\documentclass[leqno, letterpaper, 11pt]{article}
\usepackage{amssymb}
\usepackage{amsmath}
\usepackage{amsthm}
\usepackage{tikz}
\usepackage{fullpage}
\usepackage{mathrsfs}
\usepackage{subfigure}
\usepackage{verbatim}
\usepackage{enumitem}
\usepackage{bbm}
\usepackage{lipsum}


\renewcommand{\P}{\mathbb{P}}

     \newcommand{\floor}[1]{\left\lfloor#1\right \rfloor}

\newcommand{\N}{\mathbb{N}}

     \newcommand{\abs}[1]{\left|#1\right|}

 \newcommand{\df}[1]{\emph{#1}}

\def\Z{\mathbb{Z}}

\newcommand{\bR}{{\mathbb R}}
\newcommand{\bZ}{{\mathbb Z}}

\newcommand{\bT}{{\mathbb T}}
\newcommand{\cC}{{\mathcal C}}

\newcommand{\cO}{\mathcal O}

\newcommand{\xk}{\{x\}\times K_n^2}
\newcommand{\p}{\mathsf{p}}
\newcommand{\q}{\mathsf{q}}
\newcommand{\nonbrs}{\text{\tt{Nbrs}}}

\newcommand{\eps}{\epsilon}

\newcommand{\ind}[1]{\mathbbm 1(#1)}
     
\numberwithin{equation}{section}
     


\newtheorem{theorem}{Theorem}[section]

\newtheorem{lemma}[theorem]{Lemma}

\newtheorem{prop}[theorem]{Proposition}

\newtheorem{question}[theorem]{Question}


\newcommand{\prob}[1]{\mathbb{P}\left(#1\right)}
\newcommand{\probsub}[2]{\mathbb{P}_{#1}\left(#2\right)}

\newcommand\blfootnote[1]{%
  \begingroup
  \renewcommand\thefootnote{}\footnote{#1}%
  \addtocounter{footnote}{-1}%
  \endgroup
}

\begin{title}
{Bootstrap percolation on the product\\
of the two-dimensional lattice with a Hamming square}
\end{title}

\author{Janko Gravner\\
Department of Mathematics\\University of California\\Davis, CA 95616\\{\tt gravner{@}math.ucdavis.edu}
%
\and David Sivakoff\\
Departments of Statistics and Mathematics\\
Ohio State University\\ Columbus, OH 43210\\
{\tt dsivakoff{@}stat.osu.edu}}

\begin{document}


%
%


\maketitle
\begin{abstract} 
Bootstrap percolation on a graph is a deterministic 
process that iteratively enlarges a set of occupied sites by adjoining points with at least $\theta$ 
occupied neighbors. The initially occupied
set is random, given by a uniform product measure with a low
density $p$. Our main focus is on this process on the product graph $\bZ^2\times K_n^2$, where $K_n$ is a complete graph. 
We investigate how  $p$ 
scales with $n$ so that a typical site is eventually occupied. 
Under critical 
scaling, the dynamics with even $\theta$ exhibits a sharp phase transition, while odd $\theta$ yields a gradual percolation 
transition. We also establish a gradual transition 
for bootstrap percolation on $\bZ^2\times K_n$. 
The main tool is heterogeneous bootstrap percolation 
on $\bZ^2$. 
\end{abstract}

\blfootnote{\emph{Keywords}:
Bootstrap  percolation;  critical scaling;  final density; heterogeneous
bootstrap percolation.}
\blfootnote{ AMS MSC 2010:  60K35.}


\section{Introduction}

Spread of signals --- information, say, or infection ---
on graphs with community structure has attracted interest in the mathematical literature 
recently~\cite{Schi, BL,  Lal, LZ, Siv}. 
The idea is that any single community 
is densely connected, while the connections between communities
are much more sparse. This naturally leads to 
multiscale phenomena, as the spread of the signal within 
a community is much faster then between different 
communities. Often, communities are modeled as cliques,
i.e., the intra-community graph is complete, but in other cases 
some close-knit structure is assumed. By contrast, the inter-community 
graph may, for example, impose spatial proximity as a precondition for connectivity. See \cite{Sil} 
for an applications-oriented recent survey. 

The principal 
graph under study in this paper is $G=\bZ^2\times K_n^2$, 
the Cartesian 
product between the lattice $\bZ^2$ and two copies of the 
complete graph $K_n$ on $n$ points.
Thus ``community'' 
consists of ``individuals'' determined by two characteristics, 
and two individuals within the community only communicate 
if they have one of the characteristics in common. Between 
the communities, communication is between like 
individuals that are also neighbors in the lattice. For comparison, we also 
address the case where each community is a clique, that is, the 
graph
$\bZ^2\times K_n$. 

The 
particular dynamics we use for spread of signals 
is \emph{bootstrap percolation} with integer \emph{threshold} parameter 
$\theta\ge 1$. In this very simple deterministic process, 
one starts with an initial configuration $\omega_0$ 
of $0$s (or \emph{empty} sites) and $1$s 
(or \emph{occupied} sites) on vertices of $G$, and iteratively 
enlarges
the set of occupied sites in discrete time as follows. 
Assume $\omega_t$ is given for some $t\ge 0$, and fix a vertex 
$v$ of $G$. If $\omega_t(v)=1$, then $\omega_{t+1}(v)=1$. 
If $\omega_t(v)=0$, and $v$ has $\theta$ or more neighboring 
vertices $v'$ with $\omega_t(v')=1$, then $\omega_{t+1}(v)=1$; 
otherwise $\omega_{t+1}(v)=0$. We will typically identify 
the configuration $\omega_t$ with the set of its 
occupied sites $\{v:\omega_t(v)=1\}$. Thus $\omega_t$ 
increases to the set $\omega_\infty=\cup_{t\ge 0}\omega_t$ of  eventually occupied vertices.

 As is typical, we assume 
that the initial state $\omega_0$ is a uniform 
product measure with some small density $p\in (0,1)$. This makes 
the set $\omega_\infty$ random as well, and it is natural 
to ask how to choose $p$ to make 
$\omega_\infty$ large, i.e., to make the initially sparse
signal widespread. Observe that, 
if $\theta\ge 3$, $\omega_\infty$ cannot comprise
{\it all\/} vertices of $G$ with nonzero probability for 
any $p<1$, 
as a block of neighboring empty copies of $K_n^2$ 
(e.g, $\{(0,0),(0,1),(1,0),(1,1)\}\times K_n^2$) cannot 
be invaded by occupied sites, and the infinite lattice will 
contain such a block with probability $1$. We therefore
ask a weaker question:  
how large should $p$ be, in terms of $n$, so that 
$\omega_\infty$ comprises a substantial proportion of 
points? That is, we are interested in 
the size of the \emph{final density}, $\probsub{p}{v_0\in \omega_\infty}$, 
which is independent of $v_0\in G$ by vertex-transitivity of $G$.

Bootstrap percolation was introduced on trees in \cite{CLR}, 
but it has received
by far the most attention on lattices $\bZ^d$. 
In this case,  
$\probsub{p}{\omega_\infty=\bZ^d}=1$, as proved in \cite{vE}
for $d=2$ and in \cite{Scho} for $d\ge 3$. Many deep and 
surprising results originated from the study of 
metastability  properties  of  the  model  on  finite  regions  (see e.g. [AL, Hol, BBDM, GHM]).  We refer to the recent survey [Mor] for a comprehensive review. 

Study of bootstrap percolation and related dynamics on graphs with long-range connectivity is a more recent undertaking \cite{GHPS, Sli, GSS, BBLN, GPS} 
and has a fundamentally different flavor: while on sparse 
graphs, the dominant mechanism is formation of small 
nuclei that are likely to grow indefinitely, the relevant 
events in 
densely connected graphs tend to depend on 
the configuration on the whole space. It is therefore 
tempting to consider graphs that combine aspects of 
both, and we continue here our work started in \cite{GS}. 

As already remarked, $\omega_\infty$ cannot cover all 
vertices of our graph $G$ due to the presence of 
local configurations of 
sparsely occupied copies of Hamming squares, $K_n^2$. 
Other copies, of course, have higher initial occupation, get fully occupied and 
spread their occupation to the neighboring squares. Thus we have
a competition between densely occupied copies 
of $K_n^2$ that act as nuclei, and sparsely occupied ones 
that function as obstacles to growth. This invites 
comparison with polluted bootstrap percolation
\cite{GM, GH, GHS} on $\bZ^2$, which is indeed the 
main source of our tools. However, 
by contrast with the model in the 
cited papers, which has only three
states (empty and occupied sites, and permanent obstacles),
the dynamics that arise from our process has more types 
corresponding to all possible thresholds ($0, 1,2,3,4, 5$) 
that different 
sites in $\bZ^2$ require to become occupied. Moreover, we
need different variants for 
the case $\theta=3$ and the graph $\bZ^2\times K_n$.
We call 
these comparison 
dynamics \emph{heterogeneous bootstrap percolation}. We also
encounter a technical difficulty in the form of 
correlations in the initial state, which are handled
by coupling and 
other related perturbation methods.  

After its introduction in \cite{GM}, the basic polluted 
version of heterogeneous bootstrap percolation 
was further analyzed in \cite{GH, GHS}; it is the 
recent techniques developed in these two papers 
that will be useful to us.  Related models include processes  on a complete graph  with  excluded  edges \cite{JLTV}, Glauber dynamics with “frozen” vertices \cite{DEKNS}, dynamics on complex networks with “damaged” vertices \cite{BDGM1, BDGM2}, 
and on inhomogeneous geometric random graphs \cite{KL}.

Our main results determine a critical scaling for 
prevalent occupation on $\bZ^2\times K_n^2$: we exhibit functions $f_\theta(n)$ so that, 
when $p=af_\theta(n)$, the limit as $n\to\infty$ of 
the final density
$\probsub{p}{v_0\in \omega_\infty}$ is low for small $a$ and 
high for large $a$. In fact, for all $\theta$, this limit vanishes for $a<a_c$, where $a_c=a_c(\theta)$ is a critical 
value that we are able to identify (and in fact compute 
explicitly for even $\theta$).  
The behavior for $a>a_c$ is however not 
the same for all $\theta$: if $\theta$ is even, the limit is 
$1$, while if $\theta$ is odd the final density is bounded away from $1$ 
for any finite $a$ and 
only approaches $1$ as $a\to\infty$. 
We already encountered the non-intuitive qualitative difference
between odd and even $\theta$ in our 
earlier work \cite{GS}, in which the lattice factor was one
dimensional.  This, and the connection with heterogeneous 
bootstrap percolation, are the most inviting features of our present model.
 
We now proceed to formal statements of our results. We first remark 
that for $\theta\le 2$ we have no 
obstacles and 
$\probsub{p}{\omega_\infty\equiv 1}=1$ for any $p>0$ by 
standard bootstrap percolation arguments \cite{vE, Scho}; 
therefore, we assume that $\theta\ge 3$ throughout the paper.  As we have 
so far, we denote by $v_0$ an arbitrary fixed vertex of the 
graph in question, and we use the notation $\mathbf{0}=(0,0)$ 
for the origin in $\bZ^2$. We begin with our main result 
for even thresholds.

\begin{theorem}\label{sharp transition even}
Consider bootstrap percolation on $\bZ^2\times K_n^2$ with 
threshold $\theta = 2\ell+2$, for some $\ell\ge 1$. Assume that
\begin{equation}\label{form-p}
p=a\cdot \frac{(\log n)^{1/\ell}}{n^{1+1/\ell}}, 
\end{equation}
for some $a>0$.  

If $a^\ell < 2(\ell-1)!$, then
\begin{equation}\label{subcritical-even}
\probsub{p}{v_0\in \omega_\infty} 
=
n^{-2/\ell+o(1)}
\qquad \text{as $n\to\infty$}.
\end{equation}
Conversely, if $a^\ell \ge 2(\ell-1)!$, then
\begin{equation}\label{supercritical-even}
\probsub{p}{\{\mathbf{0}\}\times K_n^2 \subset \omega_\infty} \to 1 \qquad \text{as $n\to\infty$}.
\end{equation}
Moreover, if $a^\ell > 2(\ell-1)!$ , then
\begin{equation}\label{supercritical-even1}
\probsub{p}{\{\mathbf{0}\}\times K_n^2 \not\subset \omega_\infty}=
\begin{cases}
n^{4/\ell-4a^\ell/\ell!+o(1)} &\ell\ge 2\\
n^{-2a+o(1)} &\ell=1
\end{cases}\qquad \text{as $n\to\infty$},
\end{equation}
and $\probsub{p}{\omega_0=\omega_\infty\text{ on }\{\mathbf{0}\}\times K_n^2}$ satisfies the same asymptotics.
\end{theorem}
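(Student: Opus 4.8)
I would reduce the dynamics on $\bZ^2\times K_n^2$ to a heterogeneous bootstrap percolation on $\bZ^2$ with six thresholds, analyze that using the polluted‑bootstrap machinery, and translate back. For a lattice site $x$, let $\tau(x)\in\{0,\dots,5\}$ be the number of fully occupied lattice‑neighbor copies that $\{x\}\times K_n^2$ needs before its interior fills completely (with $\tau(x)\ge 5$ meaning "never"). A fully occupied neighbor contributes $+1$ to every vertex of the copy, so with $k$ such neighbors the interior runs ordinary bootstrap percolation on $K_n^2$ at threshold $2\ell+2-k$. Identifying occupied sites of $K_n^2$ with edges of a bipartite graph on $(\text{rows})\sqcup(\text{columns})$, this interior process is exactly the saturation rule "add $\{r,c\}$ whenever $\deg r+\deg c\ge 2\ell+2-k$", and the first thing I would prove is the dichotomy that it reaches $K_{n,n}$ iff there are a row \emph{and} a column of degree $\ge\lceil (2\ell+2-k)/2\rceil$ — one such row/column triggers a cascade that engulfs all rows and columns of degree $\ge (2\ell+2-k)/2-1$, of which there are abundantly many. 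This yields $\tau(x)$ as an explicit function of the degree sequence of the copy at $x$, and (since the copies are independent) the field $\tau(\cdot)$ is i.i.d.; the true dynamics is squeezed between the heterogeneous $\bZ^2$‑process with thresholds $\tau(\cdot)$ and a slightly perturbed version accounting for partially filled neighbors.

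\textbf{Type probabilities and the critical value.} The number of rows of a copy with $\ge k$ occupied sites is asymptotically Poisson with mean $\sim n(np)^k/k!=a^k(\log n)^{k/\ell}n^{1-k/\ell}/k!$. Hence whp every row and column has at most $\ell$ occupied sites, the number of degree‑$\ell$ rows is $\mathrm{Poisson}(a^\ell(\log n)/\ell!+o(1))$, and a degree‑$(\ell+j)$ row occurs with probability $n^{-j/\ell+o(1)}$. Feeding this into the dichotomy: $\tau(x)=2$ with probability $1-(2+o(1))n^{-a^\ell/\ell!}$; $\tau(x)=3$ with probability $\sim 2n^{-a^\ell/\ell!}$ (exactly one of "no degree‑$\ell$ row", "no degree‑$\ell$ column"); $\tau(x)\ge 4$ with probability $n^{-2a^\ell/\ell!+o(1)}$; $\tau(x)\le 1$ with probability $n^{-1/\ell+o(1)}$; and — the key point — $\tau(x)=0$ (self‑spanning) with probability $q_0=n^{-2/\ell+o(1)}$, coming from a degree‑$(\ell+2)$ row/column or a degree‑$(\ell+1)$ row together with a degree‑$(\ell+1)$ column. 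The obstacle‑type density $\rho:=\mathbb{P}(\tau\ge 3)\sim 2n^{-a^\ell/\ell!}$ and the seed density $q_0=n^{-2/\ell+o(1)}$ cross their roles precisely when $a^\ell/\ell!=2/\ell$, i.e. $a^\ell=2(\ell-1)!$; this is $a_c$.

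\textbf{Analysis of the $\bZ^2$‑process.} Using the rectangle/Aizenman--Lebowitz technology from the polluted‑bootstrap papers, adapted to the presence of threshold‑$3$ and threshold‑$4$ sites: when $a^\ell>2(\ell-1)!$ (so $\rho=o(q_0)$), seeds nucleate, absorb their $\tau\le 1$ neighbors, and then grow — finding further $\tau\le1$ copies along their fronts once they reach scale $\sim n^{1/\ell}$ — until they fill all of $\bZ^2$ apart from a finite frozen cluster; in particular $\{\mathbf 0\}\times K_n^2\subset\omega_\infty$ whp. The failure probability is governed by the cheapest local frozen configuration: for $\ell=1$ (where $2\ell-2=0$, so $\tau=3,4$ are genuinely hard) this is a pair of adjacent $\tau\ge3$ copies, giving $n^{-2a+o(1)}\asymp\rho^2$; for $\ell\ge2$ it is a larger obstruction (a pair of hard $\tau\ge4$ copies together with an entropy factor of order the critical‑droplet scale $n^{2/\ell}$), giving $n^{4/\ell-4a^\ell/\ell!+o(1)}$. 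A separate argument shows that on this rare event the dominant scenario is that the origin copy receives essentially no help, so $\omega_0=\omega_\infty$ on $\{\mathbf 0\}\times K_n^2$, giving the same asymptotics. When $a^\ell<2(\ell-1)!$ (so $\rho\gg q_0$) obstacles choke each seed before it reaches the self‑sustaining scale $n^{1/\ell}$, $\omega_\infty$ is a union of bounded clusters, and $\mathbb{P}(v_0\in\omega_\infty)$ is dominated by $v_0$'s own copy being self‑spanning (or lying in the bounded cluster of a nearby self‑spanning copy), hence $=q_0=n^{-2/\ell+o(1)}$; the other contributions ($v_0$ initially occupied, $v_0$'s row filling by itself, a $\tau\le1$ chain to a seed) are all of strictly smaller order.

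\textbf{Main obstacle.} I expect two points to be the crux. First, the reduction is not exact: a partially occupied neighbor helps a copy only at the coordinates where it happens to be occupied, and whether a neighbor is partially versus fully occupied is correlated with the very initial‑state randomness driving the copy under study; taming these correlations requires a careful two‑sided coupling — revealing the per‑copy configurations in stages and sandwiching between independent heterogeneous processes with slightly inflated/deflated type probabilities. Second, and harder, pinning down the \emph{constants in the exponents}, especially $4/\ell-4a^\ell/\ell!$, needs matching first‑ and second‑moment estimates both for the Hamming‑square saturation probabilities (sharp, non‑Poisson‑corrected asymptotics for "no degree‑$\ell$ row", "degree‑$(\ell+1)$ row and column", and so on) and for the frozen‑cluster probabilities of the heterogeneous $\bZ^2$‑process; making the upper and lower bounds agree at the level of the exponent, uniformly over the allowed range of $a$, is where the real work lies.
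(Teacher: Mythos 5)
Your overall strategy coincides with the paper's: reduce to a six-state heterogeneous bootstrap process on $\bZ^2$ (the paper's $\xi_t$ is your $\tau$-thresholded process), sandwich it between internal-spanning and inertness-based initializations (Lemmas~\ref{omega lower bound} and~\ref{omega upper bound}), compute per-copy type probabilities (Lemmas~\ref{internal-results-even} and~\ref{inertness-even}), and analyze the $\bZ^2$ process with polluted-bootstrap machinery adapted to handle the $1$-dependence of $\xi_0$ (Theorem~\ref{polluted} plus a shell/protected-set construction). Your identification of the critical value $a^\ell=2(\ell-1)!$ as the crossover of the state-$3$ density $\sim 2n^{-a^\ell/\ell!}$ with the scale $n^{-2/\ell}$, and your flagging of the coupling and exponent-matching as the hard points, are all exactly on target.

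Where your sketch goes wrong is the mechanism behind the $\ell\ge 2$ rate. You claim the cheapest frozen configuration is a pair of $\tau\ge 4$ copies with entropy of order $n^{2/\ell}$; that bookkeeping gives $n^{2/\ell-4a^\ell/\ell!}$, not $n^{4/\ell-4a^\ell/\ell!}$, and, more fundamentally, two isolated $\tau\ge 4$ copies do not block anything — a site with two frozen neighbors can still flip once its other two neighbors reach state $0$. The actual witness for the lower bound (Lemma~\ref{even-supper-rate-lower}, via the protectedness criterion in Lemma~\ref{super-sneaky-v2}) is a \emph{frame}: a rectangle $R\ni\mathbf 0$ of side length up to $n^{1/\ell+o(1)}$ whose four corners are in $\xi_0$-state $3$, with no state-$0$ site inside and no state-$1$ site on its boundary. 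This genuinely freezes $R$, and the count is $n^{4/\ell+o(1)}$ candidate rectangles times $n^{-4a^\ell/\ell!+o(1)}$ for the four corners. The matching upper bound (Lemmas~\ref{frame-necessary}--\ref{frame-bound}) then needs a case analysis showing every other obstruction — your pair of $4$s, a single $5$, rectangles with partially shared corners — is of equal or smaller order. A second, smaller slip: your dichotomy "a row \emph{and} a column of degree $\ge\lceil r/2\rceil$" for internal spanning at threshold $r$ is correct for even $r$ but should be "a row \emph{or} a column" for odd $r$; your own calculation $\P(\tau=3)\sim 2n^{-a^\ell/\ell!}$ and $\P(\tau\ge 4)\sim n^{-2a^\ell/\ell!}$ silently uses the corrected version.
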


Our results for odd thresholds are somewhat less precise, 
but suffice to provide the announced distinction from 
even $\theta$.

\begin{theorem}\label{odd-theta-thm}
Consider bootstrap percolation on $\bZ^2\times K_n^2$ with 
threshold $\theta=2\ell+1$, for some $\ell\ge 1$. Assume that 
\begin{equation}\label{form-p-odd}
p = \frac{a}{n^{1+1/\ell}},
\end{equation}
for some $a>0$.  

There exists a critical value $a_c=a_c(\ell)\in (0,\infty)$ so that the following holds. If $a<a_c$, then
\begin{equation}\label{odd-theta-sub}
\probsub{p}{v_0\in \omega_\infty} \to 0 \qquad \text{as $n\to\infty$}.
\end{equation}
Conversely, if $a>a_c$, then
\begin{equation}\label{odd-theta-super}
0<\liminf_{n\to\infty} \probsub{p}{\{\mathbf{0}\}\times K_n^2 \subset \omega_\infty} \le \limsup_{n\to\infty} \probsub{p}{v_0\in \omega_\infty}< 1.
\end{equation}
Furthermore, 
\begin{equation}\label{odd-theta-super1}\liminf_{n\to\infty} \probsub{p}{\{\mathbf{0}\}\times K_n^2 \subset \omega_\infty}\to 1\text{ as }a\to\infty.
\end{equation}
\end{theorem}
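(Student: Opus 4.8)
The plan is to mirror the structure used for even thresholds (Theorem~\ref{sharp transition even}), but to identify the point where the odd case breaks the $0$–$1$ dichotomy. The key structural fact is that a copy $\{x\}\times K_n^2$ of the Hamming square, once its induced initial configuration is dense enough, becomes fully occupied and then exports ``help'' to its lattice neighbors; the amount of help a neighbor $\{y\}\times K_n^2$ receives from a fully occupied neighbor in $\bZ^2$ is exactly one unit toward the threshold of each of its sites along one coordinate direction. Thus a site $(y,i,j)$ with $k$ fully occupied lattice-neighbors still needs $\theta-k$ occupied neighbors inside $K_n^2$, i.e. inside $\{i\}\times K_n \cup \{j\}\times K_n$, to become occupied. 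This is precisely the mechanism that turns the lattice dynamics into \emph{heterogeneous bootstrap percolation} on $\bZ^2$ with site-thresholds determined by how densely each $K_n^2$-copy is initially occupied. For $\theta = 2\ell+1$, the relevant threshold classes on $\bZ^2$ are odd: a square is a ``nucleus'' (effective threshold $\le 0$ after self-occupation) with some probability $p_0$, has effective $\bZ^2$-threshold $1$ with probability $p_1$, threshold $2$ with probability $p_2$, and so on, where each $p_k$ is governed by a Poisson-type count of how close the initial $K_n^2$-configuration is to the internal percolation threshold. The scaling $p = a n^{-1-1/\ell}$ is chosen so that the expected number of occupied sites in a single $K_n^2$ is $\Theta(n^{1-1/\ell})$, which is exactly the window in which the probability that a square is ``$\ell$ occupied sites short of internally percolating'' (hence needs one unit of external help) is a nontrivial constant, while being ``$\ell-1$ short'' (a spontaneous nucleus) has probability $\to 0$ polynomially.

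First I would make the reduction to heterogeneous bootstrap percolation precise: compute, for the measure induced on a single $K_n^2$-copy by the product measure with density $p = a n^{-1-1/\ell}$, the asymptotics of the probabilities $q_k := \probsub{p}{\text{the copy becomes fully occupied given }k\text{ units of external help but not with }k-1}$ for $k=0,1,\dots$. The internal dynamics on $K_n^2$ with uniform extra help is itself a small two-type bootstrap problem: a row (or column) in $K_n^2$ gets fully occupied once it has $\ge \theta - k$ occupied sites counting help, and then it helps all columns (rows), so the internal threshold for percolation of the whole square is governed by having enough initially occupied rows/columns that already meet the reduced threshold. A careful but standard computation (of the type underlying the even-threshold theorem) shows $q_0 = n^{-c_0+o(1)}$ for a positive constant $c_0$, $q_1 \to$ a constant in $(0,1)$ depending on $a$ (call the induced ``nucleation-with-one-unit-of-help'' density $\pi_1(a)$), and the higher $q_k$ are comparably or more favorable; the point is that $\pi_1(a)$ is continuous and strictly increasing in $a$ with $\pi_1(0^+)=0$ and $\pi_1(\infty)=1$. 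Because $\theta=2\ell+1$ is odd, the effective $\bZ^2$-dynamics is then heterogeneous bootstrap percolation in which a site with all four lattice-neighbors occupied still needs one occupied neighbor among them to itself already be occupied — equivalently, the relevant update rule on the ``help graph'' is the $2$-neighbor rule among nucleated squares, but a lone nucleated square cannot unilaterally convert an entire half-plane; growth requires a density-$\pi_1(a)$ field of cooperative squares, which undergoes its own (non-trivial, non-$0$/$1$) percolation-type transition. This is exactly the qualitative source of the gradual transition, in contrast to even $\theta$ where a single nucleus of help suffices to trigger unbounded growth.

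With the reduction in hand, the two halves of the theorem follow from the corresponding halves of the heterogeneous bootstrap percolation results the paper sets up. For the subcritical direction $a<a_c$: take $a_c$ to be the critical value of $a$ at which the heterogeneous process on $\bZ^2$ (with square-type densities $q_k(a)$) fails to percolate from a bounded nucleus and, more to the point, at which the final density of the heterogeneous process vanishes; for $a<a_c$ one shows the eventually-occupied set in $\bZ^2\times K_n^2$ is contained (via the monotone coupling to heterogeneous BP) in a configuration whose density tends to $0$, using that $q_0\to 0$ kills spontaneous nuclei and $\pi_1(a)<$ (heterogeneous critical density) prevents the cooperative field from percolating. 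For the supercritical direction $a>a_c$: here $\pi_1(a)$ exceeds the heterogeneous critical density, so the cooperative field of help-needing squares percolates and occupies a positive-density set of the lattice — giving $\liminf \probsub{p}{\{\mathbf 0\}\times K_n^2\subset\omega_\infty}>0$; but because isolated empty $2\times2$ blocks of Hamming squares survive forever with constant probability, the $\limsup$ of the final density is bounded strictly below $1$, and by a renormalization/ergodicity argument the final density is exactly the $\bZ^2$-survival probability of $\mathbf 0$ in the limiting heterogeneous process, which is in $(0,1)$. Finally, as $a\to\infty$, $q_0(a)$ and $\pi_1(a)\to$ values forcing the heterogeneous process to fill all but the finitely-many永久-blocked $2\times2$ blocks, and the probability that $\{\mathbf 0\}\times K_n^2$ lies in such a block tends to $0$, giving \eqref{odd-theta-super1}.

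The main obstacle is the precise asymptotic analysis of the induced single-square probabilities $q_k(a)$, and in particular establishing that $\pi_1(a)$ is a genuine continuous strictly-monotone function with the stated limits and that it crosses the heterogeneous critical density at a \emph{single} finite $a_c$; this requires controlling the internal bootstrap dynamics on $K_n^2$ under external help with enough uniformity to couple it cleanly to a fixed heterogeneous model on $\bZ^2$, and simultaneously handling the weak correlations in the initial state (since the configurations on adjacent $K_n^2$-copies share no sites, correlations enter only through the shared lattice geometry of the help field, but the coupling still needs care near the critical density). A secondary difficulty is proving the $\limsup<1$ bound robustly — i.e. that no higher-order cooperative mechanism in $\bZ^2\times K_n^2$ can rescue a site trapped in an empty $2\times2$ Hamming block — which follows from the observation that every site in such a block has at most two occupied lattice-neighbors for all time and $\theta-2 = 2\ell-1 \ge 1$ occupied internal neighbors are then needed but the internal copy is, with constant probability, too sparse to supply them.
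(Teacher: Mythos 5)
Your overall strategy is the right one and matches the paper's: reduce to a heterogeneous bootstrap process on $\bZ^2$ whose site states are governed by how close each $K_n^2$-copy is to internally spanning, define $a_c$ via a limiting $\bZ^2$-dynamics, and use a monotone coupling (together with domination of the correlated ``bad sites'' by a sparse product measure, as in \cite{LSS}) to transfer the $\bZ^2$ conclusions back to $\bZ^2 \times K_n^2$. The asymptotics you quote for the state probabilities (nucleus probability $q_0 \to 0$ polynomially, state-$1$ probability $\to (1 - e^{-a^\ell/\ell!})^2$, obstacle probability $\to e^{-2a^\ell/\ell!}$) agree with Lemma~\ref{internal-results-odd}, and the intuition that a nontrivial density of $\ge 2$-threshold sites survives and prevents the $0$--$1$ dichotomy is exactly right. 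However, there are three genuine problems.

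First, you over-claim the conclusion: you assert that ``by a renormalization/ergodicity argument the final density is exactly the $\bZ^2$-survival probability of $\mathbf 0$ in the limiting heterogeneous process.'' The theorem asserts only $\liminf$ and $\limsup$ bounds, and the paper does not prove that $\lim_n \probsub{p}{v_0 \in \omega_\infty}$ exists for odd $\theta$; it leaves that, and the continuity of the limiting function $a \mapsto \lim_{\epsilon\to 0}\P(\xi^{(a,\epsilon)}_\infty(\mathbf 0)=0)$, as open problems (Question 7.2). The coupling only yields a sandwich between two limiting heterogeneous processes with slightly perturbed parameters $a'$ and sprinkling density $\delta$, which gives the $\liminf$/$\limsup$ bounds but not existence of the limit.

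Second, your $\limsup < 1$ mechanism is stated incorrectly for $\ell \ge 2$. You say ``isolated empty $2\times 2$ blocks of Hamming squares survive forever with constant probability.'' With $p = a n^{-1-1/\ell}$, the probability that a single copy of $K_n^2$ is entirely empty is $(1-p)^{n^2} \approx e^{-a n^{1-1/\ell}} \to 0$ for $\ell \ge 2$. The paper instead uses the event that all four squares in a $2\times 2$ block are \emph{$(\theta-2)$-inert}; by Lemma~\ref{internal-results-odd}, this has probability tending to $e^{-8a^\ell/\ell!} > 0$. Your concluding sentence (``the internal copy is, with constant probability, too sparse to supply $\theta-2$ internal neighbors'') is the right idea, but it is captured by $(\theta-2)$-inertness, not emptiness.

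Third, and most materially, the case $\theta = 3$ ($\ell = 1$) breaks your reduction. For $\theta = 3$, the obstacles are genuinely empty Hamming planes, but an empty plane with two fully occupied lattice-neighbors can be invaded if a third lattice-neighbor merely contains \emph{one} occupied site, and the probability of a non-empty neighbor does not go to zero, so it cannot be absorbed into a low-density ``bad site'' coupling. The paper introduces a distinct comparison dynamics $\chi_t$ with an extra update clause (the $W_t(x)$ indicator) to handle this, and proves separate versions of all three lemmas. Your proposal, as written, uses a single heterogeneous rule for all $\ell \ge 1$ and would not correctly bound the $\theta = 3$ dynamics. You also do not identify $a_c \in (0, \infty)$ concretely; the paper does this by explicit comparisons to site percolation on $\bZ^2$ and on the triangular lattice (Lemmas~\ref{ac-pc} and~\ref{ac-pcstar}), a step that your sketch replaces with an assertion that the relevant density crosses a critical value exactly once.
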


Finally, we state our result for the case of clique community, 
in which there is no difference between odd and even $\theta$
and no phase transition as in Theorems~\ref{sharp transition even} and~\ref{odd-theta-thm}. 

\begin{theorem}\label{z2xK theorem}
Consider bootstrap percolation on $\bZ^2\times K_n$ 
with threshold $\theta\ge 3$. Assume that $p=a/n$ for 
some $a\in (0,\infty)$. Then both 
$\liminf_n\probsub{p}{\omega_\infty(v_0)=1}$ and 
$\limsup_n\probsub{p}{\omega_\infty(v_0)=1}$ are in $(0,1)$ 
and converge to $0$ (resp.~$1$) as $a\to 0$ (resp.~$a\to\infty)$. 
If $\theta\ge 14$, then $\lim_n\probsub{p}{\omega_\infty(v_0)=1}$ exists 
and is continuous in $a$. 
\end{theorem}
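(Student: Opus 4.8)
The strategy is to reduce bootstrap percolation on $\bZ^2\times K_n$ to a heterogeneous bootstrap percolation process on $\bZ^2$, along the same lines used for the $\bZ^2\times K_n^2$ results, but now the single Hamming factor $K_n$ has $n$ vertices per site of $\bZ^2$ and the relevant scaling is $p=a/n$. First I would classify, for a fixed $x\in\bZ^2$, the state of the copy $\{x\}\times K_n$ at various stages: with $p=a/n$, the number of initially occupied vertices in a copy is asymptotically Poisson with mean $a$, so a copy is ``densely occupied'' (all $n$ vertices eventually occupied once it receives enough help from neighbors) or not, and the number of external occupied neighbors it needs depends on how many of its own $n$ vertices are already on. Concretely, a copy with $k$ initially occupied vertices becomes fully occupied as soon as some vertex in it has $\theta$ occupied neighbors; since a vertex of $\{x\}\times K_n$ has $n-1$ neighbors inside the copy and $4$ outside (the copies at the four lattice-neighbors of $x$, matched vertex to vertex), once $k\ge 1$ a single fully-occupied lattice-neighbor already contributes one occupied external neighbor to each of the $n$ vertices, and the internal count $k-1$ or $k$ does the rest. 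This produces, on $\bZ^2$, a site-heterogeneous threshold field: site $x$ has an effective threshold $\threshold_x\in\{0,1,2,3,4,5\}$ depending on the Poisson count $k_x$, where $\threshold_x=0$ (already a nucleus) with probability $\to 0$ but positive, small thresholds with probabilities that are powers of $a$, and $\threshold_x\ge 3$ (obstacle-like) with probability bounded away from $0$.

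Next I would invoke the heterogeneous bootstrap percolation machinery on $\bZ^2$ announced in the introduction: the event $\{v_0\in\omega_\infty\}$ on $\bZ^2\times K_n$ translates, up to coupling errors that vanish as $n\to\infty$, into the event that the origin's copy is eventually densely occupied in the $\bZ^2$ heterogeneous process, i.e.\ that the origin lies in a region cleared by the heterogeneous dynamics. Because the field $(\threshold_x)$ is i.i.d.\ (to leading order — the correlations from shared $K_n$ vertices between lattice-neighbors must be controlled by coupling, as the authors note, using the perturbation methods referenced for the $K_n^2$ case) and the density of ``low-threshold'' sites is of constant order (not tending to $0$) while the density of obstacle sites is also of constant order, neither nucleation nor blocking dominates: with positive probability the origin is in a cleared region and with positive probability it is permanently frozen inside a block of obstacle copies. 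This yields $\liminf_n$ and $\limsup_n$ of $\probsub{p}{\omega_\infty(v_0)=1}$ both strictly between $0$ and $1$. The monotone limits as $a\to 0$ and $a\to\infty$ follow from monotonicity in $p$ of the bootstrap process together with the fact that as $a\to\infty$ the probability of a low threshold at $x$ tends to $1$ (so obstacle blocks become rare and the standard $\bZ^2$ percolation-to-everything argument of \cite{vE} applies in the limit), while as $a\to 0$ even nucleation of a single growing region near the origin becomes unlikely.

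For the final assertion — that for $\theta\ge 14$ the limit exists and is continuous in $a$ — the plan is to show that when $\threshold\ge 14$, every site $x$ has effective threshold at least $3$ regardless of $k_x$ for $k_x$ in any range of constant order (since even four fully-occupied lattice-neighbors contribute only $4$ external occupied neighbors, and $4+k_x<\threshold$ unless $k_x$ is large, which has probability $\to 0$), so that the heterogeneous process becomes a \emph{homogeneous} polluted-type bootstrap percolation on $\bZ^2$ with threshold-$\ge 3$ obstacles of a fixed density. For such a model one expects the final density at the origin to be given by a limiting expression (an infinite series over finite ``clearable configurations,'' or equivalently the probability of the origin belonging to an infinite cleared cluster in a well-defined limiting percolation model), and continuity in $a$ follows because the obstacle density and the nucleus density are themselves continuous (indeed analytic) functions of $a$ through the Poisson probabilities, and the limiting final-density functional depends continuously on these parameters by dominated convergence applied to the series representation. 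The main obstacle I anticipate is precisely making the reduction to a \emph{clean} heterogeneous (or, for large $\theta$, homogeneous) i.i.d.\ model on $\bZ^2$ rigorous: the copies $\{x\}\times K_n$ at neighboring lattice sites share no vertices, but the dynamics couples them through matched vertices, and a vertex's fate depends on the joint initial configuration of up to five copies, so establishing that the induced field of effective thresholds is asymptotically i.i.d.\ (and that the $o(1)$ coupling errors between the true $\bZ^2\times K_n$ process and the idealized $\bZ^2$ process do not accumulate over the unbounded growth) is the technically delicate heart of the argument, exactly as flagged in the introduction.
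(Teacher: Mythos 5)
Your general framework — reduce to a heterogeneous bootstrap process on $\bZ^2$, use that the number of occupied vertices in a copy of $K_n$ is asymptotically Poisson$(a)$, and sandwich with couplings that tame the short-range correlations — is the right starting point and matches the paper's strategy in broad outline. However, there are two substantial gaps, one structural and one in the continuity argument.

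\textbf{The comparison dynamics needs a new rule, not just the $\xi$ dynamics.} You treat the effective threshold of a copy $\{x\}\times K_n$ as depending only on how many fully occupied lattice-neighbors it has, which is the correct picture for $\bZ^2\times K_n^2$. For $\bZ^2\times K_n$ this is insufficient: a lattice-neighbor $\{y\}\times K_n$ that is merely \emph{non-empty} (not fully occupied) can still help, because a single occupied vertex $(y,u)$ contributes one neighbor to the matched vertex $(x,u)$, and if $\{x\}\times K_n$ already has $\theta-1$ occupied vertices plus $Z$ fully occupied lattice-neighbors, the one extra neighbor at $(x,u)$ can trigger a cascade inside $\{x\}\times K_n$. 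The paper therefore defines a new dynamics $\zeta_t \in \{0,1,\ldots,5,\theta\}^{\bZ^2}$ with an additional state $\theta$ for an \emph{empty} copy (which can contribute nothing), and an update rule $\zeta_{t+1}(x)=0$ when $Z_t(x)+W_t(x)\ge \zeta_t(x)$, where $W_t(x)$ records whether $x$ has \emph{any} neighbor with state strictly between $0$ and $\theta$. Without this extra term, the upper-bounding comparison fails: you would underestimate $\omega_\infty$. The paper also introduces ``clash sites'' and two initializations (favoring/restricting) precisely to control the dependence you flag, and then couples these with a Poisson point process to define a clean $a$-initialization $\zeta_0^{(a)}$; that Poisson construction also furnishes the monotone coupling in $a$ that makes the $a\to 0,\infty$ limits and the left-continuity of $\phi(a)=\P(\zeta_\infty^{(a)}(\mathbf{0})=0)$ immediate.

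\textbf{The continuity argument for $\theta\ge 14$ does not work as stated.} Your claim that ``every site $x$ has effective threshold at least $3$'' is false: a copy with $N_x\ge\theta-2$ has effective threshold $0$, $1$, or $2$, and since $N_x$ is Poisson$(a)$ this happens with a fixed positive probability, not one tending to $0$. The model does not become homogeneous for large $\theta$. Moreover, the proposed ``infinite series over finite clearable configurations plus dominated convergence'' is not a rigorous route to continuity of the infinite-volume final density, which depends on an unbounded region; this is analogous to continuity of the percolation function, a delicate question in general. The paper's actual mechanism is entirely different: it shows $\phi$ is right-continuous via a \emph{circuit argument}. A site $x$ with $N_x^a\notin[\theta-5,\theta-1]$ (i.e., $\zeta_0^{(a)}(x)\in\{0,5,\theta\}$) never changes state in the $\zeta_t^{(a)}$ dynamics, because $Z_t+W_t\le 4+1=5$ and $Z_t=4$ forces $W_t=0$. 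The condition $\theta\ge 14$ is used solely to guarantee, via the rigorous lower bound $p_c^{\text{site}}>0.556$ from \cite{vdBE}, that $\P(\text{Poisson}(a)\in[\theta-5,\theta-1])\le p_c^{\text{site}}$ for \emph{all} $a$, so that a circuit of such stable sites around the origin exists a.s. Combined with the right-continuity of the coupled Poisson point process in $a$, the fate of the origin inside that circuit is locally constant in $a$, yielding $\phi(a+)=\phi(a)$. This circuit mechanism is the missing idea; your approach would need to be replaced by it (or something equivalent) for the $\theta\ge 14$ claim.
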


A similar result to Theorem~\ref{z2xK theorem} holds for $\bZ^d\times K_n$ for all $d\ge 3$, but extension of our results 
to $\bZ^d\times K_n^2$ is much more challenging (see 
Section 7 on open problems). 

The organization of the rest of the paper is as follows. In the 
Section 2, we state the necessary properties of random 
subsets of a single copy $\{x\}\times K_n^2$ of a Hamming square, and in this we mostly review the results from \cite{GHPS, GS}. 
We also introduce the basic version of the heterogeneous 
bootstrap percolation; two other variants are used in 
Subsection 5.4 and Section 6. In Section 3, we prove the
subcritical rate (\ref{subcritical-even}), by utilizing the
connection with polluted bootstrap percolation \cite{GM,GHS}. 
Our argument closely follows that of \cite{GHS}, but we
give a substantial amount of details due to the differences
in the assumptions and conclusions. In Section 4, we focus 
on the supercritical part of Theorem~\ref{sharp transition even}, 
which is handled by the method from \cite{GM}, and then involves finding 
the most likely configuration that prevents occupation 
from spreading inwards from a circuit of fully 
occupied copies of the Hamming square. Section 5 contains 
the proof Theorem~\ref{odd-theta-thm}, in which we 
characterize $a_c$ through the limiting dynamics (as
$n\to\infty$), which can be appropriately coupled to the 
dynamics for finite $n$. A different limiting dynamics is
similarly used in Section 6, which is devoted to the proof 
of Theorem~\ref{z2xK theorem}. We conclude with a list of 
open problems in Section 7.



\section{Preliminaries}

\subsection{Copies of Hamming squares}
Fix an initial state 
$\omega_0$ for our bootstrap dynamics on $\bZ^2\times K_n^2$.  
For a set $A\subset \bZ^2\times K_n^2$, the dynamics 
\emph{restricted to} $A$ uses the bootstrap rule on the subgraph 
induced by $A$, with the initial state $\omega_0$ on $A$.
As in \cite{GS}, we call a copy $\{x\}\times K_n^2$, $x\in\bZ^2$:
\begin{itemize}
\item \emph{internally spanned at threshold $r$} ($r$-\emph{IS})
if the bootstrap dynamics with threshold $r$, restricted to $\{x\}\times K_n^2$,
eventually results in full occupation of $\{x\}\times K_n^2$;
\item \emph{internally inert at threshold $r$} ($r$-\emph{II})
if the bootstrap dynamics with threshold $r$, restricted to $\{x\}\times K_n^2$,
never changes the state of any vertex in $\{x\}\times K_n^2$;
and
\item \emph{inert} at threshold $r$ ($r$-\emph{inert}) if 
the (unrestricted) bootstrap dynamics with threshold $r$ 
does not occupy 
any point in $\{x\}\times K_n^2$ in the first time step. 
\end{itemize}

In the rest of this subsection, we mostly 
summarize the results from \cite{GS} and \cite{GHPS}.
We begin with the results for even $\theta$, which 
were essentially proved in \cite{GS}.

\begin{lemma}\label{internal-results-even}
Assume that $p$ is given by (\ref{form-p}). 
\begin{enumerate}
\item If $\ell\ge 1$, then 
$$
\probsub{p}{\text{$K_n^2$ is not $(2\ell-2)$-IS}}=\cO(n^{-L}),
$$
for any constant $L>0$.
\item 
If $\ell\ge 2$, then 
$$
\probsub{p}{\text{$K_n^2$ is not $(2\ell-1)$-IS}}\sim \probsub{p}{\text{$K_n^2$ is  $(2\ell-1)$-II}}\sim n^{-2a^\ell/\ell!}.
$$
and for $\ell = 1$ we have
$$
\probsub{p}{\text{$K_n^2$ is not $1$-IS}}=
\probsub{p}{\text{$K_n^2$ is $1$-II}}\sim \frac{1}{n^{a}}.
$$
\item If $\ell\ge 2$, then 
$$
\probsub{p}{\text{$K_n^2$ is not $(2\ell)$-IS}}\sim 
\probsub{p}{\text{$K_n^2$ is $(2\ell)$-II}}\sim 2n^{-a^\ell/\ell!},
$$
and for $\ell = 1$ we have
$$
\probsub{p}{\text{$K_n^2$ is not $2$-IS}}\sim 
\probsub{p}{\text{$K_n^2$ is $2$-II}}\sim \frac{a\log n}{n^{a}}.
$$
\item If $\ell\ge 1$, then 
$$
\probsub{p}{\text{$K_n^2$ is $(2\ell+1)$-IS}}\sim 
\probsub{p}{\text{$K_n^2$ is not $(2\ell+1)$-II}}\sim 
\frac {2a^{\ell+1}}{(\ell+1)!}\cdot \frac{(\log n)^{1+1/\ell}}{n^{1/\ell}}.
$$
\item If $\ell\ge 1$, then 
$$
\probsub{p}{\text{$K_n^2$ is $(2\ell+2)$-IS}}\sim 
\probsub{p}{\text{$K_n^2$ is not $(2\ell+2)$-II}}\sim 
\left(\frac {a^{\ell+1}}{(\ell+1)!}\right)^2\cdot \frac{(\log n)^{2+2/\ell}}{n^{2/\ell}}.
$$
\end{enumerate}
\end{lemma}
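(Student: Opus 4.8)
\medskip
\emph{Proof proposal.} The plan is to reduce each of the five assertions to an event depending only on the restriction of $\omega_0$ to the single Hamming square, and then to evaluate that event by first/second moment estimates. Realize $K_n^2$ as the rook graph on $[n]\times[n]$, two cells adjacent precisely when they share a row or a column. For $k\ge1$ let $R_k$ (resp.\ $C_k$) be the number of rows (resp.\ columns) carrying at least $k$ occupied cells of $\omega_0$; as rows of $K_n^2$ are vertex-disjoint, $R_k$ (and likewise $C_k$) is exactly $\mathrm{Binomial}(n,\pi_k)$ with $\pi_k=\Prob{\mathrm{Bin}(n,p)\ge k}\sim(np)^k/k!$, though $R_k$ and $C_k$ are not independent of one another. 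For $p$ as in \eqref{form-p},
\[
\E R_k=\E C_k=n\pi_k\sim\frac{a^k}{k!}\,(\log n)^{k/\ell}\,n^{1-k/\ell},
\]
which is $\sim\frac{a^\ell}{\ell!}\log n$ for $k=\ell$, a fixed positive power of $n$ times a polylog for $k\le\ell-1$, and tends to $0$ (a fixed negative power of $n$ times a polylog) for $k\ge\ell+1$.

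Set $k=\ceil{\theta/2}$, so $k\ge2$. The first task is the deterministic reduction, most of which is already contained in \cite{GS,GHPS}: there is an event $\cB$, whose probability is negligible relative to every right-hand side in the lemma (in fact $\cO(n^{-L})$ for all $L$ when $\theta\le2\ell$), outside of which
\[
\{K_n^2\ \text{is }\theta\text{-IS}\}=\{K_n^2\ \text{is not }\theta\text{-II}\}=\begin{cases}\{R_k\ge1\}\cap\{C_k\ge1\},&\theta\ \text{even},\\[.3mm]\{R_k\ge1\}\cup\{C_k\ge1\},&\theta\ \text{odd}.\end{cases}
\]
For ``not $\theta$-II'': generically (i.e.\ outside a lower-order event on which a line shares an occupied cell with a transversal line) an empty cell $(x,y)$ acquires $\theta$ occupied neighbours in one step exactly when row $x$ and column $y$ together hold $\ge\theta$ occupied cells; since a line with $\ge\ell$ cells is only log-rare while one with $\ge\ell+1$ is power-rare, the least unlikely way to realize this is a line with $\ge\ceil{\theta/2}$ cells meeting a transversal line with $\ge\floor{\theta/2}$ cells — and these two thresholds are both $k$ when $\theta$ is even, while for odd $\theta$ the second is $k-1$, i.e.\ essentially no restriction (lines with $\ge k-1$ cells being abundant). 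Configurations with $\ge\ell+2$ cells in one line are rarer still and go into $\cB$. For ``$\theta$-IS'': a line with $\ge\theta$ cells fills itself; from such a crossing pair one fills the intersection cell, then successively fills cells of that line using transversal lines with ever fewer occupied cells (the first helper lines need $\ge k-1$ cells — of which there are polynomially many when $\theta\le2\ell$, but only $\Theta(\log n)$ when $\theta\in\{2\ell+1,2\ell+2\}$) until the line reaches $\ge\theta$ cells and fills completely; two fully occupied transversal lines then fill, one line at a time, the subgrid on all lines with $\ge k-1$ cells, those lines become full, and finally every line does. The cascade completes whenever these helper lines are present in typical numbers; the atypical alternative is included in $\cB$.

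Granting the reduction, the probability estimates are routine. For the final statement ($\theta=2\ell+2$, $k=\ell+1$) put $q:=\E R_{\ell+1}\sim\frac{a^{\ell+1}}{(\ell+1)!}(\log n)^{1+1/\ell}n^{-1/\ell}\to0$; then $\Prob{R_{\ell+1}\ge1}=1-(1-\pi_{\ell+1})^n\sim q$, and since the dominant contribution to $\{R_{\ell+1}\ge1\}\cap\{C_{\ell+1}\ge1\}$ comes from a row and a column with no common cell (whereupon the two events decouple; the common-cell contribution is $o(q^2)$ because $n^2p\to\infty$, and the higher-order terms are $o(q^2)$ as well), one gets
\[
\Prob{R_{\ell+1}\ge1,\ C_{\ell+1}\ge1}\sim q^2=\Bigl(\tfrac{a^{\ell+1}}{(\ell+1)!}\Bigr)^{\!2}\frac{(\log n)^{2+2/\ell}}{n^{2/\ell}},
\]
which by the reduction is the asymptotics of both $\Prob{K_n^2\text{ is }(2\ell+2)\text{-IS}}$ and $\Prob{K_n^2\text{ is not }(2\ell+2)\text{-II}}$. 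The other items run the same way with a different $k$: item~4 ($\theta=2\ell+1$, $k=\ell+1$) gives $\Prob{R_{\ell+1}\ge1\cup C_{\ell+1}\ge1}\sim2q$; item~3 ($\theta=2\ell$, $k=\ell$) gives $\Prob{R_\ell=0\cup C_\ell=0}\sim2n^{-a^\ell/\ell!}$, via $\Prob{R_\ell=0}=(1-\pi_\ell)^n\sim n^{-a^\ell/\ell!}$ (which needs $\E R_\ell=\frac{a^\ell}{\ell!}\log n+o(1)$, true since $\pi_\ell=(np)^\ell/\ell!\,(1+\cO(np))$); item~2 ($\theta=2\ell-1$, $k=\ell$, $\ell\ge2$) gives $\Prob{R_\ell=0,\ C_\ell=0}\sim n^{-2a^\ell/\ell!}$, using in addition that $\{R_\ell=0\}$ and $\{C_\ell=0\}$ are asymptotically independent — which follows from the estimate that row $i$ and column $j$ both carry $\ge\ell$ cells with probability $\sim\pi_\ell^2$, together with a Chen–Stein/second-moment argument as in \cite{GHPS}; and item~1 ($\theta=2\ell-2$, $k=\ell-1$) gives $\Prob{K_n^2\text{ not }(2\ell-2)\text{-IS}}\le2(1-\pi_{\ell-1})^n+\Prob{\cB}=\cO(n^{-L})$ for all $L$. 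The cases $\ell=1$ of items~2 and~3 are the degenerate thresholds $\theta\in\{1,2\}$ and are handled directly, as in \cite{GS}.

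The step I expect to require most care is the deterministic reduction when $\theta\in\{2\ell+1,2\ell+2\}$, precisely because the helper lines there carry only $\ge\ell$ cells and number only $\Theta(\log n)$, so the event that too few of them are present is merely polynomially — not super-polynomially — rare. The point is that this deficiency need only be shown negligible \emph{in conjunction with} $\{R_k\ge1\}$ (and, for even $\theta$, $\{C_k\ge1\}$): conditioning on those rare events already costs a factor $q$ (respectively $q^2$), while the conditional probability of having fewer than a fixed number of helper lines is at most $(\log n)^{\cO(1)}n^{-a^\ell/\ell!}=o(1)$, so the product is $o(q)$ (respectively $o(q^2)$). Establishing the precise deterministic statement — in particular that the cascade does complete once the helper lines are present in typical numbers — together with this conditional-rarity accounting is the crux; the moment computations above are then straightforward.
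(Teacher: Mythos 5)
The paper proves this lemma by citation only — ``Statements 1 through 4 are Lemmas 3.6, 3.3, 3.4 and 3.5 in [GS], and the proof of the last statement is similar to the proof of the 4th'' — and your sketch reproduces precisely the strategy of those cited lemmas (and of [GHPS]): reduce IS/II to events on the row/column counts $R_k,C_k$, identify the balanced $k=\lceil\theta/2\rceil$ split as dominant, push the degenerate alternatives into a negligible event $\cB$, and finish with Poisson/moment asymptotics for $\Prob{R_k=0}$, $\Prob{R_k\ge1}$, and their joint versions. So the approach is essentially the same; your remarks correctly flag the deterministic-cascade reduction (especially at $\theta\in\{2\ell+1,2\ell+2\}$ where helper lines number only $\Theta(\log n)$) and the near-independence of $\{R_\ell=0\}$ and $\{C_\ell=0\}$ as the two places requiring the real work, which is exactly what is carried out in [GS, GHPS]. (One small inaccuracy: your parenthetical that $\Prob{\cB}=\cO(n^{-L})$ for all $L$ when $\theta\le 2\ell$ overstates it — e.g.\ the contribution from $C_{\ell+1}\ge1$ is only polynomially small — but what the argument actually uses, negligibility of $\cB$ relative to the target rate, does hold.)
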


\begin{proof}
Statements 1 through 4 are Lemmas 3.6, 3.3, 3.4 and 3.5 in~\cite{GS}, and the proof of the last statement is similar to the proof of the 4th, so we omit it.
\end{proof}

The next lemma compares probabilities for inertness and 
internal inertness for $\ell\ge 2$. 

\begin{lemma}\label{inertness-even}
Assume $\theta=2\ell+2$, $\ell\ge 2$, 
and $p$ is given by~\eqref{form-p}. If $\frac{a^\ell}{\ell!} < 1$, then for any $x\in \bZ^2$
\begin{equation*}
\begin{aligned}
\probsub{p}{\text{$\{x\}\times K_n^2$ is $(\theta-2)$-inert}} &\sim \probsub{p}{\text{$K_n^2$ is $(\theta-2)$-II}} \sim 2n^{-a^\ell/\ell!},\\
\probsub{p}{\text{$\{x\}\times K_n^2$ is not $(\theta-1)$-inert}} &\sim \probsub{p}{\text{$K_n^2$ is not $(\theta-1)$-II}} \sim \frac {2a^{\ell+1}}{(\ell+1)!}\cdot \frac{(\log n)^{1+1/\ell}}{n^{1/\ell}},\\
\probsub{p}{\text{$\{x\}\times K_n^2$ is not $\theta$-inert}} &\sim \probsub{p}{\text{$K_n^2$ is not $\theta$-II}} \sim \left(\frac {a^{\ell+1}}{(\ell+1)!}\right)^2\cdot \frac{(\log n)^{2+2/\ell}}{n^{2/\ell}}.
\end{aligned}
\end{equation*}
\end{lemma}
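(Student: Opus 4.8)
The plan is to compare each ``inert'' event with the corresponding ``internally inert'' event, whose asymptotics are already supplied by Lemma~\ref{internal-results-even}(3)--(5), and to show that the two differ by a negligible amount. The basic observation is a one-sided containment valid for every threshold $r$ and every $x\in\bZ^2$: a site $v=(x,v')\in\{x\}\times K_n^2$ has $2(n-1)$ neighbors inside the copy, namely $(x,v'')$ with $v''\sim v'$ in $K_n^2$, and $4$ further neighbors $(x',v')$ with $x'\sim x$ in $\bZ^2$. Since the restricted dynamics only ever adds sites, being $r$-II is equivalent to not moving at the first step, that is, to no empty site of the copy having $\ge r$ occupied $K_n^2$-neighbors in $\omega_0$; being $r$-inert is the same statement with ``$K_n^2$-neighbors'' replaced by ``all neighbors in $\bZ^2\times K_n^2$''. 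As a site has at least as many occupied neighbors overall as within the copy, we get $r$-inert $\Rightarrow$ $r$-II, and dually ``not $r$-II'' $\Rightarrow$ ``not $r$-inert''; hence $\Prob{\text{$\{x\}\times K_n^2$ is $r$-inert}}\le\Prob{\text{$K_n^2$ is $r$-II}}$ and $\Prob{\text{$\{x\}\times K_n^2$ is not $r$-inert}}\ge\Prob{\text{$K_n^2$ is not $r$-II}}$.

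It remains to estimate the probability of the symmetric difference $E_r=\{\{x\}\times K_n^2\text{ is $r$-II but not $r$-inert}\}$. On $E_r$ some empty site $v=(x,v')$ of the copy has fewer than $r$ occupied $K_n^2$-neighbors but at least $r$ occupied neighbors overall, so for some $1\le k\le 4$ it has exactly $k$ occupied $\bZ^2$-neighbors and at least $r-k$ occupied $K_n^2$-neighbors. Since $\omega_0$ is a product measure, for a fixed $v$ the states of its four $\bZ^2$-neighbors and of its $2(n-1)$ $K_n^2$-neighbors are independent, so this probability is $\cO\big(p^k(np)^{r-k}\big)$; a union bound over the $n^2$ sites $v$ and over $1\le k\le 4$, in which the $k=1$ term dominates because consecutive terms differ by a factor $\cO(1/n)$ (as $np\to 0$), yields
\[
\Prob{E_r}=\cO\big(n^2\,p\,(np)^{r-1}\big)=\cO\big(a^{r}(\log n)^{r/\ell}\,n^{1-r/\ell}\big)
\]
after substituting \eqref{form-p}.

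Now one plugs in $r\in\{\theta-2,\theta-1,\theta\}=\{2\ell,2\ell+1,2\ell+2\}$ and compares with Lemma~\ref{internal-results-even}. For $r=\theta-2$ this gives $\Prob{E_r}=\cO\big(n^{-1}(\log n)^2\big)=o\big(n^{-a^\ell/\ell!}\big)$, precisely because $a^\ell/\ell!<1$; together with $\{\text{$r$-inert}\}\subseteq\{\text{$r$-II}\}$ and $\Prob{\text{$K_n^2$ is $r$-II}}\sim 2n^{-a^\ell/\ell!}$ this gives the first line. For $r=\theta-1$ and $r=\theta$ the bound on $\Prob{E_r}$ is $\cO\big(n^{-1-1/\ell}(\log n)^{2+1/\ell}\big)$ and $\cO\big(n^{-1-2/\ell}(\log n)^{2+2/\ell}\big)$ respectively, each of which is $o$ of the matching ``not $r$-II'' probability from Lemma~\ref{internal-results-even}(4)--(5); combined with the inclusions $\{\text{not $r$-II}\}\subseteq\{\text{not $r$-inert}\}\subseteq\{\text{not $r$-II}\}\cup E_r$ this gives the second and third lines.

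The argument is routine once the containments are in place, so there is no serious obstacle; the one point at which a hypothesis is used in an essential way is the first line, where the outside-help correction is only $\cO(n^{-1}\,\mathrm{polylog})$ and must be absorbed by the genuinely subpolynomial quantity $2n^{-a^\ell/\ell!}$, which is possible exactly when $a^\ell/\ell!<1$. Accordingly, the main thing to be careful about is verifying that $E_r$ is captured by the events ``some empty $v$ has $k\ge 1$ occupied $\bZ^2$-neighbors and at least $r-k$ occupied $K_n^2$-neighbors'' and that, after inserting \eqref{form-p}, the resulting exponents $n^{1-r/\ell}$ indeed beat the target asymptotics in all three cases.
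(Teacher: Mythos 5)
Your proposal is correct and takes essentially the same approach as the paper: both bound the symmetric difference between the inert and internally-inert events via a union bound over ``outside-help'' configurations (an empty site with $k\ge 1$ occupied $\bZ^2$-neighbors and $\ge r-k$ occupied $K_n^2$-neighbors), giving a correction of order $n^{-1-(2-r)/\ell+o(1)}$, and then invoke Lemma~\ref{internal-results-even} together with the hypothesis $a^\ell/\ell!<1$. Your write-up spells out the one-sided containments and the $k=1$ dominance more explicitly than the paper does, but the argument is the same.
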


\begin{proof} Fix an $r=0,1,2$. Then the probability that any fixed copy of $K_n^2$ has a site with exactly $k\ge 1$ occupied $\bZ^2$-neighbors and at least $\theta-r-k$ occupied $K_n^2$-neighbors is
$$
\cO(n^2 p^k (np)^{\theta-r-k}) = \cO(n^{-k - (2-r)/\ell} (\log n)^{(2\ell+2-r)/\ell}).
$$
Therefore, 
\begin{equation*}
\probsub{p}{\text{$\{x\}\times K_n^2$ is $(\theta-r)$-II but not $(\theta-r)$-inert}} = n^{-1 - (2-r)/\ell+o(1)}.
\end{equation*}
The rest follows from Lemma~\ref{internal-results-even} parts 3, 4 and 5 and the assumptions put on $a$ and $\ell$.
\end{proof}

We need a slightly more involved argument for $\ell=1$.

\begin{lemma}\label{theta-4-2-inert}
Assume $\theta=4$ and $
p=a\frac{\log n}{n^2}.
$
We have, 
$$
\probsub{p}{\{x\}\times K_n^2\text{ is $2$-inert}}\ge 
a\frac{\log n}{n^a}(1-o(1))
$$
\end{lemma}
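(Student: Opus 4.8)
The plan is to obtain the bound by exhibiting one explicit family of favorable configurations and estimating its probability. Since the relevant threshold is $r=\theta-2=2$, the copy $\{x\}\times K_n^2$ is $2$-inert precisely when no initially empty vertex $v=(x,w)\in\{x\}\times K_n^2$ has two or more $\omega_0$-occupied neighbors, where the neighbors of $v$ are its $2(n-1)$ neighbors inside $\{x\}\times K_n^2$ (the vertices $(x,w')$ with $w'\sim w$ in $K_n^2$) together with its four neighbors $(x',w)$, $x'$ a lattice-neighbor of $x$, in the adjacent copies. For $w\in K_n^2$ write $\mathrm{cr}(w)$ for the set of $K_n^2$-coordinates lying in the same row or column as $w$. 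I would bound $\probsub{p}{\{x\}\times K_n^2\text{ is }2\text{-inert}}$ from below by $\probsub{p}{\cE}$, where $\cE$ is the event that: (i) $\{x\}\times K_n^2$ has exactly one occupied vertex, at the (random) position $(x,W)$; (ii) $(x',w)$ is empty for every $w\in\mathrm{cr}(W)$ and every lattice-neighbor $x'$ of $x$; and (iii) for every $w\notin\mathrm{cr}(W)$, at most one of the four vertices $(x',w)$, $x'\sim x$, is occupied. A one-line check gives $\cE\subseteq\{2\text{-inert}\}$: under $\cE$, an empty vertex $(x,w)$ with $w\in\mathrm{cr}(W)$ has exactly one occupied internal neighbor (namely $(x,W)$) and, by (ii), no occupied external neighbor, while an empty vertex $(x,w)$ with $w\notin\mathrm{cr}(W)$ has no occupied internal neighbor and, by (iii), at most one occupied external one; in neither case is the threshold $2$ reached.

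The decisive structural point is that (i) depends only on $\omega_0$ restricted to $\{x\}\times K_n^2$, whereas (ii) and (iii) depend only on $\omega_0$ restricted to the four adjacent copies; since $\omega_0$ is a product measure these are independent. Conditioning on $W$ and invoking the vertex-transitivity of $K_n^2$ (which makes the conditional probability of (ii)$\wedge$(iii) the same for every value of $W$), I get $\probsub{p}{\cE}=\probsub{p}{\text{exactly one occupied vertex in }\{x\}\times K_n^2}\cdot q$, where $q$ is that common conditional probability. The first factor equals $n^2p(1-p)^{n^2-1}$, and the elementary expansion $\log\bigl((1-p)^{n^2-1}\bigr)=-a\log n+o(1)$, valid because $p=a\log n/n^2$, gives $n^2p(1-p)^{n^2-1}=a(\log n)\,n^{-a}(1+o(1))$.

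It then remains to show $q=1-o(1)$, which is a routine two-term union bound: the row and column of $W$ together contain $2n-1$ vertices of $K_n^2$, so (ii) constrains at most $8n$ vertices of the adjacent copies and hence fails with probability at most $8np=\cO\bigl((\log n)/n\bigr)$; and, summing over the fewer than $n^2$ coordinates $w\notin\mathrm{cr}(W)$, (iii) fails with probability at most $n^2\binom{4}{2}p^2=\cO\bigl((\log n)^2/n^2\bigr)$. Hence $q\ge 1-o(1)$, and consequently $\probsub{p}{\{x\}\times K_n^2\text{ is }2\text{-inert}}\ge\probsub{p}{\cE}\ge a(\log n)\,n^{-a}(1-o(1))$, as claimed. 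The one point that needs care — the reason this is a touch more delicate than the purely internal estimate in Lemma~\ref{internal-results-even} — is that a crude union bound over all the ways outside occupation could spoil inertness contributes an error of order $(\log n)/n$, which dominates the target $a(\log n)n^{-a}$ as soon as $a>1$; it is by conditioning on $W$ first and exploiting the independence of the internal and external randomness that one keeps this error multiplicative, of the form $(1-o(1))$, rather than additive.
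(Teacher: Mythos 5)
Your proof is correct. The inclusion $\cE\subset\{\text{$2$-inert}\}$ is verified correctly, the factorization $\probsub{p}{\cE}=\probsub{p}{\text{exactly one occupied vertex}}\cdot q$ is justified by the independence of the interior and exterior portions of the product measure together with conditioning on $W$, and both factors are estimated correctly: $n^2p(1-p)^{n^2-1}=a(\log n)n^{-a}(1+o(1))$ and $q\ge 1-8np-6n^2p^2=1-o(1)$.

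Your route is genuinely different from the paper's. The paper does not construct an explicit favorable sub-event; instead it writes $\{\text{not $2$-inert}\}\subset G_1\cup G_2$ where $G_1$ is the (increasing) event that the plane contains at least two occupied vertices and $G_2$ is the (increasing) event that some vertex in the plane has both an occupied $\bZ^2$-neighbor and an occupied Hamming neighbor, and then applies the FKG inequality to get $\probsub{p}{\text{$2$-inert}}\ge\probsub{p}{G_1^c}(1-\probsub{p}{G_2})$, after which it evaluates $\probsub{p}{G_1^c}\sim a(\log n)n^{-a}$ and $\probsub{p}{G_2}=o(1)$. Both arguments are engineered around the same delicate point you correctly flag at the end — the outside-help error must enter multiplicatively, since an additive $O(\log n/n)$ error would swamp the target for $a>1$. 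The paper gets the multiplicative form via positive correlation of increasing events; you get it via exact independence after conditioning on $W$. Your version is more elementary (no FKG), at the cost of having to identify and verify a specific configuration $\cE$; the paper's is shorter once one is fluent with FKG. One small observation: your $\cE$ restricts to exactly one occupied vertex, dropping the zero-occupied case that the paper's $G_1^c$ also allows, but that contributes only $(1-p)^{n^2}\sim n^{-a}$, which is negligible against $a(\log n)n^{-a}$, so the asymptotics are unaffected.
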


\begin{proof}
Let $G_1$ be the event that $\{x\}\times K_n^2$ contains 
at least two occupied vertices, and $G_2$ the event that 
a point in $\{x\}\times K_n^2$ has both an occupied 
$\bZ^2$-neighbor and an occupied $K_n$-neighbor. Note that 
these are increasing events and that 
$$
\{\{x\}\times K_n^2\text{ is not $2$-inert}\}\subset G_1\cup G_2.
$$
Therefore, by FKG inequality, 
$$
\probsub{p}{\{x\}\times K_n^2\text{ is not $2$-inert}}\le 
\probsub{p}{G_1}+\probsub{p}{G_2}-\probsub{p}{G_1}\probsub{p}{G_2},
$$
and so
$$
\probsub{p}{\{x\}\times K_n^2\text{ is $2$-inert}}\ge 
\probsub{p}{G_1^c}-\probsub{p}{G_1^c}\probsub{p}{G_2}.
$$
Finally, we use that $\probsub{p}{G_1^c}\sim a\frac{\log n}{n^a}$ 
and $\probsub{p}{G_2}\le 8n^3p^2=\cO(\log n/n)$.  
\end{proof}

We proceed with the analogous results for odd $\theta$, 
which mostly follow from \cite{GHPS}, and we again 
omit the detailed proofs.

\begin{lemma}\label{internal-results-odd}
Assume that $p$ is given by (\ref{form-p-odd}). 
\begin{enumerate}
\item If $\ell\ge 1$, then 
$$
\probsub{p}{\text{$K_n^2$ is not $(2\ell-2)$-IS}}=\cO(n^{-L}),
$$
for any constant $L>0$.
\item 
If $\ell\ge 2$, then 
$$
\probsub{p}{\text{$K_n^2$ is not $(2\ell-1)$-IS}}\sim \probsub{p}{\text{$K_n^2$ is  $(2\ell-1)$-II}}\sim \exp\left[-\frac{2 a^\ell}{\ell!}\right].
$$
\item If $\ell\ge 2$, then 
$$
\probsub{p}{\text{$K_n^2$ is  $(2\ell)$-IS}}\sim 
\probsub{p}{\text{$K_n^2$ is not $(2\ell)$-II}}\sim \left(1 - e^{-a^{\ell}\slash{\ell!}}\right)^2.
$$
\item 
If $\ell\ge 1$, then 
$$
\probsub{p}{\text{$K_n^2$ is not $(2\ell+1)$-II}}\sim 
2\cdot \frac{a^{\ell+1}}{(\ell+1)!} \cdot \left(1 - e^{-a^{\ell}\slash{\ell!}}\right)\cdot \frac{1}{n^{1/\ell}},
$$
and
$$
\probsub{p}{\text{$K_n^2$ is $(2\ell+1)$-IS}}\sim 2\cdot \frac{a^{\ell+1}}{(\ell+1)!} \cdot \left(1 - e^{-a^{\ell}\slash{\ell!}}\right)^2 \cdot \frac{1}{n^{1/\ell}}.
$$
\end{enumerate}
\end{lemma}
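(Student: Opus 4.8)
The argument follows the template of the even-threshold case (Lemma~\ref{internal-results-even}, established in \cite{GS}) and of \cite{GHPS}, the essential change being that under $p=a/n^{1+1/\ell}$ we have $np=an^{-1/\ell}\to 0$, so that the expected number of rows of $K_n^2$ with at least $\ell$ initially occupied sites tends to the constant $\lambda:=a^\ell/\ell!$ (rather than diverging like $\log n$ as in the even regime), while the expected number of $(\ell+1)$-heavy rows is $\mu:=\tfrac{a^{\ell+1}}{(\ell+1)!}n^{-1/\ell}\to 0$ and the expected number of $k$-heavy rows with $k\le\ell-1$ is $\tfrac{a^k}{k!}n^{1-k/\ell}\to\infty$; the same holds for columns. (Call a row or column \emph{$k$-heavy} if it contains at least $k$ initially occupied sites.) The restricted threshold-$r$ dynamics on $K_n^2$ is governed, as in \cite{GHPS}, by these counts: a cell becomes occupied as soon as the occupation counts of its row and column sum to at least $r$, a line with at least $r$ occupied cells becomes full, a full line raises the occupation count of every transverse line by one, and iterating a bounded number of rounds either fills $K_n^2$ or reaches a stable configuration determined by the multiset of heavy lines. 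Two deterministic facts are needed: $K_n^2$ is $r$-II iff no initially empty cell already has $\ge r$ occupied neighbors; and, in this regime, $K_n^2$ is $r$-IS iff it contains a ``nucleus'' of heavy lines that ignites the bounded, multi-round bootstrap-within-bootstrap cascade carried by the abundant light lines.

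The nucleus conditions --- whose sufficiency is verified by running the cascade and whose necessity comes from analyzing a minimal internally spanning set --- are: for $r=2\ell-2$, at least $\ell-1$ many $(\ell-1)$-heavy rows and $\ell-1$ many $(\ell-1)$-heavy columns; for $r=2\ell-1$, at least one $\ell$-heavy line; for $r=2\ell$, an $\ell$-heavy row and an $\ell$-heavy column; for $r=2\ell+1$, an $(\ell+1)$-heavy row together with a \emph{distinct} second $\ell$-heavy row and an $\ell$-heavy column (or the row/column mirror image). The probabilistic input is that, since $np\to 0$, the counts of $k$-heavy rows and of $k$-heavy columns are asymptotically independent, the row (resp.\ column) counts converging to Poisson variables with the means above --- the only coupling, through the $\cO(1)$ cells lying in both a heavy row and a heavy column, being negligible and handled by a coupling as in \cite{GS}. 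Part~1 then follows because the $(\ell-1)$-heavy lines of each orientation fail to reach $\ell-1$ only with probability $e^{-\Theta(n^{1/\ell})}=\cO(n^{-L})$ (for $\ell=1$ the claim, at threshold $0$, is trivial). Part~2: $K_n^2$ is not $(2\ell-1)$-IS iff it is $(2\ell-1)$-II iff it has no $\ell$-heavy line, of probability $\to e^{-\lambda}\cdot e^{-\lambda}=e^{-2\lambda}$. Part~3: $K_n^2$ is $(2\ell)$-IS --- and also, up to the $\cO(\mu)=o(1)$ event that some $(\ell+1)$-heavy line meets an $(\ell-1)$-heavy line, not $(2\ell)$-II --- iff it has both an $\ell$-heavy row and an $\ell$-heavy column, of probability $\to(1-e^{-\lambda})^2$. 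Part~4: ``not $(2\ell+1)$-II'' means some empty cell has $\ge 2\ell+1$ occupied neighbors, whose dominant realization is an $(\ell+1)$-heavy line crossing an $\ell$-heavy transverse line at an empty cell; as this has probability $\sim\mu(1-e^{-\lambda})$ for each orientation and all other realizations are $o(\mu)$, we obtain $\sim 2\mu(1-e^{-\lambda})$. For ``$(2\ell+1)$-IS'' the requirement of a distinct same-orientation $\ell$-heavy line contributes the second factor $(1-e^{-\lambda})$, giving $\sim 2\mu(1-e^{-\lambda})^2$; the lower bound comes from exhibiting the nucleus (and checking the cascade does fill $K_n^2$), the upper bound from the deterministic characterization together with the fact that all competing mechanisms (two rare heavy lines, an $(\ell+2)$-heavy line, and so on) have probability $o(\mu)$.

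The main obstacle is the deterministic part: pinning down the precise nucleus criteria, and in particular, for $r=2\ell+1$, the necessity of a \emph{distinct} second $\ell$-heavy line of the same orientation --- without it the cascade stalls with only a single full line --- along with the matching claim that no cheaper configuration spans. Establishing this requires following the several rounds of the cascade and carefully accounting for overlaps: a heavy row and a heavy column sharing their common cell, a light line becoming heavier because a newly full transverse line passes through it, and so forth. A secondary point is the passage from ``not $r$-II'' to ``$r$-IS'': one must check that the intermediate event ``not $r$-IS but not $r$-II'' (some cell flips, yet $K_n^2$ is not filled) has probability of smaller order than the claimed asymptotics, which again reduces to the Poisson estimates on heavy-line counts. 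Finally, the cases $\ell=1$ should be done by hand (compare Lemma~\ref{theta-4-2-inert}), since then the ``abundant'' light lines number only $\cO(1)$, so both the cascade analysis and the independence estimates need separate, more explicit arguments.
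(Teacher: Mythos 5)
The paper does not actually prove this lemma from scratch: it cites Theorem~2.1 of [GHPS] for parts~2 and~3, the argument of Lemma~3.6 of [GS] for part~1, and explicitly \emph{omits} the proof of part~4, remarking that it is ``similar to the proof of parts~2 and~3'' and that only the weaker facts (the two probabilities are positive and tend to $0$) are needed in Section~5. Your heavy-line and nucleus-plus-cascade analysis is precisely the mechanism behind those cited results for the Hamming plane, so you are taking essentially the same route, just filling in what the paper defers to the references; in particular your observation that $(2\ell+1)$-spanning requires a \emph{second} $\ell$-heavy line of the same orientation as the $(\ell+1)$-heavy one is the key point the paper leaves implicit, and your Poisson asymptotics for the $\ell$-heavy and $(\ell+1)$-heavy line counts match the stated constants. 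Two minor remarks: for $r=2\ell-2$ a single $(\ell-1)$-heavy row together with a single $(\ell-1)$-heavy column already ignites the cascade (one does not need $\ell-1$ of each orientation), although your stronger requirement still gives the $\cO(n^{-L})$ bound since such lines number $\Theta(n^{1/\ell})$; and the $\ell=1$ case is less delicate than you fear, because the ``abundant'' $(\ell-1)$-heavy lines are then the $0$-heavy lines, i.e.\ all $n$ of them, so the cascade argument goes through verbatim.
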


\begin{proof}
Parts 2 and 3 follow from Theorem 2.1 in \cite{GHPS}. Part 1 is 
proved in the same fashion as Lemma 3.6 in \cite{GS}. The proof of part 4 is similar to the proof of parts 2 and 3 and is 
omitted; in fact, we only need in our arguments in Section 5 that the two probabilities
are positive for all $n$ and go to $0$ as $n\to\infty$, which is 
very easy to show. 
\end{proof}


We conclude with an analogue of Lemma~\ref{inertness-even}.

\begin{lemma}\label{inertness-odd}
Assume that  $\theta = 2\ell+1$, $\ell\ge 1$, and that 
$p$ is given by~\eqref{form-p-odd}. Fix an 
$x\in \bZ^2$. Then, for $\ell\ge 2$, 
\begin{equation*}
\probsub{p}{\text{$\{x\}\times K_n^2$ is 
$(\theta-2)$-II but not $(\theta-2)$-inert}}=\cO(n^{-1})
\end{equation*}
and, for $\ell\ge 1$,
\begin{equation*}
\begin{aligned}
&\probsub{p}{\text{$\{x\}\times K_n^2$ is 
$(\theta-1)$-II but not $(\theta-1)$-inert}}=\cO(n^{-1}),\\
&\probsub{p}{\text{$\{x\}\times K_n^2$ is 
$\theta$-II but not $\theta$-inert}}=\cO(n^{-1-1/\ell}).\\
\end{aligned}
\end{equation*}
\end{lemma}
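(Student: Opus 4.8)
The plan is to rerun the first-moment argument of Lemma~\ref{inertness-even}, now with $\theta=2\ell+1$ and $p$ given by~\eqref{form-p-odd}; no new idea is needed.

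The one structural observation is this: for any threshold $r$, $\{x\}\times K_n^2$ is $r$-II if and only if no non-occupied site of $\{x\}\times K_n^2$ has $\ge r$ occupied $K_n^2$-neighbors in $\omega_0$ --- indeed, if the restricted dynamics activates no site at the first step, then it never activates any site. Hence if $\{x\}\times K_n^2$ is $r$-II but not $r$-inert, then the unrestricted threshold-$r$ dynamics activates some non-occupied site $v\in\{x\}\times K_n^2$ at the first step, so $v$ has $\ge r$ occupied neighbors in $G$ but $\le r-1$ occupied $K_n^2$-neighbors, and therefore $v$ has at least one occupied $\bZ^2$-neighbor. Writing $k\in\{1,2,3,4\}$ for the number of occupied $\bZ^2$-neighbors of $v$, the site $v$ then has $\ge r-k$ occupied $K_n^2$-neighbors.

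Next I would estimate, for a fixed non-occupied $v$, the probability that $v$ has exactly $k$ occupied $\bZ^2$-neighbors and $\ge r-k$ occupied $K_n^2$-neighbors. These two conditions involve disjoint sets of sites --- the four adjacent copies and $\{x\}\times K_n^2$ itself --- which are independent under the product measure $\omega_0$, so the probability factors and is $\cO\bigl(p^{k}\bigr)\cdot\cO\bigl((np)^{r-k}\bigr)$, using $\binom{4}{k}=\cO(1)$ and $\binom{2(n-1)}{r-k}=\cO(n^{r-k})$. Summing over the $n^2$ choices of $v$ and over $k\ge 1$,
\begin{equation*}
\probsub{p}{\text{$\{x\}\times K_n^2$ is $r$-II but not $r$-inert}}=\cO\!\left(\sum_{k\ge 1} n^{2}\,p^{k}(np)^{r-k}\right)=\cO\!\left(n^{2}\,p\,(np)^{r-1}\right),
\end{equation*}
since consecutive terms of the sum differ by the factor $p/(np)=1/n$, so the $k=1$ term dominates.

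It remains to substitute $p=a\,n^{-1-1/\ell}$ (hence $np=a\,n^{-1/\ell}$) and take $r\in\{\theta,\theta-1,\theta-2\}=\{2\ell+1,2\ell,2\ell-1\}$: a routine simplification of $n^{2}p(np)^{r-1}$ then yields the three estimates claimed in the lemma, the $r=\theta-2$ case being the one that needs $\ell\ge2$ (for $\ell=1$ one has $\theta-2=1$, and a threshold-$1$ copy is activated by any single occupied neighbor, so no such estimate can hold). There is no genuinely hard step: once the reduction in the second paragraph is in hand, everything is a union bound dominated by its first term, and --- in contrast with the even-threshold situation (compare Lemma~\ref{theta-4-2-inert}) --- the two asserted $\ell=1$ instances, $r=\theta-1$ and $r=\theta$, follow from the same computation and need no special argument. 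The only points warranting care are that the $k\ge2$ contributions are of strictly smaller order (immediate since $np\to0$) and that the factorization of probabilities is legitimate (immediate since $\omega_0$ is a product measure and the relevant site sets are disjoint).
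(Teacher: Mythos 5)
Your argument is the paper's proof: both use the first-moment bound $\cO\bigl(\sum_{k\ge1}n^2p^k(np)^{r-k}\bigr)=\cO\bigl(n^2p(np)^{r-1}\bigr)$ over sites $v$ with exactly $k\ge1$ occupied $\bZ^2$-neighbors and at least $r-k$ occupied $K_n^2$-neighbors, and your second paragraph simply makes explicit the reduction that the paper states in a single sentence. One arithmetic caveat: substituting $p=an^{-1-1/\ell}$ gives $n^2p(np)^{r-1}=\Theta(n^{1-r/\ell})$, which for $r=\theta-2=2\ell-1$ equals $n^{-1+1/\ell}$, not the $n^{-1}$ claimed in the lemma's first display. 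This appears to be a misprint in the lemma --- the paper's own proof produces the same exponent $-1+1/\ell$ (via $\cO(n^{-k+(r'-1)/\ell})$ with $r'=2$, $k=1$), and the only place the bound is used, the proof of Lemma~\ref{ac-sub}, explicitly quotes the rate $n^{-1+1/\ell+o(1)}$ --- so your computation is right, but you should not claim without comment that it ``yields the three estimates claimed in the lemma,'' since one of those three is misstated and what you actually obtain is the weaker (and correct) $\cO(n^{-1+1/\ell})$.
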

\begin{proof} Observe that, for
$r\in \{0, 1, 2\}$, 
the probability that any fixed copy of $K_n^2$ has a site with exactly $k\ge 1$ occupied $\bZ^2$-neighbors and at least $\theta-r-k$ occupied $K_n^2$-neighbors is
$$
\cO(n^2 p^k (np)^{\theta-r-k}) = \cO(n^{-k + (r-1)/\ell}), 
$$
and the desired estimates follow.  
\end{proof}

\subsection{Heterogeneous bootstrap percolation}

We now introduce a comparison bootstrap dynamics $\xi_t$
on $\bZ^2$, which is a generalization of polluted 
bootstrap percolation introduced in \cite{GM}. 
We assume that $\xi_t\in \{0,1,2,3,4,5\}^{\bZ^2}$, $t\in \bZ_+$, and that $\xi_0$ is given. 
The rules mandate that a state can only change to $0$ by contact with sufficient number of 
$0$s. More precisely, if $Z_t(x)$ is the cardinality of $\{y: y\sim x\text{ and }\xi_t(y)=0\}$, where $x \sim y$ means that $x$ and $y$ are nearest neighbors in $\bZ^2$,
then
$$
\xi_{t+1}(x)=
\begin{cases}
0 & Z_t(x)\ge \xi_t(x)\\
\xi_t(x) &\text{otherwise.}
\end{cases}
$$
If $\xi_0\in \{0,2\}^{\bZ^2}$, this is the usual threshold-2 bootstrap percolation. Adding $1$s adds
sites which need to be ``switched on'' by neighboring $0$s. Finally, $3$s, $4$s and $5s$ act like 
``obstacles,'' which prevent the spread of $0$s at sufficient density.

The next two lemmas establish upper and lower-bounding couplings between $\xi_t$ and $\omega_t$. Their proofs are similar, so we only provide details for the second one.
\begin{lemma}\label{omega lower bound}
Assume $\xi_0(x)=0$ whenever the Hamming plane $\{x\}\times K_n^2$ is $\theta$-IS; $\xi_0(x)=k\in\{1,2,3, 4\}$ whenever $\{x\}\times K_n^2$ is $(\theta-k)$-IS, 
but is not $(\theta-k+1)$-IS; and that $\xi_0(x)=5$ if $\{x\}\times K_n^2$ is not $(\theta-4)$-IS.  Then 
$$
\bigcup\{\{x\}\times K_n^2 : \xi_\infty(x)=0\}\subset\omega_\infty.
$$ 
\end{lemma}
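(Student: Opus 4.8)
The plan is to reduce the lemma to a single deterministic claim proved by induction on the time parameter $t$ of the $\xi$-dynamics, namely: for every $x\in\bZ^2$ and every $t\ge 0$, if $\xi_t(x)=0$ then $\{x\}\times K_n^2\subset\omega_\infty$. Granting this, the lemma is immediate: a state of the $\xi$-dynamics can only ever change to $0$, so $\xi_\infty(x)=0$ means $\xi_t(x)=0$ for some finite $t$, and then we take the union over all such $x$.

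Before the induction I would isolate the deterministic \emph{external-help} sub-claim that carries the whole argument: if $\{x\}\times K_n^2$ is $r$-IS and at least $\theta-r$ of the four $\bZ^2$-neighboring copies $\{y\}\times K_n^2$, $y\sim x$, are contained in $\omega_\infty$, then $\{x\}\times K_n^2\subset\omega_\infty$. (When $r\le 0$ this is trivial, since threshold-$\le 0$ bootstrap occupies everything in one step.) To prove it, observe that each of the $\theta-r$ relevant neighboring copies is finite and eventually fully occupied, so there is a finite time $s$ at which all of them are entirely occupied in $\omega_s$; from time $s$ onward every vertex $(x,u)\in\{x\}\times K_n^2$ has at least $\theta-r$ permanently occupied neighbors lying outside $\{x\}\times K_n^2$. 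Hence running threshold-$r$ bootstrap percolation on the induced subgraph $\{x\}\times K_n^2$ from the initial set $\omega_s\cap(\{x\}\times K_n^2)$ is dominated by the true dynamics restarted at time $s$: an easy induction on the number of steps shows that whenever a vertex gains $r$ occupied neighbors within $\{x\}\times K_n^2$ in the restricted process, it then has $\ge\theta$ occupied neighbors in $G$ and so becomes occupied in $\omega$. Finally, since $\omega_s\cap(\{x\}\times K_n^2)\supseteq\omega_0\cap(\{x\}\times K_n^2)$, monotonicity of bootstrap percolation in the initial configuration together with the hypothesis that $\{x\}\times K_n^2$ is $r$-IS (defined via the restricted threshold-$r$ dynamics started from $\omega_0$) shows the restricted process fills $\{x\}\times K_n^2$; therefore so does $\omega$.

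With the sub-claim available, the induction is routine. The base case $t=0$: $\xi_0(x)=0$ says $\{x\}\times K_n^2$ is $\theta$-IS, and since the restricted threshold-$\theta$ dynamics is dominated by the true dynamics, $\{x\}\times K_n^2\subset\omega_\infty$ (this is the $r=\theta$ case, needing no external help). For the inductive step, assume the claim at time $t$ and suppose $\xi_{t+1}(x)=0$. If $\xi_t(x)=0$ we are done by the inductive hypothesis. Otherwise the $\xi$-rule forces $\xi_t(x)=k$ for some $k\in\{1,2,3,4\}$ with $Z_t(x)\ge k$, so at least $k$ neighbors $y\sim x$ have $\xi_t(y)=0$, and the inductive hypothesis gives $\{y\}\times K_n^2\subset\omega_\infty$ for each of them. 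Since a $\xi$-state only ever changes to $0$, the value $\xi_t(x)=k\ne 0$ forces $\xi_0(x)=k$, so $\{x\}\times K_n^2$ is $(\theta-k)$-IS. Applying the sub-claim with $r=\theta-k$ (and the at least $k=\theta-r$ occupied neighboring copies) yields $\{x\}\times K_n^2\subset\omega_\infty$, completing the induction.

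The one genuinely delicate point — the step I would be most careful with — is the time-offset coupling inside the sub-claim: the neighboring copies are not occupied at time $0$, and in fact they may have been filled with help from $\{x\}\times K_n^2$ itself, so one must pass to a later time $s$ and simultaneously invoke three monotonicity facts (monotonicity of $\omega_t$ in $t$, monotonicity of bootstrap percolation in the initial set, and the fact that restricting the dynamics to a subgraph only slows it down). None of these is deep, but keeping the quantifiers straight — and treating the degenerate case $r\le 0$ — is where the care lies; the remainder is bookkeeping.
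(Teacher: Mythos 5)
Your proof is correct; the paper omits the argument, saying only that it is ``similar'' to the proof of the companion upper-bound Lemma~\ref{omega upper bound}, and your induction on the time parameter of the $\xi$-dynamics is the intended fill-in. The genuine subtlety you flag --- that the lower-bound direction cannot be time-synchronized step for step the way the inertness-based upper bound can, because internal spanning of a Hamming plane takes many steps and the needed help from neighbouring planes may only be fully in place at a later time $s$ --- is real, and your external-help sub-claim, with the pass to time $s$ and the appeal to monotonicity in $t$, in the initial set, and under restriction to a subgraph, resolves it correctly.
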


\begin{lemma}\label{omega upper bound}
Assume $\xi_0(x)=0$ whenever the Hamming plane $\{x\}\times K_n^2$ is not $\theta$-inert; that $\xi_0(x)=k\in \{1,2,3,4\}$ whenever $\{x\}\times K_n^2$ is not $(\theta-k)$-inert,
but is $(\theta-k+1)$-inert; and that $\xi_0(x)=5$ if $\{x\}\times K_n^2$ is $(\theta-4)$-inert. Then 
$$
\omega_\infty\subset\bigcup\{\{x\}\times K_n^2: \xi_\infty(x)=0\}\cup \omega_0.
$$
\end{lemma}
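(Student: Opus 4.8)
The plan is to exhibit a single set that contains $\omega_0$ and is closed under the threshold-$\theta$ bootstrap rule on $G=\bZ^2\times K_n^2$, namely
\[
U \;:=\; \bigcup\{\{x\}\times K_n^2 : \xi_\infty(x)=0\}\;\cup\;\omega_0 .
\]
Granting that $U$ is closed --- i.e.\ that every vertex $v\notin U$ has strictly fewer than $\theta$ neighbors in $U$ --- the lemma follows by the usual monotonicity argument: $\omega_0\subseteq U$ by construction, and if $\omega_t\subseteq U$ then any vertex adjoined in passing to $\omega_{t+1}$ has at least $\theta$ neighbors in $\omega_t\subseteq U$, hence has at least $\theta$ neighbors in $U$, hence already lies in $U$ by closedness; thus $\omega_t\subseteq U$ for all $t$ and $\omega_\infty\subseteq U$, which is the claimed inclusion.

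First I would record two elementary features of the heterogeneous dynamics. Since the update rule sends each coordinate either to $0$ or to its present value, $\xi_t(x)\in\{0,\xi_0(x)\}$ for all $t$, so $S_t:=\{x:\xi_t(x)=0\}$ increases to $S_\infty=\{x:\xi_\infty(x)=0\}$, and $S_\infty$ is the smallest superset of $\{x:\xi_0(x)=0\}$ closed under adjoining a site $x$ as soon as at least $\xi_0(x)$ of its $\bZ^2$-neighbors belong to the set. Consequently, if $x\notin S_\infty$ then fewer than $\xi_0(x)$ of the (at most four) $\bZ^2$-neighbors of $x$ lie in $S_\infty$; moreover $x\notin S_\infty$ forces $\xi_0(x)\in\{1,2,3,4,5\}$, so writing $k:=\xi_0(x)$ we get that at most $k-1$ neighbors of $x$ lie in $S_\infty$ (the value $k=5$ being covered trivially, as then $k-1=4$).

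The core step is the closedness of $U$. Fix $v=(x,y)\notin U$. Then $v\notin\omega_0$, and since $v\in\{x\}\times K_n^2\not\subseteq U$ we have $x\notin S_\infty$ and $k:=\xi_0(x)\in\{1,2,3,4,5\}$. By the definition of the coupling, $\{x\}\times K_n^2$ is $(\theta-k+1)$-inert (read as $(\theta-4)$-inert when $k=5$), and by the definition of inertness this means that every vertex of $\{x\}\times K_n^2$ outside $\omega_0$ --- in particular $v$ --- has at most $\theta-k$ neighbors in $\omega_0$. Now count the neighbors of $v$ lying in $U$: because $x\notin S_\infty$ we have $(\{x\}\times K_n^2)\cap U=(\{x\}\times K_n^2)\cap\omega_0$, so the Hamming-neighbors of $v$ in $U$ are exactly those in $\omega_0$; and a $\bZ^2$-neighbor $(x',y)$ of $v$ lies in $U$ only if $(x',y)\in\omega_0$ or $x'\in S_\infty$. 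Hence the number of neighbors of $v$ in $U$ is at most the number of neighbors of $v$ in $\omega_0$ plus the number of $\bZ^2$-neighbors $x'$ of $x$ with $x'\in S_\infty$, which is at most $(\theta-k)+(k-1)=\theta-1<\theta$, as required.

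The only point demanding care is that the two ``offsets'' line up: the budget of $\theta-k$ occupied initial neighbors coming from $(\theta-k+1)$-inertness and the bound $k-1$ on $\bZ^2$-neighbors that can ever turn to $0$ must sum to $\theta-1$ for every admissible $k$, including the obstacle value $k=5$, where the $S_\infty$-bound is saturated at $4$; this is precisely where the conventions in the statement (inertness at thresholds $\theta,\theta-1,\theta-2,\theta-3,\theta-4$ attached respectively to $\xi_0$-values $1,2,3,4,5$) are needed, and checking it is the main (entirely routine) obstacle. Everything else is standard monotone-closure bookkeeping; the lower-bounding coupling of Lemma~\ref{omega lower bound} is proved dually, replacing inertness by internal spannedness and reversing the inclusion, which is why only the present direction is detailed.
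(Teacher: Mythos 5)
Your proof is correct. You take a slightly different route from the paper, though the core counting is identical. The paper proves by induction on $t$ that $\omega_t\subset\bigcup\{\{x\}\times K_n^2:\xi_t(x)=0\}\cup\omega_0$, i.e.\ a time-synchronized coupling between the two processes; you instead define the terminal set $U=\bigcup\{\{x\}\times K_n^2:\xi_\infty(x)=0\}\cup\omega_0$ directly from $\xi_\infty$, check that $U$ is closed under the threshold-$\theta$ rule, and invoke monotone closure. In both arguments the decisive arithmetic is the same: for a site $x$ with $\xi_0(x)=k$ that never flips to $0$, inertness at level $\theta-k+1$ caps the $\omega_0$-neighbors of any $v\in(\{x\}\times K_n^2)\setminus\omega_0$ at $\theta-k$, and $x\notin S_\infty$ caps the $\bZ^2$-neighbors that can ever contribute at $k-1$, so the total is at most $\theta-1$ (paper: $k$ zero-neighbors at time $t-1$ forces $\xi_0(x)\ge k+1$ and hence $(\theta-k)$-inertness, then $(\theta-k-1)+k=\theta-1$). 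Your terminal-closure phrasing is slightly more self-contained, since it doesn't establish the stronger time-by-time inclusion; the paper's time-synchronous version yields the extra information that the coupling holds at every finite $t$. Your side remarks — the monotonicity of $\xi_t(x)$ in $\{0,\xi_0(x)\}$, the fact that $x\notin S_\infty$ gives at most $\xi_0(x)-1$ neighbors in $S_\infty$, and the trivial saturation at $k=5$ because $\bZ^2$ has degree $4$ — are all correct and are exactly the points that need checking.
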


\begin{proof}
We will prove the following stronger statement by induction. We claim that for every $t\ge 0$,
\begin{equation} \label{eqn:xi-comparison}
\omega_t \subset\bigcup\{\{x\}\times K_n^2: \xi_t(x)=0\}\cup \omega_0.
\end{equation}
Suppose that (\ref{eqn:xi-comparison}) holds through time $t-1\ge 0$, and let $x\in \bZ^2$ be a point such that $\xi_{t}(x)\neq 0$. Suppose $x$ has exactly $k$ neighbors $y\in \bZ^2$ with $\xi_{t-1}(y) = 0$. Therefore, $\xi_0(x) \ge k+1$, so $\xk$ is $(\theta-k)$-inert. Every vertex in $(\xk) \setminus \omega_0$ has at most $\theta-k-1$ neighbors in $\omega_0$, so every vertex in $(\xk)\setminus \omega_0$ has at most $\theta-1$ neighbors in 
$$
\bigcup\{\{x\}\times K_n^2: \xi_{t - 1}(x)=0\}\cup \omega_0.
$$
Therefore, by the induction hypothesis, every vertex in  $(\xk)\setminus \omega_0$ has at most $\theta-1$ neighbors in $\omega_{t-1}$, so no vertex in $\xk$ becomes occupied at time $t$.
\end{proof}

\section{The subcritical regime for even threshold}

\newcommand{\step}{\rightarrowtail}

This section contains the proof of (\ref{subcritical-even}). Our 
argument is a suitable modification of the methods from 
\cite{GHS}, which are in turn based on 
duality-based construction of random 
surfaces \cite{DDGHS, ent, geom}. We cannot immediately apply 
the result from \cite{GM}, as we need to handle short-range 
dependence in the initial state. 

\subsection{Bootstrap percolation with obstacles}

Our focus will be the heterogeneous bootstrap percolation 
$\xi_t$, with a random initial set $\xi_0$. We will 
call such initial set a \df{positively correlated 
random field} if increasing events are positively correlated 
(that is, the FKG inequality holds), and \df{1-dependent}
if $\xi_0(x)$ and $\xi_0(y)$ are independent for 
$||x-y||_1\ge 2$.

\begin{theorem}\label{polluted}
Let $\p,\q >0$ be such that $\p+\q < 1$. Suppose $\xi_0$ has the following properties: for every $x\in \bZ^2$
\begin{equation}
\begin{aligned}
\prob{\xi_0(x) = 0} &= \p \\
\prob{\xi_0(x) = 2} &= 1- \p-\q\\
\prob{\xi_0(x) = 3} &= \q,
\end{aligned}
\end{equation}
and $\xi_0$ is a 1-dependent, positively correlated random field. 
Let $C>0$, and suppose that $\q>C \p^2$. Then for $C$ sufficiently large, we have that with probability at least $1-C\p^{3}$ either $\xi_\infty(\mathbf{0}) \ge 2$, or else $\mathbf{0}$ is contained in a cluster (maximal connected set) of sites $x\in \bZ^2$ with $\xi_\infty(x)=0$ that has 
$\ell^\infty$-diameter at most $1000$.
\end{theorem}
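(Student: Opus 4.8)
\emph{Strategy.} I would adapt the random‑surface (contour) method of \cite{GM,GHS}, which in turn rests on the duality constructions of \cite{DDGHS,ent,geom}, to the present rule (which here only involves the states $0,2,3$, so no $4$'s or $5$'s arise). The first move is a purely deterministic reduction. Suppose the asserted event fails, so $\xi_\infty(\mathbf 0)=0$ and the connected $0$‑component $K\ni\mathbf 0$ of $\{x:\xi_\infty(x)=0\}$ has $\ell^\infty$‑diameter larger than $1000$. Since the dynamics only ever turns sites to $0$, no site neighbouring $K$ but outside $K$ ever becomes $0$; hence the flips producing $K$ use only seeds (sites with $\xi_0=0$) lying in the bounding box of $K$ together with sites of $K$ itself. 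Running ordinary threshold‑$2$ bootstrap inside that bounding box on those seeds therefore already produces $K$, and — the ordinary $2$‑neighbour closure of a connected set being exactly its bounding rectangle — produces the whole bounding rectangle $R$. Thus $R$ is internally spanned (in the ordinary sense) by its own seeds. Moreover, because $K$ touches all four sides of $R$ and is connected, there can be no \emph{barrier} separating the two sides of $R$ in either coordinate direction; a barrier is the standard dual obstruction of \cite{GM}, namely a $*$‑connected crossing path made of $\ell^1$‑adjacent site‑pairs that the $\xi$‑dynamics cannot invade (each pair containing an obstacle). So the bad event is contained in the event that some rectangle $R\ni\mathbf 0$ of $\ell^\infty$‑diameter $>1000$ is internally spanned by its seeds and is crossed by no barrier in either direction.

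\emph{Probabilistic core.} Around such a spanned, barrier‑free rectangle (equivalently, around the occupied cluster it produces) I would build the random surface of \cite{GHS}: a closed lattice path surrounding $\mathbf 0$ whose length is at least of order $1000$. Along the surface the configuration is constrained: flat stretches running alongside the cluster correspond to long runs of sites that — since the bridging rows/columns must actually fill for the dynamics to propagate — must each contain a seed and must avoid $\ell^1$‑adjacent obstacle pairs, while the corners/turns of the surface require local obstacle support (a $2$‑site at a convex corner of the cluster would flip, so such a corner must be protected). Summing the cost along the surface, and then over all surfaces, positions and over the associated Aizenman–Lebowitz‑type rectangles hierarchy, one obtains a bound of the form $\p^{3}\,\phi(\p,\q)$ where $\phi$ is summable and $\phi\to0$ as $\q/\p^{2}\to\infty$, uniformly in the remaining parameters — this is exactly the place where the critical balance $\q\asymp\p^{2}$ of \cite{GM,GHS} surfaces (the impassability of a single bridge of length $L$ has probability of order $L\q^{2}/(\p+\q)\lesssim \p^{2}/\q$). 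Consequently, for $C$ sufficiently large, $\q>C\p^{2}$ forces the bad probability below $C\p^{3}$. The constant $1000$ plays no special role: the argument in fact bounds the diameter of $\mathbf 0$'s cluster by some universal constant, and $1000$ is merely a convenient upper bound.

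\emph{Handling the dependence.} The one genuinely new ingredient relative to \cite{GM,GHS} is that $\xi_0$ is only $1$‑dependent and positively correlated, not i.i.d. All the local ``costly'' events entering the surface estimate — a run being seed‑free, a stretch containing an obstacle pair, a small rectangle being internally spanned — are monotone, so the FKG inequality lets me replace joint probabilities by products of marginals in the right direction; and by passing to a coarse‑grained sublattice on which the relevant blocks become independent (or are stochastically dominated by independent blocks with slightly inflated densities $\p',\q'$ with $\p'\asymp\p$, $\q'\asymp\q$), the estimates of \cite{GHS} apply essentially verbatim to the coarsened field.

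\emph{Main obstacle.} The crux, and the reason this occupies a full section, is the probabilistic core: carrying out the surface construction and its counting so that the improbability collected along a surface of length $\ge 1000$ really does come out as $\p^{3}$ times a factor that is summable and $o_C(1)$ — rather than something weaker — and making sure this critical‑balance bookkeeping survives the coarse‑graining used to remove the short‑range dependence of $\xi_0$. I expect to re‑derive the relevant estimate of \cite{GHS} from scratch with the dependence tracked throughout, which is where most of the work lies.
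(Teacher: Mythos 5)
Your proposal is in the right family of ideas (adapt the \cite{GM,GHS} barrier/surface machinery), but it departs from the paper's route in a way that introduces a genuine gap, and it misses the paper's central deterministic lemma.

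First, your deterministic reduction is incorrect as stated in the heterogeneous setting. You reduce the bad event to: some rectangle $R\ni\mathbf{0}$ of diameter $>1000$ is internally spanned by its seeds \emph{and is crossed by no barrier} in the sense of \cite{GM}, i.e.\ a crossing path of $\ell^1$-adjacent obstacle pairs. In \cite{GM} the obstacles are permanent, so such a path truly blocks the dynamics. Here the obstacles are $3$s, which flip to $0$ once they have three $0$-neighbors; a GM-style path of $3$s is not automatically impassable, so the bad event does not imply the absence of such a path. This is precisely why the paper replaces ``obstacle'' by ``\emph{nice} vertex'' --- a $\xi_0$-state-$3$ site such that everything in the cross $x+J$ has $\xi_0\ge 2$, so there are no seeds near any site that might otherwise erode the barrier. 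Your later remark that ``a $2$-site at a convex corner of the cluster would flip, so such a corner must be protected'' gestures at the right issue, but you never upgrade the barrier notion to one that is actually impassable under the rule, and without that the reduction does not close.

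Second, the paper's proof is \emph{constructive} rather than a bad-event union bound, and the load-bearing step is Proposition~\ref{super-sneaky}, which you do not mention. That proposition gives a deterministic condition on a finite set $Z$ (corners of the boundary are $3$s, no seeds near the boundary, and the $Z$-restricted dynamics has no large $0$-cluster) under which the $Z$-restricted dynamics agrees with the full dynamics on $Z$. Combined with Lemma~\ref{limited-bootstrap} (the restricted dynamics inside a box only produces $0$-clusters of bounded diameter, with failure probability $O(\p^{s})$), this reduces the whole theorem to: a suitable protected set $Z$ around $\mathbf{0}$ exists with high probability (Lemma~\ref{Z-exists}, via the shell of good boxes and fortresses). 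This route sidesteps the union bound over all large internally-spanned rectangles, which in your sketch you would have to control carefully --- and whose convergence at the critical scaling $\q\asymp\p^2$ is exactly the part you flag as ``the crux'' without giving an argument (your back-of-envelope $L\q^2/(\p+\q)\lesssim\p^2/\q$ does not obviously deliver a summable $\p^3\cdot o_C(1)$ bound after the enumeration over surfaces and rectangles). Your treatment of the $1$-dependence via FKG plus \cite{LSS}-type domination on a coarse-grained sublattice does match what the paper does, so that part is sound, but the missing comparison lemma and the inadequate barrier notion are real gaps that would have to be filled for the proof to go through.
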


We will complete the proof of Theorem~\ref{polluted}, and then
the proof of (\ref{subcritical-even}), in Section 3.4. 
Throughout this section, we will assume that $\p$ is sufficiently small to make certain estimates work.

For a set $A\subset \bZ^2$, a configuration $\xi_0 \in \{0, \ldots, 5\}^{\bZ^2}$, and $k\in \{0, \ldots, 5\}$, define $\xi_0^{(A,k)}$ by
$$
\xi_0^{(A,k)}(x) = \begin{cases}
\xi_0(x) & \text{for $x\in A$}\\
k & \text{for $x\in A^c$}.
\end{cases}
$$
The resulting bootstrap dynamics, with initial configuration $\xi_0^{(A,k)}$, is denoted by $(\xi_t^{(A,k)})_{t\ge0}$. Observe 
that $(\xi_t^{(A,5)})_{t\ge0}$ is the heterogeneous bootstrap dynamics restricted to $A$, that is, run on the subgraph of
$\bZ^2$ induced by $A$. 
Also, for an $x\in \bZ^2$, let $\nonbrs(x, A)$ denote the number of neighbors of $x$ that lie in $A$.
\begin{prop} \label{super-sneaky}
Fix an integer $m\ge 1$. Fix a finite set $Z\subset \Z^2$ with $\nonbrs(x,Z^c)\le 2$ for every $x\in Z$, and
run two heterogeneous bootstrap percolation dynamics: the first with
initial configuration $\xi_0^{(Z,0)}$; the second with initial configuration $\xi_0^{(Z,5)}$.
Assume that the configuration $\xi_0$ on $Z$ satisfies the following
conditions.
\begin{enumerate}[label= \textup{(\roman*)}]
\item  Any $x\in Z$ with $\nonbrs(x, Z^c)= 2$ has $\xi_0(x) = 3$.
\item  For any $x\in Z$ with $\nonbrs(x,Z^c)\ge 1$, there is no
vertex $y$ with $\xi_0(y)=0$ within $\ell^\infty$-distance $m$ of $x$.
\item The final configuration in the dynamics started from the initial configuration $\xi_0^{(Z,5)}$ has no connected set
of vertices in state $0$ with $\ell^\infty$-diameter larger than $m/2$.
\end{enumerate}
Then, for all $t\ge 0$, we have
$$
\{x\in Z: \xi_t^{(Z,0)}(x)=0\} = \{x\in Z: \xi_t^{(Z,5)}(x)=0\}.
$$
\end{prop}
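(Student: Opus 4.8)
The plan is to prove, by induction on $t$, the stronger assertion that $\xi_t^{(Z,0)}$ and $\xi_t^{(Z,5)}$ agree at every vertex of $Z$; the claimed equality of $0$-sets on $Z$ follows at once. The base case $t=0$ is the definition of $\xi_0^{(A,k)}$. For the inductive step, fix $x\in Z$; since a state only ever changes to $0$, the value at $x$ always lies in $\{0,\xi_0(x)\}$, so it is enough to decide in the same way, in both dynamics, whether the number of neighbors of $x$ in state $0$ at time $t$ reaches $\xi_0(x)$. If $\nonbrs(x,Z^c)=0$, all four neighbors of $x$ lie in $Z$ and hence carry identical states in the two dynamics at time $t$ by the inductive hypothesis, so the two updates of $x$ coincide. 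The substance is in the \emph{boundary} vertices, those with $\nonbrs(x,Z^c)\in\{1,2\}$, and for these I would first isolate the following claim: \emph{in the restricted dynamics $(\xi_t^{(Z,5)})_{t\ge0}$, no neighbor of a boundary vertex is ever in state $0$.}

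I would prove this claim from two ingredients. The first is the elementary fact that every connected component $C$ of the final $0$-set of the monotone dynamics $(\xi_t^{(Z,5)})_{t\ge 0}$ contains a vertex $y$ with $\xi_0(y)=0$: letting $v\in C$ be a vertex occupied no later than every other vertex of $C$, if this occupation time were positive then $v$ would have a neighbor $u$ in state $0$ one step earlier, and $u$ (being in state $0$ in the final configuration and adjacent to $v$) would also lie in $C$ and be occupied strictly earlier than $v$, a contradiction; here one uses that a non-zero vertex has threshold at least $1$ and that in the $(Z,5)$ dynamics every initial $0$ lies in $Z$. The second ingredient is hypothesis (iii), which bounds the $\ell^\infty$-diameter of every such $C$ by $m/2$. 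Now if a neighbor $w$ of a boundary vertex $x$ were ever in state $0$, then $w$ would lie in some final $0$-component $C$, which would therefore contain a vertex $y$ with $\xi_0(y)=0$ and $\|y-w\|_\infty\le m/2$; since $\|w-x\|_\infty=1$, this gives $\|y-x\|_\infty\le m/2+1\le m$, contradicting hypothesis (ii). This proves the claim.

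The boundary case of the induction is then short. By the claim and the inductive hypothesis, at time $t$ a boundary vertex $x$ has no neighbor in state $0$ inside $Z$ in either dynamics, so its only possible neighbors in state $0$ are the $\nonbrs(x,Z^c)$ vertices of $Z^c$, which are in state $0$ in the $(Z,0)$ dynamics and in state $5$ in the $(Z,5)$ dynamics. If $\nonbrs(x,Z^c)=2$, then $\xi_0(x)=3$ by (i), so $x$ has at most $2<3$ neighbors in state $0$ in each dynamics; if $\nonbrs(x,Z^c)=1$, then $x$ is not an initial $0$ by (ii), hence $\xi_0(x)\ge2$ (recalling that $\xi_0$ takes values in $\{0,2,3\}$ in the setting of Theorem~\ref{polluted}), so $x$ has at most $1<2$ neighbors in state $0$ in each dynamics. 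Either way $x$ does not change state in either dynamics, so $\xi_{t+1}^{(Z,0)}(x)=\xi_{t+1}^{(Z,5)}(x)$, completing the induction.

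The step I expect to be the main obstacle is the claim: it is where hypotheses (ii) and (iii) are used \emph{in combination}, upgrading the static ``moat of non-zeros'' that (ii) places near $\partial Z$ to one that persists throughout the evolution --- exactly the service provided by the diameter bound (iii) together with the elementary component fact. Everything else is monotone bookkeeping; as a consistency check, the inclusion $\{x\in Z:\xi_t^{(Z,5)}(x)=0\}\subseteq\{x\in Z:\xi_t^{(Z,0)}(x)=0\}$ also follows directly from monotonicity of the heterogeneous dynamics in the initial configuration, since $\xi_0^{(Z,0)}\le\xi_0^{(Z,5)}$ pointwise.
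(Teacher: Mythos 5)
Your proof is correct and takes essentially the same route as the paper, which argues by contradiction at the first time of disagreement rather than by induction --- structurally the two are interchangeable. Where you add value is in isolating and proving the claim that no neighbor of a boundary vertex is ever in state $0$ in the $(Z,5)$ dynamics: the paper compresses exactly this into the single phrase ``by minimality of $t$, and properties (ii) and (iii),'' while you supply the two ingredients (every final $0$-component of the restricted dynamics contains an initial $0$, by looking at the earliest-occupied vertex in the component; (iii) then bounds that component's diameter and (ii) pushes it away from $\partial Z$). One pedantic slip: you write $m/2+1\le m$, which fails at $m=1$; but since $\ell^\infty$-distances are integers the relevant bound is $\lfloor m/2\rfloor+1\le m$, which does hold for all integers $m\ge 1$, and in any case the proposition is only invoked with large $m$. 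You were also right to flag that the case $\nonbrs(x,Z^c)=1$ needs $\xi_0(x)\ge 2$; the paper asserts this without comment, and as you note it comes from the fact that in the setting where the proposition is applied $\xi_0$ takes values only in $\{0,2,3\}$ and (ii) rules out $\xi_0(x)=0$.
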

\begin{proof}
Assume the conclusion does not hold, and consider the
first time $t$ at which there exists a vertex $x\in Z$ such that $\xi_t^{(Z,0)}(x)=0$ but $\xi_t^{(Z,5)}(x)>0$. As the two dynamics have the same
initial configuration on $Z$, we have $t>0$.
By minimality of $t$, and properties (ii) and (iii), at time $t-1$ every $y\in Z$ such that $\nonbrs(y,Z^c)\ge 1$ has no neighbors in $Z$ with state $0$ in either dynamics. So, we cannot have $\nonbrs(x,Z^c) = 2$, since by (i), $\xi_{0}^{(Z,0)}(x) = 3$, and $x$ has at most two neighbors in state $0$ through time $t-1$, so the state of $x$ could not change at time $t$. We cannot have $\nonbrs(x,Z^c)=1$ either, since $\xi_{t-1}^{(Z,0)}(x)\ge 2$. Thus
$\nonbrs(x,Z^c)=0$, but then $x$ sees the same states among its neighbors in both dynamics at time $t-1$, and therefore $x$ has the same state in both dynamics at time $t$, a contradiction.
\end{proof}

\begin{lemma}\label{limited-bootstrap}
Fix an integer $s>0$, and let $N=\lfloor \p^{-s}\rfloor$.
Let $A=[-N,N]^2$.
With probability at least $1-C\p^{s}$, where $C=C(s)$ is a constant, 
 all connected clusters (maximal connected sets) of state $0$ vertices in $\xi_\infty^{(A,5)}$ have $\ell^\infty$-diameter at most $24s$.
\end{lemma}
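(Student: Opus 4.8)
My plan is a standard nucleation estimate, reduced to ordinary threshold-$2$ bootstrap percolation. Write $B=\{x\in A:\xi_0(x)=0\}$ for the seed set. The only structural input from the heterogeneous rule is that every non-seed site of $\xi_0$ has threshold at least $2$ (its state is $2$ or $3$), so a site can switch to state $0$ only after acquiring two neighbours that are already $0$. I would extract two consequences. (a) \emph{Perimeter monotonicity}: in any run restricted to a finite set $T$, the edge-boundary of the current state-$0$ set is non-increasing in $t$, since an incoming $0$ erases at least two boundary edges and creates at most two. Hence, if $T$ is \emph{internally spanned} --- the dynamics restricted to $T$, started from the seeds it contains, turns all of $T$ to $0$ --- then, comparing the initial edge-boundary (at most four times the number of seeds in $T$) with the final one (the edge-boundary $|\partial T|$ of $T$), we find that $T$ contains at least $|\partial T|/4$ seeds; for a rectangle with longer side $b$ this is at least $b/2$. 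The same statement holds verbatim for threshold-$2$ bootstrap percolation. (b) \emph{Domination}: because turning a site on always uses two $0$-neighbours, the set $\{x:\xi_t^{(A,5)}(x)=0\}$ is contained, for every $t$, in the threshold-$2$ closure of $B$ inside $A$.

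Next I would localize to a bounded scale. Suppose $\xi_\infty^{(A,5)}$ has a state-$0$ cluster of $\ell^\infty$-diameter at least $24s$. By (b), the threshold-$2$ closure of $B$ inside $A$ contains a connected set of $\ell^\infty$-diameter at least $24s$; since this closure is a union of rectangles contained in $A$ that are pairwise at $\ell^\infty$-distance at least $2$, each internally spanned by the seeds it contains (the standard structure of threshold-$2$ closures, which for a rectangular $A$ coincides with the unrestricted closure), one of these rectangles $R\subseteq A$ has longer side at least $24s$. Applying the Aizenman--Lebowitz lemma to $R$ yields an internally spanned rectangle $R'\subseteq R\subseteq A$ whose longer side lies in $[24s,48s]$, and by (a) $R'$ contains at least $12s$ seeds.

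It then suffices to bound the probability that some rectangle $R'\subseteq A$ with both sides at most $48s$ contains at least $12s$ seeds. There are at most $|A|\,(48s)^2\le 9\p^{-2s}(48s)^2$ such rectangles. For a fixed vertex set $W$ with $|W|=m\le(48s)^2$: since $\xi_0$ is $1$-dependent, its dependency graph is contained in the nearest-neighbour graph of $\bZ^2$, which is bipartite, so any $k$ vertices of $W$ contain at least $k/2$ vertices of a single colour class, which are mutually independent; hence $\prob{W\text{ contains}\ge k\text{ seeds}}\le\binom mk\,\p^{\lceil k/2\rceil}$, which for $k=12s$ is at most $\binom{(48s)^2}{12s}\,\p^{6s}$. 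Multiplying by the number of rectangles, the probability of a state-$0$ cluster of $\ell^\infty$-diameter at least $24s$ is at most $9(48s)^2\binom{(48s)^2}{12s}\,\p^{-2s}\cdot\p^{6s}=C(s)\,\p^{4s}\le C(s)\,\p^{s}$, which proves the lemma.

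The one non-routine ingredient is the Aizenman--Lebowitz lemma, which is classical for threshold-$2$ bootstrap percolation and applies here precisely because step (b) reduces everything to that setting; the remaining steps use only perimeter monotonicity, the rectangular structure of threshold-$2$ closures, and bipartiteness of the dependency graph. The distributional hypotheses enter only through $\prob{\xi_0(x)=0}=\p$, $1$-dependence, and the thresholds of non-seeds being at least $2$; the constant $24$ is not optimized, as the gap between the obtained $\p^{4s}$ and the required $\p^{s}$ shows.
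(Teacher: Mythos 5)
Your proof is correct, and its overall blueprint is the same as the paper's: replace the $3$s by $2$s (equivalently, dominate the heterogeneous run by a threshold-$2$ run) so that the final state-$0$ clusters are rectangles, invoke Aizenman--Lebowitz to extract an internally spanned rectangle of bounded size, estimate the probability that such a rectangle exists, and union-bound over locations in $A$ of size $\p^{-2s}$. The two places where you diverge from the paper are both minor substitutions. For the deterministic seed count, the paper uses the classical fact that every pair of adjacent lines perpendicular to the long side of an internally spanned rectangle must contain a seed; you instead derive a lower bound of $|\partial T|/4$ seeds from perimeter monotonicity of the state-$0$ set under any threshold-$\ge 2$ rule, which is an equally valid (and if anything slightly stronger) bound. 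For the probabilistic estimate, the paper keeps the double-line structure and multiplies the probabilities of $\approx k/6$ well-separated double-lines each containing a seed, getting $(2k\p)^{k/6-1}$ per rectangle; you forget the geometry and union-bound over $k$-subsets, using that the $1$-dependence graph embeds in the bipartite nearest-neighbour graph to extract $\lceil k/2\rceil$ mutually independent sites, getting $\binom{m}{k}\p^{\lceil k/2\rceil}$. Both estimates are polynomial-in-$s$ times $\p^{\Theta(s)}$ and comfortably beat the required $C(s)\p^s$. Your argument is perhaps a hair more self-contained (it avoids re-deriving the ``adjacent lines contain a seed'' structural fact for $1$-dependent fields), but the two proofs are morally the same.
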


\begin{proof}
First, replace all $3$s by $2$s in the initial configuration $\xi_0^{(A,5)}$; then, all connected clusters of $0$s in $\xi_\infty^{(A,5)}$ are rectangles. Fix an integer
$k>0$, and let $E_k$ be the
event that the
final configuration contains a rectangle of $0$s whose longest
side has length at least $k$. If $E_k$ occurs, $A$
contains an
internally spanned rectangle $R$ whose longest side length
is in the interval $[k/2,k]$ \cite{AL}. Then, any pair of neighboring lines, each
perpendicular to the longest
side of $R$, and such that both intersect $R$, must contain a state $0$ vertex within $R$ initially. Moreover, two pairs of neighboring lines that are at distance at least 2 from one another satisfy this requirement independently (since $\xi_0$ is 1-dependent). There are at most $(2N+1)^2 k^2$ possible
selections of the rectangle $R$. Therefore,
\begin{equation}\label{threshold2-1}
\P(E_k)\le 5\,N^2 k^2 (2k \p)^{k/6 - 1}\le \p^{(k-12s)/6 - 1} (2k)^{k/6+2},
\end{equation}
and the claim follows by choosing $k=24s$.
\end{proof}

Let $$L = \lfloor \delta/(m\p) \rfloor,$$
where $\delta>0$ is a small constant to be fixed later.
Also let $M = 12L$. Define the set
\begin{equation}\label{J-definition}
J = ([-m,m] \times [-M, M])\ \cup\ ([-M, M] \times [-m,m]).
\end{equation}
Call a vertex $x\in \Z^2$ \df{nice} if $\xi_0(x)=3$ and every vertex $y \in x + J$ has $\xi_0(y)\ge 2$. For each $u\in \Z^2$, define the rescaled box at $u$ to be
\[
Q_u := (2L+1) u + [-L,L]^2.
\]
We call a box $Q_u$ \df{good} if it contains a nice vertex. We will give a lower bound on the probability that a box is good. Call a vertex $x\in \bZ^2$ \df{viable} if every vertex $y \in x + J$ has $\xi_0(y)\ge 2$, and note that a viable vertex $x$ with $\xi_0(x) = 3$ is nice.

\begin{lemma}\label{viable-likely}
Fix a vertex $x\in \Z^2$ and an $\epsilon>0$. Assume
$\delta\le \epsilon/10^3$. Then,
\begin{equation}\label{viable-lb}
\P(x\ \text{\rm is viable})\ge 1-\epsilon.
\end{equation}
\end{lemma}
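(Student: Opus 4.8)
The plan is to bound the complementary probability by a union bound. In the setting of Theorem~\ref{polluted} the field $\xi_0$ takes values only in $\{0,2,3\}$, so a vertex $y$ has $\xi_0(y)\ge 2$ unless $\xi_0(y)=0$. Hence the event that $x$ is not viable is exactly the event that some $y\in x+J$ has $\xi_0(y)=0$, and therefore
\[
\P(x\ \text{is not viable})\ \le\ \sum_{y\in x+J}\P(\xi_0(y)=0)\ =\ |J|\,\p,
\]
using only the marginal $\P(\xi_0(y)=0)=\p$; neither $1$-dependence nor the FKG inequality is needed. It thus remains to show $|J|\,\p\le\epsilon$ under the hypothesis $\delta\le\epsilon/10^3$.

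The second step is a crude count of $|J|$. By \eqref{J-definition}, $J$ is contained in the union of two $(2m+1)\times(2M+1)$ rectangles, so $|J|\le 2(2m+1)(2M+1)$. Now $M=12L$ and $L=\floor{\delta/(m\p)}$; since $\p$ is assumed small throughout the section we may suppose $\p\le\delta/m$, so that $L\ge 1$ and hence $2M+1=24L+1\le 25L\le 25\delta/(m\p)$. Plugging this in,
\[
|J|\,\p\ \le\ 2(2m+1)\cdot\frac{25\,\delta}{m\,\p}\cdot\p\ =\ 50\,\delta\,\frac{2m+1}{m}\ \le\ 150\,\delta,
\]
where the last step uses $(2m+1)/m=2+1/m\le 3$ for all $m\ge 1$. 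The key structural observation making this work is that, although $J$ is long (diameter of order $\delta/\p$), it is a thin cross of fixed width $2m+1$, so its area is only \emph{linear} in $L$, and the width $2m+1$ cancels the $1/m$ built into $L$; consequently $|J|\,\p=\cO(\delta)$ with a constant independent of $m$.

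Finally, combining the two displays with $\delta\le\epsilon/10^3$ gives $|J|\,\p\le 150\,\delta\le 150\,\epsilon/1000<\epsilon$, i.e. $\P(x\ \text{is viable})\ge 1-\epsilon$, as claimed. I do not expect a genuine obstacle here: the only thing to be careful about is the bookkeeping of constants, namely checking that the absolute constant produced ($150$, or a slightly larger value if one prefers to bound $24L+1$ by, say, $25L$ only after verifying $L\ge 1$, or to split off an $\cO(\p)$ error term instead) stays comfortably below $10^3$, which it does.
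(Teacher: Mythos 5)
Your proof is correct, and it takes a mildly different route from the paper's. The paper bounds $\P(\xi_0(y)\ge 2 \text{ for all } y\in x+J)$ from below directly, using the positive-correlation (FKG) assumption to get $\P(\text{all good})\ge (1-\p)^{|J|}$, and then estimates this product. You instead bound the complementary event by a union bound, $\P(\text{some }y\in x+J\text{ has }\xi_0(y)=0)\le |J|\,\p$, which requires no assumption whatsoever on the dependence structure of $\xi_0$. Both routes reduce to the same key bookkeeping — namely that $|J|$ is linear in $L$ because $J$ is a thin cross of width $2m+1$, so $|J|\,\p=\cO(\delta)$ with a constant independent of $m$ — and both produce a constant comfortably below $10^3$ (the paper gets $432\le 500$, you get $150$). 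The small gain in your version is that it shows the FKG hypothesis is not actually needed for this particular lemma; the small loss is that it does not carry over as cleanly if one later wants a sharper multiplicative bound of the form $e^{-c\delta}$ rather than $1-c\delta$, but that is irrelevant here. One minor point worth keeping: your reduction $2M+1\le 25L$ uses $L\ge 1$, i.e. $\p\le\delta/m$; this is covered by the standing "$\p$ small enough" convention in the section, but you are right to flag it explicitly.
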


\begin{proof}
The argument is a simple estimate, where the first inequality below follows from the positive correlation assumption on $\xi_0$,
\begin{equation}\label{viable1}
\begin{aligned}
&\P\bigl(\xi_0(y) \ge 2 \text{ for all $y\in x+J$}\bigr)\\
&\ge [1-\p]^{2(2M+1)(2m+1)}\\
&\ge \exp{\bigl[-36\, mM\p\bigr]}\\&\ge\exp{(-500\,\delta)} ,
\end{aligned}
\end{equation}
provided $\p$ is small enough. Thus we can choose
any $\delta<\epsilon/500$ to make the probability in
(\ref{viable1}) larger than $1-\epsilon$.
\end{proof}

\begin{lemma}\label{good-likely} Fix any $\epsilon>0$,
and assume $\delta\le1/(4\cdot10^3)$. Then
there exists a constant $C=C(m,\epsilon,\delta)$, such that
$\q\ge C\p^2$ implies that
the probability that the box $Q_{\mathbf{0}}$ is good is at least
$1-\epsilon$.
\end{lemma}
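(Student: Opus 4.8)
We want to lower-bound the probability that $Q_{\mathbf 0}=[-L,L]^2$ (with $L=\lfloor\delta/(m\p)\rfloor$) is good, i.e.\ that it contains a nice vertex. A nice vertex is a viable vertex $x$ (every $y\in x+J$ has $\xi_0(y)\ge 2$) with $\xi_0(x)=3$. The natural strategy is a first-moment / Paley--Zygmund-type argument: show that the expected number of nice vertices in $Q_{\mathbf 0}$ is large (of order $\q L^2 \gg 1$ once $\q\ge C\p^2$ and $C$ is large), and then control the variance using the $1$-dependence of $\xi_0$. Since niceness of $x$ and niceness of $x'$ depend only on $\xi_0$ restricted to $(x+J)\cup\{x\}$ and $(x'+J)\cup\{x'\}$, these events are independent whenever those two sets are at $\ell^1$-distance $\ge 2$; the sets $x+J$ are ``plus-shaped'' with arm length $M=12L$, so for a positive fraction of pairs $x,x'\in Q_{\mathbf 0}$ the events are independent, and in general the covariance is bounded crudely. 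A second-moment inequality then gives $\P(Q_{\mathbf 0}\text{ is good})\ge 1-\epsilon$ once the mean is large enough.

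Concretely, here is the order I would carry it out. First, by Lemma~\ref{viable-likely}, choosing $\delta\le 1/(4\cdot 10^3)$ (so that $\epsilon$ in that lemma can be taken to be, say, $1/2$) guarantees $\P(x\text{ is viable})\ge 1/2$ for every $x$. Second, for $x\in Q_{\mathbf 0}$, condition on viability and use the positive-correlation (FKG) assumption together with $\P(\xi_0(x)=3)=\q$: the event $\{x\text{ is nice}\}=\{x\text{ is viable}\}\cap\{\xi_0(x)=3\}$ is an intersection involving a coordinate-$3$ event at $x$, but one can instead bound $\P(x\text{ is nice})$ from below by a direct computation — the field is $1$-dependent, so $\xi_0(x)$ is independent of $\xi_0$ on $x+J$ minus the $\ell^1$-ball of radius $1$ around $x$, and the finitely many sites near $x$ contribute only a bounded factor; thus $\P(x\text{ is nice})\ge c\q$ for an absolute constant $c>0$ (depending on $m$ through the radius-$1$ neighborhood, but not on $L$). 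Hence $\E[\#\{\text{nice }x\in Q_{\mathbf 0}\}]\ge c\q\,(2L+1)^2\ge c\q\,\delta^2/(m^2\p^2)$, which is $\ge c\,\delta^2 C/m^2$ once $\q\ge C\p^2$; choosing $C=C(m,\epsilon,\delta)$ large makes this mean as large as we like. Third, bound the second moment: $\E[(\#\text{nice})^2]=\sum_{x,x'}\P(x,x'\text{ both nice})$, split into pairs with independent defining regions (contributing $\le (\E\#\text{nice})^2$) and the remaining pairs; a pair $(x,x')$ has dependent regions only if the plus-shaped neighborhoods $(x+J)\cup\{x\}$ and $(x'+J)\cup\{x'\}$ come within $\ell^1$-distance $1$, and since these are thin ``plus'' shapes with arm length $12L$ and width $2m+1$, the number of such $x'$ for fixed $x$ is $O(mL)$ (a bounded-width neighborhood of a plus), so the bad pairs number $O(mL\cdot L^2)=O(m L^3)$, each contributing at most $\P(x\text{ is nice})\le \q$; thus the variance is $O(m\q L^3)+$ (mean), which is $o\big((\E\#\text{nice})^2\big)$ because $\E\#\text{nice}\asymp \q L^2$ and $\q L^3 / (\q L^2)^2 = 1/(\q L^2)\to$ small as $C\to\infty$ (using $\q L^2\asymp \delta^2 C/m^2$). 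Finally, Chebyshev (or Paley--Zygmund) gives $\P(\#\text{nice}=0)\le \mathrm{Var}/(\E\#\text{nice})^2\le \epsilon$ for $C$ large, which is the claim.

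**Main obstacle.** The routine parts are the mean lower bound and the Chebyshev step; the only genuinely delicate point is handling the covariances correctly given that $\xi_0$ is merely $1$-dependent and positively correlated rather than i.i.d. One has to be careful that ``$x$ is viable'' is a \emph{decreasing}-type constraint on a large region while ``$\xi_0(x)=3$'' is a single-coordinate event that is neither increasing nor decreasing, so FKG does not apply directly to their intersection; the fix is the $1$-dependence observation above — $\xi_0(x)$ depends on $\xi_0$ on $x+J$ only through sites within $\ell^1$-distance $1$ of $x$, a set of bounded size, so one peels those off and applies FKG (or the explicit marginals of Theorem~\ref{polluted}) only to the remaining increasing event $\{\xi_0(y)\ge 2\text{ on the far part of }x+J\}$. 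Similarly, for the covariance bound between two nice-events one invokes $1$-dependence to reduce to the geometry of how close two plus-shaped regions can be, which is elementary. Provided these bookkeeping points are handled, the second-moment method closes cleanly, with $C$ chosen last (depending on $m$, $\epsilon$, $\delta$) to absorb all the constants.
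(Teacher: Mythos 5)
Your approach (second moment / Chebyshev) is genuinely different from the paper's, which constructs explicit vertical and horizontal strips $\mathtt{Row}_k$ and $\mathtt{Col}_k$ spaced $3m$ apart, shows by large deviations and $1$-dependence that many of each are free of $0$s, combines those via FKG (both are increasing events), and then extracts a nice vertex at an intersection of a good row and a good column. Unfortunately, the variance step in your argument contains a fatal geometric error.

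You claim that a pair $(x,x')$ has dependent defining regions ``only if the plus-shaped neighborhoods $(x+J)\cup\{x\}$ and $(x'+J)\cup\{x'\}$ come within $\ell^1$-distance 1,'' and that the number of such $x'$ for fixed $x$ is $O(mL)$. But for any $x,x'\in Q_{\mathbf 0}=[-L,L]^2$, the sets $x+J$ and $x'+J$ \emph{always intersect}: the horizontal arm of $x+J$, namely $(x_1+[-M,M])\times(x_2+[-m,m])$, always meets the vertical arm of $x'+J$, namely $(x'_1+[-m,m])\times(x'_2+[-M,M])$, because $|x_1-x'_1|\le 2L< M+m$ and $|x_2-x'_2|\le 2L< M+m$ (recall $M=12L$). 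Equivalently, $Q_{\mathbf 0}-Q_{\mathbf 0}=[-2L,2L]^2\subset J-J$, so every translate $x'+J$ with $x'\in Q_{\mathbf 0}$ meets $x+J$. There is thus no set of ``independent pairs'' at all; every one of the $\Theta(L^4)$ pairs is dependent, and your estimate that the bad pairs number $O(mL^3)$ is off by a factor of order $L/m$.

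This matters because the fallback is not free. For the dependent pairs you need an upper bound on $\P(x,x'\text{ both nice})-\P(x\text{ nice})\P(x'\text{ nice})$, and the given hypothesis of positive association (FKG) pushes in the \emph{wrong} direction: viability of $x$ and viability of $x'$ are positively correlated, which makes the covariance nonnegative rather than negligible. One can in fact recover a usable bound by exploiting $1$-dependence directly --- replace $x'+J$ by $(x'+J)$ minus the distance-$1$ neighborhood of $x+J$ to force exact factorization, and then pay for the removed sites a factor of order $\p\cdot|(x+J)\cap(x'+J)|$, which is $O(m^2\p)$ for typical pairs and $O(mM\p)$ for the $O(mL)$ choices of $x'$ sharing a collinear arm with $x$. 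Summed, this does give $\mathrm{Var}\lesssim m^2\p\,\q^2 L^4+\q L^2$, which is $o\big((\q L^2)^2\big)$ once $\p$ is small and $\q L^2\gtrsim \delta^2 C/m^2$ is large; so a corrected second-moment argument does close. But as written, your covariance bound rests on a false geometric claim and does not establish the lemma.
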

\begin{proof}
For $k= 1, \ldots, \floor{\frac{2M+1}{3m}}-1$, let
$$
\mathtt{Row}_k = \bigl((-M + 3km) + [-m,m]\bigr) \times [-M,M]
$$
and
$$
\mathtt{Col}_k =  [-M,M] \times \bigl((-M + 3km) + [-m,m]\bigr).
$$
Define events
\begin{equation*}
\begin{aligned}
G_r &= \{\text{For at least } M/2m \text{ values of $k$, every $y\in \mathtt{Row}_k$ has $\xi_0(y)\ge2$}\}, \\
\text{and }\quad G_c &= \{\text{For at least } M/2m \text{ values of $k$, every $y\in \mathtt{Col}_k$ has $\xi_0(y)\ge2$}\}
\end{aligned}
\end{equation*}
The probability that $\mathtt{Row}_k$ has no $0$s is at least 
$3/4$, which can be proved by applying 
Lemma~\ref{viable-likely} with $\eps \le 1/4$. By large 
deviations for binomial random variables (noting that $
\mathtt{Row}_k$ and $\mathtt{Row}_{k+1}$ 
are at least distance 2 apart), we have
$$
\P(G_r) = \P(G_c) \ge 1 - \eps/4
$$
for small enough $\p$. By the assumed positive correlations in $\xi_0$, we have
\begin{equation*}
\P(G_r \cap G_c) \ge \P(G_r)\P(G_c) \ge 1- \eps /2,\\
\end{equation*}
and
\begin{equation*}
\begin{aligned}
\P(Q_{\mathbf{0}} \text{ is good}\ |\ G_r\cap G_c) &\ge \prob{\mathrm{Binomial}\left[ \left(\frac{M}{2m}\right)^2, \q \right] \ge 1 } \\
&\ge 1 - \exp(-\q(M/2m)^2)\\
&\ge 1 - \exp(-C\p^2 (3\delta/m^2\p)^2 )\\
& \ge 1-\eps/2
\end{aligned}
\end{equation*}
provided $C$ is large enough. The claim follows from the last two estimates.
\end{proof}

\subsection{Construction of a shell of good boxes}
 
 Let $B\subset\Z^2$. A site $u\in \bZ^2$ off the coordinate 
 axes is called \df{protected by $B$} provided that:
 \begin{itemize}
 \item if $u\in [1, \infty)^2\cup (-\infty, -1]^2$ then 
 both $ u+[-2,-1]\times [1,2]$ and 
 $u+[1,2]\times [-2,-1]$ intersect $B$; and 
 \item if 
 $u \in (-\infty, -1]\times [1,\infty)\cup [1,\infty)\times 
 (-\infty, -1]$, then both
 $ u+[-2,-1]\times [-2,-1]$ and 
 $u+[1,2]\times [1,2]$ intersect $B$.
 \end{itemize}
  If $u$ lies on one of the coordinate axes, we will not need to refer to $u$ as being protected.

A \df{shell $S$ of radius $r \in \N$} is defined to be a subset of $\Z^2$ that satisfies the following properties.
\begin{enumerate}[label=(S\arabic*)]
\item \label{S1} The shell $S$ contains all sites $u$ such that $\|u\|_1=r$ and $\|u\|_\infty \ge r-3$.
    (This implies that $S$ contains portions of the
    $\|\cdot\|_1$-sphere of radius $r$ in neighborhoods of
    each of the four sites $(\pm r, 0)$ and $(0,\pm r)$.)

\item \label{S2} For each $u\in S$, we have $r\le \|u\|_1 \le r+\sqrt{r}$
and
$\|u\|_\infty\le r$.

\item \label{S3} For each of the four directions $\varphi \in \{(\pm1, \pm1)\}$, there exists an integer $k = k(\varphi)\ge r/2$ such that $k\varphi \in S$.

\item \label{S4} If $u=(u_1,u_2)\in S$, and $|u_1| \ge 3$ and $|u_2|\ge 3$, then $u$ is protected by $S$.

\end{enumerate}

Let sites in the lattice $\Z^2$ be independently marked
black with probability $b$ and white otherwise. We wish to
consider paths of a certain type, and we start by defining
two types of steps. An ordered pair $u\step v$ of distinct sites $u,v\in \bZ^2$ is called:
\begin{enumerate}
\item a \df{taxed step} if each non-zero coordinate of $u$ increases in absolute value by $1$ to obtain the corresponding coordinate of $v$, while each zero coordinate of $u$ changes to $-1, 0$ or $1$ to obtain the corresponding coordinate of $v$;
\item a \df{free step} if  $\|v\|_1 < \|u\|_1$ and $v - u \in F$, where $F$ is the set of all vectors obtained by permuting coordinates and flipping signs from any of
\[
(1, 0), \text{ and } (2, 1).
\]
(For example, $(-1,2)\in F$.)
\end{enumerate}
Observe that, in a taxed step $u\step v$, we have 
$\|v\|_1>\|u\|_1$. We call $v-u$
the \df{direction} of either type of step. 

A \df{permissible path from $u_0$ to $u_k$} is a 
finite sequence of distinct sites $u_0, u_1, \ldots, u_k$ such that for every $i = 1, \ldots, k$, $u_{i-1}\step u_i$ is either a free step or a taxed step, and in the latter case, $u_i$ is white.  

To obtain a (random) shell $S$ of radius $r$, we let
\begin{equation}\label{eq:Adef}
A = \{v\in \Z^2 : \exists\, u \in \Z^2 \text{ with } \|u\|_1 < r \text{ and a permissible path from } u \text{ to } v\},
\end{equation}
and we define
\begin{equation}\label{eq:Sdef}
S = \{v \in \Z^2 \setminus A : \exists\, u \in A \text{ such that } u\step v \text{ is a taxed step}\}.
\end{equation}
Note that if $S$ exists, then all sites in $S$ must be black, since there are no permissible paths from $A$ to $A^c$.

\begin{prop} \label{shell}
Let $E_r$ be the event that there exists a shell of radius $r$ consisting of black sites.  There exists $b_1 \in (0,1)$ such that for any $b>b_1$ and $r\ge 1$, we have $\P(E_r)\ge 1/2$.
\end{prop}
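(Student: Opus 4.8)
\textbf{Proof proposal for Proposition~\ref{shell}.}

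The plan is to show that a suitable (random) shell of radius $r$ exists with probability at least $1/2$, for $b$ close enough to $1$, by proving that the set $A$ defined in \eqref{eq:Adef} is, with high probability, confined to a bounded annular region around the $\|\cdot\|_1$-sphere of radius $r$, and then verifying that the resulting boundary set $S$ from \eqref{eq:Sdef} satisfies properties \ref{S1}--\ref{S4}. The key observation is that every site of $S$ is automatically black (as noted after \eqref{eq:Sdef}), and that a permissible path can only leave the ball $\{\|u\|_1<r\}$ by taking taxed steps, each of which must land on a white site. Since white sites have probability $1-b$, which is small, long permissible paths from the interior to a far-away site are very unlikely. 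Concretely, I would argue that a permissible path reaching $\|\cdot\|_1$-distance roughly $r+\sqrt r$ from the origin must contain at least on the order of $\sqrt r$ taxed steps, whose target vertices are distinct white sites; a union bound over the (at most exponentially-in-$\sqrt r$ many) such paths of the probability $(1-b)^{\#\text{taxed steps}}$ then shows $A$ stays within $\{u:\|u\|_1\le r+\sqrt r\}$ except with probability $o(1)$ as $r\to\infty$, uniformly for $b$ close to $1$; for small $r$ one checks the bound directly or absorbs it into the choice of $b_1$.

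Granting that $A$ is contained in $\{u : r \le \|u\|_1 \le r+\sqrt r,\ \|u\|_\infty \le r\}$ together with the interior ball, the next step is to read off the structure of $S$. Property \ref{S2} is essentially immediate from the confinement of $A$ plus the fact that taxed and free steps change $\|\cdot\|_1$ by a bounded amount, so $S$, being one taxed step beyond $A$, lies in the stated annulus; the bound $\|u\|_\infty\le r$ follows because a taxed step out of a site with $\|\cdot\|_\infty\le r$ that also has $\|\cdot\|_1\ge r$ cannot increase the sup-norm past $r$ without the $\ell^1$ norm overshooting $r+\sqrt r$ (this needs a short case check on which coordinates are zero). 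For \ref{S1}, note that near the axis directions $(\pm r,0),(0,\pm r)$ the ball $\{\|u\|_1<r\}$ has a ``flat'' boundary and a single taxed step from a boundary site of the ball with a zero coordinate reaches exactly the sites with $\|u\|_1=r$ and large sup-norm; since those sites cannot be reached by free steps from inside (free steps decrease $\|\cdot\|_1$) they all lie in $S$. Property \ref{S3} follows because along each diagonal direction $\varphi=(\pm1,\pm1)$, the permissible-path exploration must terminate somewhere — a permissible path cannot continue indefinitely outward once it runs out of white sites — and by the confinement this happens at $\ell^1$-distance at most $r+\sqrt r < 2r$ but (by definition of $A$ reaching at least to $\|u\|_1=r$) at $\ell^1$-distance at least $r$, hence at a multiple $k\varphi$ with $k\ge r/2$; one has to be a bit careful that the exploration actually does progress along the diagonal, which again uses that taxed steps from a diagonal point stay on the diagonal provided the neighboring site is white, so I would instead reason that the complement of $A$ along the ray $\{k\varphi\}$ contains a point with a neighbor (in the taxed-step sense) in $A$, giving a shell site on the ray.

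Property \ref{S4} — that every $u\in S$ with $|u_1|,|u_2|\ge 3$ is protected by $S$ — is the main obstacle, and I expect it to require the most care. The point of the ``protected'' definition and of the asymmetric step set $F$ (which contains $(2,1)$-type moves but not, say, $(1,1)$) is precisely to force $S$ to be ``thick'' in the $\ell^\infty$-sense at off-axis points, so that later, when $S$ is used as an obstacle circuit in the bootstrap dynamics, $0$s cannot sneak through a diagonal gap. To establish it I would argue contrapositively: if some off-axis $u\in S$ were not protected, then one of the two required small boxes near $u$ (e.g. $u+[-2,-1]\times[1,2]$ in the first-quadrant-type case) misses $S$ entirely; since that box is adjacent to $u$ and $u\in S$ sits just outside $A$, the box must lie in $A$ (it cannot be ``beyond'' $S$), and then one can extend a permissible path from that box through $u$'s location via a free step in $F$ — this is where the specific vectors $(1,0)$ and $(2,1)$ are chosen so that such an extension exists and decreases $\|\cdot\|_1$ — contradicting $u\notin A$. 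Making this precise amounts to a finite case analysis over the relative position of $u$ and the offending box and a check that the requisite free step lies in $F$; I would organize it by the quadrant/region of $u$ as in the two bullets of the definition of ``protected.'' Finally, combining: on the event (of probability $\ge 1/2$ for $b>b_1$) that the confinement estimate holds, the set $S$ defined by \eqref{eq:Sdef} is nonempty, consists of black sites, and satisfies \ref{S1}--\ref{S4}, so $E_r$ occurs; choosing $b_1$ large enough that the union bound over permissible paths gives probability $<1/2$ of failure for every $r\ge1$ completes the proof. $\hfill\Box$
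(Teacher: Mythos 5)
Your decomposition --- a probabilistic confinement argument for \ref{S1}--\ref{S3} plus a deterministic geometric argument that the set $S$ always satisfies \ref{S4} --- is exactly the paper's plan (Lemmas~\ref{shell-is-bounded} and~\ref{face-protected}), and your path-counting sketch for the former matches what the paper does (it defers the details to~\cite{GHS}). The gap is in your argument for \ref{S4}. You assert that if, say, the box $u+[1,2]\times[-2,-1]$ misses $S$, then it must lie entirely in $A$ ``since it cannot be beyond $S$.'' That inference is not valid: by definition~\eqref{eq:Sdef}, $S$ consists only of those sites in $A^c$ reached from $A$ by a \emph{taxed} step, so a site can perfectly well lie in $A^c\setminus S$, and nothing forces any of the four box sites into $A$. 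Without a box site known to be in $A$, there is no starting point for the free step that is supposed to place $u$ in $A$ and furnish the contradiction, and the argument does not close.

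The paper's Lemma~\ref{face-protected} handles \ref{S4} by direct construction rather than by contradiction. Since $u\in S$ is reached from $A$ by a taxed step and $|u_1|,|u_2|\ge 3$, the unique taxed predecessor of $u$ is $u+(-1,-1)$, so $u+(-1,-1)\in A$. The hypothesis $|u_1|,|u_2|\ge 3$ makes $(1,-2)\in F$ a legitimate ($\ell^1$-decreasing) free step from $u+(-1,-1)$, giving $u+(0,-3)\in A$. Also $u+(2,-1)\notin A$, since otherwise the free step in direction $(-2,1)$ would place $u$ in $A$, contradicting $u\in S$. Now either $u+(1,-2)\in A$, in which case the taxed step $(1,1)$ from it reaches $u+(2,-1)\in A^c$, so $u+(2,-1)\in S$; or $u+(1,-2)\notin A$, in which case the taxed step $(1,1)$ from $u+(0,-3)\in A$ reaches $u+(1,-2)$, so $u+(1,-2)\in S$. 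Either way $(u+[1,2]\times[-2,-1])\cap S\neq\emptyset$. If you wish to keep your contrapositive framing you would have to reproduce essentially this chain of memberships; the shortcut ``the box lies in $A$'' cannot be extracted from disjointness with $S$ alone.
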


Note that the event $E_r$ depends only on the colors of sites in $\{u\in \Z^2 : r\le \|u\|_1\le r+ \sqrt{r}\}$. However, in proving Proposition~\ref{shell}, we show that the set $S$ defined in~\eqref{eq:Sdef} is, in fact, the desired shell with large probability. The proof of the first lemma below, based on path counting, is nearly identical to the proofs of Lemmas 8, 9 and 10 in~\cite{GHS}, so we omit the details.

\begin{lemma}\label{shell-is-bounded}
There exists $b_2<1$ such that if $b>b_2$, then for each $r\ge 1$, the set $S$ defined by~\eqref{eq:Adef} and~\eqref{eq:Sdef} satisfies properties \ref{S1}, \ref{S2} and \ref{S3} with probability at least $1/2$.
\end{lemma}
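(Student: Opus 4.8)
The plan is to establish properties \ref{S1}, \ref{S2}, \ref{S3} by showing that, with high probability (at least $1/2$ when $b$ is close enough to $1$), the random set $S$ from~\eqref{eq:Sdef} cannot "escape" the annulus $\{r \le \|u\|_1 \le r + \sqrt r\}$ in any of the relevant directions. The key quantitative input is a path-counting bound: if $S$ fails one of these properties, then there must be a permissible path from some $u$ with $\|u\|_1 < r$ that travels an "excessive" distance, and each such path must use many taxed steps through white sites. Since white sites occur with probability $1-b$, and the number of permissible paths of a given length grows at most geometrically (the free steps draw from the finite set $F$ and each taxed step has at most $3^2$ choices for how zero coordinates resolve — in practice the relevant paths off the axes have deterministic taxed steps), a union bound over path lengths makes the failure probability as small as we like by taking $b$ large. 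This is exactly the scheme of Lemmas~8--10 in \cite{GHS}, so I will only indicate the shape of each estimate.

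First I would set up the geometry of taxed and free steps: along a permissible path, each taxed step strictly increases $\|\cdot\|_1$ (by $1$ for each nonzero coordinate, so by $2$ off the axes), while each free step strictly decreases $\|\cdot\|_1$ (by $1$ for a $(\pm1,0)$-type step, by $1$ for a $(2,1)$-type step since $2+1=3$ but one coordinate drops... more precisely $\|v\|_1<\|u\|_1$ is imposed by definition). Hence along any permissible path the $\|\cdot\|_1$-value performs a walk that goes up only via (white) taxed steps and down only via free steps. To violate \ref{S2} — say $S$ contains a vertex with $\|u\|_1 > r+\sqrt r$ — the path from $A$ reaching just outside $S$ would have to have net displacement at least $\sqrt r$ in $\|\cdot\|_1$, hence at least $\sqrt r/2$ taxed steps, all landing on white sites; the probability is at most $\sum_{k \ge \sqrt r/2} (\text{paths of length}\sim k)\,(1-b)^{k/2}$, which is summable and $o(1)$ for $b$ near $1$, uniformly in $r \ge 1$. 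Property \ref{S1} is handled similarly near the four axis points $(\pm r,0),(0,\pm r)$: near an axis, taxed steps are forced and cheap, so $A$ automatically contains the full $\|\cdot\|_1 = r-1$ neighborhood there and $S$ the corresponding $\|\cdot\|_1=r$ sites; one only needs that no white site lets a path shoot past, again a geometric tail bound. Property \ref{S3} — that each diagonal ray $k\varphi$ meets $S$ with $k \ge r/2$ — follows because along a pure diagonal direction taxed steps are again deterministic (no zero coordinates once off the axes), so $A$ contains $k\varphi$ for all $k$ up to wherever the first white-induced or free-step obstruction forces $S$; the bad event that this happens before $k = r/2$ requires an atypically long all-white stretch avoided... rather, requires the path structure to fail, which has probability $\le C r (1-b)^{cr}$, summable away.

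The main obstacle, as in \cite{GHS}, is the bookkeeping for the path counting: one must verify that the number of permissible paths of length $k$ starting from a fixed region of size $O(r^2)$ is at most $r^{O(1)} \cdot c^k$ for an absolute constant $c$, despite free steps being able to backtrack in $\|\cdot\|_1$. The resolution is that a permissible path is still a self-avoiding walk (the $u_i$ are distinct) on $\Z^2$ with bounded step size, so it has at most $C^k$ choices for an absolute $C$, and one then pays $(1-b)$ per taxed step while using that a path achieving the forbidden displacement must contain a constant fraction of taxed steps (because each free step decreases $\|\cdot\|_1$ by a bounded amount, so to gain $\sqrt r$ or $r/2$ in $\|\cdot\|_1$ there must be proportionally many increasing, i.e.\ taxed and hence white, steps). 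Combining the self-avoidance count $C^k$ with the white-cost $(1-b)^{\Omega(k)}$ gives a convergent geometric series once $b$ is chosen with $C^2(1-b) < 1$, and summing the three failure probabilities below $1/2$ fixes $b_2$. I would state this union bound cleanly and refer to \cite{GHS} for the identical combinatorial estimate rather than reproducing it.
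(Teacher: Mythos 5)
Your proposal takes essentially the same approach as the paper, which omits the proof entirely and defers to the path-counting arguments of Lemmas~8--10 in \cite{GHS}. Your sketch correctly identifies the three ingredients that make that argument go: a permissible path violating one of \ref{S1}--\ref{S3} must have positive net $\|\cdot\|_1$-gain, so (since taxed steps raise $\|\cdot\|_1$ by at most $2$ and free steps lower it by at least $1$) at least a third of its steps are taxed and hence land on white sites; the number of permissible self-avoiding paths of length $k$ from a prescribed region is at most $C^k$; and the resulting geometric series $\sum_k C^k (1-b)^{k/3}$ converges once $b$ is close enough to $1$, uniformly in $r$, giving the uniform constant $b_2$. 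Your remarks on \ref{S1} and \ref{S3} are somewhat loose as written (e.g., establishing $k\varphi\in S$ rather than merely $k\varphi\in A$ requires showing $k\varphi\notin A$, which brings in the same path count needed for \ref{S2}, and near an axis you still need the boundary sites to be black), but you acknowledge these are bookkeeping matters and correctly propose to refer to \cite{GHS} for the combinatorics rather than reproduce them, which is exactly what the paper does.
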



\begin{lemma}\label{face-protected}
The set $S$ defined by~\eqref{eq:Adef} and~\eqref{eq:Sdef} satisfies property \ref{S4}.
\end{lemma}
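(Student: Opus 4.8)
The plan is to show that the set $S$ defined by~\eqref{eq:Adef} and~\eqref{eq:Sdef} automatically satisfies property~\ref{S4}, purely from the combinatorial structure of permissible paths — no probability is involved. Fix $u=(u_1,u_2)\in S$ with $|u_1|\ge 3$ and $|u_2|\ge 3$; by symmetry it suffices to treat the two cases in the definition of ``protected,'' and within each we may assume $u$ lies in the appropriate (open) quadrant. Consider first $u\in[1,\infty)^2$ (so $u_1,u_2\ge 3$). We must show that both $u+[-2,-1]\times[1,2]$ and $u+[1,2]\times[-2,-1]$ intersect $S$. Since $u\in S$, by~\eqref{eq:Sdef} there is some $w\in A$ with $w\step u$ a taxed step; in this quadrant a taxed step decreases both coordinates by $1$, so $w=u-(1,1)\in A$.

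Next I would exhibit a point of $A$ from which a taxed step lands in each of the two target boxes. Starting from $w=u-(1,1)\in A$, I apply free steps to reach other points of $A$: free steps use directions obtained from $(1,0)$ and $(2,1)$ by permuting coordinates and flipping signs, and from any point of $A$ a free step stays in $A$ (by the definition~\eqref{eq:Adef}, since appending a free step to a permissible path gives a permissible path). From $w$, the free step in direction $(-1,0)$ (if $w$ is still off the axes, which holds since $u_1\ge 3$ forces $w_1\ge 2$) reaches $w'=u-(2,1)\in A$; then the taxed step from $w'$ lands at $u-(2,1)+(1,1)=u-(1,0)$, wait — I should instead track which taxed-step images hit the boxes. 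The cleaner route: from $w=u-(1,1)\in A$ I reach, via a short chain of free steps within the quadrant, a point $w''$ with $w''=u+(-3,0)$ or similar, so that the taxed step $w''\step w''+(1,1)$ (or the appropriate sign pattern) lands inside $u+[-2,-1]\times[1,2]$; symmetrically another free-step chain reaches a point whose taxed-step image lies in $u+[1,2]\times[-2,-1]$. Concretely one checks that $u+(-1,1)$ and $u+(1,-1)$ are both reachable as taxed-step images from points of $A$: e.g. $u+(-2,0)\in A$ (two free steps in direction $(-1,0)$ from $w$) has taxed-step image $u+(-1,1)$, which lies in $u+[-2,-1]\times[1,2]$; and $u+(0,-2)\in A$ (two free steps in direction $(0,-1)$ from $w$) has taxed-step image $u+(1,-1)\in u+[1,2]\times[-2,-1]$. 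In each case, the taxed-step image is either in $A$ or in $S$; if it is in $A$ one pushes one more free step to land the taxed-step image strictly inside the box while staying in $A\cup S$, and since the box has $\ell^\infty$-diameter $1$ one of the two relevant lattice points of the box must be outside $A$, hence in $S$.

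The second case, $u\in(-\infty,-1]\times[1,\infty)$ with $|u_1|,|u_2|\ge 3$, is handled identically after the reflection $(x_1,x_2)\mapsto(-x_1,x_2)$, which maps taxed and free steps and the quantity $\|\cdot\|_1$ to themselves, and interchanges the two bullets in the definition of ``protected''; likewise the remaining quadrants follow by the symmetries of the construction under coordinate permutation and sign flips, which preserve both step types and~\eqref{eq:Adef}--\eqref{eq:Sdef}. The only genuine content — and the step I expect to need the most care — is the bookkeeping verifying that the specific free-step chains stay off the coordinate axes (so that the ``zero coordinate'' clause of the taxed-step definition is not triggered and the free-step directions are legitimate) and that each of the two target boxes necessarily contains a point of $S$ rather than merely a point of $A$; the latter uses that $A$ contains no permissible path leaving $A$, so the ``outer boundary'' reached by a taxed step from $A$ is exactly $S$, and at least one of the two candidate points in each box lies on that boundary. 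All of this is elementary case-checking on a bounded neighborhood of $u$, so the lemma follows.
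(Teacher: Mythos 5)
Your overall plan is in the right spirit and begins the same way as the paper: since $u$ is off the axes, the unique taxed-step preimage of $u$ is $u+(-1,-1)$, so $u+(-1,-1)\in A$; then one wants to push into $A$ with free steps and re-emerge into the target box with a taxed step. However, there are two genuine problems in the execution.

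First, the specific free-step chains do not reach the sites you claim. From $w=u+(-1,-1)$, two free steps in direction $(-1,0)$ reach $u+(-3,-1)$, not $u+(-2,0)$; and two free steps in direction $(0,-1)$ reach $u+(-1,-3)$, not $u+(0,-2)$. In fact $u+(-2,0)$ and $u+(0,-2)$ have the \emph{same} $\ell^1$ norm as $w$, so they are not reachable from $w$ by free steps at all (each free step strictly decreases $\ell^1$ norm), nor is the displacement $(\pm 1,\mp 1)$ in the set $F$. So the sites you want to take a taxed step from have not been shown to lie in $A$.

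Second, and more importantly, the final step of your argument — ``since the box has $\ell^\infty$-diameter $1$, one of the two relevant lattice points of the box must be outside $A$, hence in $S$'' — has no justification and is where the real content of the lemma lies. There is no a priori reason why a taxed-step image of a point of $A$ must sometimes land in $A^c$; you need to actually exhibit a point of the target box that is provably in $A^c$. The paper's proof supplies exactly this: it observes that $u+(2,-1)\in A^c$, because the free step in direction $(-2,1)$ from $u+(2,-1)$ would land at $u$, and $u\in S\subset A^c$. With that one fact in hand, the dichotomy is clean: the free step $(1,-2)$ from $u+(-1,-1)$ gives $u+(0,-3)\in A$ (this is where $|u_1|,|u_2|\ge 3$ is used to guarantee the $\ell^1$ norm decreases); then either $u+(1,-2)\in A$, so its taxed-step image $u+(2,-1)$ lies in $A^c$ and hence in $S$, or $u+(1,-2)\in A^c$, in which case it is the taxed-step image of $u+(0,-3)\in A$ and hence itself lies in $S$. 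I would suggest you replace your speculative free-step chains and the unproved exclusion claim with this explicit ``outside $A$'' observation and the resulting two-case analysis.
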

\begin{proof}
Without loss of generality, suppose $u= (u_1,u_2)\in S$ is such that $u_i \ge 3$ for $i=1,2$, and by symmetry it suffices to show that $u+[1,2]\times [-2,-1]$ intersects $S$.  By the definition of $S$ in~\eqref{eq:Sdef}, $u$ must be reachable from $A$ by a taxed step. Since $u$ is not on a coordinate axis, the only site from which we can reach $u$ via a taxed step is $u + (-1,-1)$, so $u + (-1,-1)\in A$. Taking a free step in the direction $(1,-2)$ implies $u+(0,-3)\in A$ (this is where we require $|u_1|\ge 3$ and $|u_2|\ge 3$, to guarantee that direction $(1,-2)$ is, in fact, a free step). Observe that $u + (2,-1) \in A^c$, otherwise we would have $u\in A$, since it is reachable from this point by the free step in the direction $(-2,1)$.

Now their are two cases. If $u+(1,-2)\in A$, then $u+(2,-1)\in S$, since it is reachable from $u+(1,-2)$ along the taxed step 
in the direction $(1,1)$. Otherwise, if $u+(1,-2)\in A^c$, then $u+(1,-2)\in S$, since it is reachable from $u+(0,-3)\in A$ along the taxed step in the direction $(1,1)$. In either case, we have found a site in 
$(u+[1,2]\times [-2,-1])\cap S$.
\end{proof}

\begin{proof}[Proof of Proposition~\ref{shell}]
The claim follows from Lemmas~\ref{shell-is-bounded} and~\ref{face-protected}.
\end{proof}

\subsection{Construction of a protected set $Z$}
In this section we construct a set $Z\subset \bZ^2$, which is our candidate for the set satisfying the assumptions of Proposition~\ref{super-sneaky}.  

Suppose that there exists a shell $S$ of radius $r$
so that $Q_u$ is a good box for every $u\in S$. For every 
$u\in S$ with both coordinates at least $3$ in absolute value, select a 
nice vertex from $Q_u$ and gather the selected vertices 
into the set $U$. (No nice vertices are chosen from $Q_u$ if at least one coordinate of $u\in S$ is less than 3 in absolute value.)

A \df{fortress} is a square of side length
$12L+1$ (this is the reason for our choice of $M=12L$ in the definition of $J$ at~\eqref{J-definition}), all four of whose corners are nice. Suppose that
there is a fortress centered at each of the four vertices $(\pm r(2L+1),0), (0,\pm r(2L+1))$.
Let $K$ be the set of all corner vertices of all fortresses ($16$ in all).
For $x\in \bZ^2$, define $\mathtt{Rect}(x)$ to be the rectangle with opposite corners at $x$ and $\mathbf{0}$ (for example, if $x=(x_1,x_2)$ with $x_1\ge0$ and $x_2\leq 0$, then $\mathtt{Rect}(x) = [0,x_1]\times[x_2,0]$). Now define $Z$ by
\begin{equation}\label{eqn:Zdef}
Z=\bigcup_{x \in U\cup K} \mathtt{Rect}(x).
\end{equation}
Note that by construction, all convex corners of $Z$ are nice vertices, and near each of the coordinate axes, there are two nice vertices on the line orthogonal to the nearby axis that are at distance $12L+1$. In addition, the fact that the slope of $S$ is locally bounded above and below (by property~\ref{S4}) makes the following proposition geometrically transparent. The formal proof is very similar to the proofs of Lemmas 20 through 26 in~\cite{GHS}, though it is much simpler, and is omitted.

\begin{lemma}\label{Z-is-protected}
Suppose $Z$ is defined as in~\eqref{eqn:Zdef}. If $\p$ is sufficiently small (depending on $\delta$ and $m$) to make $L$ sufficiently large, then $Z$ satisfies assumptions (i) and (ii) of Proposition~\ref{super-sneaky}.
\end{lemma}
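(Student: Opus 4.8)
The plan is to verify conditions (i) and (ii) of Proposition~\ref{super-sneaky} by carefully examining the boundary geometry of the set $Z$ defined in~\eqref{eqn:Zdef}, using only the defining properties \ref{S1}--\ref{S4} of the shell $S$ together with the fact that every chosen vertex (the $U$-vertices coming from good boxes, and the $K$-vertices at corners of fortresses) is \df{nice}, meaning that $\xi_0$ equals $3$ at that vertex and is $\ge 2$ throughout a translate of the large cross $J$ of~\eqref{J-definition}. First I would set up notation for the boundary $\partial Z$, decomposing it into the four ``convex corner'' vertices (which by construction of $Z$ as a union of rectangles $\mathtt{Rect}(x)$ over $x\in U\cup K$ are precisely the outermost selected nice vertices), the flat edge segments joining consecutive selected vertices, and the ``concave corners'' (reflex vertices) of the staircase. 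The key quantitative inputs are: consecutive selected vertices on $S$ lie within $\ell^\infty$-distance $O(L)$ of one another because the boxes $Q_u$ have side $2L+1$ and the shell has locally bounded slope by~\ref{S4}; and near each coordinate axis, property \ref{S1} together with the fortresses guarantees two nice vertices on a common line orthogonal to the axis at distance exactly $12L+1$, which is why $M=12L$ was chosen.

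For condition (i) — every $x\in Z$ with $\nonbrs(x,Z^c)=2$ (i.e.\ every convex corner of $Z$) has $\xi_0(x)=3$ — I would argue that the only vertices of $Z$ with two neighbors outside $Z$ are exactly the convex corners of the staircase boundary, and by the construction~\eqref{eqn:Zdef} each such corner is one of the selected vertices in $U\cup K$, hence nice, hence in state $3$. This requires checking that no ``spurious'' convex corners arise from the union of rectangles — that is, that the outer boundary of $\bigcup \mathtt{Rect}(x)$ is a monotone staircase in each quadrant whose only convex (outward) turns occur at the $x$'s themselves. This follows from the monotonicity of $S$ along each of the four quadrant arcs, guaranteed by \ref{S2}, \ref{S3} and the local slope bound \ref{S4}; near the axes one also uses \ref{S1} and the fortress corners to see the boundary closes up without creating extra convex corners.

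For condition (ii) — for every $x\in Z$ with $\nonbrs(x,Z^c)\ge 1$ there is no vertex $y$ with $\xi_0(y)=0$ within $\ell^\infty$-distance $m$ of $x$ — I would note that such $x$ lie on $\partial Z$, and every point of $\partial Z$ is within $\ell^\infty$-distance at most, say, $O(L)$ of some selected nice vertex $w$ (using the spacing bound from the previous paragraph; the worst case is the middle of a flat edge, and here the fact that the two nice vertices bracketing the segment are at distance $\le 12L+1$, together with the cross $J$ having arm half-length $M=12L$, forces the whole segment to be covered by $w+J$). Since $w$ is nice, $\xi_0\ge 2$ on $w+J$, and if $L$ is large enough (equivalently $\p$ small enough, since $L=\lfloor\delta/(m\p)\rfloor$) the $\ell^\infty$-ball of radius $m$ about $x$ is contained in $w+J$; hence it contains no state-$0$ vertex. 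The one delicate point is handling the neighborhoods of the coordinate axes, where $Z$ is not defined via a nice-cornered rectangle but is ``capped off'' using the fortresses: there one invokes \ref{S1} (the shell hugs the $\ell^1$-sphere near $(\pm r,0),(0,\pm r)$) and the fortress structure to check that the boundary segments crossing the axes are still within a cross $J$ of a nice vertex.

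The main obstacle I expect is purely combinatorial-geometric bookkeeping: showing that the union of rectangles $Z$ has exactly the intended boundary structure — a monotone staircase in each open quadrant with convex corners only at the selected nice vertices, correctly capped near the four axes using the fortresses — and that consecutive nice vertices are close enough (in units of $L$) that the cross $J$ of one covers the flat segment to the next and overshoots by at least $m$ on either side. This is the content of Lemmas 20--26 of~\cite{GHS} in the cited reference; here it is simpler because the shell's slope bound \ref{S4} makes the staircase tame, but it still requires a case analysis over the five boundary regions (four quadrant arcs plus the four axis caps, which by the fourfold symmetry reduce to essentially two cases). I would present the quadrant-arc case in full and indicate that the axis-cap case is analogous with \ref{S1} and the fortresses replacing the generic good-box spacing. $\Box$
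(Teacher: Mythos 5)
Your proposal matches the paper's own (deliberately terse) treatment: the paper states exactly the same three geometric facts you use — that all convex corners of $Z$ are nice vertices, that near each axis the fortress supplies two nice vertices on a line orthogonal to the axis at spacing $12L+1$, and that \ref{S4} bounds the local slope of $S$ — and then defers the bookkeeping to Lemmas 20--26 of~\cite{GHS}, just as you do. Your elaboration of why these facts give (i) (no spurious convex corners, every $\nonbrs(x,Z^c)=2$ site is a selected nice vertex and hence in state $3$) and (ii) (every boundary site's $\ell^\infty$-ball of radius $m$ sits inside $w+J$ for a nearby nice $w$, with the quadrant arcs controlled by \ref{S4} giving consecutive $U$-vertices at $\ell^\infty$-distance $O(L)$ and the axis caps controlled by the fortress spacing) is consistent with and somewhat more explicit than the paper's sketch.
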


\subsection{Existence of a protected set $Z$}

Assume $N_0=3\lfloor \p^{-36}\rfloor$, 
$n_0=\lfloor  \p^{-19}\rfloor$, 
$T=\lfloor  \p^{-17}\rfloor$, and 
$\Delta=\lfloor \p^{-19}\rfloor$. 
Define the sequence of separated annuli 
$$A_i=\{x\in \bZ^2: n_0+(2i-1)\Delta\le \|x\|_1\le 
n_0+2i\Delta\}, 
$$
for $i=1,\ldots, T$. 

\begin{lemma} 
\label{Z-exists}
Fix an $m$. 
For a small enough $\epsilon>0$ and $\delta>0$, 
and $\q\ge C\p^2$, where $C$ is given 
in Lemma~\ref{good-likely}, the following holds. 
With probability at least $1-\exp(-1/(4\p))$,  
there exists a protected set $Z$ satisfying assumptions (i) and (ii) of Proposition~\ref{super-sneaky} contained in $\{x\in \bZ^2: \|x\|_1\le N_0\}$. 
\end{lemma}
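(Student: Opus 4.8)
The plan is to find, with very high probability, a radius $r$ in a prescribed range for which a shell $S$ of good boxes exists, and then to upgrade this to the existence of the fortresses at the four axis points, so that the set $Z$ of \eqref{eqn:Zdef} can be constructed and Lemma~\ref{Z-is-protected} applies. The randomness is organized around the $T$ separated annuli $A_1,\dots,A_T$: these are at pairwise $\ell^1$-distance $\Delta-1$, which, since $\Delta$ grows polynomially in $1/\p$ and each shell lives in a thin $\sqrt{r}$-neighborhood of an $\ell^1$-sphere, guarantees that the events ``a good shell exists in $A_i$'' are determined by disjoint (indeed well-separated) regions of $\bZ^2$.

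First I would set up the comparison with site percolation on the rescaled lattice: by Lemma~\ref{good-likely}, for small enough $\epsilon,\delta$ and $\q\ge C\p^2$, each rescaled box $Q_u$ is good with probability at least $1-\epsilon$, and because goodness of $Q_u$ depends only on $\xi_0$ in $Q_u+J$, which has bounded overlap structure, the good-box field dominates a high-density $1$-dependent (hence, by a standard comparison, Bernoulli) site percolation with parameter $b$ arbitrarily close to $1$; choosing $\epsilon$ small pushes $b$ above the threshold $b_1$ of Proposition~\ref{shell}. Then for each annulus $A_i$ there is a radius $r_i\approx n_0+(2i-\tfrac12)\Delta$ (chosen so the full $\sqrt{r}$-shell neighborhood fits inside $A_i$) such that $\P(\text{good shell of radius }r_i\text{ inside }A_i)\ge 1/2$, by Proposition~\ref{shell} applied in rescaled coordinates. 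By the separation of the annuli these $T$ events are independent, so the probability that none of them occurs is at most $2^{-T}\le \exp(-c\p^{-17})$, which is far smaller than $\exp(-1/(4\p))$.

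Next I would handle the fortresses. Conditioned on a good shell of some radius $r=r_i$ in $A_i$, I still need four fortresses centered at $(\pm r(2L+1),0),(0,\pm r(2L+1))$, i.e.\ at each of these four points a $(12L+1)\times(12L+1)$ square all four of whose corners are nice vertices. A single specified vertex is nice with probability $\asymp \q\,(1-\epsilon)\asymp \p^2$ (it must have $\xi_0=3$ and be viable), and the four corners of a fixed fortress are far enough apart to be independent by $1$-dependence, so a fixed fortress is present with probability $\asymp \p^8$. That is too small to get directly, but one does not need a fortress at an exactly prescribed point: there is a whole window — of radius comparable to $\Delta/(2L+1)$ in rescaled coordinates, or to a power of $1/\p$ — of admissible centers near each axis point for which the resulting $Z$ is still protected (the role of $M=12L$ and the local slope bound \ref{S4} is exactly to make $Z$ robust to such translations). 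Over $\asymp \p^{-c}$ independent candidate centers, the probability of finding at least one fortress near a given axis point is $1-(1-\asymp\p^8)^{\p^{-c}}\ge 1-\exp(-\p^{-(c-8)})$ once $c>8$; a union bound over the four axis directions and over the (at most $T$) radii keeps this below $\exp(-1/(4\p))$. I would choose the exponents $36,19,17,19$ in $N_0,n_0,T,\Delta$ precisely so that: the annuli fit inside $\{\|x\|_1\le N_0\}$; each shell and its fortresses fit inside its annulus; $T$ is large enough that $2^{-T}\le \tfrac12\exp(-1/(4\p))$; and the number of candidate fortress centers in each window is large enough to beat $\p^8$.

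The main obstacle is the last step — getting the fortresses — because the ``one fortress at a prescribed corner'' probability $\asymp\p^8$ is polynomially small, and the whole point of the lemma is a stretched-exponentially small failure probability. Everything hinges on showing that the shell construction leaves enough freedom near the four axis points that a positive power of $1/\p$ many disjoint fortress attempts are available and that \emph{any} success among them yields a set $Z$ still satisfying (i) and (ii) of Proposition~\ref{super-sneaky}; this is the geometric content alluded to by ``similar to the proofs of Lemmas 20 through 26 in \cite{GHS}'' in the paragraph before Lemma~\ref{Z-is-protected}, and it is where I would spend the most care. The remaining bookkeeping — the $1$-dependent-to-Bernoulli comparison for good boxes, the independence of the $T$ annular events, and the arithmetic of the four exponents — is routine once the window width is pinned down.
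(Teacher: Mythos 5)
Your overall architecture matches the paper's: paint $x$ black if $Q_x$ is good, use \cite{LSS} to dominate a high-density Bernoulli field so that Proposition~\ref{shell} applies, place one shell attempt in each of the $T\approx\p^{-17}$ separated annuli $A_i$, and exploit the spatial independence of the annuli to get a stretched-exponential failure bound. Where you diverge is in how the four fortresses are obtained, and this is the step where your route is both more complicated and not fully sound as written.

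The paper does \emph{not} condition on the shell and then search for fortresses. It observes that the shell event is increasing in $\xi_0$ (in the $\{0,2,3\}$-valued configuration that actually arises), as is the event that each of the $16$ prescribed corners in $K=\{$corners of the fortresses at $(\pm r_i(2L+1),0),(0,\pm r_i(2L+1))\}$ is nice. By FKG these two events are positively correlated, so the per-annulus success probability is at least $\tfrac12\cdot\P(\text{$16$ fixed nice vertices})$, which is a fixed polynomial in $\p$; multiplying over the $\p^{-17}$ annuli gives the $\exp(-1/(4\p))$ bound with no search at all. Your proposal instead conditions on the shell and then attempts a first-moment search over $\p^{-c}$ candidate fortress centers. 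That has two problems: (a) conditioning on the (increasing) shell event changes the law of $\xi_0$ near the axis points, where S1 forces shell sites, so the fortress-corner events are no longer a clean product measure — you would need FKG anyway, at which point the search is unnecessary; and (b) you would have to establish that Lemma~\ref{Z-is-protected} is stable under translating each fortress within your window, i.e.\ rework the definition of $Z$ in \eqref{eqn:Zdef} and the protection geometry for non-axis-aligned $K$. None of that is needed in the paper.

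Your worry that a ``fortress at a prescribed corner'' probability is ``polynomially small while the lemma demands a stretched-exponentially small failure probability'' is a red herring: the stretched-exponential gain does not come from searching within one annulus, but from the product over $T\gg$ (polynomial power)$^{-1}$ many independent annuli, exactly as you set up in your first paragraph. Once you have any polynomial per-annulus lower bound $\p^{c_0}$ with $c_0<17$, you are done; the FKG step delivers that directly, with no window argument. So the fix is to replace the search/conditioning step with the single observation that shell existence and $K$-niceness are both increasing and hence positively correlated under the FKG assumption on $\xi_0$.
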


\begin{proof}
Note that $n_0+2T\Delta\le N_0$. 

Paint each site $x\in \bZ^2$ \emph{black}
if the box $Q_x$ is good. Let $r_i = \floor{(n_0+(2i-1)\Delta)/ (2L+1)}+11$, so $(2L+1)r_i -20L \ge n_0+(2i-1)\Delta$, and observe that $\sqrt{r_i} \le \sqrt{N_0 / L} \ll \Delta / (2L+1)$ for $\p$ small. Therefore, existence of a shell of good boxes of radius $r_i$ depends only on the states of vertices within the annulus $A_i$.  Moreover, we have that sites $x_1$ and $x_2$ with 
$\|x_1-x_2\|_\infty\ge 30$ are painted independently, and so 
by \cite{LSS} the configuration of black sites 
dominates a product measure of density $b_1$ (chosen 
from Proposition~\ref{shell}) provided $\epsilon>0$ 
in Lemma~\ref{good-likely} is small enough, and $\delta$ 
is chosen appropriately. It follows that, when $\p$ is small enough, by Proposition~\ref{shell}, a shell  of good boxes of radius $r_i$ exists with probability at least $1/2$. The existence of a shell of good boxes of radius $r_i$ is an increasing event (in $\xi_0$), and so it is positively correlated with existence of nice vertices at the 16 locations comprising the set $K$ ($\subset A_i$) in~\eqref{eqn:Zdef}.
Therefore, the set $Z$ given by~\eqref{eqn:Zdef} exists with convex corners $U\cup K\subset A_i$ with probability 
at least $\p^{16}/2$. Due to the separation of shells, 
the probability that such a $Z$ does not exist in $A_i$ for all $i=1,\ldots, T$
is then at most $(1-\p^{16}/2)^{\p^{-17}/2}\le \exp(-1/(4\p))$. By Lemma~\ref{Z-is-protected}, if $Z$ constructed in this manner exists, then it satisfies assumptions (i) and (ii) of Proposition~\ref{super-sneaky}.
\end{proof}

\begin{proof}[Proof of Theorem~\ref{polluted}]
Choose $s=37$ in Lemma~\ref{limited-bootstrap}. 
That determines $m=48s < 2000 $. The proof is concluded by Lemma~\ref{Z-exists}, Lemma~\ref{limited-bootstrap}, 
 and
Proposition~\ref{super-sneaky}. 
\end{proof}

\begin{proof}[Proof of Theorem~\ref{sharp transition even} equation (\ref{subcritical-even})]
Initialize $\xi_0$ using inertness as in 
Lemma~\ref{omega upper bound}, then convert all $1$s to $0$s,
and all $4$s and $5$s to $3s$. Suppose $v_0 \in \mathbf{0}\times K_n^2$. If $v_0\in \omega_\infty$, then either $v_0\in \omega_0$, or some Hamming square in  $\{\{x\} \times K_n^2 : x\in [-1000,1000]^2\}$ is not $\theta$-inert, or else $\mathbf{0}$ is in a cluster of state-$0$ sites in $\xi_\infty$ that has diameter larger than $1000$.
Therefore, by Theorem~\ref{polluted} and Lemma~\ref{inertness-even}
\begin{equation*}
\begin{aligned}
&\probsub{p}{v_0\in \omega_\infty}\\
&\le 
\probsub{p}{v_0\in \omega_0}+10^7\probsub{p}{\mathbf{0}\times K_n^2
\text{ is not $\theta$-inert}}+
C\probsub{p}{\mathbf{0}\times K_n^2
\text{ is not $(\theta-1)$-inert}}^3\\
&=n^{-2/\ell+o(1)}.
\end{aligned}
\end{equation*}
The lower bound is easy: by Lemma~\ref{internal-results-even} part 5,
$$
\probsub{p}{v_0\in \omega_\infty}
\ge \probsub{p}{\mathbf{0}\times K_n^2
\text{ is $\theta$-IS}} = n^{-2/\ell + o(1)},
$$
and (\ref{subcritical-even}) is thus proved.
\end{proof}

\section{The supercritical regime for even threshold}

\renewcommand{\frame}{\text{\tt Frame}}
\newcommand{\blocking}{\text{\tt Blocking\_In }}

In this section, we prove the claims of 
Theorem~\ref{sharp transition even} 
when $a^\ell\ge 2(\ell-1)!$. In the following subsections, we prove, 
in order: (\ref{supercritical-even}), upper bound on the rate 
(\ref{supercritical-even1}) for $\ell\ge 2$, lower bound 
on the same rate for $\ell\ge 2$, and the asymptotics for
the exceptional case $\ell=1$. 

\subsection{Comparison process and rescaling}

Initialize the comparison process, $\xi_t$, as follows. For $x\in \bZ^2$, let
\begin{equation}
\xi_0(x) = \begin{cases}
0 & \text{if $\{x\}\times K_n^2$ is $\theta$-IS} \\
k & \text{if $k\in \{1,2\}$ and $\{x\}\times K_n^2$ is $(\theta-k)$-IS, but is not $(\theta-k+1)$-IS}\\
5 & \text{if $\{x\}\times K_n^2$ is not $(\theta-2)$-IS.}
\end{cases}
\end{equation}
In other words, initialize $\xi_t$ as in Lemma~\ref{omega lower bound}, but replace all $3$s and $4$s with $5$s. 

To apply Lemma~\ref{omega lower bound}, we need a method to 
show that 
$\probsub{p}{\xi_\infty(\mathbf{0})=0}$ is close to 1, and 
for that, we adapt the rescaling from \cite{GM} to our purposes;
in particular, we need to account for 
the existence of $1$s, which require activation from $0$s, 
and to
prove high final density at the critical value (when $a^\ell = 2 (\ell-1)!$). We let 
\begin{equation}\label{N-def}
N=
\begin{cases}
\floor{n^{1/\ell} (\log n)^{-1/2\ell}} & \ell\ge 2\\
\floor{n (\log n)^{-3/4}} & \ell=1
\end{cases}
\end{equation}
and, for $x \in \bZ^2$, let $\Lambda_x = N\cdot x + [0,N-1]^2$ be the $N\times N$ box in $\bZ^2$ with lower-left corner at $N x$. 
Call the box $\Lambda_x$ {\em good} if $\xi_0(y) \le 2$ for every $y\in \Lambda_x$ and, in addition, every row and column of $\Lambda_x$ contains at least one $y$ such that $\xi_0(y) \le 1$.  Call a box {\em very good} if $\xi_0(y)\le 1$ for every $y\in \Lambda_x$ and $\xi_0(y) = 0$ for some $y\in \Lambda_x$. 

\begin{lemma} \label{not-good-bound}
For $\ell\ge 1$ and large enough $n$, 
$$
\probsub{p}{\Lambda_x \text{ is not good}} \leq
6 n^{(2/\ell) - (a^\ell/\ell!)} \cdot(\log n)^{- (1/\ell\wedge 1/2)}.
$$
\end{lemma}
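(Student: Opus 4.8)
The plan is an elementary first--moment (union) bound over the $N^2$ sites of $\Lambda_x$ together with its $2N$ rows and columns. First I would make two reductions: by translation invariance of the product measure $\omega_0$, hence of $\xi_0$, it suffices to treat $x=\mathbf{0}$; and the variables $(\xi_0(y))_{y\in\bZ^2}$ are independent, since each depends only on $\omega_0$ restricted to the copy $\{y\}\times K_n^2$ and these copies are pairwise disjoint. Note that in the initialization used here $\xi_0$ takes values only in $\{0,1,2,5\}$, and, by monotonicity of internal spanning in the threshold, $\xi_0(y)\le 1$ exactly when $\{y\}\times K_n^2$ is $(\theta-1)$-IS, i.e.\ $(2\ell+1)$-IS, while $\xi_0(y)\ge 3$ (equivalently $\xi_0(y)=5$) exactly when $\{y\}\times K_n^2$ is not $(\theta-2)$-IS, i.e.\ not $(2\ell)$-IS. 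I would then write the complement of ``$\Lambda_{\mathbf{0}}$ is good'' as $B_1\cup B_2$, where $B_1=\{\exists\,y\in\Lambda_{\mathbf{0}}:\xi_0(y)=5\}$ and $B_2=\{\text{some row or column of }\Lambda_{\mathbf{0}}\text{ contains no }y\text{ with }\xi_0(y)\le 1\}$, and bound $\probsub{p}{\Lambda_{\mathbf{0}}\text{ not good}}\le\probsub{p}{B_1}+\probsub{p}{B_2}$ separately.

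For the dominant term $B_1$, the union bound gives $\probsub{p}{B_1}\le N^2\,\probsub{p}{K_n^2\text{ is not }(2\ell)\text{-IS}}$. Inserting the asymptotics of Lemma~\ref{internal-results-even}(3) (namely $2n^{-a^\ell/\ell!}$ for $\ell\ge 2$ and $a(\log n)/n^a$ for $\ell=1$) together with $N^2\le n^{2/\ell}(\log n)^{-1/\ell}$, resp.\ $N^2\le n^2(\log n)^{-3/2}$, from~\eqref{N-def}, one obtains $\probsub{p}{B_1}\le (2+o(1))\,n^{2/\ell-a^\ell/\ell!}(\log n)^{-1/\ell}$ for $\ell\ge 2$ and $\probsub{p}{B_1}\le (a+o(1))\,n^{2-a}(\log n)^{-1/2}$ for $\ell=1$, which is already of the asserted form. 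For $B_2$ I would fix a row $R$ of $\Lambda_{\mathbf{0}}$: by independence of $(\xi_0(y))_{y\in R}$, the probability that $R$ contains no $y$ with $\xi_0(y)\le1$ equals $(1-p_0)^N\le e^{-Np_0}$, where $p_0=\probsub{p}{K_n^2\text{ is }(2\ell+1)\text{-IS}}$, so $\probsub{p}{B_2}\le 2Ne^{-Np_0}$. By Lemma~\ref{internal-results-even}(4), $p_0\sim \tfrac{2a^{\ell+1}}{(\ell+1)!}(\log n)^{1+1/\ell}n^{-1/\ell}$, so with the choice~\eqref{N-def} one has $Np_0\sim \tfrac{2a^{\ell+1}}{(\ell+1)!}(\log n)^{1+1/(2\ell)}$ for $\ell\ge2$ and $Np_0\sim a^2(\log n)^{5/4}$ for $\ell=1$; in either case $Np_0/\log n\to\infty$, hence $\probsub{p}{B_2}=O(n^{-L})$ for every $L>0$ and is negligible against the bound on $\probsub{p}{B_1}$. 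Adding the two contributions then yields the stated bound for all large $n$.

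I do not expect a genuine obstacle here: the whole argument is a union bound plus the already-established single-square asymptotics. The only point that requires a moment's thought is verifying that the row/column requirement in the definition of ``good'' costs essentially nothing, i.e.\ that $Np_0$ grows \emph{faster than} $\log n$ so that $(1-p_0)^N$ is superpolynomially small; this is immediate from the value of $N$ in~\eqref{N-def} and the probability in Lemma~\ref{internal-results-even}(4). It is also worth remarking that, unlike elsewhere in the paper, no correlation issues intervene, since $\xi_0$ is here a genuine product field over $\bZ^2$.
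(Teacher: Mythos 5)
Your proposal is correct and matches the paper's proof in approach and detail: both decompose the complement of ``good'' into the event that some site of $\Lambda_x$ is in state $5$ (bounded by $N^2\cdot\probsub{p}{\xi_0(\mathbf{0})=5}$ via Lemma~\ref{internal-results-even}(3)) and the event that some row or column has no site with $\xi_0\le 1$ (bounded by $2N(1-\probsub{p}{\xi_0(\mathbf{0})\le 1})^N$ via Lemma~\ref{internal-results-even}(4)), then invoke~\eqref{N-def} and check the second term is superpolynomially small. You spell out a few details the paper leaves implicit (independence of $\xi_0$ across sites, the identification of $\xi_0$-states with IS thresholds, and the explicit verification that $Np_0\gg\log n$), but the argument is the same.
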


\begin{proof}
By Lemma~\ref{internal-results-even}, for $\ell\ge 2$,
\begin{equation*}
\begin{aligned}
\probsub{p}{\Lambda_x \text{ is not good}} &\leq N^2 \probsub{p}{\xi_0(\mathbf{0}) = 5} + 2N\cdot \left(1- \probsub{p}{\xi_0(\mathbf{0}) \le 1}\right)^N \\
&\leq 3N^2 n^{-a^{\ell}/{\ell}!} + 2N \exp\left[-N \cdot \frac {2a^{\ell+1}}{(\ell+1)!}\cdot \frac{(\log n)^{1+1/\ell}}{n^{1/\ell}}(1+o(1))\right] \\
&\leq 3 n^{(2/\ell) - (a^\ell/\ell!)} \cdot(\log n)^{- 1/\ell} + n^{1/\ell} \exp\left[-C (\log n)^{1 + 1/2\ell}\right].
\end{aligned}
\end{equation*}
When $\ell=1$, repeat the above computation with $\probsub{p}{\xi_0(\mathbf{0}) = 5} \le 3a n^{-a} \log n$.
\end{proof}

\begin{proof}[Proof of (\ref{supercritical-even})]
It follows from Lemma~\ref{omega lower bound} that 
$$
\bigcup\{\{x\}\times K_n^2 : \xi_\infty(x)=0\}\subset\omega_\infty,
$$
so we need only to show that $\probsub{p}{\xi_\infty(\mathbf{0})=0} \to 1$ when $a^\ell \ge 2(\ell-1)!$. Let $\cC_0$ denote the cluster of good boxes containing the box $\Lambda_0$. Observe that
$$
\probsub{p}{\abs{\cC_0} = \infty} = \probsub{p}{\{\abs{\cC_0} = \infty\}\cap \{\text{$\cC_0$ contains a very good box}\}} \le \probsub{p}{\xi_\infty(\mathbf{0}) = 0}.
$$
The last inequality follows from the fact that a very good box in $\cC_0$ sets off a cascade resulting in all vertices in $\cC_0$ eventually flipping to $0$. Now, Lemma~\ref{not-good-bound} implies $\probsub{p}{\abs{\cC_0} = \infty} \to 1$. 
\end{proof}

\subsection{Upper bound in (\ref{supercritical-even1}) for $\ell\ge 2$}

Throughout this subsection, assume that $\ell\ge 2$, 
$a^\ell/\ell!> 2/\ell$
and that $\xi_0$ is built by internal spanning 
properties, as in Lemma~\ref{omega lower bound}.

We will prove first the upper bound on the rate.
\begin{lemma} \label{even-supper-rate-upper}
The probability that the Hamming square based 
at the origin is not completely filled satisfies the 
following bound:
\begin{equation}\label{even-super-rate-less}
\probsub{p}{\{\mathbf{0}\}\times K_n^2 \not\subset \omega_\infty}
\le n^{4/\ell-4a^\ell/\ell!+o(1)}.
\end{equation}
\end{lemma}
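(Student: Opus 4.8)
The plan is to bound the failure probability by summing over the possible ways that occupation fails to reach $\{\mathbf{0}\}\times K_n^2$, using the comparison process $\xi_t$ from Lemma~\ref{omega lower bound} (with $3$s and $4$s replaced by $5$s, so $\xi_0\in\{0,1,2,5\}$). If $\{\mathbf{0}\}\times K_n^2\not\subset\omega_\infty$, then in particular $\xi_\infty(\mathbf{0})\ne 0$, so the origin lies in a connected component $\cC$ of the complement of $\{\xi_\infty=0\}$. Following the rescaling from the proof of (\ref{supercritical-even}), good boxes percolate with probability tending to $1$, and the origin's box fails to flip to $0$ only if $\Lambda_{\mathbf{0}}$ is not in the infinite good cluster. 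The standard Peierls/contour argument then says there is a circuit of ``bad'' boxes surrounding $\Lambda_{\mathbf{0}}$; alternatively, and more efficiently here, I would work directly on $\bZ^2$ rather than on rescaled boxes and locate the cheapest local obstruction.

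First I would identify the dominant obstruction. By Lemma~\ref{omega lower bound} (with the modification above), $\xi_0(x)\ge 5$ exactly when $\{x\}\times K_n^2$ is not $(\theta-2)$-IS, and by Lemma~\ref{internal-results-even} part 3 this has probability $\sim 2n^{-a^\ell/\ell!}$. Two such obstacle sites, $\bZ^2$-neighbors of each other and arranged so that a $0$-signal cannot pass around them (a $2\times 1$ block of $5$s, or more precisely a configuration of $5$s and $2$s forming a ``blocking'' set near the origin in the sense of the polluted bootstrap percolation literature \cite{GM}), suffice to keep the origin out of the $0$-cluster; but since $a^\ell/\ell! > 2/\ell$, each $5$ costs more than $n^{-2/\ell}$, and we need to see why roughly \emph{four} of them are required. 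The point is that in threshold-$2$ bootstrap percolation on $\bZ^2$, a single pair of adjacent obstacles does not by itself stop growth — one can go around — so the minimal blocking configuration around a point consists of obstacles on all ``four sides,'' giving the exponent $4a^\ell/\ell!$. The factor $n^{4/\ell}$ comes from the entropy: summing over the location and size of the blocking configuration, which can sit anywhere within an $O(n^{1/\ell})$-ish window determined by the rescaling length $N$ in (\ref{N-def}), and over the shape of the contour, contributes $(\log n)^{O(1)}$ and polynomial-in-$n$ factors absorbed into the $n^{4/\ell+o(1)}$.

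Concretely, the steps are: (1) reduce to $\probsub{p}{\xi_\infty(\mathbf{0})\ne 0}$ via Lemma~\ref{omega lower bound}; (2) invoke the rescaling of Subsection~4.1 — if $\xi_\infty(\mathbf{0})\ne 0$, then $\Lambda_{\mathbf{0}}$ is not in the infinite cluster of good boxes, so there is a $*$-connected circuit of not-good boxes around $\mathbf{0}$; (3) show that the cheapest such circuit is achieved by a bounded-size configuration, and that a not-good box carries probability $O(n^{2/\ell-a^\ell/\ell!}(\log n)^{-1/\ell})$ by Lemma~\ref{not-good-bound}, which by itself only gives one factor; (4) therefore refine the argument to count genuine \emph{obstacles} (sites with $\xi_0\ge 5$, probability $\asymp n^{-a^\ell/\ell!}$), observing that a minimal blocking set around the origin in threshold-$2$ dynamics requires four essentially independent obstacles (one must block each of the four lattice directions of approach), so the probability is at most $O(1)\cdot\big(n^{-a^\ell/\ell!}\big)^4$ times an entropy factor for the location of the blocking set; (5) bound the entropy factor by $n^{4/\ell+o(1)}$, using that the relevant spatial scale is $N=n^{1/\ell+o(1)}$ and that a blocking set far from the origin would require a longer, and hence more costly, contour — so the sum over locations is dominated by blocking sets within $O(N)$ of the origin, of which there are $N^{O(1)}=n^{O(1/\ell)}$ many, and careful bookkeeping yields the $4/\ell$ in the exponent. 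Combining (4) and (5) gives $\probsub{p}{\{\mathbf{0}\}\times K_n^2\not\subset\omega_\infty}\le n^{4/\ell-4a^\ell/\ell!+o(1)}$.

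The main obstacle is step (4)–(5): making precise that the minimal obstruction to $\xi_\infty(\mathbf{0})=0$ involves four obstacles and not two, and that the entropic cost of placing this obstruction is only $n^{4/\ell+o(1)}$ rather than something larger. This is exactly the ``most likely blocking configuration'' analysis carried out in \cite{GM} for ordinary polluted bootstrap percolation, so I would adapt that argument, with the extra care that here obstacles have density $n^{-a^\ell/\ell!}$ rather than a fixed $q$, the spatial scale is $N$ from (\ref{N-def}) rather than $q^{-1/2}$, and $1$s (which require activation) must be shown not to help the obstruction — they behave like $2$s for the purpose of blocking but are already subsumed into the not-good-box bound of Lemma~\ref{not-good-bound}. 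I expect the clean way to present this is to first prove the weaker bound $n^{2/\ell - 2a^\ell/\ell! + o(1)}$ using a single blocking pair and a crude contour sum, then observe it is not tight, and then run the full \cite{GM}-style argument to get the factor $4$; the lower bound matching this (deferred to the next subsection in the paper) will confirm $4$ is correct.
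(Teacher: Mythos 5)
Your plan matches the paper's proof in outline, but the crux is exactly the part you defer. The paper proceeds as you suggest: use Lemma~\ref{omega lower bound} to reduce to $\xi_\infty(\mathbf{0})\ne 0$; use percolation of good boxes (Lemma~\ref{circuit-good-sites}) to force, outside an event of probability $n^{-L}$, a circuit of $0$s inside $[-DN,DN]^2$ with $N=n^{1/\ell+o(1)}$; then optimize over blocking configurations inside that window. The structural lemma (Lemma~\ref{frame-necessary}) is that, once sites in state $5$ and all but at most one site in state $4$ are excluded (which costs $n^{4/\ell-4a^\ell/\ell!+o(1)}$ by Lemma~\ref{no-4-5}), the final nonzero cluster containing $\mathbf{0}$ must sit inside a nondegenerate rectangle each of whose four sides carries either two state-$\ge3$ sites or one state-$4$ site. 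The probabilistic lemma (Lemma~\ref{frame-bound}) then does the ``most likely blocking configuration'' optimization: among all such rectangles in $[-DN,DN]^2$, the dominant contribution is the \emph{frame}, a rectangle whose four \emph{corners} are in state $3$, so that the four side-obstacles are shared pairwise and only four obstacles are paid for. With $\lambda=1/\ell+o(1)$ and $\alpha=a^\ell/\ell!$, this gives entropy $n^{4\lambda}$ (four boundary lines) times $n^{-4\alpha}$ (four corner $3$s); the roughly nine other cases (no corner $3$s, one corner $3$, a $4$ on the boundary, etc.) are each shown to be $o$ of the frame term using $2\lambda<\alpha$.

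The gap in your proposal is precisely this case analysis. You write that ``a minimal blocking set around the origin in threshold-$2$ dynamics requires four essentially independent obstacles'' and that ``careful bookkeeping yields the $4/\ell$,'' and you explicitly say you would adapt \cite{GM}. That is the right instinct, but the assertion that four obstacles suffice \emph{and} are the cheapest, and that the entropy is exactly $N^4$ rather than, say, $N^8$ (from choosing two obstacles independently on each of the four sides), is the content of the lemma, not a consequence of general principles. The sharing of obstacles at corners is what lowers the cost from $n^{8\lambda-8\alpha}$ (two per side, no sharing) to $n^{4\lambda-4\alpha}$, and seeing why the corner-$3$ configuration actually blocks (a corner of the bounding rectangle has only two outward neighbors, so a state-$3$ site there is safe) requires keeping the graded state space $\{3,4,5\}$ rather than collapsing it to a single permanent obstacle as you do at the outset --- after collapsing, you can no longer distinguish an obstacle that can tolerate two $0$-neighbors from one that can tolerate three, which is what drives the counting. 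Your collapse happens to give the correct numerical answer for the upper bound because states $4$ and $5$ are subdominant, but the paper's argument genuinely uses the distinction in Lemma~\ref{frame-necessary}. Finally, your heuristic that ``a blocking set far from the origin would require a longer, and hence more costly, contour'' is not the paper's mechanism for confinement; instead, Lemma~\ref{circuit-good-sites} uses supercritical percolation of good boxes to place a circuit of $0$s in $[-DN,DN]^2$ at negligible cost, and the entropy factor $n^{4/\ell}$ comes entirely from choosing the frame within that window, not from any Peierls-type decay in contour length.
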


For a deterministic or random set $A\subset \bZ^2$, we say that 
the event $\blocking A$ happens if 
there exists a rectangle $R=[a_1,a_2]\times [b_1,b_2]$ so that:
$\mathbf{0}\in R$; $R$ is nondegenerate, i.e., $a_1<a_2$ and $b_1<b_2$; 
and each of the four sides of $R$, $\{a_1\}\times[b_1,b_2]$, 
$\{a_2\}\times[b_1,b_2]$,
$[a_1,a_2]\times\{b_1\}$, and $[a_1,a_2]\times\{b_2\}$,
either contains two distinct sites in $A$ with $\xi_0$-state $3$ 
or a site in $A$ with $\xi_0$-state $4$. A \emph{frame} is such 
a rectangle $R$ whose four corners have $\xi_0$-state $3$. 

\begin{lemma}\label{frame-necessary} Suppose that 
$\xi_\infty(\mathbf{0})\ne 0$. Assume that there is  circuit of $0$s around $\mathbf{0}$ in $\xi_t$, for some $t$. 
Let $A$ comprise sites in the strict interior of this circuit.
Assume that there are no sites in $A$ with $\xi_0$-state $5$, and there is 
at most one site in $A$ with $\xi_0$-state $4$. 
Then the event $\blocking A$ happens. 
\end{lemma}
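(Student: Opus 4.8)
The plan is to analyze the dynamics of $0$s restricted to the region $A$ bounded by the circuit, and to argue that the obstruction to $\mathbf{0}$ ever becoming $0$ forces a rectangular "barrier'' of $3$s and $4$s around $\mathbf{0}$. The key observation is that in heterogeneous bootstrap percolation the set of $0$s grows monotonically, and — after replacing $3$s by $2$s, $4$s and $5$s by (large) obstacle states appropriately — the geometry of the reachable $0$-region is governed by the rectangle-spanning behavior of threshold-$2$ dynamics, exactly as in the polluted model of \cite{GM}. So I would first let $\zeta_t$ denote the configuration obtained by running the $\xi$-dynamics restricted to the interior $A$ of the circuit (the circuit itself is a ring of $0$s, which can only help, so restricting to $A$ and treating $A^c$ as all-$0$ is the right comparison; in fact by monotonicity it suffices to prove the statement with $A^c$ frozen to $0$). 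Since $\mathbf{0}$ is not in $\omega_\infty$-terms, $\zeta_\infty(\mathbf{0})\neq 0$.

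Next I would identify the final $0$-region inside $A$. Because there is at most one $4$ and no $5$ in $A$, the only sites that can block the spread of $0$s are the $3$s, a single possible $4$, plus the boundary effect. Here I would invoke the standard fact (used already in Lemma~\ref{limited-bootstrap} via \cite{AL}): if we replace every $3$ by $2$, the $0$-region in $\zeta_\infty$ is a union of rectangles, and since the $0$s on the boundary circuit of $A$ surround $\mathbf{0}$, the sites that are $0$ at the *end* of the restricted dynamics form a region whose complement within $A$ (the sites that stay nonzero) contains a connected "ring'' separating $\mathbf{0}$ from the boundary. The outer boundary of the maximal $0$-rectangle not containing $\mathbf{0}$ then yields a rectangle $R$ with $\mathbf{0}\in R$ such that every site on $\partial R$ failed to turn to $0$. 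Translating back to the original states: a site on a side of $R$ that failed to become $0$ despite having both its neighbors perpendicular into the $0$-region occupied must have had $\xi_0$-state $\ge 3$; counting how many perpendicular-$0$ neighbors a boundary site of a rectangle can accumulate (one for an interior-of-side site that is adjacent only to the outside-$0$ region from one side, after the $0$s have filled the rectangle's exterior up to $R$) gives that each side of $R$ must contain either two $3$s (each of which blocked one further inward advance, so $Z_\infty = 2 < 3$) or one $4$. This is precisely the event $\blocking A$. Finally, to get the sharper statement that $R$ may be taken to be a \emph{frame} (corners in state $3$), I would note that a corner of $R$ sees at most $1$ exterior $0$-neighbor (one coordinate direction is blocked by the adjacent side), so a corner that stays nonzero has $\xi_0$-state $\ge 2$; sharpening via the side condition and a short case analysis — if a corner is a $4$ we can shrink $R$ by one unit in the appropriate direction, using the neighboring side's $3$s — lets us arrange all four corners to be $3$s, at the cost of possibly moving $R$ inward by a bounded amount while keeping $\mathbf{0}\in R$ (one must check $\mathbf{0}$ is not on $\partial R$, which can be forced by taking $R$ innermost).

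The main obstacle I anticipate is the bookkeeping in the last step: converting the soft topological statement "the $0$-region's complement surrounds $\mathbf{0}$'' into the precise combinatorial claim about how many $3$s/$4$s lie on each side of a \emph{rectangle}, and in particular handling the interaction between the single allowed $4$ and the corner-state sharpening (the $4$ could sit at a corner, in which case one must use the adjacent sides' $3$s to reposition). The cleanest route is probably to first prove the weaker $\blocking A$ statement (any rectangle with the two-$3$s-or-one-$4$ property on each side) by running the dynamics with all $3$s demoted to $2$s and reading off the rectangle decomposition of \cite{AL}, and then obtain the frame refinement by a separate deterministic argument that starts from the innermost such $R$ and peels off corner defects one at a time. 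I expect the geometry to be "transparent'' in the same sense the authors describe Lemma~\ref{Z-is-protected}, so the write-up can afford to be brief once the reduction to rectangle-spanning is in place.
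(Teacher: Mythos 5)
Your core idea is the same as the paper's: restrict to the interior $A$ with $A^c$ frozen to $0$, look at the set of sites that remain non-zero forever, and read off a blocking rectangle from the extremal sites of that set. But the write-up has two genuine gaps and one wrong digression.

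First, the topology is garbled in the middle. The set $A'$ of sites that never become $0$ is a finite set \emph{containing} $\mathbf{0}$ and \emph{surrounded by} $0$s; it does not ``form a connected ring separating $\mathbf{0}$ from the boundary,'' and ``the outer boundary of the maximal $0$-rectangle not containing $\mathbf{0}$'' does not produce a rectangle containing $\mathbf{0}$. The paper does something simpler and cleaner: take $R$ to be the bounding box of $A'$; then on the top row of $R$, the leftmost and rightmost sites of $A'$ each have at least two eventual $0$-neighbors (one perpendicular, one lateral), hence $\xi_0$-state $\ge 3$, and if they coincide there are three such $0$-neighbors, hence state $\ge 4$. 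That directly gives the two-$3$s-or-one-$4$ condition on each side with no appeal to the rectangle decomposition of \cite{AL}.

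Second, and more importantly, you never address nondegeneracy of $R$, which is part of the definition of $\blocking A$. The paper's proof dispatches it by observing that if $A'$ were contained in a single line through $\mathbf{0}$, its two endpoints would each have three eventual $0$-neighbors, hence $\xi_0$-state $\ge 4$; with $5$s excluded and at most one $4$ allowed in $A$, this is impossible. Your proposal has no analogue of this step.

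Third, the ``frame'' refinement you try to prove at the end is not required: the lemma asserts only $\blocking A$, and the frame notion is a separate definition used later in Lemma~\ref{frame-bound} to organize the probability computation, not a conclusion of Lemma~\ref{frame-necessary}. Beyond being unnecessary, your frame argument is also incorrect on its own terms: a corner of a rectangle has \emph{two} exterior neighbors (e.g.\ the top-right corner $(a_2,b_2)$ is adjacent to $(a_2+1,b_2)$ and $(a_2,b_2+1)$), not ``at most one,'' so the premise of the corner-peeling step fails.
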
 

\begin{proof} We may assume that all sites in $A^c$ are $0$s
in $\xi_0$. Let $A'$ be the set of sites which are non-zero in 
$\xi_\infty$. Then the leftmost and the rightmost 
site on the top line of $A'$ must either be the same state with $\xi_0$-state 
$4$, or be two distinct sites which both have $\xi_0$-state at least $3$. 
To check nondegeneracy, assume that, say, $b_1=b_2$. 
As there are no sites in $\xi_0$-state $5$ in $A$, 
there then must be two sites at $\xi_0$-state $4$ on either side of
$\mathbf{0}$ on the $x$-axis, but by the assumption there can be at most one such site. 
\end{proof}

Now we pick $N$ as in (\ref{N-def}) and also 
keep the definition of good boxes from the previous
subsection. 
For a constant $D$, let $G_1(D)$ be the event that 
there is a circuit of good boxes that encircles $\mathbf{0}$, is contained 
in $[-DN,DN]^2$, and is connected to the infinite cluster of good boxes. 

\begin{lemma}\label{circuit-good-sites}
For any $L$ there is a constant $D=D(a,L)$ so that 
\begin{equation}\label{circuit-good-sites-eq}
\probsub{p}{G_1(D)^c}\le n^{-L}
\end{equation}
\end{lemma}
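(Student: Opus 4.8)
\medskip
\noindent\emph{Plan of proof.} The plan is to treat the good/not-good status of the rescaled boxes as a supercritical Bernoulli site percolation on $\bZ^2$ and to run a standard contour (Peierls) argument whose failure probability we can push below $n^{-L}$. The first point to record is that this rescaled percolation is an honest product measure: throughout this subsection $\xi_0$ is built from internal-spanning properties as in Lemma~\ref{omega lower bound} (with $3$s and $4$s replaced by $5$s), so $\xi_0(x)$ is a deterministic function of $\omega_0$ restricted to $\{x\}\times K_n^2$; since $\omega_0$ is a product measure, the variables $\{\xi_0(x)\}_{x\in\bZ^2}$ are i.i.d., and as the boxes $\Lambda_x$ are pairwise disjoint, the events $\{\Lambda_x\text{ good}\}$ are independent. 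Each has probability $1-q$ with
\[
q=\probsub{p}{\Lambda_x\text{ is not good}}\le 6\,n^{-c}(\log n)^{-(1/\ell\wedge 1/2)}=n^{-c+o(1)},\qquad c:=\tfrac{a^\ell}{\ell!}-\tfrac{2}{\ell}>0,
\]
by Lemma~\ref{not-good-bound} and the standing assumption $a^\ell/\ell!>2/\ell$; in particular $8q<1$ for $n$ large. Call a rescaled site \emph{open} if its box is good.

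Next I would fix an integer $m\ge 2$ (chosen at the very end), put $D=2m+1$, and let $\cA_m=\{x:m\le\|x\|_\infty\le 2m\}$ be an annulus of rescaled sites; the boxes $\Lambda_x$ with $x$ in or inside $\cA_m$ all lie in $[-DN,DN]^2$. Let $E$ be the event that (i) there is an open circuit around $\mathbf 0$ inside $\cA_m$, and (ii) there is no $\ast$-connected circuit of closed sites surrounding $\mathbf 0$ of $\ell^\infty$-diameter at least $m$. I claim $E\subset G_1(D)$: the open circuit in (i) encircles $\{\|x\|_\infty<m\}\ni\mathbf 0$ and is contained in $[-DN,DN]^2$; and if it lay in a finite open cluster, that cluster would be enclosed by a closed $\ast$-circuit surrounding $\mathbf 0$ of diameter at least $m$, contradicting (ii); hence the circuit lies in an infinite, and so (by uniqueness) the infinite, open cluster of good boxes. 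It then remains to bound $\probsub{p}{E^c}$ by a union bound over the negations of (i) and (ii). The negation of (i) yields, by planar duality, a closed $\ast$-path crossing $\cA_m$, of length at least $m$; a standard path count ($O(m)$ start sites, at most $8^k$ continuations of length $k$) together with independence gives probability at most $Cm\sum_{k\ge m}(8q)^k\le C'm(8q)^m$. The negation of (ii) yields a closed $\ast$-circuit around $\mathbf 0$ of diameter $\ge m$, hence of length $\ge 2m$; counting such circuits through a fixed axis site gives probability at most $C\sum_{k\ge 2m}k(8q)^k\le C'(8q)^{2m}$. Altogether $\probsub{p}{G_1(D)^c}\le\probsub{p}{E^c}\le C''(8q)^m$.

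Finally, since $8q=n^{-c+o(1)}$ we get $C''(8q)^m=n^{-cm+o(1)}$, so choosing $m=\lceil (L+1)/c\rceil$ makes this at most $n^{-L}$ for all sufficiently large $n$; the resulting $D=2m+1$ depends only on $a$ and $L$ (recall $\ell$ is fixed since $\theta=2\ell+2$). I expect the only subtle points to be (a) verifying the product-measure structure, which is exactly what makes the contour sums clean, and (b) the inclusion $E\subset G_1(D)$, i.e.\ encoding ``connected to the infinite cluster'' via the absence of a large enclosing closed $\ast$-circuit; the contour estimates themselves are routine here, since the $n$-independent combinatorial factors ($\sim 8^m$ with $m$ fixed) are trivially swamped by the polynomial-in-$n$ decay of $q$.
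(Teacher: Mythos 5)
Your proof is correct and is exactly the ``standard percolation argument'' the paper invokes in one line: treat goodness of the disjoint $N\times N$ boxes as i.i.d.\ supercritical Bernoulli site percolation (your observation that $\xi_0(x)$ depends only on $\omega_0$ restricted to $\{x\}\times K_n^2$, and hence that goodness of distinct boxes is independent, is the key and is right), then run a Peierls contour bound over an annulus of $O_L(1)$ rescaled sites, using that $q=\probsub{p}{\Lambda_x\text{ not good}}\le n^{-c+o(1)}$ with $c=a^\ell/\ell!-2/\ell>0$ from Lemma~\ref{not-good-bound}, so the closed-contour sums decay like $n^{-cm+o(1)}$ and $m$ can be taken as a function of $a$ and $L$ only. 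The encoding of ``connected to the infinite cluster'' via the absence of a large closed $\ast$-circuit around the origin, together with uniqueness of the infinite cluster, is the right way to formalize that part of the event. The paper omits all of these details, so you have simply supplied them correctly; no gap.
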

\begin{proof}
This follows from Lemma~\ref{not-good-bound}, together 
with a standard percolation argument. 
\end{proof}

\begin{lemma}\label{no-4-5}
The probability that  $[-DN,DN]^2$ contains at least one site 
with $\xi_0$-state $5$ 
or at least two sites in $A$ with $\xi_0$-state $4$
is $n^{4/\ell-4a^\ell/\ell!+o(1)}$
\end{lemma}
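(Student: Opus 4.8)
The plan is to bound the probability by a union bound over the (polynomially many) sites and pairs of sites in $[-DN,DN]^2$, using the single-site asymptotics from Lemma~\ref{internal-results-even} together with the 1-dependence (really: finite-range dependence) of $\xi_0$ coming from the fact that each $\xi_0(x)$ is a function of the restriction of $\omega_0$ to $\{x\}\times K_n^2$ and its $\bZ^2$-neighbors — so $\xi_0(x)$ and $\xi_0(y)$ are independent once $\|x-y\|_1\ge 3$. First I would record the relevant single-site probabilities in the regime at hand: by Lemma~\ref{internal-results-even} part (3) (equivalently $\xi_0(x)=5$ iff $K_n^2$ is not $(\theta-2)$-IS$=(2\ell)$-IS), one has $\probsub{p}{\xi_0(x)=5}=n^{-a^\ell/\ell!+o(1)}$; and since $\xi_0(x)=4$ requires $K_n^2$ to be $(\theta-4)$-IS but not $(\theta-3)$-IS, parts (1)–(2) of that lemma give $\probsub{p}{\xi_0(x)=4}=n^{-2a^\ell/\ell!+o(1)}$.

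The main computation is then: the probability that $[-DN,DN]^2$ contains at least one $5$ is at most $(2DN+1)^2\cdot n^{-a^\ell/\ell!+o(1)}$. Recalling $N=\floor{n^{1/\ell}(\log n)^{-1/2\ell}}$, so $N^2=n^{2/\ell+o(1)}$ and $D=D(a,L)$ is constant in $n$, this is $n^{2/\ell - a^\ell/\ell!+o(1)}$. For the event of two sites with $\xi_0$-state $4$: bound it by $\sum_{x,y\in[-DN,DN]^2}\probsub{p}{\xi_0(x)=4,\ \xi_0(y)=4}$, split the sum according to whether $\|x-y\|_1\le 2$ or not. The close pairs contribute at most $(\text{const})\cdot N^2\cdot n^{-2a^\ell/\ell!+o(1)} = n^{2/\ell - 2a^\ell/\ell!+o(1)}$; the far pairs, where the two events are independent, contribute at most $N^4\cdot n^{-4a^\ell/\ell!+o(1)} = n^{4/\ell - 4a^\ell/\ell!+o(1)}$. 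Comparing the three bounds $n^{2/\ell-a^\ell/\ell!+o(1)}$, $n^{2/\ell-2a^\ell/\ell!+o(1)}$, and $n^{4/\ell-4a^\ell/\ell!+o(1)}$, and using the standing hypothesis $a^\ell/\ell!>2/\ell$ of this subsection, one checks that the last exponent dominates: indeed $4/\ell-4a^\ell/\ell! > 2/\ell - a^\ell/\ell!$ is equivalent to $2/\ell > 3a^\ell/\ell!$, which fails, so I must be slightly more careful — in fact under $a^\ell/\ell!>2/\ell$ the dominant term among these is $n^{2/\ell-a^\ell/\ell!+o(1)}$, and one has $2/\ell-a^\ell/\ell! < 4/\ell-4a^\ell/\ell!$ iff $3a^\ell/\ell! < 2/\ell$, again false. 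So the correct reading is that the stated rate $n^{4/\ell-4a^\ell/\ell!+o(1)}$ is an \emph{upper} bound only because it is being combined, in Lemma~\ref{even-supper-rate-upper}, with the complementary events; here one should simply bound the displayed probability by the sum of the three contributions and observe each is $n^{4/\ell-4a^\ell/\ell!+o(1)}$ or smaller — wait, $n^{2/\ell-a^\ell/\ell!}$ is \emph{larger}. The resolution must be that the "$5$" and "double-$4$" events are in fact being compared against the \emph{other} blocking mechanisms and only the double-$4$ far-pair term has exponent $4/\ell-4a^\ell/\ell!$; the single-$5$ and single-$4$ contributions are absorbed elsewhere (they force $\xi_0$-state $\ge 3$ patterns whose contribution to non-occupation is already counted in the frame/blocking estimate). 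I expect the actual statement to use that on the event $G_1(D)$ the relevant $A$ lies in $[-DN,DN]^2$, and that a single $5$ or single $4$ in $A$ is handled by Lemma~\ref{frame-necessary}, so that Lemma~\ref{no-4-5} only needs the genuinely independent far-pair bound $N^4 n^{-4a^\ell/\ell!+o(1)}=n^{4/\ell-4a^\ell/\ell!+o(1)}$ plus the sub-dominant close-pair and single-site $5$ corrections.

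The step I expect to be the main obstacle is precisely this bookkeeping: arguing that the single-$5$ term $n^{2/\ell-a^\ell/\ell!+o(1)}$ and the single-site-$4$ terms do not actually need to be bounded by $n^{4/\ell-4a^\ell/\ell!+o(1)}$ — either because the lemma's hypothesis is stated for the region $[-DN,DN]^2$ with the understanding that the complementary "good circuit" event $G_1(D)$ (Lemma~\ref{circuit-good-sites}) has already been subtracted off with a far smaller probability $n^{-L}$, or because one should replace the crude box $[-DN,DN]^2$ by the random set $A$ (which has $|A|=n^{2/\ell+o(1)}$ with the right constants) and invoke Lemma~\ref{frame-necessary}. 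Concretely I would: (i) on $G_1(D)$, take $A$ to be the interior of the innermost good circuit, so $A\subset[-DN,DN]^2$; (ii) if $A$ contains no $5$ and at most one $4$, we are done by Lemma~\ref{frame-necessary}; (iii) otherwise estimate $\probsub{p}{A\text{ contains a }5\text{ or two }4\text{'s}}$ by the union/independence bound above; (iv) add $\probsub{p}{G_1(D)^c}\le n^{-L}$ with $L$ large, which is negligible. The output is the claimed $n^{4/\ell-4a^\ell/\ell!+o(1)}$ once the single-site $5$ contribution is seen to be routed through the frame estimate rather than counted here.
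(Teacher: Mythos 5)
Your approach (union bound over sites and pairs, comparing against the single-site asymptotics of Lemma~\ref{internal-results-even}) is the same as the paper's, but the execution goes wrong on the very first estimate, and that error sends you on the wild goose chase that occupies the second half of your write-up.

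The gap is the identification of $\xi_0$-state $5$. Subsection 4.2 states explicitly that $\xi_0$ is built ``by internal spanning properties, as in Lemma~\ref{omega lower bound}'' --- the \emph{unmodified} initialization, under which $\xi_0(x)=5$ iff $\{x\}\times K_n^2$ is not $(\theta-4)$-IS $= (2\ell-2)$-IS. By Lemma~\ref{internal-results-even} part 1, this has probability $\cO(n^{-L})$ for \emph{any} constant $L$. You instead read off $\xi_0(x)=5$ iff not $(\theta-2)$-IS, which is the truncated initialization introduced in Subsection 4.1 for the proof of \eqref{supercritical-even} --- but that initialization has no states $3$ or $4$ at all, so it cannot be the one Lemma~\ref{no-4-5} is about (you implicitly acknowledge this by using the Lemma~\ref{omega lower bound} definition when you treat state $4$). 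With the correct estimate, the single-$5$ contribution is at most $(2DN+1)^2\cdot\cO(n^{-L}) = \cO(n^{2/\ell - L + o(1)})$, which is $o(n^{4/\ell - 4a^\ell/\ell!})$ once $L > 4a^\ell/\ell! - 2/\ell$, so it is genuinely negligible and nothing has to be ``absorbed elsewhere'' or routed through the frame estimate. The discrepancy you detect is real, but its source is your overestimate of $\probsub{p}{\xi_0(x)=5}$, not some subtle bookkeeping in Lemma~\ref{even-supper-rate-upper}.

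A second, independent slip: $\xi_0$ here is not merely $1$-dependent --- it is a \emph{product measure}. Internal spanning of $\{x\}\times K_n^2$ depends only on $\omega_0$ restricted to $\{x\}\times K_n^2$ itself, not on the neighboring Hamming squares (that dependence arises only for the inertness-based initialization of Lemma~\ref{omega upper bound}, used elsewhere). Consequently there is no ``close-pair'' term: every pair $x\ne y$ is independent, and $\probsub{p}{\xi_0(x)=\xi_0(y)=4} = n^{-4a^\ell/\ell!+o(1)}$ for all of the $\Theta(N^4)$ pairs. This matters, because the crude close-pair bound you wrote down, $N^2\,n^{-2a^\ell/\ell!+o(1)} = n^{2/\ell-2a^\ell/\ell!+o(1)}$, is \emph{also} larger than the claimed rate under $a^\ell/\ell!>2/\ell$ (since $2/\ell-2a^\ell/\ell! > 4/\ell-4a^\ell/\ell!$ iff $a^\ell/\ell!>1/\ell$), so keeping it would give a second spurious dominant term even after you fix the state-$5$ estimate. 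Once both corrections are made, a union bound gives the upper bound, and a direct inclusion--exclusion (or second-moment) computation using the product structure and $\probsub{p}{\xi_0(x)=4} = n^{-2a^\ell/\ell!+o(1)}$ gives the matching lower bound, yielding the claimed $n^{4/\ell-4a^\ell/\ell!+o(1)}$. (As an aside, the paper's one-line proof cites Lemma~\ref{internal-results-odd}, which is surely a typo for Lemma~\ref{internal-results-even}; your instinct to use the even-threshold lemma was correct.)
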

\begin{proof}
This follows from Lemma~\ref{internal-results-odd}. 
\end{proof}

 

\begin{lemma}\label{frame-bound}
Assume $D$ is a fixed constant. Then $$\probsub{p}{\blocking [-DN,DN]^2}\le  n^{4/\ell-4a^\ell/\ell!+o(1)}.$$
\end{lemma}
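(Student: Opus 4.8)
The plan is to bound the probability of $\blocking [-DN,DN]^2$ by a union bound over all candidate rectangles $R\ni\mathbf{0}$ contained in $[-DN,DN]^2$, using the fact that the four sides impose four essentially independent (thanks to $1$-dependence of the $\xi_0$-field coming from Lemma~\ref{internal-results-even}) ``expensive'' requirements. The key quantitative input is that, by Lemma~\ref{internal-results-even}(2) with $\theta=2\ell+2$ (so the relevant threshold is $\theta-3=2\ell-1$) the probability that a given site $x$ has $\xi_0(x)=3$ is $\probsub{p}{\{x\}\times K_n^2\text{ is }(2\ell-1)\text{-IS but not }(2\ell)\text{-IS}}=n^{-2a^\ell/\ell!+o(1)}$, and similarly $\probsub{p}{\xi_0(x)=4}=\probsub{p}{\{x\}\times K_n^2\text{ is }(2\ell-2)\text{-IS but not }(2\ell-1)\text{-IS}}=n^{-2a^\ell/\ell!+o(1)}$ as well (this is where I would double-check that the exponent for a single $3$ or $4$ is exactly $2a^\ell/\ell!$; both are of order $n^{-2a^\ell/\ell!+o(1)}$, which is all we need). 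Hence the event that a fixed line segment $\{a\}\times[b_1,b_2]$ ``qualifies'' — meaning it contains either two distinct $3$s or one $4$ — has probability at most $(\ell_{\text{seg}})^2\cdot n^{-4a^\ell/\ell!+o(1)} + \ell_{\text{seg}}\cdot n^{-2a^\ell/\ell!+o(1)}$, where $\ell_{\text{seg}}\le 2DN+1$ is the length of the segment.

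First I would note that for a rectangle $R=[a_1,a_2]\times[b_1,b_2]$ to witness $\blocking A$, we need $\mathbf{0}\in R$, so $a_1\le 0\le a_2$ and $b_1\le 0\le b_2$, and all four corners of $R$ lie in $[-DN,DN]^2$; there are $O(N^4)=n^{O(1)}$ choices. Next, I would separate the cost of the two horizontal sides from that of the two vertical sides. The two horizontal sides $[a_1,a_2]\times\{b_1\}$ and $[a_1,a_2]\times\{b_2\}$ are disjoint (since $b_1<b_2$) and, if $|b_1-b_2|\ge 2$, use disjoint sets of $\xi_0$-values, hence are independent by $1$-dependence; each costs at most $(2DN)^2\cdot n^{-4a^\ell/\ell!+o(1)}+(2DN)\cdot n^{-2a^\ell/\ell!+o(1)} = n^{2/\ell-2a^\ell/\ell!+o(1)}$, using $N=n^{1/\ell+o(1)}$. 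The dominant term is the single-$4$ term, of order $n^{1/\ell-2a^\ell/\ell!+o(1)}$; but in fact, since $\blocking A$ only cares about sites \emph{in $A$}, and we are really going to combine this with Lemma~\ref{no-4-5} which already handles the ``two $4$s'' situation, the honest accounting should go: either two distinct sides of $R$ each use a $4$ — but then $[-DN,DN]^2$ contains two $4$s, an event of probability $n^{4/\ell-4a^\ell/\ell!+o(1)}$ by Lemma~\ref{no-4-5} — or at least three of the four sides each contain two distinct $3$s, which by independence across (at least two) disjoint pairs of parallel sides costs at most $\bigl[(2DN)^2 n^{-4a^\ell/\ell!}\bigr]^{2}\cdot n^{o(1)} = n^{8/\ell - 8a^\ell/\ell! + o(1)}$, or exactly the ``one $4$ on one pair of parallel sides, two $3$s on the other pair'' configuration, costing $(2DN)n^{-2a^\ell/\ell!}\cdot (2DN)^2 n^{-4a^\ell/\ell!}\cdot n^{o(1)}=n^{3/\ell-6a^\ell/\ell!+o(1)}$. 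Multiplying any of these by the $n^{O(1)}$ count of rectangles and taking the worst case, together with $a^\ell/\ell! > 2/\ell$ (so each ``factor'' beats the polynomial rectangle count comfortably once there are at least, say, three $3$-costs or one $4$-cost plus two $3$-costs), all contributions are $\le n^{4/\ell - 4a^\ell/\ell! + o(1)}$; the binding case is the one already isolated by Lemma~\ref{no-4-5}.

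The main obstacle I anticipate is the bookkeeping in the union bound: making sure that the four side-events genuinely decouple (which requires invoking $1$-dependence carefully, and dealing with the degenerate sub-cases where opposite sides are within $\ell^1$-distance $1$, so that their $\xi_0$-values might be correlated — but a degenerate rectangle with $a_2-a_1=1$ or $b_2-b_1=1$ forces a $4$ on a unit-length side, which is the cheap-to-count single-$4$ situation, handled by Lemma~\ref{no-4-5}), and making sure the various polynomial prefactors $N^{O(1)}$ are all absorbed into the $n^{o(1)}$ given $a^\ell/\ell! > 2/\ell$. A secondary technical point is confirming the exact value of $\probsub{p}{\xi_0(x)=3}$ and $\probsub{p}{\xi_0(x)=4}$ from Lemma~\ref{internal-results-even}: both are $n^{-2a^\ell/\ell!+o(1)}$, so the worst single-side cost involving a $4$ is $\asymp N\cdot n^{-2a^\ell/\ell!}$, and two such independent sides give $\asymp N^2 n^{-4a^\ell/\ell!} = n^{2/\ell - 4a^\ell/\ell! + o(1)}$, which after multiplying by the $N^{O(1)}$ rectangle count is still $\le n^{4/\ell - 4a^\ell/\ell! + o(1)}$ — but as noted, two sides using $4$s is exactly covered by Lemma~\ref{no-4-5}, so in the end the bound $n^{4/\ell - 4a^\ell/\ell! + o(1)}$ is exactly what drops out, and one just has to verify that no configuration of qualifying sides is cheaper to realize than ``two $4$s somewhere in $[-DN,DN]^2$.''
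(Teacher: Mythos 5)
Your value for $\probsub{p}{\xi_0(x)=3}$ is wrong, and this error propagates to the heart of the argument. Having $\xi_0(x)=3$ means $\{x\}\times K_n^2$ is $(\theta-3)$-IS but not $(\theta-2)$-IS, i.e. $(2\ell-1)$-IS but not $(2\ell)$-IS. By Lemma~\ref{internal-results-even} parts~2 and~3, $\P(\text{not }(2\ell-1)\text{-IS})\sim n^{-2a^\ell/\ell!}$ while $\P(\text{not }(2\ell)\text{-IS})\sim 2n^{-a^\ell/\ell!}$, so the difference is $\sim 2n^{-a^\ell/\ell!}$, i.e. $\probsub{p}{\xi_0(x)=3}=n^{-\alpha+o(1)}$ with $\alpha=a^\ell/\ell!$, not $n^{-2\alpha+o(1)}$ as you wrote. (You got the $4$-probability right: $n^{-2\alpha+o(1)}$.)

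This matters because once the $3$-probability is corrected, the frame case --- a rectangle whose four \emph{corners} all have state $3$ --- costs $\Theta(N^4\cdot(n^{-\alpha})^4)=n^{4/\ell-4\alpha+o(1)}$, which is \emph{exactly} the claimed exponent and not at all negligible. Under your mistaken $n^{-2\alpha}$ the frame would cost $n^{4/\ell-8\alpha}$, which is why you concluded that ``two 4s in the box'' (Lemma~\ref{no-4-5}) is the uniquely binding case; but in fact the frame contributes at the same order, and the paper's proof organizes the entire $3$-only case by the number of corner $3$s precisely to isolate the frame as the worst sub-case. Your treatment of ``at least three of the four sides each contain two distinct $3$s'' is also too coarse: you bound it as if disjoint pairs of parallel sides furnish independent factors, but corner-sharing between perpendicular sides can reduce the number of distinct required $3$s to four, which is the frame again. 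The paper handles this by splitting into sub-events (frame; no corner $3$s; exactly one, two or three corner $3$s; and analogously for a single $4$) and checking that, given $2\lambda<\alpha$ where $N=n^{\lambda}$, every sub-event except the frame is $o(n^{4\lambda-4\alpha})$. Without that decomposition and with the wrong single-site probability, the argument as written does not establish the bound, even though your final exponent happens to coincide with the true one via Lemma~\ref{no-4-5}.
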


\begin{proof}
Define $\lambda$ so that $DN=n^\lambda$, so that $\lambda=1/\ell+o(1)$, and 
let $\alpha=a^\ell/\ell!$. Note that $2\lambda<\alpha$. We will restrict all our sites to 
the region $[-DN,DN]^2$. 
Let $\frame$ be the event that a frame exists (which thus by definition 
means existence in $[-DN,DN]^2$).
Then $\probsub{p}{\frame)=\Theta(n^{4\lambda-4\alpha}}$.

The event that there exists a nondegenerate rectangle $R$ that has at least 
two sites with $\xi_0$-state $3$ on all sides can be 
split into the following events, according to additional properties
of the configuration on $R$:
\begin{itemize}
\item $R$ is a frame; 
\item $R$ has no $3$s at the corners (i.e., there is 
no sharing), which 
happens with probability at most a constant times
$$n^{4\lambda}(n^{2\lambda}n^{-2\alpha})^4=n^{12\lambda-8\alpha}=o(\probsub{p}{\frame})$$
(we give these probabilities as products, 
reflecting successive choices:  
four lines determining $R$, pairs of points on the same line away from 
corners; single points on lines away from corners, states at corners); 
\item $R$ has exactly one $3$ at a corner, 
with probability 
at most a constant times
$$n^{4\lambda}(n^{2\lambda}n^{-2\alpha})^2(n^{\lambda}n^{-\alpha})^2n^{-\alpha}
=n^{10\lambda}n^{-7\alpha}=
o(\probsub{p}{\frame});$$ 
\item $R$ has exactly two corner $3$s on the same line, with probability at most a constant times
$$n^{4\lambda}(n^{2\lambda}n^{-2\alpha})(n^{\lambda}n^{-\alpha})^2n^{-2\alpha}
=n^{8\lambda}n^{-6\alpha}=
o(\probsub{p}{\frame});$$
\item $R$ has exactly two corner $3$s not on the same line, with probability at most a constant times 
$$n^{4\lambda}(n^{\lambda}n^{-\alpha})^4n^{-2\alpha}
=n^{8\lambda}n^{-6\alpha}=
o(\probsub{p}{\frame});$$
\item $R$ has exactly three corner $3$s, with probability
 at most a constant times
$$n^{4\lambda}(n^{\lambda}n^{-\alpha})^2n^{-3\alpha}
=n^{6\lambda}n^{-5\alpha}=
o(\probsub{p}{\frame}).$$
\end{itemize}
Next we consider the event that a rectangle $R$ has 
exactly one $4$ on its boundary, and either two $3$s or a 
$4$ on each of its sides. Again, we split this event according 
to additional properties:
\begin{itemize}
\item
 $4$ is not at a corner of $R$ and neither are $3$s, with  probability at most a constant times
$$n^{4\lambda}(n^{2\lambda}n^{-2\alpha})^3n^{\lambda}n^{-2\alpha}
=n^{11\lambda}n^{-8\alpha}=
o(\probsub{p}{\frame});$$
\item the $4$ is at a corner of $R$, but no $3$s are at corners, 
with  probability at most a constant times 
$$n^{4\lambda}(n^{2\lambda}n^{-2\alpha})^2n^{-2\alpha}
=n^{8\lambda}n^{-6\alpha}=
o(\P(\frame));$$
\item the $4$ is at a corner of $R$, and a $3$ is at the opposite 
corner, with probability at most a constant times
$$n^{4\lambda}(n^{\lambda}n^{-\alpha})^2n^{-2\alpha}n^{-\alpha}
=n^{6\lambda}n^{-5\alpha}=
o(\probsub{p}{\frame}).$$
\end{itemize}
Together with Lemma~\ref{no-4-5}, these calculations end the 
proof.
\end{proof}
\begin{proof}[Proof of Lemma~\ref{even-supper-rate-upper}]
Choose the constant $D$ in Lemma~\ref{circuit-good-sites}
so that $L$ in (\ref{circuit-good-sites-eq}) satisfies
$L>4a^\ell/\ell!-4/\ell$. Then (\ref{even-super-rate-less}) 
follows from Lemmas~\ref{frame-necessary}--\ref{frame-bound}.
\end{proof}

\subsection{Lower bound in (\ref{supercritical-even1}) for $\ell\ge 2$}
In this subsection also, we 
assume that $a^\ell/\ell!>2/\ell$
but now $\xi_0$ is built by inertness
properties, as in Lemma~\ref{omega upper bound}. 
In this section, we prove the lower bound on the rate. 

\begin{lemma} \label{even-supper-rate-lower}
The probability that the configuration on the Hamming square based 
at the origin never changes satisfies the 
following bound:
\begin{equation}\label{even-super-rate-more}
\probsub{p}{\omega_\infty=\omega_0\text{ on }\{\mathbf{0}\}\times K_n^2}
\ge n^{4/\ell-4a^\ell/\ell!+o(1)}.
\end{equation}
\end{lemma}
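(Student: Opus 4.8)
The plan is to construct an explicit, and reasonably likely, local configuration of the initial state $\omega_0$ on a bounded (in terms of $n$) region around the origin that forces $\omega_\infty = \omega_0$ on $\{\mathbf 0\}\times K_n^2$, and to show its probability is at least $n^{4/\ell - 4a^\ell/\ell! + o(1)}$. This is the complementary (lower bound) half to Lemma~\ref{even-supper-rate-upper}, and by the same reasoning via Lemma~\ref{omega upper bound} it suffices to work entirely with the comparison dynamics $\xi_t$ on $\bZ^2$, built from inertness: we need a positive-probability-on-the-$n$-scale event under which $\xi_\infty(\mathbf 0)\neq 0$, or more precisely under which no circuit of $0$s ever surrounds $\mathbf 0$ (so that $\mathbf 0$ never turns to $0$ and the Hamming square at the origin is never helped from outside).

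First I would pin down the cheapest blocking configuration. Recall from Lemma~\ref{inertness-even} that a site has $\xi_0$-state $\ge 3$ (i.e., is an ``obstacle'') with probability $\sim 2 n^{-a^\ell/\ell!}$ (the state-$(\theta-2)$ ``II but inert'' event), whereas states $4,5$ are polynomially rarer and states $1$ are common but require activation. The natural candidate is a \emph{frame}: a nondegenerate rectangle $R\ni\mathbf 0$ all four of whose corners are in $\xi_0$-state $3$, together with the requirement that the interior of $R$ contains no $0$s in $\xi_0$ and no sites of state $\ge 1$ that could be switched on — i.e. the interior is entirely in state $2$ (ordinary threshold-$2$ sites), or more carefully, that the interior dynamics restricted to $R$ never produces a $0$. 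A clean sufficient condition: take $R=[-1,1]^2$ (or any fixed small rectangle containing $\mathbf 0$), declare its four corners to be in state $3$, and declare every other site of $R$ (there are only finitely many, including $\mathbf 0$ itself) to be in state $\ge 2$. Then inside $R$ no site ever reaches enough $0$-neighbors: the non-corner boundary sites and interior sites have state $\ge 2$ and see at most... — here one checks the elementary dynamics, exactly as in the proof of Proposition~\ref{super-sneaky} and Lemma~\ref{frame-necessary}, that a frame with state-$\ge 2$ interior can never be invaded by $0$s from outside. Since the four corners are in state $3$, and a corner has at most $2$ neighbors outside $R$, an external sea of $0$s gives each corner only $2$ zero-neighbors, not enough; the rest follows by induction on $t$.

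The probability estimate is then immediate: the four designated corners are in state $3$, which by Lemma~\ref{inertness-even} has probability $\sim 2n^{-a^\ell/\ell!}$ each; by $1$-dependence (the corners of a fixed-size rectangle can be taken pairwise at $\ell^1$-distance $\ge 2$, or one absorbs a constant number of overlapping-neighborhood corrections into the $o(1)$) these four events are independent, contributing $n^{-4a^\ell/\ell! + o(1)}$; and the finitely many remaining sites of $R$ being in state $\ge 2$ costs only a constant factor (in fact probability $\to$ a positive constant, since state $\le 1$ has probability $o(1)$ by Lemma~\ref{inertness-even}). The extra $n^{4/\ell}$ gain over the naive bound — matching \eqref{even-super-rate-more} — comes from \emph{enlarging} the frame: one is free to place each of the four sides of $R$ anywhere in a window of width $\Theta(N)=\Theta(n^{1/\ell+o(1)})$ about the origin, and distinct placements of the corners (keeping them far apart and keeping $\mathbf 0$ inside) give disjoint or near-disjoint corner-events, so a union / second-moment argument over the $\sim (n^{1/\ell})^4$ choices of frame yields the factor $n^{4/\ell}$. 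Concretely I would fix a grid of $\sim N$ candidate $x$-coordinates and $\sim N$ candidate $y$-coordinates for the sides, take frames whose corners are pairwise $\ell^1$-distance $\ge 2$, and apply a Paley–Zygmund / second-moment computation to the number of such frames whose interiors are additionally $0$-free (an increasing event in the complement, handled by FKG and the fact that the interior has only $O(N^2)=n^{2/\ell+o(1)}$ sites each a $0$ with probability $\sim 2a^{\ell+1}/(\ell+1)!\cdot (\log n)^{1+1/\ell} n^{-1/\ell}$, so a typical frame interior is $0$-free with probability $n^{-o(1)}$... — this is where a little care is needed).

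The main obstacle I anticipate is precisely this last point: a frame large enough to contribute the $n^{4/\ell}$ multiplicity has an interior of size $\sim N^2$, and the expected number of interior $0$s is $N^2 \cdot \Theta((\log n)^{1+1/\ell} n^{-1/\ell}) = \Theta(n^{1/\ell}(\log n)^{\cdots})\to\infty$, so a \emph{random} such frame almost surely has a $0$ inside and is not blocking. The fix is to choose $N$ (or the frame size) a bit smaller than the rescaling $N$ of \eqref{N-def} — say side length $n^{1/\ell}(\log n)^{-c}$ for a suitable constant $c>1+1/\ell$ — so that the interior is $0$-free with probability bounded below by a negative power of $\log n$ (hence $n^{-o(1)}$), while the number of frame placements is still $(n^{1/\ell - o(1)})^4 = n^{4/\ell - o(1)}$; one must also ensure the chosen frame's interior contains no state-$\ge 1$ site that a single interior $0$ could ever activate, but since we are conditioning on \emph{no} interior $0$s this is automatic. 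A second, milder obstacle is making the second-moment argument go through despite the $1$-dependence and despite the mild positive correlations in $\xi_0$ (inherited from the Hamming-square initial state): overlaps between the neighborhoods of corners of two different frames are bounded in number, contributing only $O(1)$ correction factors, and positive correlation only helps for the increasing ``interior is $0$-free'' constraint via FKG. Assembling these, and invoking Lemma~\ref{omega upper bound} to transfer the bound from $\xi_\infty$ back to $\omega_\infty = \omega_0$ on $\{\mathbf 0\}\times K_n^2$, gives \eqref{even-super-rate-more}.
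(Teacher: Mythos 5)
Your high-level plan matches the paper: build a frame of four $\xi_0$-state-$3$ corners around the origin with a benign interior, show it is never breached, and recover the factor $n^{4/\ell}$ from the multiplicity of frame placements on the scale $N\approx n^{1/\ell}/\mathrm{polylog}$. The paper sets $N=\lfloor n^{1/\ell}/\log^5 n\rfloor$, defines the event $G_1$ that some rectangle $R\subset[-N,N]^2$ has four state-$3$ corners \emph{and boundary free of states $0$ and $1$}, and the event $G_2$ that $[-N,N]^2$ has no state-$0$ site; both are increasing in $\xi_0$, so FKG gives $\P(G_1\cap G_2)\ge \P(G_1)\P(G_2)$, and Lemma~\ref{super-sneaky-v2} does the rest. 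Your second-moment/Paley--Zygmund route to a lower bound on (your analogue of) $\P(G_1)$ would also work, since $Nn^{-a^\ell/\ell!}\to 0$ makes the diagonal term dominate; but the FKG decomposition avoids those correlation headaches entirely.

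There is one real gap. When you enlarge the frame, the condition you impose is only ``the interior is $0$-free,'' plus an aside that interior state-$1$ sites cannot be switched on because there are no interior $0$s. That argument covers interior $1$s, but not $1$s on the \emph{boundary} of $R$: a non-corner boundary site has exactly one neighbor outside $R$, so if that boundary site has $\xi_0$-state $1$ and the exterior floods with $0$s, it is switched on by a single external $0$-neighbor and the protection collapses. This is precisely why the paper's $G_1$ excludes states $0$ \emph{and} $1$ on $\partial R$ (and why Lemma~\ref{super-sneaky-v2} assumes no state-$1$ on the boundary). The probability cost of this extra boundary constraint is small with the paper's $N$, since $\partial R$ has only $O(N)$ sites and state $1$ has probability $\Theta\bigl((\log n)^{1+1/\ell}n^{-1/\ell}\bigr)$, but the constraint is essential and should appear explicitly in your event.

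A second, smaller issue: you quote the state-$0$ probability as $\sim 2a^{\ell+1}/(\ell+1)!\cdot(\log n)^{1+1/\ell}n^{-1/\ell}$, but that is the probability of being not $(\theta-1)$-inert (state $\le 1$). In the inertness initialization of Lemma~\ref{omega upper bound}, state $0$ means ``not $\theta$-inert,'' whose probability is $\sim\bigl(a^{\ell+1}/(\ell+1)!\bigr)^2(\log n)^{2+2/\ell}n^{-2/\ell}$ (Lemma~\ref{inertness-even}). With your (incorrect) figure, the expected number of $0$s in an $N\times N$ box with $N=n^{1/\ell}(\log n)^{-c}$ is $n^{1/\ell}(\log n)^{\,1+1/\ell-2c}\to\infty$ for every fixed $c$, so your ``interior $0$-free with probability $n^{-o(1)}$'' claim would not hold; it does hold with the correct probability, where $N^2$ times it is $(\log n)^{\,2+2/\ell-2c}\to 0$ for $c>1+1/\ell$. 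Worth fixing so the choice of $c$ is actually justified.
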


Fix a non-degenerate rectangle $R$. Let $\xi_0^0$ be obtained from 
$\xi_0$ by converting all $4$s and $5$s to $3$s on $R$, and changing all sites 
to $0$ off $R$. 
Let $\xi_t^0$ 
be the bootstrap dynamics 
started from this initial state. 
We say that $R$ is \emph{protected}
if $R$ has its four corners in 
$\xi_0^0$-state 
$3$, no site in $R$ has $\xi_0^0$-state $0$ 
and no site on the boundary of $R$ has $\xi_0^0$-state $1$. 

\begin{lemma} \label{super-sneaky-v2} 
Assume a nondegenerate rectangle $R$ is protected. 
Then no site ever changes state in $\xi_t^0$, and 
therefore $\xi_t$ never changes any state in $R$.
\end{lemma}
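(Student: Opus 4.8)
The plan is to prove, by induction on $t$, that $\xi_t^0=\xi_0^0$ for all $t\ge 0$, and then to deduce the second assertion by a monotonicity comparison. The base case is immediate. For the inductive step, assume $\xi_t^0=\xi_0^0$. Every site in the complement of $R$ is already in state $0$, which is absorbing, so it cannot change. For a site $x\in R$ we must check that $Z_t(x)<\xi_0^0(x)$, where $Z_t(x)$ is the number of neighbors of $x$ currently in state $0$; by the inductive hypothesis the only state-$0$ sites are those outside $R$, so $Z_t(x)$ is at most the number of neighbors of $x$ lying outside $R$. Here is where nondegeneracy of $R$ enters: a site of $R$ has exactly $0$ neighbors outside $R$ when it is in the interior of $R$, exactly $1$ when it lies on an edge but is not a corner, and exactly $2$ when it is one of the four corners. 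Meanwhile the definition of being protected gives $\xi_0^0(x)\ge 1$ for interior $x$ (no site of $R$ is in state $0$), $\xi_0^0(x)\ge 2$ for non-corner boundary $x$ (no boundary site is in state $1$), and $\xi_0^0(x)=3$ at the corners. In each of the three cases $Z_t(x)<\xi_0^0(x)$, so $x$ does not change state; hence $\xi_{t+1}^0=\xi_0^0$, completing the induction and the first assertion.

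For the second assertion, note that $\xi_0^0\le\xi_0$ pointwise in the order $0<1<2<3<4<5$: off $R$ we have $\xi_0^0\equiv 0$, and on $R$ we only lowered states $4,5$ to $3$. The heterogeneous bootstrap dynamics is monotone — if $\eta_0\le\zeta_0$ pointwise, then $\eta_t\le\zeta_t$ pointwise for all $t$ — which is a short induction from the update rule: if $\eta_t\le\zeta_t$ then the set of state-$0$ neighbors of any $x$ is at least as large for $\eta_t$ as for $\zeta_t$, while $\eta_t(x)\le\zeta_t(x)$ is a smaller threshold, so $\zeta_{t+1}(x)=0$ forces $\eta_{t+1}(x)=0$, and otherwise $\eta_{t+1}(x)\le\eta_t(x)\le\zeta_t(x)=\zeta_{t+1}(x)$. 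Applying this with $(\xi_0^0,\xi_0)$ gives $\xi_t^0\le\xi_t$ pointwise for all $t$, and combining with the first assertion, $\xi_t(x)\ge\xi_t^0(x)=\xi_0^0(x)\ge 1$ for every $x\in R$ and every $t$. Since in these dynamics a site's state, once it differs from its initial value, equals $0$ thereafter, the bound $\xi_t(x)>0$ for all $t$ forces $\xi_t(x)=\xi_0(x)$ for every $x\in R$; that is, $\xi_t$ never changes any state in $R$.

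I do not expect a real obstacle here: the proof is essentially bookkeeping. The only point that deserves care is the geometric count of how many neighbors of a boundary site of a nondegenerate rectangle fall outside it, together with checking that the inequality $Z_t(x)<\xi_0^0(x)$ is strict in every case — it is precisely nondegeneracy that prevents a corner from having three outside neighbors (which would require state $4$, not supplied by "protected"), and it is precisely the exclusion of boundary $1$'s that handles edge sites. One may also merge the two assertions into a single argument avoiding monotonicity: if any site of $R$ ever switched to $0$, either in $\xi_t^0$ or in $\xi_t$, take the first such time $\tau$ and the offending site $x$; at time $\tau-1$ no site of $R$ is in state $0$, so all of the at least $\xi_0(x)$ state-$0$ neighbors of $x$ lie outside $R$, contradicting the neighbor count above together with $\xi_0(x)\ge\xi_0^0(x)$ and the bounds from "protected".
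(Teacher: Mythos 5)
Your proof is correct and takes essentially the same approach as the paper's: the paper's one-line argument (``the first site to change state would have to be on the boundary of $R$, which is clearly impossible'') is exactly the compressed form of your first-change/induction argument together with the neighbor count (interior $0$, edge $1$, corner $2$) and the definition of ``protected.'' The explicit monotone coupling $\xi_t^0\le\xi_t$ that you use for the second assertion is precisely what the paper's ``therefore'' leaves implicit, and your observation that a site's state is always in $\{0,\xi_0(x)\}$ closes that step cleanly.
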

\begin{proof}
The first site to change state would have to be on 
the boundary of $R$, which is clearly impossible. 
\end{proof}

Assume now $N=\lfloor n^{1/\ell}/\log^5n\rfloor$. 
Define the following two events:
\begin{equation*}
\begin{aligned}
&G_1=\{\text{there exists a rectangle $R$ with }
\mathbf{0}\in R\subset[-N,N]^2,  \text{four corners in $\xi_0^0$-state 3, }\\
& \qquad\qquad \text{and no site on the boundary of $R$ is in $\xi_0^0$-state 0 or 1}\},\\
&G_2=\{\text{there is no $x\in [-N,N]^2$ with  $\xi_0(x)=0$}\}.
\end{aligned}
\end{equation*}
\begin{lemma} \label{protected-lower} 
With our choice of $N$, 
$$\probsub{p}{G_1}\ge n^{4/\ell-4a^\ell/\ell!+o(1)}.
$$
\end{lemma}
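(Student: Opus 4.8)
The plan is to lower bound $\probsub{p}{G_1}$ by exhibiting a specific, easily-described configuration that forces the event, and then estimating its probability using the asymptotics for $\xi_0$-states from Lemma~\ref{inertness-even}. Recall that in the inertness initialization $\probsub{p}{\xi_0(x)=3}$ is comparable to $\probsub{p}{K_n^2\text{ is }(\theta-3)\text{-II but not }(\theta-2)\text{-II}}$; since $\theta-3 = 2\ell-1$ and $\theta-2 = 2\ell$, parts 3 and 4 of Lemma~\ref{internal-results-even} give that this probability is $n^{-a^\ell/\ell!+o(1)}$ (more precisely $\Theta(n^{-a^\ell/\ell!})$ up to logarithmic factors, which is all we need since the exponent we are targeting has an $o(1)$ slack). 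Similarly $\probsub{p}{\xi_0(x)\in\{0,1\}}$ is the probability that $\{x\}\times K_n^2$ is not $(\theta-1)$-inert, which is $n^{-1/\ell+o(1)}$, and $\probsub{p}{\xi_0(x)=0}$ is $n^{-2/\ell+o(1)}$; in particular these are all $o(1)$.

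The concrete construction: fix a small square frame, say the rectangle $R_0 = [-1,1]\times[-1,1]$ (or any fixed nondegenerate rectangle containing $\mathbf 0$ with a bounded number of boundary sites — a constant like $8$ of them). The event $G_1$ is implied by the conjunction of (a) all four corners of $R_0$ are in $\xi_0^0$-state $3$, and (b) none of the (finitely many) boundary sites of $R_0$ is in $\xi_0^0$-state $0$ or $1$. Note that $\xi_0^0$ agrees with $\xi_0$ on $R_0$ except that $4$s and $5$s are recoded to $3$; so "state $3$ in $\xi_0^0$" at a corner is implied by "state $3,4,$ or $5$ in $\xi_0$" there, and "not state $0$ or $1$ in $\xi_0^0$" on the boundary is equivalent to "not state $0$ or $1$ in $\xi_0$" there. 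Thus $G_1 \supset \{\text{each corner of }R_0\text{ has }\xi_0\text{-state}\ge 3\}\cap\{\text{each boundary site of }R_0\text{ has }\xi_0\text{-state}\ge 2\}$. By the FKG/positive-correlation property of $\xi_0$ (both events are decreasing in the occupied set, equivalently increasing in the "$\xi_0$-state large" sense — one should state this carefully, but the relevant events are monotone in the underlying i.i.d.\ site occupation within bounded blocks), this probability is at least the product of the individual probabilities. Each corner contributes a factor $\probsub{p}{\xi_0(x)\ge 3}$, which is $n^{-a^\ell/\ell!+o(1)}$, and each remaining boundary site contributes a factor $\probsub{p}{\xi_0(x)\ge 2} = 1 - \probsub{p}{\xi_0(x)\le 1} \to 1$. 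Since there are exactly four corners and a constant number of other boundary sites, the product is $\left(n^{-a^\ell/\ell!+o(1)}\right)^4\cdot(1-o(1)) = n^{-4a^\ell/\ell!+o(1)}$.

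This already gives $\probsub{p}{G_1}\ge n^{-4a^\ell/\ell!+o(1)}$, but the target has an extra $+4/\ell$ in the exponent, which is larger, so the crude single-rectangle bound is not enough — we need to exploit that there are many disjoint potential frames inside $[-N,N]^2$. The refinement: instead of one fixed $R_0$, consider frames of the form $R = [-j,j]^2$ — or more robustly, rectangles anchored near $\mathbf 0$ but with corners ranging over a grid of $\sim N^2$ possible locations in $[-N,N]^2$ (roughly, pick the four corners in four quadrants, each in a region of $\sim N^2$ choices, but one cannot freely multiply by $N^8$ because the four-corner positions are not independent — rather, one picks two $x$-coordinates and two $y$-coordinates, giving $\sim N^4$ genuinely free choices, i.e.\ $n^{4/\ell+o(1)}$). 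Running a second-moment or disjointification argument over these $\sim n^{4/\ell+o(1)}$ nearly-disjoint candidate frames, each succeeding with probability $n^{-4a^\ell/\ell!+o(1)}$, and using $1$-dependence to get near-independence among a positive fraction of them, yields $\probsub{p}{G_1}\ge n^{4/\ell+o(1)}\cdot n^{-4a^\ell/\ell!+o(1)} = n^{4/\ell-4a^\ell/\ell!+o(1)}$ — matching (an upper bound of this order presumably comes from the union bound in the previous subsection, Lemma~\ref{frame-bound}). I expect the main obstacle to be the bookkeeping in this last step: one must choose the family of candidate frames so that (i) there are $n^{4/\ell+o(1)}$ of them, (ii) all are contained in $[-N,N]^2$ and contain $\mathbf 0$, (iii) a large subfamily has pairwise $\ell^1$-separation $\ge 2$ so that the defining events (involving only $\xi_0$ on the four corners and the boundary of each, which sit in bounded neighborhoods of those corners) are mutually independent by $1$-dependence, and (iv) the second-moment computation closes. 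Everything else — invoking Lemma~\ref{internal-results-even} for the state probabilities, the FKG step, the recoding of $4$s/$5$s to $3$s — is routine.
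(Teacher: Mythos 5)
Your overall plan is sound and, modulo presentation, is the same statistical idea as the paper's: the paper simply points to the row/column conditioning argument of Lemma~\ref{good-likely}, which separates the high-probability boundary conditions (clean rows and columns, i.e.\ no $\xi_0\le 1$ sites) from the rare corner event, and then counts; you instead set up the count directly as a Bonferroni/second-moment bound over the $\sim N^4 = n^{4/\ell+o(1)}$ candidate rectangles $[a_1,a_2]\times[b_1,b_2]$ through $\mathbf 0$. Both routes lead to the same estimate and both ultimately rest on showing that $\sum_{R\ne R'}\P(A_R\cap A_{R'})=o\bigl(\sum_R\P(A_R)\bigr)$ (pairs sharing $0$, $1$, or $2$ corners contribute $o(\E X)$ because $a^\ell/\ell!>2/\ell$).

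Two points in your sketch, however, are wrong or misleading and should be flagged. First, your alternative route (iii) --- extracting a large subfamily of candidate frames whose defining events are \emph{mutually independent} --- cannot produce the right exponent: since every candidate frame uses one $x$-coordinate from $[-N,0]$, one from $[0,N]$, and similarly in $y$, a family with all coordinates pairwise separated across frames has size only $O(N)$, giving $Nq^4=n^{1/\ell-4a^\ell/\ell!+o(1)}$, which is short of the claimed $n^{4/\ell-\cdot}$ by a factor $n^{3/\ell}$. Only the full second-moment computation (your item (iv)) closes. Second, the parenthetical assertion that ``the boundary of each \dots\ sit[s] in bounded neighborhoods of those corners'' is false: the boundary of $R$ is its entire perimeter, of length up to $8N$, not a bounded set. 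This does not sink the argument, since $\probsub{p}{\xi_0(x)\le 1}=n^{-1/\ell+o(1)}$ and $N=n^{1/\ell}/(\log n)^5$ are tuned so that a full perimeter is clean with probability $1-o(1)$ (this is exactly what the $\log^5 n$ in $N$ buys, and is the content of the row/column conditioning in Lemma~\ref{good-likely}); but the boundary event is a long-range, high-probability overhead, not a local decoration of the corners, and one must say so. Finally, a small typo: $\xi_0(x)=3$ corresponds to ``not $(\theta-3)$-inert but $(\theta-2)$-inert'', not the other way around; the estimate you extract is nonetheless correct.
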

\begin{proof}
This follows from an argument that is very similar to the
one for Lemma~\ref{good-likely}.
\end{proof}

\begin{lemma} \label{no-0} 
With our choice of $N$,
$$
\probsub{p}{G_2^c}\to 0,
$$ 
as $n\to\infty$.
\end{lemma}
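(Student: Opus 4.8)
Lemma~\ref{no-0} asserts that, with $N=\lfloor n^{1/\ell}/\log^5 n\rfloor$, the probability that $[-N,N]^2$ contains no site $x$ with $\xi_0(x)=0$ tends to $1$. The plan is a direct first-moment (union bound) estimate. Recall that in this subsection $\xi_0$ is built from inertness properties as in Lemma~\ref{omega upper bound}, so $\xi_0(x)=0$ precisely when $\{x\}\times K_n^2$ is not $\theta$-inert. By Lemma~\ref{inertness-even} (third line), for each fixed $x$ we have $\probsub{p}{\xi_0(x)=0}=\probsub{p}{\{x\}\times K_n^2\text{ is not }\theta\text{-inert}}\sim \left(\frac{a^{\ell+1}}{(\ell+1)!}\right)^2\cdot\frac{(\log n)^{2+2/\ell}}{n^{2/\ell}}$.

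First I would write $\probsub{p}{G_2^c}\le \sum_{x\in[-N,N]^2}\probsub{p}{\xi_0(x)=0}=(2N+1)^2\cdot\probsub{p}{\xi_0(\mathbf{0})=0}$ by translation invariance and the union bound. Then I would plug in the two asymptotics: $(2N+1)^2 = O\!\left(n^{2/\ell}/(\log n)^{10}\right)$ and $\probsub{p}{\xi_0(\mathbf{0})=0}=O\!\left((\log n)^{2+2/\ell}/n^{2/\ell}\right)$. Multiplying, the $n^{2/\ell}$ factors cancel and one is left with $O\!\left((\log n)^{2+2/\ell-10}\right)$, which tends to $0$ since $2+2/\ell-10 = 2/\ell-8 < 0$ for every $\ell\ge 1$ (indeed for $\ell\ge 2$ it is at most $-7$). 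Hence $\probsub{p}{G_2^c}\to 0$.

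There is essentially no obstacle here; the only point requiring a word of care is making sure that the power of $\log n$ built into $N$ is large enough to beat the $(\log n)^{2+2/\ell}$ coming from the single-site probability — which is exactly why $N$ was defined with the divisor $\log^5 n$ (giving $\log^{10}n$ after squaring) rather than a smaller power. One could equally well note that any divisor $\log^{\kappa}n$ with $2\kappa > 2+2/\ell$, e.g. $\kappa\ge 3$, would suffice, so the stated choice is comfortably inside the valid range. No second-moment argument, correlation structure, or $1$-dependence of $\xi_0$ is needed, since a union bound already gives a quantitative decay; the proof is two lines once the two inputs from Lemma~\ref{inertness-even} and the definition of $N$ are recalled.
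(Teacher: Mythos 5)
Your proof is correct and matches the paper's approach: a translation-invariant union bound over $[-N,N]^2$ against the single-site probability of not being $\theta$-inert, with the $\log^5 n$ built into $N$ absorbing the $(\log n)^{2+2/\ell}$ factor. One point worth flagging: Lemma~\ref{inertness-even} is stated under the hypothesis $a^\ell/\ell! < 1$, which need not hold in this supercritical subsection where $a^\ell > 2(\ell-1)!$ (e.g.\ $\ell=2$, $a=2$ gives $a^\ell/\ell!=2$). That hypothesis is only used to make $n^{-a^\ell/\ell!}$ dominate the $n^{-1+o(1)}$ error in the first displayed line of that lemma; the third line you invoke holds unconditionally, since it is the not-$\theta$-II asymptotic of Lemma~\ref{internal-results-even} part 5 (valid for all $a>0$) together with the uniformly smaller correction $n^{-1-2/\ell+o(1)}$ from the computation in the proof of Lemma~\ref{inertness-even}. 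This is presumably why the paper's two-line proof cites Lemma~\ref{internal-results-even} alongside Lemma~\ref{inertness-even} rather than Lemma~\ref{inertness-even} alone. With that bookkeeping corrected, your union-bound calculation is exactly what is needed.
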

\begin{proof}
This follows from 
Lemma~\ref{internal-results-even} and 
Lemma~\ref{inertness-even}.
\end{proof}

\begin{proof}[Proof of Lemma~\ref{even-supper-rate-lower}]
Observe that $G_1$ and $G_2$ are increasing events, therefore
by FKG and  Lemmas~\ref{protected-lower} and \ref{no-0}, 
$$
\probsub{p}{G_1\cap G_2}\ge n^{4/\ell-4a^\ell/\ell!+o(1)},
$$
and the result follows from 
Lemma~\ref{super-sneaky-v2}.
\end{proof}

\subsection{The exceptional case: $\theta=4$}
\newcommand{\tblocking}{\text{\tt 4\_Blocking\_In }}

We assume that $\theta=4$ throughout this 
section, and that, in accordance with (\ref{form-p}), 
$$
p=a\frac{\log n}{n^2},
$$
with $a>2$. We first prove an analogue of Lemma~\ref{even-supper-rate-upper}. We will again assume that $\xi_0$ is built by internal spanning 
properties, as in Lemma~\ref{omega lower bound}, and observe that
the sites 
with $\xi_0$-state $4$ and $\xi_0$-state $3$, 
both of which we call \emph{4-obstacles}, are comparably 
improbable at our precision level. (Also note that 
there are no sites with $\xi_0$-state 5.) As a result, the 
convergence rate changes.

\begin{lemma} \label{theta-4-rate-upper}
The probability that the Hamming square based 
at the origin is not completely filled satisfies the 
following bound:
\begin{equation}\label{theta-4-rate-less}
\probsub{p}{\{\mathbf{0}\}\times K_n^2 \not\subset \omega_\infty}
\le n^{-2a+o(1)}.
\end{equation}
\end{lemma}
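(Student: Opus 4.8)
The plan is to run the argument of Lemmas~\ref{even-supper-rate-upper}--\ref{frame-bound} with the single structural change anticipated above: for $\theta=4$ a fixed site is in $\xi_0$-state $3$ with probability $\sim a\log n/n^a=n^{-a+o(1)}$ (Lemma~\ref{internal-results-even}, part~3 with $\ell=1$) and in $\xi_0$-state $4$ with probability $\sim 1/n^a=n^{-a+o(1)}$ (part~2 with $\ell=1$), so the two kinds of \emph{4-obstacle} are treated symmetrically; recall also that there are no sites in $\xi_0$-state $5$. Since Lemma~\ref{omega lower bound} gives $\bigcup\{\{x\}\times K_n^2:\xi_\infty(x)=0\}\subset\omega_\infty$, it suffices to bound $\probsub{p}{\xi_\infty(\mathbf{0})\ne0}$. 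Keeping the good boxes and the value of $N$ from~\eqref{N-def} (so $N=n^{1+o(1)}$), Lemma~\ref{not-good-bound} with $\ell=1$ gives $\probsub{p}{\Lambda_x\text{ not good}}\le 6n^{2-a}(\log n)^{-1/2}\to0$ because $a>2$, and the standard percolation argument of Lemma~\ref{circuit-good-sites} provides, for any $L$, a $D=D(a,L)$ with $\probsub{p}{G_1(D)^c}\le n^{-L}$, where $G_1(D)$ is the event that $\mathbf{0}$ is encircled inside $[-DN,DN]^2$ by a circuit of good boxes connected to the infinite cluster. On $G_1(D)$ that circuit is eventually turned entirely to $0$ by a very good box in the infinite cluster, as in the proof of~\eqref{supercritical-even}, so on $G_1(D)\cap\{\xi_\infty(\mathbf{0})\ne0\}$ there is, for some $t$, a circuit of $0$s around $\mathbf{0}$ in $\xi_t$ whose strict interior $A$ lies in $[-DN,DN]^2$.

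I would then introduce an event $\tblocking A$, defined like $\blocking A$ but with each side of the rectangle $R$ required to contain either two distinct $4$-obstacles of $A$ or one site of $A$ in $\xi_0$-state $4$, and reprove the analogue of Lemma~\ref{frame-necessary}: if $\xi_\infty(\mathbf{0})\ne0$ and there is a circuit of $0$s around $\mathbf{0}$ with strict interior $A$, then $\tblocking A$ occurs. The proof copies that of Lemma~\ref{frame-necessary}, but without the hypothesis "at most one site of $A$ in $\xi_0$-state $4$" (which was only needed because for $\ell\ge2$ such sites are much rarer than $\xi_0$-state-$3$ sites): the leftmost and rightmost non-zero sites on the top line of $\{x:\xi_\infty(x)\ne0\}$ each see a $0$ above and a $0$ to one side and so are $4$-obstacles, and when they coincide that site sees a $0$ above and on both sides and so is in $\xi_0$-state $4$; the other three sides are handled identically, and the degenerate case, in which the non-zero set collapses to one lattice line, is allowed because then every site of $A$ on that line sees $0$s above and below and is a $4$-obstacle. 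Combining the two paragraphs, $\probsub{p}{\{\mathbf{0}\}\times K_n^2\not\subset\omega_\infty}\le\probsub{p}{G_1(D)^c}+\probsub{p}{\tblocking [-DN,DN]^2}\le n^{-L}+\probsub{p}{\tblocking [-DN,DN]^2}$, and I would take $L=2a+1$.

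It remains to prove $\probsub{p}{\tblocking [-DN,DN]^2}\le n^{-2a+o(1)}$, the analogue of Lemma~\ref{frame-bound}. Writing $DN=n^{1+o(1)}$, the dominant configuration is a \emph{4-frame}, a nondegenerate rectangle all four of whose corners are $4$-obstacles; there are $n^{4+o(1)}$ of these in $[-DN,DN]^2$, so the $4$-frame contribution is $n^{4-4a+o(1)}$, which is at most $n^{-2a+o(1)}$ precisely because $a\ge2$ --- this is exactly the $\ell=1$ incarnation of the standing assumption $a^\ell/\ell!>2/\ell$. Every other realization of $\tblocking$ places an obstacle off a corner, cuts a corner of the bounding rectangle by a staircase, or is degenerate; in a case analysis parallel to Lemma~\ref{frame-bound}, each such deviation replaces one corner $4$-obstacle by at least two $4$-obstacles while gaining at most $n^{2+o(1)}$ positional freedom, hence costs a factor $n^{2-a+o(1)}$ --- again using $a>2$ --- so the sum stays at $n^{4-4a+o(1)}\le n^{-2a+o(1)}$, which gives~\eqref{theta-4-rate-less}.

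I expect the last step to be the main obstacle: one must verify that this corner-versus-extra-obstacle trade-off really accounts for \emph{every} stable set containing $\mathbf{0}$ in its interior --- equivalently, that a solid rectangle with $4$-obstacle corners is the uniquely cheapest blocking shape --- so that the union bound over all such sets costs no more than $n^{4-4a+o(1)}$. This is geometrically transparent, since any thin, bumpy, or corner-cut shape forces additional $4$-obstacles, and it parallels Lemmas~\ref{frame-necessary} and~\ref{frame-bound}; the point requiring care is that, in contrast with the case $\ell\ge2$, $\xi_0$-state-$4$ sites are not rare enough to be discarded at the outset, so they must be carried through the enumeration on the same footing as $\xi_0$-state-$3$ sites.
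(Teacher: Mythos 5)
Your overall strategy (heterogeneous bootstrap with both $\xi_0$-states $3$ and $4$ treated as 4-obstacles, a good-box circuit argument, and a blocking event for the interior) matches the paper's. However, the key probabilistic estimate in your last step is wrong, and your own caveat at the end is exactly the point where it fails.

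You claim the 4-frame (a nondegenerate rectangle all four of whose corners are 4-obstacles, contributing $n^{4-4a+o(1)}$) is the ``uniquely cheapest blocking shape,'' and that every other realization ``replaces one corner 4-obstacle by at least two 4-obstacles while gaining at most $n^{2+o(1)}$ positional freedom,'' so the whole sum is $n^{4-4a+o(1)}$. That intermediate claim is false, and would in fact contradict the matching lower bound, Lemma~\ref{theta-4-rate-lower}, which gives $\probsub{p}{\omega_\infty\equiv 0\text{ on }\{\mathbf{0}\}\times K_n^2}\ge n^{-2a}(1+o(1))$. The configuration you are missing is the \emph{degenerate} one: a rectangle $R$ containing $\mathbf{0}$ with both side lengths at most $2$ and just \emph{two} 4-obstacles — for instance two adjacent $\xi_0$-state-$4$ sites, one of them $\mathbf{0}$. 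Each such site has at most three state-$0$ neighbors, hence survives forever, and the probability is $\binom{O(1)}{2}(n^{-a+o(1)})^2=n^{-2a+o(1)}$. Comparing to the 4-frame: the degenerate case \emph{drops} two corner obstacles (gaining a factor $n^{2a}$) while losing the $n^{4}$ positional freedom, a net factor of $n^{2a-4}>1$ when $a>2$. So this deviation increases, rather than decreases, the probability, and your ``cost $n^{2-a+o(1)}$ per deviation'' bookkeeping does not apply to it. The correct dominant scenario is the small bounded rectangle with two 4-obstacles, giving $n^{-2a+o(1)}$, not the 4-frame.

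The paper handles this by defining $\tblocking A$ case-by-case on the dimensions of $R$: for $R$ with both side lengths $\ge 3$ it requires two 4-obstacles in each two-layer strip along each side (probability $\le n^{4-4a+o(1)}$); for $R$ thin in one direction it requires four 4-obstacles in $R$ (again $\le n^{4-4a+o(1)}$); and for $R$ with both sides $\le 2$ it requires two 4-obstacles in $R$, which is the dominant term $n^{-2a+o(1)}$. It then verifies (Lemma~\ref{4-blocking-necessary}) that a solitary 4-obstacle on the top line of the non-zero set forces another 4-obstacle in the adjacent line — this is the reason for the two-layer strips, and it is not captured by your single-line definition of $\tblocking$. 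If you want to repair your argument, you need both (a) a case split that isolates the bounded rectangle with only two obstacles, so you can see $n^{-2a+o(1)}$ emerge as the leading term, and (b) the two-layer version of the blocking condition (or an equivalent fix) so that the necessity lemma goes through without the ``at most one state-$4$ site'' hypothesis that you correctly dropped.
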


For a set $A\subset \bZ^2$, we say that 
the event $\tblocking A$ happens if 
there exists a rectangle $R=[a_1,a_2]\times [b_1,b_2]$ so that
$\mathbf{0}\in R$ and either:
\begin{itemize}
\item $a_2-a_1\ge 3$ and $b_2-b_1\ge 3$ and two layers on the 
four sides of $R$, $[a_1,a_1+1]\times[b_1,b_2]$, 
$[a_2-1,a_2]\times[b_1,b_2]$,
$[a_1,a_2]\times[b_1,b_1+1]$, and $[a_1,a_2]\times[b_2-1,b_2]$,
each contain at least two 4-obstacles in $A$;
\item $0\le a_2-a_1\le 2$, $b_2-b_1\ge 3$, and 
$R$ contains 4 or more 4-obstacles in $A$;
\item $a_2-a_1\ge 3$, $0\le b_2-b_1\le 2$, and 
$R$ contains 4 or more 4-obstacles in $A$; or
\item 
$0\le a_2-a_1\le 2$,  $0\le b_2-b_1\le 2$,
and 
$R$ contains 2 or more 4-obstacles in $A$. 
\end{itemize}

\begin{lemma}\label{4-blocking-necessary} 
Suppose that 
$\xi_\infty(\mathbf{0})\ne 0$. Assume that there is  circuit of $0$s around $\mathbf{0}$ in $\xi_t$, for some $t$. 
Let $A$ comprise sites in the strict interior of this circuit.
Then the event $\tblocking A$ happens. 
\end{lemma}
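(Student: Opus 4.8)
\textbf{Proof proposal for Lemma~\ref{4-blocking-necessary}.}

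The plan is to mimic the proof of Lemma~\ref{frame-necessary}, but with two complications: the threshold is now only $4$ rather than a large even number, and we do not assume away the possibility of a site with $\xi_0$-state $5$ (there are none when $\theta=4$, since $\theta-2=2$ and the corresponding state is $5$ only when $\xi_0(x)$ records failure to be $(\theta-4)$-IS $=0$-IS, which never happens). As before, the first reduction is to assume that every site in $A^c$ is a $0$ in $\xi_0$; this can only make the dynamics spread $0$s more aggressively, so if a circuit of $0$s around $\mathbf 0$ forms in the original dynamics, it still forms under this modification, and $\mathbf 0$ remains a non-$0$. Let $A'$ be the set of sites that are still non-zero in $\xi_\infty^{0}$ (the dynamics from this modified initial state restricted to reasoning about $A$); $\mathbf 0 \in A'$ and $A'$ is a finite set contained in $A$, hence bounded, so it has a bounding rectangle $R=[a_1,a_2]\times[b_1,b_2]$ with $\mathbf 0\in R$.

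The heart of the argument is local: on each of the four sides of $R$, examine the outermost one or two lines of $A'$ and deduce that enough 4-obstacles sit there. Consider the top; the argument for the other three sides is symmetric. Every site on the top line $\{b=b_2\}$ that lies in $A'$ has, at time $\infty$, at least one $0$-neighbor directly above it (outside $A'$, since $R$ is the bounding rectangle) and possibly a $0$-neighbor to the left or right on the same line if that neighbor is outside $A'$. A site with $\xi_0$-state at most $2$ would have flipped to $0$ once it had two $0$-neighbors, so every site of $A'$ on the top line with only one horizontal neighbor in $A'$ must have $\xi_0$-state at least $3$, i.e. be a 4-obstacle. The leftmost and rightmost sites of $A'\cap\{b=b_2\}$ each have at most one horizontal neighbor in $A'$, so if the top line of $A'$ has at least two sites we immediately get two 4-obstacles on the top layer. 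If the top line of $A'$ is a single site $x$, that site has two $0$-neighbors among its horizontal neighbors unless one of them lies in $A'$ — but there is no such neighbor — so $x$ alone has two $0$-neighbors and must itself have $\xi_0$-state $\ge 3$; but one 4-obstacle is not enough, so we must also look at the line just below. That second line $\{b=b_2-1\}$ must contain a site (namely below $x$, or the argument degenerates) whose neighbor above is $x\in A'$, not a $0$; for it to remain non-zero it needs its remaining neighbors to supply fewer than two $0$s, forcing another 4-obstacle on $\{b=b_2-1\}$ unless the rectangle is thin. This is exactly why the statement of $\tblocking$ splits into the four cases according to whether $a_2-a_1$ and $b_2-b_1$ are $\ge 3$ or $\le 2$: in the thin directions the two ``layers'' coincide or overlap and one only asks for a total count of 4-obstacles in $R$.

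The bookkeeping I would carry out carefully is: (1) show $R$ is non-degenerate, i.e. $a_1<a_2$ and $b_1<b_2$ — if, say, $b_1=b_2$, then $\mathbf 0$ and its horizontal neighbors would need to be 4-obstacles to survive with only horizontal and the two vertical $0$-neighbors, and one checks that the count of 4-obstacles forced in that degenerate rectangle still matches the last bullet of $\tblocking$, so we may simply include the degenerate case under the $0\le a_2-a_1\le 2$, $0\le b_2-b_1\le 2$ clause (which is why the definition allows $a_2-a_1=0$); (2) in the fat–fat case ($a_2-a_1\ge 3$ and $b_2-b_1\ge 3$) verify that each of the four two-line layers separately contains at least two 4-obstacles, using the single-site-on-a-line subcase above to push to the second line when needed and noting the layers are disjoint because of the gap $\ge 3$; (3) in the mixed and thin cases, aggregate the forced 4-obstacles into the stated totals ($\ge 4$ or $\ge 2$), being careful that sites counted on two different sides are not double-counted — in the thin directions this is automatic since everything lies in the single short rectangle $R$. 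The main obstacle is item (2)'s edge case where an outermost line of $A'$ is a single site and one must chase the constraint one line inward without accidentally running out of room inside $R$; handling that cleanly, and making sure the resulting obstacle lands in the claimed layer, is where the proof of Lemma~\ref{frame-necessary} does not directly transfer and a small amount of extra case analysis is unavoidable.
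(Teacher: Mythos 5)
Your proposal matches the paper's proof: reduce to the case where $A^c$ is all $0$s in $\xi_0$, let $A'$ be the surviving non-zero sites and $R$ its bounding rectangle, extract 4-obstacles from the extremal sites on each boundary line of $A'$ (pushing one line inward when the boundary line is a single site, since otherwise that site would be surrounded by $0$s and flip), and then split into the four dimension cases of $\tblocking$. One harmless slip: the four two-line layers are not pairwise disjoint even when $a_2-a_1\ge 3$ and $b_2-b_1\ge 3$ (adjacent ones share a corner); this does not matter since the first clause of $\tblocking$ only asks each layer individually to contain two obstacles, and in the mixed and thin cases you correctly rely only on disjointness of opposite layers.
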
 

\begin{proof} As before, we may assume that all sites 
in $A^c$ are $0$s
in $\xi_0$ and let $A'$ be the set of sites which are non-zero in 
$\xi_\infty$. If the top line of $A'$ 
consists of a single 4-obstacle, 
then the next line from the top must also contain a 4-obstacle. (Otherwise, the next line from the top would eventually turn into all $0s$, causing the solitary $4$-obstacle on the top line to be surrounded by $0$s.)
Finally, if there is 
a single 4-obstacle within $R$, then 
all sites in $R$ eventually turn into $0$s.
\end{proof}

We next note that Lemma~\ref{circuit-good-sites} still
holds, with $N$ given by~\eqref{N-def} with $\ell=1$, and proceed
with our final lemma.

\begin{lemma}\label{4-blocking-bound}
Assume $D$ is a fixed constant. Then $\probsub{p}{\tblocking [-DN,DN]^2}\le  n^{-2a+o(1)}$.
\end{lemma}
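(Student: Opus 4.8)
Since in this subsection $\xi_0$ is built from internal spanning (as in Lemma~\ref{omega lower bound}) and the copies $\{x\}\times K_n^2$ are disjoint for distinct $x$, the variables $(\xi_0(x))_{x\in\bZ^2}$ are i.i.d. Write $q=\probsub{p}{x\text{ is a $4$-obstacle}}=\probsub{p}{\text{$K_n^2$ is not $2$-IS}}\sim a(\log n)/n^{a}=n^{-a+o(1)}$ by Lemma~\ref{internal-results-even}, and recall $DN=n^{1+o(1)}$ from~\eqref{N-def} with $\ell=1$, so that $DN\cdot q=n^{1-a+o(1)}\to 0$ since $a>2$. The plan is to take a union bound over the four cases in the definition of $\tblocking$, with all sites restricted to $[-DN,DN]^2$; the first (``fat rectangle'') case is the only one that requires real work.

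The three degenerate cases are handled by crude counting. In the fourth case $R\subseteq[-2,2]^2$, so there are $O(1)$ admissible rectangles, each demanding at least two $4$-obstacles among its at most nine sites; this contributes $O(q^{2})=n^{-2a+o(1)}$, which is already the target bound. In the second case $a_2-a_1\le 2$, so the column interval $[a_1,a_2]$ takes one of finitely many values, and the (at least four) $4$-obstacles it requires all lie in the strip $[a_1,a_2]\times[-DN,DN]$, which has $O(DN)$ sites; this contributes at most $\binom{O(DN)}{4}q^{4}=n^{4-4a+o(1)}$, and $4-4a<-2a$ because $a>2$. The third case is symmetric.

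For the first case, $R=[a_1,a_2]\times[b_1,b_2]$ has both side lengths at least $3$, so the left and right two-layer strips are disjoint and so are the top and bottom ones, and each of the four strips contains at least two $4$-obstacles. I would fix a set $W$ of distinct $4$-obstacles witnessing this, i.e.\ filling the eight ``slots'' (two per strip). Because $a_2-a_1\ge 3$ and $b_2-b_1\ge 3$, a point of $\bZ^2$ lies in at most one of $\{$left, right$\}$ and at most one of $\{$bottom, top$\}$, hence in at most two of the four strips, and in exactly two only when it sits in one of the four $2\times 2$ corner blocks of $R$. Thus if $|W|=k$ and $j$ of its members are non-corner obstacles, then $8\le j+2(k-j)$, giving $k\ge4$ and $j\le 2k-8$. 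Now $W$ pins down $R$: an obstacle in, say, the left strip forces $a_1$ into a set of size $2$, a corner obstacle forces one $x$- and one $y$-coordinate of $R$ up to $O(1)$, and, given $R$, a non-corner obstacle has $O(DN)$ admissible positions while a corner obstacle has $O(1)$. Summing over $R$ ($O((DN)^4)$ choices) and then over the obstacle positions consistent with $R$ and with a fixed corner/non-corner pattern ($O((DN)^{j})$ choices, $O(1)$ patterns), and using independence of the $\xi_0(x)$, gives a contribution $O\big((DN)^{4+j}q^{k}\big)=O\big((DN)^{2k-4}q^{k}\big)=n^{(2k-4)-ka+o(1)}$. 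The exponent $(2k-4)-ka=k(2-a)-4$ is decreasing in $k$ for $a>2$, so over $k\ge4$ it is maximized at $k=4$, and the first case contributes at most $n^{4-4a+o(1)}$. This is the two-layer analogue of a \emph{frame}; as in Lemma~\ref{frame-bound} one must in addition check that the various special placements of the witnessing obstacles relative to the corners of $R$ (a single obstacle on a line, obstacles shared between adjacent strips, and so on) all give contributions of order at most $n^{4-4a+o(1)}$, which they do.

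Summing the four cases, $\probsub{p}{\tblocking[-DN,DN]^2}\le n^{-2a+o(1)}+n^{4-4a+o(1)}=n^{-2a+o(1)}$, since $4-4a<-2a$ for $a>2$; the dominant term is the pair of $4$-obstacles of the fourth case, and everything else is genuinely lower order. I expect the enumeration in the fat-rectangle case to be the main obstacle: one must keep the trade-off between the number of witnessing $4$-obstacles (powers of $q$) and the residual freedom in $R$ (powers of $DN$) balanced tightly enough that the estimate still beats $n^{-2a}$ all the way down to $a>2$. The remaining cases and the per-pattern bookkeeping closely parallel the proof of Lemma~\ref{frame-bound}, so no new ideas should be needed there.
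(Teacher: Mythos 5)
Your proof is correct and follows the paper's route: a union bound over the four items in the definition of $\tblocking$, showing the ``fat rectangle'' and one‑sided degenerate items each contribute $n^{4-4a+o(1)}$ while the $O(1)\times O(1)$ item contributes $n^{-2a+o(1)}$, which dominates because $a>2$. The paper disposes of the fat‑rectangle item with a bare reference to the case enumeration in Lemma~\ref{frame-bound}; your $(k,j)$ parametrization (distinct witness obstacles $k\ge 4$, non‑corner ones $j\le 2k-8$, giving $(DN)^{4+j}q^k\le (DN)^{2k-4}q^k$ maximized at $k=4$) is a tidier packaging of the same counting, and the closing remark that you ``must in addition check the special placements'' undersells it --- that bookkeeping is exactly what the $(k,j)$ bound already absorbs. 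One small slip that doesn't affect anything: in the fourth case $R\subseteq[-2,2]^2$ has up to $25$ sites (each $R$ individually has at most $9$), but the bound $O(q^2)$ is unchanged.
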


\begin{proof}
For the event $\{\tblocking [-DN,DN]^2\}$ to happen, one of the four 
events, corresponding to the four items in its 
definition, must happen.
The event in the 
first item happens with probability at most $n^{4-4a+o(1)}$, 
as in the proof of Lemma~\ref{frame-bound}. The events 
in the second and 
third item also happen with probability at most 
$n^{4-4a+o(1)}$. The event in the last item happens with 
probability $n^{-2a+o(1)}$, and this last probability is 
the largest, as $a>2$. 
\end{proof}

\begin{proof}[Proof of Lemma~\ref{theta-4-rate-upper}]
Analogously to the case of even $\theta\ge 6$, choose the constant 
$D$ in Lemma~\ref{circuit-good-sites}
so that $L$ in (\ref{circuit-good-sites-eq}) satisfies
$L>2a$, and use Lemmas~\ref{4-blocking-necessary} and 
\ref{4-blocking-bound} to conclude (\ref{theta-4-rate-less}).
\end{proof}

We conclude this section by the simple observation that gives
the matching lower bound. 

\begin{lemma} \label{theta-4-rate-lower}
The Hamming square based 
at the origin remains unoccupied forever with 
probability bounded below as follows:
\begin{equation}\label{theta-4-rate-more}
\probsub{p}{\omega_\infty\equiv 0\text{ on }\{\mathbf{0}\}\times K_n^2}
\ge n^{-2a}(1+o(1)).
\end{equation}
\end{lemma}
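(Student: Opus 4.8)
The plan is to exhibit an explicit, extremely economical blocking event and observe that it is self‑sustaining precisely because $\theta=4$. Set
\[
D=\{(0,0),(1,0)\}\times K_n^2\subset\bZ^2\times K_n^2,
\]
the union of the Hamming squares sitting over the origin and over $(1,0)$. The key deterministic fact I would establish is: if $\omega_0\equiv 0$ on $D$, then $\omega_t\equiv 0$ on $D$ for every $t\ge 0$, and in particular $\omega_\infty\equiv 0$ on $\{\mathbf{0}\}\times K_n^2$. I would prove this by induction on $t$. For $v=((0,0),(i,j))$ the only neighbors of $v$ in $\bZ^2\times K_n^2$ lying outside $D$ are $((-1,0),(i,j))$, $((0,1),(i,j))$ and $((0,-1),(i,j))$ — three of them — and all remaining neighbors of $v$ lie in $D$; symmetrically, each $v\in\{(1,0)\}\times K_n^2$ has exactly three neighbors outside $D$. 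Hence, if $D$ is empty at time $t$, every $v\in D$ has at most $3<\theta=4$ occupied neighbors at time $t$ and so remains empty at time $t+1$, which closes the induction.

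Granting this, the bound is immediate. The events $\{\omega_0\equiv 0\text{ on }\{\mathbf{0}\}\times K_n^2\}$ and $\{\omega_0\equiv 0\text{ on }\{(1,0)\}\times K_n^2\}$ are determined by $\omega_0$ on disjoint vertex sets, hence independent, and each has probability $(1-p)^{n^2}$; their intersection is contained in the event of interest. Therefore
\[
\probsub{p}{\omega_\infty\equiv 0\text{ on }\{\mathbf{0}\}\times K_n^2}\ \ge\ (1-p)^{2n^2}.
\]
To evaluate the right‑hand side I would write $(1-p)^{2n^2}=\exp\bigl(2n^2\log(1-p)\bigr)=\exp\bigl(-2n^2p\,(1+O(p))\bigr)$ and substitute $p=a(\log n)/n^2$, getting $\exp(-2a\log n+o(1))=n^{-2a}(1+o(1))$, which is exactly (\ref{theta-4-rate-more}).

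There is essentially no obstacle here; the only subtlety worth spelling out is why one empty Hamming square does not suffice. A single empty square $\{\mathbf{0}\}\times K_n^2$ is typically invaded, because its four neighboring squares usually become fully occupied and then, for every $(i,j)$, the four occupied copies $((\pm1,0),(i,j))$ and $((0,\pm1),(i,j))$ already meet the threshold $\theta=4$. A domino of two \emph{adjacent} empty squares removes one of those four directions uniformly in $(i,j)$, which is what makes $D$ self‑sustaining; this is why the cost is $\bigl((1-p)^{n^2}\bigr)^2=n^{-2a+o(1)}$, matching the upper bound of Lemma~\ref{theta-4-rate-upper}.
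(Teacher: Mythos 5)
Your proof is correct and takes the same approach as the paper: use the inclusion $\{\omega_0\equiv 0\text{ on two adjacent Hamming squares}\}\subset\{\omega_\infty\equiv 0\text{ on }\{\mathbf{0}\}\times K_n^2\}$ and the independence of the two squares (the paper uses $\{\mathbf{0},(0,1)\}$, you use $\{(0,0),(1,0)\}$; equivalent by symmetry). The paper states the inclusion without justification; your inductive verification and counting of the three exterior $\bZ^2$-neighbors is the correct, if implicit, reason it holds.
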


\begin{proof}
The inclusion
$$
\{\omega_0\equiv 0\text{ on }\{\mathbf{0}, (0,1)\}\times K_n^2\}
\subset
\{\omega_\infty\equiv 0\text{ on }\{\mathbf{0}\}\times K_n^2\}.
$$
gives the desired bound.
\end{proof}

\section{The odd threshold}
\newcommand{\pn}{p^{(n)}}
\newcommand{\bxi}{\overline\xi}
\newcommand{\hxi}{\widehat\xi}
\newcommand{\greenp}{\text{\tt Green\_Percolation}}
\newcommand{\zerop}{\text{\tt Green\_Connection}}
\newcommand{\nonred}{\text{\tt Nonred\_Percolation}}
\newcommand{\nonredzero}{\text{\tt Nonred\_Connection}}
\newcommand{\redcircuit}{\text{\tt Red\_Circuit}}
\newcommand{\nozero}{\text{\tt No\_Zero}}
\newcommand{\obstbox}{\text{\tt Obstacle\_Box}}

In this section we prove Theorem~\ref{odd-theta-thm}. 
In the first three subsections, we handle the case 
$\ell\ge 2$: first we define, and give bounds for, 
the critical value $a_c$, then we prove (\ref{odd-theta-super}), 
and then (\ref{odd-theta-sub}). In the last, fourth subsection, 
we sketch the argument for the case $\ell=1$ in lesser detail. 

\subsection{The critical value of $a$ for $\ell\ge 2$}

Pick an $a>0$ and an $\epsilon\in(0,\exp\left[-\frac{ a^\ell}{\ell!}\right]-\exp\left[-\frac{2 a^\ell}{\ell!}\right])$.
Consider the initial state $\xi^{(a,\epsilon)}_0$ given by the 
product measure with 
\begin{equation*}
\begin{aligned}
\P(\xi^{(a,\epsilon)}_0(x)=0)&=\epsilon,\\
\P(\xi^{(a,\epsilon)}_0(x)=1)&=\left(1 - e^{-a^{\ell}\slash{\ell!}}\right)^2, \\
\P(\xi^{(a,\epsilon)}_0(x)=3)&=\exp\left[-\frac{2 a^\ell}{\ell!}\right],\\
\P(\xi^{(a,\epsilon)}_0(x)=2)&=1-P(\xi^{(a,\epsilon)}_0(x)=0)-P(\xi^{(a,\epsilon)}_0(x)=1)-P(\xi^{(a,\epsilon)}_0(x)=3)
\end{aligned} 
\end{equation*}
for every $x\in\bZ^2$. We will call this an \emph{$(a,\epsilon)$-initialization} and denote the resulting bootstrap dynamics 
by $\xi_t^{(a,\epsilon)}$. 

Define $a_c\in [0,\infty]$ as follows: 
$$
a_c=\inf\{a> 0: \lim_{\epsilon\to 0}\P(\xi^{(a,\epsilon)}_\infty(\mathbf{0})=0)>0\}.
$$
Observe that $\P(\xi_\infty^{(a,\epsilon)}(\mathbf{0})=0)$ is a
nonincreasing function of $\epsilon$ and therefore its limit  
as $\epsilon\to 0$ exists. Furthermore, this limit is a nondecreasing function of $a$, 
and therefore it vanishes on $[0,\alpha_c)$ and is strictly positive on $(a_c,\infty)$. 

The next two lemmas establish that $a_c$ is nontrivial, 
that is, $a_c\in (0,\infty)$, by comparison to
the critical value $p_c^{\text{site}}$ of site percolation on $\bZ^2$, and to the critical value of the site percolation 
on the triangular lattice.

\begin{lemma} \label{ac-pc} The following strict inequality holds:
\begin{equation}\label{ac-pc-eq}
(1 - e^{-a_c^{\ell}\slash{\ell!}})^2<p_c^{\text{\rm site}}.
\end{equation}
 In particular, $a_c<\infty$. Furthermore, $\lim_{\epsilon\to 0}\P(\xi^{(a,\epsilon)}_\infty(\mathbf{0})=0)\to 1$ as $a\to\infty$.
\end{lemma}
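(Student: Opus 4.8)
The plan is to treat the three assertions of the lemma --- $a_c<\infty$, the $a\to\infty$ limit, and the strict inequality --- in increasing order of difficulty.

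\emph{The two easy assertions.} For a lower bound on $\P(\xi^{(a,\epsilon)}_\infty(\mathbf 0)=0)$, I would discard all help coming from states $2,3,4,5$: freeze every site not in state $0$ or $1$ and let only the $0$s spread through the random set $W=\{x:\xi^{(a,\epsilon)}_0(x)\in\{0,1\}\}$. Freezing can only remove $0$s from $\xi_\infty$, so the occupied set of the frozen dynamics is contained in $\{x:\xi^{(a,\epsilon)}_\infty(x)=0\}$; and in the frozen dynamics a connected cluster of $W$ turns entirely to $0$ as soon as it contains one initial $0$. Since each site lies in $W$ independently with probability $q_a+\epsilon$, where $q_a:=(1-e^{-a^\ell/\ell!})^2$, and each $W$-site is an initial $0$ independently with probability $\epsilon/(q_a+\epsilon)>0$, every infinite $W$-cluster a.s.\ contains an initial $0$. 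Hence
$$
\P\bigl(\xi^{(a,\epsilon)}_\infty(\mathbf 0)=0\bigr)\ \ge\ \theta_{\mathrm{site}}(q_a+\epsilon),
$$
where $\theta_{\mathrm{site}}$ is the percolation function of i.i.d.\ site percolation on $\bZ^2$. Letting $\epsilon\to0$ and using monotonicity of $\theta_{\mathrm{site}}$ gives $\lim_{\epsilon\to0}\P(\xi^{(a,\epsilon)}_\infty(\mathbf 0)=0)\ge\theta_{\mathrm{site}}(q_a)$, which is positive as soon as $q_a>p_c^{\text{site}}$. Thus $a_c\le a^{*}$, where $a^{*}$ is defined by $q_{a^{*}}=p_c^{\text{site}}$; in particular $a_c<\infty$. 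Since $q_a\to1$ as $a\to\infty$ and $\theta_{\mathrm{site}}(q)\to1$ as $q\to1$, the same estimate gives $\lim_{\epsilon\to0}\P(\xi^{(a,\epsilon)}_\infty(\mathbf 0)=0)\to1$ as $a\to\infty$.

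\emph{The strict inequality: reduction to a block estimate.} It now suffices to produce some $a<a^{*}$ with $\lim_{\epsilon\to0}\P(\xi^{(a,\epsilon)}_\infty(\mathbf 0)=0)>0$, since this forces $a_c\le a<a^{*}$, that is $q_{a_c}<p_c^{\text{site}}$. Here the state-$2$ sites do the work: a state-$2$ site becomes $0$ once two of its neighbours are $0$, so the $0$-spread can fuse distinct clusters of $W$, a gain that persists even when $W$ is subcritical. I would turn this into a renormalization in the style of \cite{GHS}. Fix a large integer $K$, tile $\bZ^2$ by blocks $B_u=Ku+[0,K-1]^2$, and call $B_u$ \emph{good} if, running the heterogeneous dynamics inside the enlarged block $Ku+[-K,2K-1]^2$ with a single $0$ planted near the centre of $B_u$, the resulting set of $0$s contains a connected ``highway'' that crosses $B_u$ in both directions and meets a prescribed contact region in each of the four neighbouring blocks --- and a further adjustment makes the planted $0$ dispensable, so that goodness depends only on the partition of sites into $W$, state-$2$ and state-$3$. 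Then $\{B_u\ \text{good}\}$ is an increasing, finite-range-dependent event in $u$ whose probability is continuous in $\epsilon$; if this probability (for $K$ large and $a$ close to $a^{*}$) is close enough to $1$, then by \cite{LSS} the good blocks dominate a supercritical Bernoulli field, so there is an infinite cluster of good blocks, which a.s.\ contains a block with an initial $0$; that $0$ floods every highway in the infinite good-cluster, whence $\P(\xi^{(a,\epsilon)}_\infty(\mathbf 0)=0)\ge c>0$ for all small $\epsilon$, with $c$ not depending on $\epsilon$.

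\emph{The main obstacle.} The heart of the matter is therefore to show that, when $a=a^{*}$ (so $W$ is exactly critical site percolation, $\P(\xi_0\in\{0,1\})=p_c^{\text{site}}$, with a complementary positive density of state-$2$ bridge sites and only a small density $e^{-2(a^{*})^{\ell}/\ell!}<1$ of state-$3$ obstacles), a block is good with probability tending to $1$ as $K\to\infty$; the case $a$ slightly below $a^{*}$ then follows by continuity in $a$. At criticality, Russo--Seymour--Welsh estimates give crossings of $B_K$ by $W$-clusters with probability bounded away from $0$ but \emph{not} from $1$, so the whole gain has to come from the independent, positive-density field of state-$2$ bridge sites, which must be shown to glue the many long $W$-clusters and near-crossings present at every scale into one conductive highway spanning $B_K$; the sparse state-$3$ obstacles, needing three $0$-neighbours to flip and hence effectively inert, have to be routed around. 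This is an essential-enhancement phenomenon --- state-$2$ bridging creates infinite conductive clusters at the site-percolation threshold of $W$ --- and establishing it in a form robust enough to feed the renormalization above is the delicate step; the rest is bookkeeping.
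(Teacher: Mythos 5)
Your treatment of the two easy assertions is correct and in fact a bit cleaner than the paper's: freezing all sites in states $\ge 2$ yields the subgraph bound $\P(\xi^{(a,\epsilon)}_\infty(\mathbf 0)=0)\ge\theta_{\mathrm{site}}(q_a+\epsilon)$ with $q_a=(1-e^{-a^\ell/\ell!})^2$, and this gives both $a_c\le a^{*}$ (where $q_{a^{*}}=p_c^{\mathrm{site}}$) and the $a\to\infty$ limit. So far so good.

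The strict inequality $q_{a_c}<p_c^{\mathrm{site}}$, however, is the entire content of the lemma, and your proposal does not prove it. You sketch a block renormalization in the style of \cite{GHS}, and then explicitly declare the crucial input --- that at $a=a^{*}$ the good-block probability tends to $1$ as $K\to\infty$ --- to be ``the delicate step'' left undone. This is not a small gap: a quantitative block estimate for an enhanced model \emph{at} the site-percolation threshold of $W$ is at least as strong as the essential-enhancement statement you would feed into it, and you give no route to proving it. (Indeed, at $a=a^{*}$, RSW only gives crossings with probability bounded away from $0$ and $1$, and turning the positive density of state-$2$ bridge sites into a crossing probability near $1$ is exactly the phenomenon whose proof is missing.) The ``continuity in $a$'' step at the end is fine once the block estimate at $a^{*}$ is in hand, but that is the point that needs work.

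The paper avoids the renormalization entirely by a local construction plus a black-box appeal to the Aizenman--Grimmett theorem \cite{AG} (in the strengthened form of \cite{BBR}). Declare a site \emph{green} if $\xi_0\le 1$, or if $\xi_0=2$ and its eight neighbours all have $\xi_0\le 1$ except possibly one diagonally opposite pair (the two patterns displayed in \eqref{ac-pc-eq1}). A short case check shows that a state-$2$ green site adjacent to an eventually-$0$ green site is forced to acquire two eventually-$0$ neighbours, so any green cluster containing an initial $0$ flips entirely to $0$ --- this replaces your ``conductive highway'' idea with an explicit, verified local rule. Since every infinite green cluster a.s.\ contains an initial $0$, $\P(\greenp)$ lower-bounds $\P(\xi_\infty(\mathbf 0)=0)$ uniformly in $\epsilon$. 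The green model is precisely an essential enhancement of Bernoulli$(q_a)$ site percolation, so \cite{AG,BBR} yield an $a$ with $q_a<p_c^{\mathrm{site}}$ and $\P(\greenp)>0$. That is the idea your sketch is reaching for but does not supply: reduce to a clean local enhancement and cite the known strict-monotonicity theorem, rather than try to re-derive an enhancement result via a block argument at criticality.
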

\begin{proof} 
Given a configuration $\xi_0=\xi_0^{(a,\epsilon)}$, form the following set of \emph{green} sites. 
Any site $x$ with $\xi_0(x)\le 1$ is green. Also make green any site $x$ such that 
$\xi_0(x)=2$ and $\xi_0(y)\le1$ for
all sites $y$ among the $8$ nearest neighbors of $x$, except possibly for two diagonally opposite 
neighbors. That is, if the local configuration in $\xi_0$ around a site $x$ is
\begin{equation}\label{ac-pc-eq1}
\begin{matrix}
1 \,1\, *\\
1\, 2 \,1\\
* \,1 \,1\\
\end{matrix} \qquad
\text{ or } \qquad
\begin{matrix}
* \,1\, 1\\
1\, 2 \,1\\
1 \,1 \,*\\
\end{matrix} \quad,
\end{equation}
where $*$ denotes an arbitrary state, then $x$ is green, and it is also green 
if its local configuration has $0$s in place of any of the $1$s
in (\ref{ac-pc-eq1}). Let $\greenp$ 
be the event that $\mathbf{0}$ is in an infinite connected set of green sites, 
and $\zerop$ the event that $\mathbf{0}$ is green and connected to a vertex with state $0$ in $\xi_0$ through green sites. 
Then 
\begin{equation}\label{ac-pc-eq2}
\P(\greenp\setminus\zerop)=0.
\end{equation}
Moreover, we claim that 
\begin{equation}\label{ac-pc-eq3}
\zerop\subset \{\xi_\infty(\mathbf{0})=0\}.
\end{equation}
To see this, consider the set of all sites in a connected cluster $\cC$ of 
$\mathbf{0}$ of green sites that includes a $0$ in $\xi_0$. Let $\cC_0$ be the 
set of all sites in $\cC$ that eventually assume state $0$. If $\cC_0\subsetneqq\cC$, then 
there exist neighbors $x$ and $y$ with $x\in \cC_0$ and 
$y\in \cC\setminus\cC_0$. But then $\xi_0(y)=2$, 
and by inspection of the configurations in (\ref{ac-pc-eq1}), we see that $y$ must have at least $2$ 
neighbors in $\cC_0$, a contradiction. Therefore 
 $\cC_0=\cC$ and 
(\ref{ac-pc-eq3}) holds. 

Finally, it follows from \cite{AG} (see also \cite{BBR}) that there exists 
an $a$ with $(1 - e^{-a^{\ell}\slash{\ell!}})^2<p_c^{\text{site}}$, so that 
$\P(\greenp)>0$. This, together with (\ref{ac-pc-eq1}--\ref{ac-pc-eq3}), 
establishes (\ref{ac-pc-eq}). Moreover, it follows from 
standard percolation arguments that 
$\P(\greenp)\to 1$ as $a\to\infty$, and then (\ref{ac-pc-eq2})
implies the last claim. 
\end{proof}

\begin{lemma} \label{ac-pcstar} The critical value $a_c$ satisfies 
the following strict inequality:
$$\exp[-2a_c^\ell/\ell!] < 1/2.$$
In particular, $a_c>0$. 
\end{lemma}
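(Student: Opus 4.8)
The goal is to show $\exp[-2a_c^\ell/\ell!] < 1/2$, equivalently that the state-$3$ density $q := \exp[-2a^\ell/\ell!]$ stays strictly below $1/2$ at the critical point, which in turn forces $a_c > 0$. The plan is to find a small but positive value of $a$ for which the process $\xi_t^{(a,\epsilon)}$ almost surely never turns $\mathbf 0$ to $0$ (in the $\epsilon\to 0$ limit), by exhibiting a percolating barrier of obstacles that blocks the spread of $0$s. I would compare to site percolation on the triangular lattice, whose critical probability is exactly $1/2$ (Kesten), and show that when $q > 1/2$ — which holds for all sufficiently small $a$, since $q \to 1$ as $a \to 0$ — an infinite cluster of obstacles surrounds $\mathbf 0$ with probability one, and this cluster cannot be destroyed.

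The key structural point is that $3$s act as obstacles: a site in state $3$ only becomes $0$ if it has at least three $0$-neighbors among its four lattice neighbors. The relevant observation is that in threshold-$2$-type spreading on $\bZ^2$, a set of sites that is "NW–SE connected" — i.e., connected using the four axis neighbors together with the two diagonal neighbors $(1,1)$ and $(-1,-1)$ — forms a barrier: if every site along such a path is in state $3$ (or state $\ge 3$), then no $0$ can cross it, because any site on the path has at most two of its four axis-neighbors outside the path, hence at most two $0$-neighbors unless a $0$ has already appeared within the path. This is exactly why the comparison is to the triangular lattice (equivalently, $\bZ^2$ with the $(1,1)$ diagonal added) rather than plain site percolation on $\bZ^2$. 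So the steps are: (1) declare a site "red" if $\xi_0(x) = 3$; (2) note reds occur independently with density $q$; (3) when $q > 1/2 = p_c^{\mathrm{site}}(\text{triangular})$, with probability one there is an infinite NW–SE-connected cluster of reds, and moreover $\mathbf 0$ is enclosed by a red circuit of this connectivity type; (4) argue by induction on time that no site on such a red circuit ever turns to $0$, hence the interior — in particular $\mathbf 0$ — never turns to $0$; (5) conclude $\P(\xi_\infty^{(a,\epsilon)}(\mathbf 0) = 0) = 0$ for all such $a$ and all $\epsilon$, so $a_c \ge$ the threshold value, which is strictly positive, and in fact $\exp[-2a_c^\ell/\ell!] \le 1/2$; then upgrade $\le$ to $<$ by the standard argument that at $q = 1/2$ exactly there is still no percolation of the circuit-enclosing event uniformly, or more simply by noting that the set of $a$ for which blocking occurs is open (one has room to spare: a circuit at density $q > 1/2$ persists under small perturbation), giving the strict inequality.

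For the strict inequality I would argue as follows. Let $a^*$ be defined by $\exp[-2(a^*)^\ell/\ell!] = 1/2$. For any $a < a^*$ we have $q > 1/2$, so by the triangular-lattice percolation argument the blocking event has probability $1$, whence $\lim_{\epsilon\to 0}\P(\xi_\infty^{(a,\epsilon)}(\mathbf 0) = 0) = 0$; therefore $a_c \ge a^*$, i.e. $\exp[-2a_c^\ell/\ell!] \le 1/2$. To get strictness, observe that the event "$\mathbf 0$ is surrounded by a red circuit in the NW–SE connectivity" is an increasing event in the red configuration whose probability is $1$ for every $a < a^*$; by continuity/monotonicity it remains $1$ at $a = a^*$ (supercritical percolation arguments: the probability of an enclosing circuit at criticality of site percolation on the triangular lattice is actually $0$, so this direction needs care). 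The cleaner route: since at $q = 1/2$ there is no infinite red cluster a.s. (criticality), the barrier argument fails at $a = a^*$, but this does not immediately give strictness of $a_c > a^*$. Instead I would use that the blocking mechanism only needs a red circuit in a finite annulus together with it being "anchored," and at $q$ slightly above $1/2$ the probability of such a configuration is bounded below uniformly — so the clean statement is simply $a_c \ge a^*$ with $\exp[-2a_c^\ell/\ell!] \le 1/2$, and the strict inequality follows because for $a$ in a right-neighborhood of $a^*$ one still has $q$ bounded below by a value above the threshold needed — wait, that is not true at $a^*$ itself.

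The honest resolution, and the step I expect to be the main obstacle, is handling the boundary case $q = 1/2$. The way to obtain \emph{strict} inequality $\exp[-2a_c^\ell/\ell!] < 1/2$ is to show that blocking occurs even for some $a$ with $q$ \emph{at or slightly below} $1/2$, which requires more than a red circuit: one can allow the circuit to use state-$3$ \emph{and} state-$2$ sites in a controlled way, or equivalently enhance the barrier using the fact that $0$s themselves spread slowly, so that a "thick" red-ish annulus of width growing with the annulus radius blocks with probability $1$ even at $q = 1/2$ by a Peierls-type / block argument — this is analogous to how polluted bootstrap percolation on $\bZ^2$ blocks at density $q \gg p^2$ rather than at a sharp threshold. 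Concretely, I would mimic the good-box construction from Section 3: rescale $\bZ^2$ into blocks, call a block "barricaded" if it contains a short NW–SE red path crossing it, note barricaded blocks percolate (as a $1$-dependent field dominating a supercritical product measure) for $q$ strictly above some $q_1 < 1/2$, and the resulting macroscopic circuit of barricaded blocks yields a microscopic red barrier that blocks all $0$s. Since $q_1 < 1/2$, there is $a > a^*$ (so $q < 1/2$) with $q > q_1$, giving blocking, hence $a_c > a^*$ and $\exp[-2a_c^\ell/\ell!] < \exp[-2(a^*)^\ell/\ell!] = 1/2$. The main obstacle is making the block-percolation barrier argument precise enough to beat the exact threshold $1/2$; everything else (independence of reds, the induction that a red NW–SE circuit is never breached) is routine.
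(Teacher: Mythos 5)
Your barrier argument is structurally the right one and matches the paper's: declare obstacle sites, observe that a circuit of obstacles in the triangular lattice $\bT$ (i.e., $\bZ^2$ with the $(1,1)$/$(-1,-1)$ diagonals added) has every site with at most two axis-neighbors outside the circuit-plus-interior, so a circuit with no initial $0$s inside is never breached. You also correctly identify the crux: bare state-$3$ percolation on $\bT$ has critical density exactly $1/2$, so this argument alone only gives $\exp[-2a_c^\ell/\ell!]\le 1/2$, and the hard part is the strict inequality.

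However, your proposed fix does not close the gap. The block-rescaling plan you lean on (``call a block barricaded if it contains a short NW--SE red path crossing it; barricaded blocks percolate for $q>q_1$ with $q_1<1/2$'') is circular: if ``red'' means only state $3$, then the probability that a block contains a red crossing is controlled by crossing probabilities of bare state-$3$ percolation, which at $q=1/2$ is merely bounded away from $0$ and $1$ (RSW), not close to $1$. Renormalization makes a supercritical model more supercritical, but it cannot turn critical or subcritical percolation into a supercritical renormalized field, so the claim $q_1<1/2$ is unsupported. Your analogy to polluted bootstrap percolation (where obstacles of density $q\gg p^2$ block) is also off here, since that is a sparse-obstacle scaling regime, not a fixed finite-density threshold.

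What the paper actually does to beat $1/2$ is an \emph{essential enhancement}, which you mention in passing (``allow the circuit to use state-$3$ and state-$2$ sites in a controlled way'') but never pin down. Concretely, the paper enlarges the red set: a site $x$ with $\xi_0(x)=2$ is declared red if its eight surrounding sites, except possibly two diagonally opposite ones, all have state $3$ (the pattern in equation (5.2)). A red $\bT$-circuit is still an impenetrable barrier once the auxiliary $3$s around the enhanced $2$s are included in the protected set $R$, as the paper checks case by case. Crucially, this is an essential enhancement of state-$3$ site percolation on $\bT$, so by Aizenman--Grimmett and Balister--Bollob\'as--Riordan, the enhanced model has strictly lower critical density, giving some $a$ with $\exp[-2a^\ell/\ell!]<1/2$ for which red circuits surround $\mathbf 0$ with probability arbitrarily close to $1$. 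Without invoking this enhancement machinery (or an alternative argument that genuinely produces obstacle circuits at some $q<1/2$), your proof gives only $\le 1/2$ and therefore does not prove $a_c>0$.
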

\begin{proof} 
Pick an $\alpha>0$.
Given a configuration $\xi_0=\xi_0^{(a,\epsilon)}$, 
declare a site 
$x$ \emph{red} if $\xi_0(x)=3$, or $\xi_0(x)=2$ and the 
local configuration in $\xi_0$ around $x$ is:
\begin{equation}\label{ac-pcstar-eq0}
\begin{matrix}
3 \,3\, *\\
3\, 2 \,3\\
* \,3 \,3\\
\end{matrix} \qquad
\end{equation}
where $*$ denotes an arbitrary state.

The triangular lattice $\bT$ is obtained by adding 
SW-NE edges to the nearest neighbor edges in $\bZ^2$. 
(When we say that $x,y\in \bZ^2$ are neighbors without 
specifying the lattice, 
we still mean nearest neighbors.)
Recall that $\bT$ is (site-)self-dual and so the 
site percolation on $\bT$ has critical density $1/2$.
We call a $\bT$-circuit $\zeta$ a sequence of 
distinct points $y_0, y_1,\ldots, y_n=y_0$ such 
that $y_i$ and $y_{i-1}$ are $\bT$-neighbors 
for $i=1,\ldots, n$. We will also assume that 
$\zeta$ is a boundary of its connected interior, i.e., 
its sites are all points, which are outside some nonempty 
$\bT$-connected set $S$, but have a $\bT$-neighbor in $S$ (this is possible, again, because $\bT$ is site-self-dual); we call 
$S$ the \emph{interior} of $\zeta$.
Observe that every site on $\zeta$ has at least 
two neighbors in the set obtained as the union 
of sites on $\zeta$ and its interior.

Let $\redcircuit_N$ 
be the event that there exists a
$\bT$-circuit of red sites, with the origin in its interior, and inside 
$[-N,N]^2$. Moreover, let $\nozero_N$ be the event that 
no site $x\in[-N,N]^2$ has $\xi_0(x)=0$.
It follows from \cite{AG,BBR}, and standard arguments
from percolation theory, that there exists an $a$ with
$\exp[-2a^\ell/\ell!]<1/2$, with the following 
property. For every $\alpha>0$, there exists an $N=N(\alpha)$ 
so that
\begin{equation}\label{ac-pcstar-eq1}
\P(\redcircuit_N)>1-\alpha. 
\end{equation}

Pick any $\bT$-circuit $\zeta$ of red states.
Form the set of sites $R$ that consists of: all sites 
of $\zeta$; all sites in the interior of $\zeta$; and all
sites required to be in $\xi_0$-state $3$ 
in (\ref{ac-pcstar-eq0}) around any 
site with state $2$ on $\zeta$. Assume that there is 
no site in $\xi_0$-state $0$ in $R$. Then we claim that 
no site in $R$ ever changes its state to $0$. 
Indeed, to get a
contradiction, let $x\in R$ be the first such site to change its state to $0$ (chosen arbitrarily in 
case of a tie). Clearly $x$ cannot be in the interior of 
$\zeta$, as then $x$ has no neighbor outside $R$. The site
$x$ cannot have $\xi_0$-state 3 and be
on $\zeta$, as $x$ then has at least two neighbors 
in $R$, and hence at most two outside $R$. Furthermore, $x$ cannot be a site with $\xi_0$-state $2$ on $\zeta$, as $x$ must then have all neighbors in $R$ in accordance with $(\ref{ac-pcstar-eq0})$. The final 
possibility is that $x$ is one of the sites with $\xi_0$-state 
$3$ in  (\ref{ac-pcstar-eq0}). But each of those sites clearly also has two neighbors in $R$. 

So we have, for every $N$,
\begin{equation}\label{ac-pcstar-eq2}
\redcircuit_N\cap\nozero_N\subset \{\xi_\infty(\mathbf{0})=0\}^c.
\end{equation}
It follows from (\ref{ac-pcstar-eq1}) and (\ref{ac-pcstar-eq2})
that there exists an $N=N(\alpha)$ so that 
\begin{equation}\label{ac-pcstar-eq3}
\P(\xi_\infty(\mathbf{0})=0)\le \alpha +(2N+1)^2\epsilon.
\end{equation}
Now  in (\ref{ac-pcstar-eq3}), we send $\epsilon\to 0$ first, and then send $\alpha\to 0$ 
to conclude that $\P(\xi_\infty(\mathbf{0})=0)\to 0$ as $\epsilon\to 0$, and therefore $a\le a_c$. 
\end{proof}


\subsection{The supercritical regime for $\ell\ge 2$}

\begin{lemma} \label{easy-coupling} 
Assume $\vec X=(X_1,X_2,X_3,X_4)$ 
and $\vec Y=(Y_1,Y_2,Y_3,Y_4)$
are $4$-tuples of i.i.d. Bernoulli random variables with 
$\P(X_i=1)=\alpha_1$ and $P(Y_i=1)=\alpha_2$ for all $i$. If 
$1-(1-\alpha_1)^4\le \alpha_2^4$, then $\vec X$ and $\vec Y$ can be coupled 
so that $\{\exists i: X_i=1\}\subset \{\forall i: Y_i=1\}$.
\end{lemma}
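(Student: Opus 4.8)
The plan is to observe that the stated inequality is not only necessary for such a coupling to exist (since $\Prob{\exists\,i:X_i=1}=1-(1-\alpha_1)^4$ must be at most $\Prob{\forall\,i:Y_i=1}=\alpha_2^4$) but also sufficient, and then to exhibit the coupling explicitly. Write $A=\{\exists\,i:X_i=1\}$ and $B=\{Y_1=Y_2=Y_3=Y_4=1\}$; these are events in the respective product spaces, with $p:=\Prob{A}=1-(1-\alpha_1)^4$ and $q:=\Prob{B}=\alpha_2^4$, so that the hypothesis reads $p\le q$. Note that $B$ consists of the single configuration $(1,1,1,1)$.

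First I would define the coupling. Sample $\vec X$ from its product $\mathrm{Bernoulli}(\alpha_1)$ law $\mu$; if $\vec X\in A$, set $\vec Y=(1,1,1,1)$; if $\vec X\notin A$, sample $\vec Y$ independently from the law $\nu'$ on $\{0,1\}^4$ given by
\begin{equation*}
\nu'(\vec y)=\frac{\nu(\vec y)-p\,\ind{\vec y\in B}}{1-p},
\end{equation*}
where $\nu$ denotes the product $\mathrm{Bernoulli}(\alpha_2)$ law. By construction $\{\vec X\in A\}\subset\{\vec Y\in B\}$, which is precisely the asserted inclusion.

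The remaining steps are routine verifications on the finite set $\{0,1\}^4$. One checks that $\nu'$ is a probability measure: for $\vec y\notin B$ we have $\nu'(\vec y)=\nu(\vec y)/(1-p)\ge0$; for the unique $\vec y\in B$ we have $\nu'(\vec y)=(q-p)/(1-p)\ge0$, which is exactly where the hypothesis $p\le q$ enters; and $\sum_{\vec y}\nu'(\vec y)=(1-p)/(1-p)=1$. One then checks that the $\vec Y$-marginal of the coupling equals $\nu$: for $\vec y\in B$ its mass is $p\cdot 1+(1-p)\cdot(q-p)/(1-p)=q=\nu(\vec y)$, and for $\vec y\notin B$ its mass is $(1-p)\cdot\nu(\vec y)/(1-p)=\nu(\vec y)$. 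The $\vec X$-marginal is $\mu$ by construction.

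I do not expect any real obstacle here: the only inequality invoked is $p\le q$, which is the hypothesis verbatim, and everything else is bookkeeping. An even shorter route is to quote the general principle that events $A,B$ on (possibly distinct) probability spaces with $\Prob{A}\le\Prob{B}$ always admit a coupling under which $A\subset B$, applied to the two product spaces and the events $A$, $B$ above; the displayed $\nu'$ is just the explicit witness to that principle in our case.
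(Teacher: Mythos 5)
Your proof is correct, and since the paper omits the argument entirely (it says only ``This follows from an elementary argument and we omit the details''), your explicit coupling is precisely the kind of routine construction the authors had in mind. The only cosmetic point is the degenerate case $p=1$, where the formula for $\nu'$ involves division by zero; but then $\alpha_1=1$ forces $q\ge p=1$, hence $\alpha_2=1$, so both vectors are deterministically $(1,1,1,1)$ and the conclusion is trivial, so no real gap remains.
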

\begin{proof}
This follows from an elementary argument and we omit the details.
\end{proof}

\begin{lemma} \label{ac-super} If $a>a_c$, then  
(\ref{odd-theta-super}) holds.  Moreover, (\ref{odd-theta-super1}) holds.  
\end{lemma}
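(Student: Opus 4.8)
The plan is to combine the two comparison couplings, Lemmas~\ref{omega lower bound} and~\ref{omega upper bound}, with the limiting dynamics $\xi^{(a,\epsilon)}_t$ that defines $a_c$. I would prove separately that $\liminf_n\probsub{p}{\{\mathbf 0\}\times K_n^2\subset\omega_\infty}>0$ and that $\limsup_n\probsub{p}{v_0\in\omega_\infty}<1$; since $\{\mathbf 0\}\times K_n^2\subset\omega_\infty$ entails $v_0\in\omega_\infty$ for $v_0\in\{\mathbf 0\}\times K_n^2$, these two facts give~(\ref{odd-theta-super}). By Lemma~\ref{omega lower bound}, $\probsub{p}{\{\mathbf 0\}\times K_n^2\subset\omega_\infty}\ge\probsub{p}{\xi_\infty(\mathbf 0)=0}$ for the comparison process initialized from internal spanning, whose $\xi_0$ is a genuine product field (being $r$-IS depends on $\omega_0$ only through a single copy of $K_n^2$). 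By Lemma~\ref{omega upper bound} and $p\to0$, $\probsub{p}{v_0\in\omega_\infty}\le p+\probsub{p}{\xi_\infty(\mathbf 0)=0}$ for the comparison process initialized from inertness, whose $\xi_0$ is a monotone (decreasing) function of the product field $\omega_0$, hence positively correlated.

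For the lower bound I would fix $a'\in(a_c,a)$, so that $c':=\lim_{\epsilon\to0}\P(\xi^{(a',\epsilon)}_\infty(\mathbf 0)=0)>0$, and set $\epsilon_n:=\probsub{p}{K_n^2\text{ is }(2\ell+1)\text{-IS}}$ (of order $n^{-1/\ell}$ by Lemma~\ref{internal-results-odd}(4)) and $\gamma_n:=\probsub{p}{\xi_0(x)\ge4}$ ($=\cO(n^{-L})$ for every $L$ by Lemma~\ref{internal-results-odd}(1)). With $M>1/\ell$ fixed and $B_n:=[-n^M,n^M]^2$, the asymptotics of Lemma~\ref{internal-results-odd}(2)--(4) together with $a'<a$ (which leaves uniform slack in the ``$\le1$'' and ``$\le2$'' cumulative state probabilities, comfortably absorbing the $o(1)$ distortion caused by conditioning) show that for large $n$ the field $\xi_0$, restricted to $B_n$ and conditioned to contain no site of state $\ge4$ (an event of probability $1-o(1)$, as $n^{2M}\gamma_n\to0$), is stochastically dominated, in the natural state order, by $\xi^{(a',\epsilon_n)}_0$ restricted to $B_n$. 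Since the restricted heterogeneous dynamics is monotone and any site put in state $0$ by the $B_n$-restricted dynamics is also put in state $0$ by the full dynamics, $\probsub{p}{\xi_\infty(\mathbf 0)=0}\ge(1-o(1))P_n$, where $P_n$ is the probability that the $B_n$-restricted $\xi^{(a',\epsilon_n)}$-dynamics eventually places $\mathbf 0$ in state $0$. To bound $P_n$, I would write $\xi^{(a',\epsilon)}_0$ as an independent superposition of a would-be-state field $\tau\in\{1,2,3\}^{\bZ^2}$ and a Bernoulli$(\epsilon)$ seed field, and let $\mathcal I^{(R)}(\tau)$ be the set of $y\in B_R$ for which a single $0$ placed at $y$ makes the $B_R$-restricted dynamics flip $\mathbf 0$; monotonicity gives $P_n\ge\E_\tau[1-(1-\epsilon_n)^{|\mathcal I^{(n^M)}(\tau)|}]$. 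Since the $0$-set of the single-seed dynamics is always connected, on the event $\{\bigcup_R\mathcal I^{(R)}(\tau)\text{ is infinite}\}$, whose $\tau$-probability tends to $c'$ as $\epsilon\to0$, the cardinality $|\mathcal I^{(R)}(\tau)|$ is of order $R$; hence $n^M\epsilon_n\to\infty$ forces $\liminf_n P_n\ge c'$. This gives $\liminf_n\probsub{p}{\{\mathbf 0\}\times K_n^2\subset\omega_\infty}\ge c'>0$, and letting $a'\to\infty$ and using the last assertion of Lemma~\ref{ac-pc} gives~(\ref{odd-theta-super1}).

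For the upper bound I would use the inertness-initialized $\xi_t$ and the event $E$ that $\xi_0(\mathbf 0)\ne0$ and $\xi_0(x)\ge3$ for every $x$ with $\|x\|_\infty=1$. On $E$ no site of $R:=\{x\in\bZ^2:\|x\|_\infty\le1\}$ ever changes state: one checks by induction that each of the eight sites $x$ with $\|x\|_\infty=1$ retains at least two neighbors in $R$ (still nonzero), hence never has more than two $0$-neighbors --- too few to flip a state-$\ge3$ site --- while $\mathbf 0$ has all four neighbors in $R$ and so never has a $0$-neighbor; thus $\xi_\infty(\mathbf 0)=\xi_0(\mathbf 0)\ne0$ on $E$. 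By Lemmas~\ref{internal-results-odd} and~\ref{inertness-odd}, $\probsub{p}{\xi_0(x)\ge3}=\exp[-2a^\ell/\ell!](1+o(1))$ and $\probsub{p}{\xi_0(\mathbf 0)\ne0}=1-o(1)$, so positive correlation yields $\probsub{p}{E}\ge\bigl(\exp[-2a^\ell/\ell!](1+o(1))\bigr)^8(1-o(1))\ge\tfrac12\exp[-16a^\ell/\ell!]$ for large $n$, whence $\probsub{p}{\xi_\infty(\mathbf 0)\ne0}\ge\tfrac12\exp[-16a^\ell/\ell!]$ and $\limsup_n\probsub{p}{v_0\in\omega_\infty}\le1-\tfrac12\exp[-16a^\ell/\ell!]<1$.

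The main obstacle is the double limit in the lower bound: transferring the $\epsilon\to0$ limit that defines $a_c$ to the $n\to\infty$ limit of the finite-$n$ process. It forces one to show both that the stray state-$\{4,5\}$ sites, which the limiting product initialization lacks, are genuinely negligible, and that truncating the limiting dynamics to a box of polynomial size does not cost the flip probability of $\mathbf 0$; both hinge on the quantitative input $n^M\epsilon_n\to\infty$, i.e.\ on the fact that, on the percolation event, the influence set $\mathcal I^{(R)}$ already has polynomial size. The upper-bound half is, by contrast, elementary once one notices the eight-site barrier and that the density of state-$\ge3$ sites stays bounded away from zero.
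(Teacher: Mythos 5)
Your upper bound for (\ref{odd-theta-super}) is correct and close in spirit to the paper's, which uses the $2\times 2$ block $\{\mathbf 0,(0,1),(1,0),(1,1)\}$ of $(\theta-2)$-inert Hamming squares rather than your eight-site barrier in the $3\times 3$ box; the paper's event gives the slightly tighter bound $\limsup_n\probsub{p}{v_0\in\omega_\infty}\le 1-\exp(-8a^\ell/\ell!)$, but both establish that the limsup is strictly less than $1$.

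Your lower bound for (\ref{odd-theta-super}) takes a genuinely different and, as written, incomplete route. You condition on the absence of state-$\ge 4$ sites in a polynomial box $B_n$, restrict the heterogeneous dynamics to $B_n$, and attempt to transfer the $\epsilon\to 0$ limit defining $a_c$ to the restricted finite-$n$ process via an influence-set argument. Three gaps remain. First, you assert that on the event $\{\bigcup_R\mathcal I^{(R)}(\tau)\text{ is infinite}\}$ the cardinality $|\mathcal I^{(R)}(\tau)|$ grows linearly in $R$; this is a nontrivial geometric claim about heterogeneous bootstrap percolation, and connectivity of the $0$-set grown from one seed does not by itself force linear growth of the set of seed locations that reach $\mathbf 0$. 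Second, you identify the $\tau$-probability of that event with $c'=\lim_{\epsilon\to 0}\P(\xi^{(a',\epsilon)}_\infty(\mathbf 0)=0)$, but $\tau$ does not depend on $\epsilon$ in your superposition, so this identification is asserted rather than established --- and it is not a priori an equality, since several cooperating seeds can flip $\mathbf 0$ even when no single one of them lies in the influence set. Third, a minor point: an independent superposition of a fixed field $\tau\in\{1,2,3\}^{\bZ^2}$ with a Bernoulli$(\epsilon)$ seed field yields marginal probability $(1-\epsilon)\P(\tau(x)=1)$ for state $1$, not the $\epsilon$-independent value $(1-e^{-(a')^\ell/\ell!})^2$ prescribed by the $(a',\epsilon)$-initialization, so even the decomposition needs adjustment.

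The paper circumvents all of this with one move. Instead of conditioning away the state-$\{4,5\}$ sites --- which forces a restriction to a box and, with it, your hard transfer problem --- the paper builds the internal-spanning field $\overline\xi_0$ by a layered construction and couples it with a modified field $\widehat\xi_0$ in which, via the elementary Lemma~\ref{easy-coupling}, whenever any site of a $2\times 2$ box has state $\ge 4$ in $\overline\xi_0$, the whole $2\times 2$ box is set to state $3$ in $\widehat\xi_0$. Such a $2\times 2$ block of $3$s is frozen, so $\P(\overline\xi_\infty(\mathbf 0)=0)\ge \P(\widehat\xi_\infty(\mathbf 0)=0)$. No box restriction is needed and no geometric influence-set estimate is needed; the distributional comparison $\widehat\xi_0$ versus $\xi^{(a',\epsilon_n)}_0$ with $\epsilon_n:=\P(\widehat\xi_0(\mathbf 0)=0)>0$ immediately gives $\liminf_n\P(\widehat\xi_\infty(\mathbf 0)=0)\ge \lim_{\epsilon\to 0}\P(\xi^{(a',\epsilon)}_\infty(\mathbf 0)=0)>0$ by monotonicity of the flip probability in $\epsilon$, with no double-limit transfer to justify. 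I would rework your lower bound along those lines.
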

\begin{proof}  
Fix an $a'\in (a_c,a)$. 
Fix also a small $\delta>0$, to be chosen later dependent on 
$a'$. 
For $i=0,\ldots,5$, we define probabilities $\pn_i$ as follows. For $i=1,2,3,4$, let 
$$\pn_i=\probsub{p}{K_n^2
\text{ is $(\theta-i)$-IS but not $(\theta-i+1)$-IS}},
$$
and 
$$\pn_0=\probsub{p}{K_n^2
\text{ is $\theta$-IS}}, \quad \pn_5=\probsub{p}{K_n^2
\text{ is not $(\theta-4)$-IS}}.
$$
Denote by $\pi(\alpha)$ the Bernoulli product measure of \emph{active} and \emph{inactive} sites with density 
$\alpha$ of active sites. Build the initial state $\bxi_0$ 
in four steps as follows.
In the first step, choose active sites according to 
$\pi(\pn_4+\pn_5)$ and fill them with $5$s. In the second step,
choose active sites according to $\pi(\pn_0/(1-\pn_4-\pn_5))$ 
and fill them with $0$s, provided they are not already filled.
Continue in the third step with 
$\pi(\pn_3/(1-\pn_0-\pn_4-\pn_5))$ to fill some unfilled
sites with $3$s, and then in the fourth step 
analogously with $2$s, and 
then finally $1$s fill all the remaining unfilled sites. 

Divide $\bZ^2$ into $2\times 2$ boxes and couple product 
measures 
$\pi(\pn_4+\pn_5)$ and $\pi(\delta)$ 
on the space of pairs 
$(\eta_1,\eta_2)\in 2^{\bZ^2}\times 2^{\bZ^2}$ so that 
any box with at least one active site in $\eta_1$ is fully activated in $\eta_2$. 
This coupling is possible, for large enough $n$, by Lemmas~\ref{internal-results-odd} and~\ref{easy-coupling}. 

Use this to couple $\bxi_0$ with 
another initial state $\hxi_0$. To build this configuration, keep 
all selected product measures used to define $\bxi_0$, 
but change 
the first step above as follows:
replace $\pi(\pn_4+\pn_5)$ by $\pi(\delta)$ 
(coupled as above), and fill the active sites by $3$s
(instead of $5$s). Note that we now fill by $3$s twice, and 
that some $0$s, $1$s, and $2$s in $\bxi_0$ are 
converted to $3$s in 
$\hxi_0$. 

Denote the 
resulting bootstrap dynamics by $\bxi_t$ and $\hxi_t$. 
The important observation is that no site that is $5$ 
in $\bxi_0$ can ever turn to $0$ in $\hxi_t$, as it is covered
by a $2\times 2$ block of $3$s that cannot change. 
Therefore, by Lemma~\ref{omega lower bound} and the coupling between $\bxi_t$ and $\hxi_t$, 
\begin{equation}\label{acsuper-eq1}
\probsub{p}{\{\mathbf{0}\}\times K_n^2 \subset \omega_\infty} 
\ge \P(\bxi_\infty(\mathbf{0})=0)\ge \P(\hxi_\infty(\mathbf{0})=0).
\end{equation}
Now if $\delta=\delta(a')$ is small enough, then for
large enough $n$, 
\begin{equation}\label{acsuper-eq2}
\begin{aligned}
&\epsilon_n=\P(\hxi_0(\mathbf{0})=0)>0,\\
&\P(\hxi_0(\mathbf{0})=1)\ge \P(\xi^{(a',\epsilon_n)}(\mathbf{0})=1),\\
&\P(\hxi_0(\mathbf{0})=3)\le \P(\xi^{(a',\epsilon_n)}(\mathbf{0})=3).
\end{aligned}
\end{equation}
As $a'>a_c$, the inequalities (\ref{acsuper-eq2}) guarantee 
that $\liminf_n\P(\hxi_\infty(\mathbf{0})=0)>0$. 
Therefore, by (\ref{acsuper-eq1}), the leftmost inequality 
in (\ref{odd-theta-super}) holds. When $a\to\infty$, we can 
send $a'\to\infty$ as well, and then Lemma~\ref{ac-pc}
gives (\ref{odd-theta-super1}). 

To prove the rightmost inequality 
in (\ref{odd-theta-super})
let $\obstbox$ be the event that 
$\{x\}\times K_n^2$ is $(\theta-2)$-inert for all 
$x \in \{\mathbf{0},(0,1),(1,0),(1,1)\}$. Then 
$$\obstbox\subset \{\omega_\infty=\omega_0\text{ on }\{\mathbf{0}\}\times K_n^2\},$$ 
and therefore, for any $a>0$, by Lemmas~\ref{internal-results-odd} and~\ref{inertness-odd},
$$
\limsup_{n\to\infty} \probsub{p}{v_0 \in \omega_\infty}\le \lim_{n\to\infty}\probsub{p}{\obstbox^c}=1-\exp(-8a^\ell/\ell!)<1,
$$
which ends the proof of (\ref{odd-theta-super}).
\end{proof}

\subsection{The subcritical regime for $\ell\ge 2$}

\begin{lemma} \label{ac-sub} Assume that $a<a_c$ and $\ell\ge 2$. 
Then (\ref{odd-theta-sub}) holds. 
\end{lemma}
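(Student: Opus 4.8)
The plan is to run the comparison dynamics of Lemma~\ref{omega upper bound}, reduce to showing that its origin rarely turns to $0$, and then transfer subcriticality from the limiting dynamics $\xi^{(a',\epsilon)}$ via the definition of $a_c$, applied to a parameter $a'\in(a,a_c)$ (which exists precisely because $a<a_c$).

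First I would initialize $\xi_0=\xi_0^{\mathrm{inert}}$ by inertness as in Lemma~\ref{omega upper bound}, so that $\omega_\infty\subset\bigcup\{\{x\}\times K_n^2:\xi_\infty^{\mathrm{inert}}(x)=0\}\cup\omega_0$, whence for $v_0\in\{\mathbf{0}\}\times K_n^2$,
\[
\probsub{p}{v_0\in\omega_\infty}\le p+\probsub{p}{\xi_\infty^{\mathrm{inert}}(\mathbf{0})=0}.
\]
Since $p\to0$, it suffices to prove $\probsub{p}{\xi_\infty^{\mathrm{inert}}(\mathbf{0})=0}\to0$. By Lemmas~\ref{internal-results-odd} and~\ref{inertness-odd}, the state marginals of $\xi_0^{\mathrm{inert}}$ converge: $\P(\xi_0^{\mathrm{inert}}(x)=0)=:\epsilon_n\to0$, $\P(\xi_0^{\mathrm{inert}}(x)=1)\to(1-e^{-a^\ell/\ell!})^2$, $\P(\xi_0^{\mathrm{inert}}(x)\ge3)\to e^{-2a^\ell/\ell!}$, and $\P(\xi_0^{\mathrm{inert}}(x)\in\{4,5\})\to0$. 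Also $\xi_0^{\mathrm{inert}}$ is positively correlated and finite-range dependent --- for each $k$, $\{\xi_0^{\mathrm{inert}}(x)\ge k\}=\{\{x\}\times K_n^2\text{ is }(\theta-k+1)\text{-inert}\}$ is a \emph{decreasing} event in $\omega_0$ depending only on the Hamming squares within $\bZ^2$-distance $1$ of $x$, so Harris' inequality applies --- and, using that distinct Hamming squares are disjoint, $\xi_0^{\mathrm{inert}}$ agrees off a set of sites of density $O(n^{-1})$ with the \emph{product} field $\xi_0^{\mathrm{II}}$ built from internal inertness, while $\xi_0^{\mathrm{inert}}\le\xi_0^{\mathrm{II}}$ pointwise.

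Next I fix $a'\in(a,a_c)$. Because $e^{-2a'^\ell/\ell!}<e^{-2a^\ell/\ell!}$ and $(1-e^{-a'^\ell/\ell!})^2>(1-e^{-a^\ell/\ell!})^2$, for every fixed $\epsilon>0$ and all large $n$ the marginals of $\xi_0^{\mathrm{II}}$ dominate those of the $(a',\epsilon)$-initialization in the order $0<1<\cdots<5$. I would then couple $\xi_0^{\mathrm{inert}}$ with a field $\zeta_0\deq\xi^{(a',\epsilon)}_0$ so that $\zeta_0\le\xi_0^{\mathrm{inert}}$ pointwise: the set of sites where $\xi_0^{\mathrm{inert}}<\xi_0^{\mathrm{II}}$, together with $\{\xi_0^{\mathrm{inert}}(x)=0\}$ and $\{\xi_0^{\mathrm{II}}(x)\ge4\}$, is finite-range dependent with density $o(1)$, so by the Liggett--Schonmann--Stacey theorem~\cite{LSS} (as in the proof of Lemma~\ref{Z-exists}) it lies, for large $n$, below an independent random subset of density $\epsilon/2$, which we put in the $0$-set of $\zeta_0$; on the complement one has $\xi_0^{\mathrm{inert}}=\xi_0^{\mathrm{II}}\le3$, and since $\xi_0^{\mathrm{II}}$ is a product field, a monotone coupling $\zeta_0\le\xi_0^{\mathrm{II}}$ there is built pointwise, valid for large $n$ by the marginal domination above and the strict gap $a'>a$. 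As the map $\xi_0\mapsto\{x:\xi_\infty(x)=0\}$ is monotone \emph{decreasing}, this gives $\{x:\xi_\infty^{\mathrm{inert}}(x)=0\}\subset\{x:\zeta_\infty(x)=0\}$, hence $\probsub{p}{\xi_\infty^{\mathrm{inert}}(\mathbf{0})=0}\le\P\bigl(\xi^{(a',\epsilon)}_\infty(\mathbf{0})=0\bigr)+o(1)$.

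Letting $n\to\infty$ gives $\limsup_n\probsub{p}{\xi_\infty^{\mathrm{inert}}(\mathbf{0})=0}\le\P(\xi^{(a',\epsilon)}_\infty(\mathbf{0})=0)$; then letting $\epsilon\to0$ and invoking $a'<a_c$ together with the monotonicity of $\epsilon\mapsto\P(\xi^{(a',\epsilon)}_\infty(\mathbf{0})=0)$ recorded right after the definition of $a_c$ (so that this limit vanishes) yields $\lim_n\probsub{p}{\xi_\infty^{\mathrm{inert}}(\mathbf{0})=0}=0$, which is (\ref{odd-theta-sub}). I expect the main obstacle to be the coupling of the previous paragraph: the comparison field coming from inertness is genuinely (short-range) dependent, so one cannot just invoke stochastic domination of a product measure by a dependent field --- this fails because a rare, heavily occupied neighbourhood of Hamming squares can force $\xi_0^{\mathrm{inert}}(x)=0$. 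The way around it is the staged, perturbative coupling above (Liggett--Schonmann--Stacey for the $0$'s, the $O(n^{-1})$ inert-versus-internally-inert discrepancies, and the rare $4$'s and $5$'s, then a monotone product coupling with the density margin from $a'>a$), which is precisely the ``correlations in the initial state handled by coupling and perturbation methods'' issue flagged in the introduction; the rest is bookkeeping with Lemmas~\ref{internal-results-odd} and~\ref{inertness-odd}.
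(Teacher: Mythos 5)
Your strategy follows the paper's: bound $\probsub{p}{v_0\in\omega_\infty}$ via the inertness-based comparison of Lemma~\ref{omega upper bound}, perturb to a product measure via LSS, and transfer to the limiting $(a',\epsilon)$-initialization with $a'\in(a,a_c)$. But the coupling step has a genuine gap. Your set $D=\{\xi_0^{\mathrm{inert}}<\xi_0^{\mathrm{II}}\}\cup\{\xi_0^{\mathrm{inert}}=0\}\cup\{\xi_0^{\mathrm{II}}\ge4\}$ contains pieces that are $\xi_0^{\mathrm{II}}$-measurable: $\{\xi_0^{\mathrm{II}}\ge4\}$ itself, and (since inertness implies internal inertness) $\{\xi_0^{\mathrm{II}}=0\}\subset\{\xi_0^{\mathrm{inert}}=0\}$. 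The conditional density of $D$ given $\xi_0^{\mathrm{II}}$ therefore equals $1$ on $\{\xi_0^{\mathrm{II}}\in\{0,4,5\}\}$, so there is \emph{no} coupling with $D\subset\pi(\epsilon/2)$ in which $\pi(\epsilon/2)$ is a product field \emph{independent of $\xi_0^{\mathrm{II}}$}. But that independence is exactly what your pointwise coupling of $\zeta_0$ with $\xi_0^{\mathrm{II}}$ on $D^c$ needs in order for $\zeta_0$ to come out as a product measure: as written, $\zeta_0$ inherits correlations from the joint law of $(\pi(\epsilon/2),\xi_0^{\mathrm{II}})$, and $\zeta_0\deq\xi^{(a',\epsilon)}_0$ is not established.

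The paper circumvents this by working with a coarser partition. It first samples, via independent product measures, the ``$3$'' sites (those $x$ for which $\{x\}\times K_n^2$ is $(\theta-2)$-II) and the ``$2$'' sites ($(\theta-1)$-II but not $(\theta-2)$-II); this partition does not separate $\xi_0^{\mathrm{II}}=0$ from $\xi_0^{\mathrm{II}}=1$, nor $\xi_0^{\mathrm{II}}\ge4$ from $\xi_0^{\mathrm{II}}=3$. Only then are ``bad'' sites defined --- not $\theta$-II, or II but not inert for some threshold in $[\theta-2,\theta]$ --- and one checks that their conditional density given the $3$s and $2$s is \emph{uniformly} $n^{-1+1/\ell+o(1)}$: the condition ``not $\theta$-II'' is compatible only with a non-$3$, non-$2$ site, where its conditional probability is $\pn_0/(1-\pn_3-\pn_2)=o(1)$, and the inertness/internal-inertness discrepancies are controlled by Lemma~\ref{inertness-odd}. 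Now LSS applies conditionally, producing a $\pi(\delta)$ genuinely independent of the $3$/$2$ configuration; overwriting the $\pi(\delta)$-set with $0$s gives a bona fide product measure $\hxi_0\le\bxi_0\le\xi_0^{\mathrm{inert}}$, which is then compared to $\xi^{(a',\delta)}_0$ by its marginals as in~\eqref{acsub-eq2}. Your argument goes through once the ``good data'' and $D$ are recast in this coarser, layered form.
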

\begin{proof}
Pick now an $a'\in (a,a_c)$ and $\alpha>0$, and again also fix 
$\delta>0$, to be chosen later to be appropriately dependent 
on $a'$ and $\alpha$. We will redefine $\pn_i$, $\bxi_0$ and $\hxi_0$ 
from the previous proof. 

Let 
\begin{equation*}
\begin{aligned}
&\pn_0=\probsub{p}{K_n^2
\text{ is not $\theta$-II}}\\
&\pn_1=\probsub{p}{K_n^2
\text{ is not $(\theta-1)$-II but is $\theta$-II}},\\
&\pn_2=\probsub{p}{K_n^2
\text{ is not $(\theta-2)$-II but is $(\theta-1)$-II}},\\
&\pn_3=\probsub{p}{K_n^2
\text{ is $(\theta-2)$-II}}.\\
\end{aligned}
\end{equation*}
This time build the initial state $\bxi_0$ 
in three steps as follows.
In the first step, choose active sites according to 
$\pi(\pn_3)$ and fill them by $3$s. In the second step,
choose active sites according to $\pi(\pn_2/
(1-\pn_3))$ 
and fill them by $2$s, provided they are not already filled.
In the third step, choose the 
configuration of \emph{bad} sites: those are sites that
\begin{itemize}
\item are not $\theta$-II; or 
\item are internally inert but not inert for some threshold 
in $[\theta-2,\theta]$.
\end{itemize} 
The configuration of bad sites has proper conditional distribution 
given the configuration of $3$s and $2$s. Observe that 
this conditional distribution has finite range of dependence: 
if $||x-y||_1\ge 3$, then $x$ and $y$ are bad independently. 
Furthermore, by Lemma~\ref{inertness-odd}, the probability that any site is bad is, 
uniformly over the configurations of $2$s and $3$s,
$n^{-1+1/\ell+o(1)}$  and thus goes to $0$ if $\ell\ge 2$.
Finally, finish the construction of $\bxi_0$ by filling all
bad sites with $0$'s and the remaining unfilled sites 
with $1$s.

By \cite{LSS}, the configuration of bad sites can be coupled
with a product measure $\pi(\delta)$ that dominates it, and 
is independent of the configuration of $2$s and $3$s. 
As in the previous proof, we now couple $\bxi_0$ with 
another initial state $\hxi_0$. To build $\hxi_0$, keep 
the selected product measures used in the first two steps.
The third step is changed by using the $\pi(\delta)$, 
obtained from the domination coupling, as active sites, all of which are filled by $0$s, possibly replacing some $2$s and $3s$. This way, some of the $1$s, $2$s and $3$s in $\bxi_0$ are changed to 
$0$s in $\hxi_0$.  

Denote again the 
resulting bootstrap dynamics by $\bxi_t$ and $\hxi_t$.  
This time, by Lemma~\ref{omega upper bound} and coupling properties, 
\begin{equation}\label{acsub-eq1}
\probsub{p}{\omega_\infty\ne \omega_0\text{ on }\{\mathbf{0}\}\times K_n^2} 
\le \P(\bxi_\infty(\mathbf{0})=0)\le \P(\hxi_\infty(\mathbf{0})=0).
\end{equation}
Now if $\delta=\delta(a')$ is small enough, then for
large enough $n$, 
\begin{equation}\label{acsub-eq2}
\begin{aligned}
&\P(\hxi_0(\mathbf{0})=0)\le \delta,\\
&\P(\hxi_0(\mathbf{0})=1)\le \P(\xi^{(a',\epsilon)}(\mathbf{0})=1),\\
&\P(\hxi_0(\mathbf{0})=3)\ge \P(\xi^{(a',\epsilon)}(\mathbf{0})=3).
\end{aligned}
\end{equation}
As $a'<a_c$, the inequalities (\ref{acsub-eq2}) guarantee 
that $\P(\hxi_\infty(\mathbf{0})=0)<\alpha$ if $\delta=\delta(a',\alpha)$ is small enough. 
Therefore, by (\ref{acsub-eq1}), (\ref{odd-theta-sub}) holds.
\end{proof}

\subsection{The exceptional case: $\theta=3$}

We assume here that $p=a/n^2$, in accordance with (\ref{form-p-odd}). In this case, we need another version of the heterogeneous 
bootstrap dynamics, somewhere between $\xi_t$ used 
when $\ell\ge 2$ and $\zeta_t$ used later for the graph 
$\bZ^2\times K_n$. Indeed, observe that the obstacles 
are now empty Hamming planes, but they become completely 
occupied by contact with two fully occupied neighboring 
planes and another neighboring plane that is merely non-empty. 
Clearly, the probability of having a non-empty neighboring plane 
does not go to $0$, and so this possibility now cannot be 
handled by a coupling with a low-density measure. 

We denote the new rule by $\chi_t\in \{0,1,2,3\}^{\bZ^2}$, $t\in \bZ_+$. Assume that $\chi_0$ is given. For a given $t\ge 0$, let as before $Z_t(x)$ be the cardinality of $\{y: y\sim x\text{ and }\chi_t(y)=0\}$ and let $W_t(x) = \ind{\{y: y\sim x\text{ and }0<\chi_t(y)<3\} \neq \emptyset}$
then  
$$
\chi_{t+1}(x)=
\begin{cases}
0 & Z_t(x)\ge \chi_t(x) \text{ or } 
(\chi_t(x)=3,\, Z_t(x)=2, \text{ and }W_t(x)=1)
\\
\chi_t(x) &\text{otherwise.}
\end{cases}
$$
For a small $\epsilon>0$, we 
consider the initial state $\chi^{(a,\epsilon)}_0$ given by the 
product measure with 
\begin{equation*}
\begin{aligned}
\P(\chi^{(a,\epsilon)}_0(x)=0)&=\epsilon,\\
\P(\chi^{(a,\epsilon)}_0(x)=1)&=1-(a+1)e^{-a}, \\
\P(\chi^{(a,\epsilon)}_0(x)=3)&=e^{-a},\\
\P(\chi^{(a,\epsilon)}_0(x)=2)&=1-P(\chi^{(a,\epsilon)}_0(x)=0)-P(\chi^{(a,\epsilon)}_0(x)=1)-P(\chi^{(a,\epsilon)}_0(x)=3)
\end{aligned} 
\end{equation*}
for every $x\in\bZ^2$, denote the resulting bootstrap dynamics 
by $\chi_t^{(a,\epsilon)}$, and for $\theta=3$ define 
$a_c\in [0,\infty]$ by
$$
a_c=\inf\{a> 0: \lim_{\epsilon\to 0}\P(\chi^{(a,\epsilon)}_\infty(\mathbf{0})=0)>0\}.
$$

We will not provide complete proofs of the next three
lemmas, but only point to previous arguments that 
apply with simplifications and minor modifications.

\begin{lemma} \label{theta-3-ac-pc} The following strict inequalities hold:
 $$1-(a_c+1)e^{-a_c}<p_c^{\text{\rm site}},\quad e^{-a_c}<p_c^{\text{\rm site}}.$$  
 In particular, $a_c\in(0,\infty)$. Also, 
 $\lim_{\epsilon\to 0}\P(\chi^{(a,\epsilon)}_\infty(\mathbf{0})=0)
 \to 1$ as $a\to\infty$.
 
\end{lemma}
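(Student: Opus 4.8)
The statement combines analogues of Lemma~\ref{ac-pc} --- which will give $1-(a_c+1)e^{-a_c}<p_c^{\text{site}}$, hence $a_c<\infty$, together with the limit as $a\to\infty$ --- and of Lemma~\ref{ac-pcstar} --- which will give $e^{-a_c}<p_c^{\text{site}}$, hence $a_c>0$. The plan is to rerun both comparison constructions for the dynamics $\chi_t$; the only new ingredient is to arrange that the ``special'' transition (a site in state $3$ with exactly two $0$-neighbors and at least one neighbor in state $1$ or $2$ turns to $0$) is inert on the region that matters. As for $\xi_t$, $\lim_{\epsilon\to0}\P(\chi^{(a,\epsilon)}_\infty(\mathbf0)=0)$ exists and is nondecreasing in $a$ by a monotone coupling, so it suffices to locate suitable values of $a$.

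For the first inequality, reuse the green-site construction of Lemma~\ref{ac-pc} verbatim: $x$ is green if $\chi_0(x)\le1$, or if $\chi_0(x)=2$ and the local picture of $\chi_0$ around $x$ is one of the two patterns in~(\ref{ac-pc-eq1}), with ``$\le1$'' allowed in place of each displayed $1$. Green sites are never in state $3$, so the special transition is irrelevant inside a green cluster and the restricted dynamics is precisely the two-threshold process of Lemma~\ref{ac-pc}; hence $\P(\greenp\setminus\zerop)=0$ and $\zerop\subset\{\chi_\infty(\mathbf0)=0\}$, just as in~(\ref{ac-pc-eq2})--(\ref{ac-pc-eq3}). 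The state-$1$ sites form a product field of density $1-(a+1)e^{-a}$, and greening the state-$2$ sites is an essential enhancement, so \cite{AG,BBR} furnish an $a$ with $1-(a+1)e^{-a}<p_c^{\text{site}}$ for which $\P(\greenp)>0$. Monotonicity in $a$ then gives the first inequality and $a_c<\infty$, while $\P(\greenp)\to1$ as $a\to\infty$ gives the last assertion.

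For the second inequality, the key observation is that a nearest-neighbor circuit of state-$3$ sites enclosing $\mathbf0$ is by itself a blocking structure for $\chi_t$, once no site of its interior is initially in state $0$. Indeed, by a first-flip argument: an interior site has all four neighbors in the (still frozen) interior-plus-circuit, hence no $0$-neighbor; a circuit site $x$ (state $3$) has at least two neighbors on the circuit, hence at most two outside the interior-plus-circuit, and if it has exactly two outside neighbors these are exterior and its remaining two neighbors are circuit sites still in state $3$, so $W_t(x)=0$ once both outside neighbors have become $0$ --- disabling the special transition --- while if it has at most one outside neighbor then $Z_t(x)\le1<2$; in every case $x$ cannot flip. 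By planar $*$-duality, such a circuit around $\mathbf0$ exists almost surely whenever the non-$3$ sites, of density $1-e^{-a}$, do not $*$-percolate, that is whenever $e^{-a}>p_c^{\text{site}}$; a sprinkling argument (fix a box in which a surrounding $3$-circuit lies with probability $>1-\alpha$, send $\epsilon\to0$ to clear the finitely many interior $0$'s, then $\alpha\to0$) gives $\lim_{\epsilon\to0}\P(\chi^{(a,\epsilon)}_\infty(\mathbf0)=0)=0$ for all such $a$, whence $a_c\ge-\log p_c^{\text{site}}$. To make the inequality strict one runs the essential-enhancement argument of \cite{AG,BBR} for the blocking set, allowing the circuit to pass through state-$2$ sites surrounded by state-$3$ sites and adjoining those surrounding $3$'s to the region --- in the spirit of the forced $3$'s of Lemma~\ref{ac-pcstar} and~(\ref{ac-pcstar-eq0}); this yields an $a$ with $e^{-a}<p_c^{\text{site}}$ still admitting such enhanced blocking circuits, and monotonicity gives $e^{-a_c}<p_c^{\text{site}}$ and $a_c>0$.

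The main obstacle is this last step: verifying that the enhanced blocking region remains frozen under the special transition. The trouble is that a state-$2$ site, even buried among $3$'s, gives every adjacent $3$ the value $W_t\equiv1$, i.e.\ ``effective threshold $2$''; so the enhancement must be arranged --- with a sufficiently thick shell of forced $3$'s around each enhancing $2$, all included in the region --- so that no $3$ adjacent to a frozen state-$1$-or-$2$ site ever collects two $0$-neighbors. Checking that the resulting local rule is still an essential enhancement of positive density for $a$ slightly past $-\log p_c^{\text{site}}$ is the delicate point; everything else is a direct transcription of the $\xi_t$ arguments.
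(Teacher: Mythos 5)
Your proposal follows the same route the paper indicates for this lemma --- transcribing Lemmas~\ref{ac-pc} and~\ref{ac-pcstar} to the $\chi_t$ dynamics --- and most of it is carried out correctly. In particular, you correctly observe that the green-site construction of Lemma~\ref{ac-pc} transcribes verbatim (no state-$3$ site ever enters a green cluster, so the extra transition in $\chi_t$ is invisible there), and you correctly identify the reason the paper uses $p_c^{\text{\rm site}}$ here rather than the $\bT$-threshold $1/2$ of Lemma~\ref{ac-pcstar}: a $\bT$-circuit site may have both of its non-exterior $\bZ^2$-neighbors in the interior (hence possibly in state $1$ or $2$), so with $Z_t=2$ the extra $\chi_t$ transition can fire; whereas a $\bZ^2$-circuit site always has its two cycle-neighbors in state $3$, so when $Z_t=2$ the remaining two neighbors are those cycle-neighbors and $W_t=0$. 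Your case analysis for the basic $\bZ^2$-circuit of $3$'s is correct, and the sprinkling argument giving $a_c\ge -\log p_c^{\text{\rm site}}$ is the analogue of (\ref{ac-pcstar-eq1})--(\ref{ac-pcstar-eq3}).

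The one genuine gap is the part you flag yourself: the strict inequality $e^{-a_c}<p_c^{\text{\rm site}}$ requires an essential-enhancement step, and you observe, correctly, that the $\chi_t$ rule makes the naive analogue of the ``$+$''-pattern (\ref{ac-pcstar-eq0}) fail --- a forced-$3$ site adjacent to the enhancing $2$ has $W_t\equiv 1$, so if it also has two exterior neighbors it can flip. You sketch the fix (a thicker, say full $3\times 3$, shell of forced $3$'s so that every $3$ adjacent to the enhancing $2$ has at most one exterior neighbor), but you do not carry out the finite local check that the resulting region is actually frozen nor the verification that this thicker pattern is still an essential enhancement in the \cite{AG,BBR} sense. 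Both of these are plausible and finitary, and the paper itself glosses over them, but as written this is the step your proposal leaves open; everything else is a faithful and correct adaptation.
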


\begin{proof}
The argument is very similar to that for Lemmas~\ref{ac-pc}
and~\ref{ac-pcstar}. 
\end{proof}

\begin{lemma} \label{theta-3-ac-super}
If $a>a_c$, then (\ref{odd-theta-super}) holds.
Also, (\ref{odd-theta-super1}) holds. 
\end{lemma}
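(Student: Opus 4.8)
The plan is to follow the proof of Lemma~\ref{ac-super} almost verbatim, with the dynamics $\chi_t$ replacing $\xi_t$; the $\theta=3$ case is in fact easier, because every copy of $K_n^2$ is trivially $0$-IS, so the comparison process never produces planes of type ``$4$'' or ``$5$'', and the $2\times2$-block coupling device from Lemma~\ref{ac-super} (needed there only to match the permanent state-$5$ obstacles of the real process against the softer state-$3$ obstacles of $\chi^{(a,\epsilon)}$) becomes unnecessary.

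The first thing I would record is the $\chi$-analogue of Lemma~\ref{omega lower bound}: initialize $\chi_0(x)=0$ if $\{x\}\times K_n^2$ is $3$-IS, $\chi_0(x)=k\in\{1,2\}$ if it is $(3-k)$-IS but not $(3-k+1)$-IS, and $\chi_0(x)=3$ if it is not $1$-IS, i.e.\ empty; then $\bigcup\{\{x\}\times K_n^2:\chi_\infty(x)=0\}\subset\omega_\infty$. The proof is the induction of Lemma~\ref{omega lower bound} with one extra case: when $\chi_t(x)$ flips to $0$ through the special clause $\chi_t(x)=3$, $Z_t(x)=2$, $W_t(x)=1$, the plane $\{x\}\times K_n^2$ is empty in $\omega_0$ but two of its $\bZ^2$-neighbor planes are fully occupied in $\omega_t$ and a third is non-empty in $\omega_0$ (one uses here that types $1$ and $2$ are $2$-IS resp.\ $1$-IS planes, hence non-empty); a vertex of $\{x\}\times K_n^2$ sharing its $K_n^2$-coordinate with an occupied vertex of the third plane then acquires three occupied neighbors, after which the within-plane threshold-$1$ dynamics, boosted by the two fully occupied neighbor planes, fills the connected graph $K_n^2$. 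I would likewise record the $\chi$-analogue of Lemma~\ref{omega upper bound} (using inertness), and note that $\chi_t$ is monotone: $\chi_0\le\chi_0'$ pointwise implies $\{x:\chi_\infty'(x)=0\}\subset\{x:\chi_\infty(x)=0\}$, by the usual one-line induction, now additionally checking that the $W_t$-clause respects the order.

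For the leftmost inequality in~(\ref{odd-theta-super}) I would fix $a'\in(a_c,a)$ and let $\pn_0,\pn_1,\pn_2,\pn_3$ be the probabilities of the four internal-spanning types above. By Lemma~\ref{internal-results-odd} with $\ell=1$, $\pn_0\to0$ with $\pn_0>0$ for every $n$, $\pn_1\to(1-e^{-a})^2$, and $\pn_3=(1-p)^{n^2}\to e^{-a}$. Since the planes are disjoint, this $\chi_0$ is an i.i.d.\ field, and I would check that for all large $n$ it is stochastically below the $(a',\pn_0)$-initialization $\chi^{(a',\pn_0)}_0$, i.e.\ $\P(\chi_0\le k)\ge\P(\chi^{(a',\pn_0)}_0\le k)$ for $k=0,\dots,3$: $k=0$ is an equality; $k=1$ reduces to $\pn_1\ge 1-(a'+1)e^{-a'}$, which holds for large $n$ because $(1-e^{-a})^2>(1-e^{-a'})^2>1-(a'+1)e^{-a'}$ (the last step being $e^{-a'}>1-a'$); $k=2$ reduces to $\pn_3\le e^{-a'}$, which holds for large $n$ since $\pn_3\to e^{-a}<e^{-a'}$; and $k=3$ is trivial. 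Coupling pointwise and using monotonicity of $\chi_t$, $\P(\chi_\infty(\mathbf{0})=0)\ge\P(\chi^{(a',\pn_0)}_\infty(\mathbf{0})=0)\ge\lim_{\epsilon\to0}\P(\chi^{(a',\epsilon)}_\infty(\mathbf{0})=0)>0$, the middle step by the monotonicity in $\epsilon$ noted after the definition of $a_c$ (and $\pn_0>0$), the last because $a'>a_c$. With the $\chi$-analogue of Lemma~\ref{omega lower bound} this gives $\liminf_n\probsub{p}{\{\mathbf{0}\}\times K_n^2\subset\omega_\infty}>0$. Fixing $a'$ and letting $a\to\infty$, then letting $a'\to\infty$, the last assertion of Lemma~\ref{theta-3-ac-pc} then yields~(\ref{odd-theta-super1}); the middle inequality in~(\ref{odd-theta-super}) is immediate since $\probsub{p}{v_0\in\omega_\infty}$ does not depend on $v_0$.

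For the rightmost inequality in~(\ref{odd-theta-super}) I would use, as in Lemma~\ref{ac-super}, the event that $\{x\}\times K_n^2$ is empty in $\omega_0$ for all $x\in\{(0,0),(0,1),(1,0),(1,1)\}$; it has probability $(1-p)^{4n^2}\to e^{-4a}$, and on it no vertex of $\{(0,0)\}\times K_n^2$ is ever occupied, by the standard self-trapping argument (the first vertex of this $2\times2$ block of planes to change would need $\theta=3$ occupied neighbors but has at most the two coming from the two $\bZ^2$-neighbor planes outside the block), so $\limsup_n\probsub{p}{v_0\in\omega_\infty}\le1-e^{-4a}<1$. The only genuinely new point is the $\chi$-analogue of Lemma~\ref{omega lower bound}, i.e.\ verifying that the $W_t$-clause never lets $\chi_t$ run ahead of the true process $\omega_t$; I expect that (routine) verification to be the main obstacle, with everything else a transcription of the $\ell\ge2$ argument.
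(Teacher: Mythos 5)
Your proposal is correct and follows exactly the route the paper intends: the published proof is a one-line pointer to Lemma~\ref{ac-super}, observing that for $\theta=3$ there are no states $4$ or $5$ (a non-empty Hamming square is automatically $1$-IS, so $\chi_0$ only takes values in $\{0,1,2,3\}$), which collapses the two-stage $\bxi_0/\hxi_0$ construction and the $2\times2$-block domination device to a direct stochastic comparison with $\chi_0^{(a',\pn_0)}$. You correctly supply the two items the paper leaves implicit: the $\chi$-analogue of Lemma~\ref{omega lower bound}, whose only new content is the $W_t$-clause check (two eventually-full neighbor planes plus one non-empty neighbor plane suffice to fill an empty $K_n^2$ at threshold $3$, exactly as you argue), and the monotonicity of $\chi_t$ in $\chi_0$, which indeed survives the $W_t$-clause. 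The treatment of the rightmost inequality in~(\ref{odd-theta-super}) via a $2\times2$ block of empty planes is the same self-trapping observation the paper uses.

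One small inaccuracy worth flagging: you assert $\pn_1\to(1-e^{-a})^2$ ``by Lemma~\ref{internal-results-odd} with $\ell=1$,'' but part~3 of that lemma is stated only for $\ell\ge2$, and for $\ell=1$ the correct limit is $\probsub{p}{K_n^2\text{ is $2$-IS}}\to\P(\text{Poisson}(a)\ge2)=1-(a+1)e^{-a}$ --- which is precisely why the paper's definition of $\chi_0^{(a,\epsilon)}$ assigns $\P(\chi_0^{(a,\epsilon)}(x)=1)=1-(a+1)e^{-a}$ rather than $(1-e^{-a})^2$. Your needed inequality $\pn_1\ge 1-(a'+1)e^{-a'}$ for large $n$ still holds, now simply because $a\mapsto(a+1)e^{-a}$ is strictly decreasing on $(0,\infty)$, so the error does not propagate.
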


\begin{proof}
This follows from the proof of Lemma~\ref{ac-super}, 
simplified by the absence of states $4$ and $5$, which 
eliminates the need for a coupling domination. 
\end{proof}

\begin{lemma} \label{theta-3-ac-sub}
Assume that $a<a_c$. Then (\ref{odd-theta-sub}) holds.
\end{lemma}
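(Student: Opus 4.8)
The plan is to repeat the argument of Lemma~\ref{ac-sub} almost verbatim, with $\chi_t$ in place of $\xi_t$. The one structural change forced by $\ell=1$ is that the obstacle-filling mechanism of non-vanishing probability --- an empty Hamming plane that becomes fully occupied once two of its $\bZ^2$-neighbouring planes fill up and a third becomes merely non-empty --- cannot be absorbed into a low-density coupling and must instead be carried by the comparison dynamics itself; this is exactly the role of the $W_t$-clause in $\chi_t$. Accordingly, the first step is a $\chi$-version of Lemma~\ref{omega upper bound}: initialize $\chi_0$ by internal inertness, with $\chi_0(x)=0$ if $\{x\}\times K_n^2$ is not $3$-II, $\chi_0(x)=k\in\{1,2\}$ if it is $(4-k)$-II but not $(3-k)$-II, and $\chi_0(x)=3$ if it is $1$-II (i.e.\ empty); and in addition set $\chi_0(x)=0$ on the \emph{bad} set of $3$-II planes whose $\omega$-dynamics outruns the $\chi$-rule with this label (planes admitting outside help not of the two-full-neighbours-plus-one-non-empty type). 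Then $\omega_\infty\subset\bigcup\{\{x\}\times K_n^2:\chi_\infty(x)=0\}\cup\omega_0$, by the same induction as in the proof of Lemma~\ref{omega upper bound}. The genuinely new case is the $W_t$-flip of a state-$3$ site $x$: using the geometry of the Hamming square $K_n^2$, one checks that once two planes neighbouring $\{x\}\times K_n^2$ are full and a third is non-empty, an occupied vertex appears at a shared coordinate, which then seeds a full row and a full column of $K_n^2$ and hence the entire plane. One also records the elementary (if, because of the $W_t$-term, slightly tedious) monotonicity: the eventual zero set of $\chi_t$ is non-increasing in the label order $0<1<2<3$.

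The second step reproduces the construction of $\bxi_0$ and $\hxi_0$ from Lemma~\ref{ac-sub}, now producing $\overline\chi_0$ and $\widehat\chi_0$. Put $\pn_0=\probsub{p}{K_n^2\text{ is not }3\text{-II}}$, $\pn_1=\probsub{p}{K_n^2\text{ is }3\text{-II but not }2\text{-II}}$, $\pn_2=\probsub{p}{K_n^2\text{ is }2\text{-II but not }1\text{-II}}$ and $\pn_3=\probsub{p}{K_n^2\text{ is }1\text{-II}}$; in this regime a copy of $K_n^2$ carries $\Theta(1)$ occupied vertices, so a direct Poisson computation gives $\pn_0\to0$, $\pn_1\to1-(a+1)e^{-a}$, $\pn_2\to ae^{-a}$ and $\pn_3\to e^{-a}$, matching the $(a,\epsilon)$-initialization of $\chi_0$. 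Build $\overline\chi_0$ by filling $3$s with density $\pn_3$, then $2$s with density $\pn_2/(1-\pn_3)$, then $0$s on the bad set, then $1$s on the remaining sites. By Lemma~\ref{inertness-odd} and the estimates in its proof (applied after subtracting off the non-vanishing $W_t$-mechanism), the bad set has probability $o(1)$ and range of dependence $2$, hence by~\cite{LSS} is dominated by an independent product measure $\pi(\delta)$; replacing the bad set by the $\pi(\delta)$-active set, filled with $0$s (possibly overwriting some $1$s, $2$s, $3$s), gives $\widehat\chi_0$. The $\chi$-coupling of the first step applied to $\overline\chi_0$, together with $\overline\chi_0\preceq\widehat\chi_0$ and monotonicity, yields
\[
\probsub{p}{\omega_\infty\ne\omega_0\text{ on }\{\mathbf{0}\}\times K_n^2}\le\P(\overline\chi_\infty(\mathbf{0})=0)\le\P(\widehat\chi_\infty(\mathbf{0})=0).
\]

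The third step closes the estimate exactly as in Lemma~\ref{ac-sub}. Fix $a'\in(a,a_c)$ and $\alpha>0$; by the definition of $a_c$ there is $\epsilon_0>0$ with $\P(\chi^{(a',\epsilon_0)}_\infty(\mathbf{0})=0)<\alpha$. Since $a<a'$ we have $e^{-a}>e^{-a'}$ and $(a+1)e^{-a}>(a'+1)e^{-a'}$, so for $\delta<\epsilon_0$ small enough and $n$ large, $\widehat\chi_0$ has $0$-density $\delta\le\epsilon_0$ and at least as large a $3$-density and a $\{2,3\}$-density as $\chi^{(a',\epsilon_0)}_0$; hence $\widehat\chi_0$ stochastically dominates $\chi^{(a',\epsilon_0)}_0$ in the label order. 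By monotonicity $\P(\widehat\chi_\infty(\mathbf{0})=0)\le\P(\chi^{(a',\epsilon_0)}_\infty(\mathbf{0})=0)<\alpha$, and letting $\alpha\to0$ gives $\P(\widehat\chi_\infty(\mathbf{0})=0)\to0$. Combined with the display and $\probsub{p}{v_0\in\omega_0}=p\to0$, this gives $\probsub{p}{v_0\in\omega_\infty}\to0$, i.e.\ (\ref{odd-theta-sub}). The main obstacle --- and the only place where $\ell=1$ genuinely differs from $\ell\ge2$ --- is the first step: one must set up the $\chi$-analogue of Lemma~\ref{omega upper bound} and carve out the bad set precisely enough that the $W_t$-clause captures \emph{every} obstacle-filling route of non-vanishing probability, leaving an $o(1)$ remainder for the $\pi(\delta)$-domination to swallow.
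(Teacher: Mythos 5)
Your proposal follows the paper's approach essentially verbatim: build a $\chi$-analogue of Lemma~\ref{omega upper bound}, initialize $\overline\chi_0$ by internal inertness plus a low-density ``bad'' correction, dominate the bad set by an independent $\pi(\delta)$ via~\cite{LSS} to get $\widehat\chi_0$, and close via stochastic domination against $\chi^{(a',\epsilon_0)}_0$ and the definition of $a_c$. The one place you leave things more implicit than the paper is the precise identification of the bad set---the paper pins it down as the sites that are not $3$-inert together with those that are $2$-II but not $2$-inert, which is exactly the list whose density Lemma~\ref{inertness-odd} shows to be $\cO(n^{-1})$; your phrasing captures the same set conceptually and cites the same lemma, so this is a difference in presentation rather than substance.
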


\begin{proof}
The difference from the proof of Lemma~\ref{ac-sub}
is the definition of \emph{bad} sites, which in this 
case are those that are not $3$-inert, and those that are 
 $2$-II but not $2$-inert. As 
the density of bad sites goes to $0$ by 
Lemma~\ref{inertness-odd}, 
the proof of Lemma~\ref{ac-sub} can be easily adapted. 
\end{proof}

\section{Bootstrap percolation on $\bZ^2\times K_n$}

\newcommand{\nz}{\text{\tt nz}}
\newcommand{\zetaf}{\zeta^{\tt fv}}
\newcommand{\zetar}{\zeta^{\tt rs}}


In this section, we prove Theorem~\ref{z2xK theorem}, 
which follows from Lemmas~\ref{Z2xK limit} and ~\ref{Z2xK continuity} below. 

As already announced, we need yet another 
heterogeneous bootstrap rule in which 
sites in $\bZ^2$ receive more help from their neighbors than 
in $\xi_t$. In this case we have a new state, 
labeled by $\theta$ and representing 
an empty site that has no contribution to make. We 
denote this rule by $\zeta_t\in \{0,1,2,3,4,5, \theta\}^{\bZ^2}$, $t\in \bZ_+$. Assume 
that $\zeta_0$ is given. For a given $t\ge 0$, let as before $Z_t(x)$ be the cardinality of $\{y: y\sim x\text{ and }\zeta_t(y)=0\}$ and $W_t(x) = \ind{\{y: y\sim x\text{ and }0<\zeta_t(y)<\theta\} \neq \emptyset}$
then  
$$
\zeta_{t+1}(x)=
\begin{cases}
0 & 
Z_t(x)+W_t(x)\ge \zeta_t(x)\\
\zeta_t(x) &\text{otherwise.}
\end{cases}
$$

For an initially occupied set $\omega_0$, we create 
two initial states $\zeta_0$ as follows. For $x\in \bZ^2$, let $$N_x=|\{y\in \{x\}\times K_n: \omega_0(y)=1\}|.$$
Call $x$ a \emph{clash site}
if $N_x<\theta$ and $\omega_0(y_1,u)=\omega_0(y_2,u)=1$ for some $y_1\ne y_2$ 
in $\{x\}\cup\{y:y\sim x\}$ and some $u\in K_n$, such that $N_{y_1}<\theta$ and
$N_{y_2}<\theta$.
We define the \emph{favoring initialization} $\zetaf_0(x)$ and the \emph{restricting initialization} $\zetar_0(x)$ as follows.
If $x$ is a clash site, then $\zetaf_0(x)=0$, while $\zetar_0(x)=\theta$. If $x$ is not a clash site, 
the two initializations are equal: $\zetaf_0(x)=\zetar_0(x)=\nz(N_x)$, where $\nz:\bZ_+\to\{0,\ldots,5,\theta\}$ is given by 
\begin{equation}\label{N-zeta}
\nz(m)=
\begin{cases}
0&m\ge \theta\\
k&m=\theta-k\text{ for some }k\in\{1,2,3,4\}\\
5&0<m<\theta-4\\
\theta &m=0
\end{cases}
\end{equation} 
These initializations determine their respective dynamics 
$\zetar_t$ and $\zetaf_t$, $0\le t\le \infty$.
We next state the comparison lemma whose 
simple proof is omitted. 

\begin{lemma}\label{Z2xK bounds}
We have
$$
\bigcup\{\{x\}\times K_n : \zetar_\infty(x)=0\}\subset\omega_\infty\subset\bigcup\{\{x\}\times K_n: \zetaf_\infty(x)=0\}\cup \omega_0.
$$
\end{lemma}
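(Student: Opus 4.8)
The two inclusions are established separately, each by induction on time, in the same spirit as the proof of Lemma~\ref{omega upper bound}. For the right inclusion the invariant is $\omega_t \subset \bigcup\{\{x\}\times K_n : \zetaf_t(x) = 0\}\cup\omega_0$ for all $t\ge 0$; for the left inclusion it is $\{x : \zetar_t(x) = 0\}\times K_n \subset \omega_\infty$ for all $t\ge 0$. Letting $t\to\infty$ yields the statement. The base cases are immediate: if $\zetar_0(x)=0$ then $x$ is not a clash site and $N_x\ge\theta$, so every vertex of $\{x\}\times K_n$ has $\theta$ occupied clique-neighbours and the clique fills in one step; and $\omega_0\subset\bigcup\{\zetaf_0=0\}\cup\omega_0$ trivially.

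For the right inclusion, assume the invariant through time $t-1$ and fix $x$ with $\zetaf_t(x)\ne0$. Then $x$ is not a clash site, $\zetaf_0(x)=\nz(N_x)\ne0$ forces $N_x<\theta$, monotonicity gives $Z_{t-1}(x)+W_{t-1}(x)\le\zetaf_0(x)-1$, and applying the invariant to the clique $\{x\}\times K_n$ itself gives $\omega_{t-1}\cap(\{x\}\times K_n)\subset\omega_0$. The plan is to bound, for each $(x,u)\in(\{x\}\times K_n)\setminus\omega_0$, the number of its neighbours lying in $\omega_{t-1}$: the clique contributes $N_x$; lattice-neighbours $y$ with $\zetaf_{t-1}(y)=0$ contribute at most $Z_{t-1}(x)$ in total; and a lattice-neighbour $y$ with $\zetaf_{t-1}(y)\ne0$ can contribute (via column $u$) only if $(y,u)\in\omega_0$, which forces $1\le N_y<\theta$. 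The key combinatorial point is that, because $x$ is not a clash site and $(x,u)\notin\omega_0$, \emph{at most one} such $y$ exists, and for it $0<\zetaf_{t-1}(y)<\theta$, so it is counted by $W_{t-1}(x)$. Hence $(x,u)$ has at most $N_x+Z_{t-1}(x)+W_{t-1}(x)\le N_x+\zetaf_0(x)-1$ occupied neighbours, and a short case check against the definition of $\nz$ (the cases $\zetaf_0(x)=k$ with $N_x=\theta-k$; $\zetaf_0(x)=5$ with $N_x<\theta-4$; $\zetaf_0(x)=\theta$ with $N_x=0$) shows this is $\le\theta-1$. So no vertex of $\{x\}\times K_n$ is occupied at time $t$, and the invariant persists.

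For the left inclusion, assume the invariant through time $t$ and fix $x$ with $\zetar_{t+1}(x)=0$, $\zetar_t(x)\ne0$, so $\zetar_t(x)=\zetar_0(x)=:c$ and $Z_t(x)+W_t(x)\ge c$; since $x$ has only four lattice-neighbours one has $Z_t(x)+W_t(x)\le4$, so $c\le4$, ruling out the never-flipping label $5$ and (when $\theta\ge6$) the label $\theta$. By the induction hypothesis, each lattice-neighbour $y$ with $\zetar_t(y)=0$ satisfies $\{y\}\times K_n\subset\omega_\infty$, hence $\subset\omega_T$ for a common finite $T$; and each lattice-neighbour $y$ with $0<\zetar_t(y)<\theta$ has $\zetar_0(y)\in(0,\theta)$, hence is not a clash site and has $N_y\ge1$, i.e.\ $\{y\}\times K_n$ contains an occupied vertex in some column $w$ at time $0$. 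The plan is to propagate this after time $T$: the $Z_t(x)$ fully occupied neighbour cliques give every vertex of $\{x\}\times K_n$ that many occupied neighbours, and when $W_t(x)=1$ the vertex $(x,w)$ additionally sees $(y,w)$. The crucial point — again using that $x$ is not a clash site, or, in the residual case $\theta\le4$ with $x$ a clash site, using $N_x\ge1$ in the relevant subcase — is that $(x,w)\notin\omega_0$, so this extra occupied vertex is genuinely new; combined with $N_x$ (resp.\ $N_x+1$) from the clique and $Z_t(x)$ from the full neighbours, every vertex of $\{x\}\times K_n$ eventually acquires $\theta$ occupied neighbours, so $\{x\}\times K_n\subset\omega_\infty$.

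The main obstacle is precisely the interplay of clash sites with the indicator term $W_t$. On the lower-bound side one must verify that a non-full neighbouring clique transmits its help into $\{x\}\times K_n$ through a column that is not already occupied there, which is exactly what the absence of a clash at $x$ guarantees; on the upper-bound side one must verify, symmetrically, that at most one such column-collision can occur, so that the optimistic $W_t$ contribution is not overcounted. Everything else — the bookkeeping with $\nz$ and the handful of small-$\theta$ edge cases where the labels $3,4,5$ and the label $\theta$ coincide (and where a clash site can itself flip) — is routine once this combinatorial dichotomy is established.
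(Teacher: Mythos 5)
Your proposal is correct, and it is the natural induction-on-time argument that the paper signals by omitting the proof as ``simple'' and by proving the analogous Lemma~\ref{omega upper bound} in exactly this style. The right inclusion is a close analogue of that proof, with the extra $W_{t-1}$ term accounted for by the ``at most one helping column'' argument, and the left inclusion is its mirror image. One small remark on exposition: in the last paragraph of your left-inclusion argument, the sentence is structured as if $(x,w)\notin\omega_0$ were the common conclusion of both branches, but the clash-site branch does not actually establish $(x,w)\notin\omega_0$; rather, it observes that if $(x,w)\in\omega_0$ then $N_x\ge 1$, so $N_x + Z_t(x)\ge 1+(\theta-1)=\theta$ and the extra witness is unnecessary (and if $(x,w)\notin\omega_0$, the witness $(y_0,w)$ supplies the missing neighbor). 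The two branches are thus genuinely different, though both land at $\theta$ occupied neighbors; stating the dichotomy explicitly would make this cleaner. Also, the label $\theta$ is already ruled out once $\theta\ge 5$ (not just $\theta\ge 6$) since $c\le 4<\theta$, so the ``residual case'' is only $\theta\in\{3,4\}$. These are cosmetic; the argument is sound.
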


Consider $\bZ^2\times [0,\infty)$ and equip each $\{x\}\times [0,\infty)$, $x\in \bZ^2$ with an independent Poisson point location of unit intensity. Then we define the \emph{$a$-initialization} 
$\zeta_0^{(a)}$ obtained by $\zeta_0^{(a)}(x)=\nz(N_x^a)$, 
where now
$N_x^a$ is the number of location points in $\{x\}\times [0,a]$
and the function $\nz$ is defined in
(\ref{N-zeta}). 

For the rest of this section, we 
assume that $\omega_0$ is a product measure with density 
$p=a/n$.

\begin{lemma}\label{Z2xK coupling}
Assume $a'>a$. Then, for large enough $n$, $\omega_0$ and 
the $a'$-initialization $\zeta_0^{(a')}$
can be coupled so that $\zetaf_0\ge \zeta_0^{(a')}$

Conversely, assume $a'<a$. Then, for large enough $n$, 
 $\omega_0$ and    
$\zeta_0^{(a')}$
can be coupled so that $\zetar_0\le \zeta_0^{(a')}$.
\end{lemma}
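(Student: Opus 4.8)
The plan is to reduce both assertions to a single elementary stochastic-domination estimate between binomial and Poisson counts, and then to absorb the effect of clash sites by a domination-by-product-measure argument. First I would rewrite the initializations to isolate the combinatorics: extend the map $\nz$ of~\eqref{N-zeta} by setting $\nz(m)=0$ for all $m\ge\theta$ (consistent with~\eqref{N-zeta}), so that, with the natural total order $0<1<2<3<4<5<\theta$ on states, $\nz$ is non-increasing and $\nz(m)$ depends on $m$ only through $m\wedge\theta$. Writing $\mathrm{cl}_x=\mathbbm{1}[x\text{ is a clash site}]$ and using that $\mathrm{cl}_x=1$ forces $N_x<\theta$, one checks $\zetaf_0(x)=\nz\bigl((N_x\wedge\theta)\vee(\theta\,\mathrm{cl}_x)\bigr)$ and $\zetar_0(x)=\nz\bigl((N_x\wedge\theta)(1-\mathrm{cl}_x)\bigr)$. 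Since $\nz$ is non-increasing, it suffices to couple $\omega_0$ with the Poisson field so that, with $Y_x^{+}:=(N_x\wedge\theta)\vee(\theta\,\mathrm{cl}_x)$ and $Y_x^{-}:=(N_x\wedge\theta)(1-\mathrm{cl}_x)$, one has $N_x^{a'}\ge Y_x^{+}$ for all $x$ when $a'>a$, and $N_x^{a'}\wedge\theta\le Y_x^{-}$ for all $x$ when $a'<a$. Here the $N_x\sim\mathrm{Binomial}(n,a/n)$ and the $N_x^{a'}\sim\mathrm{Poisson}(a')$ are each i.i.d., and each $Y_x^{\pm}$ is a function of $\omega_0$ on the Hamming copies at $\ell^1$-distance at most $1$ from $x$, so the field $(Y_x^{\pm})_x$ has range of dependence at most $2$.

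The elementary input is that, for $n$ large, $\mathrm{Binomial}(n,a/n)\wedge\theta$ is strictly stochastically smaller than $\mathrm{Poisson}(a')\wedge\theta$ when $a'>a$, and strictly larger when $a'<a$. This follows from the vertex-wise coupling $\mathrm{Bernoulli}(p)=\mathbbm{1}[\mathrm{Poisson}(-\log(1-p))\ge1]$, which gives $\mathrm{Binomial}(n,p)\preceq\mathrm{Poisson}(-n\log(1-p))$, together with $-n\log(1-a/n)\to a$ and the strict stochastic monotonicity of $\lambda\mapsto\mathrm{Poisson}(\lambda)$ on the finite set $\{1,\dots,\theta\}$. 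In addition, $\P(\mathrm{cl}_x=1)=O(1/n)$: on the event that the relevant counts are $<\theta$, each of the $O(1)$ ordered pairs $y_1\neq y_2$ in $\{x\}\cup\{y:y\sim x\}$ shares an occupied label with probability $O(\theta^2/n)$ by a birthday-type bound, and one sums over the pairs. Consequently the one-site marginals of both $Y_x^{+}$ and $Y_x^{-}$ converge, as $n\to\infty$, to $\mathrm{Poisson}(a)\wedge\theta$, the clash correction affecting only an $O(1/n)$ fraction of the mass.

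It remains to produce the two global couplings, and this is the step I expect to be the main obstacle: clash sites are determined by $\omega_0$ and cannot be avoided site-by-site while keeping $\omega_0$ a product measure, so the required inequality must come from a domination-by-product-measure theorem rather than from an independent per-site coupling. For $a'>a$, the field $(Y_x^{+})_x$ has range of dependence at most $2$ and one-site marginal converging to $\mathrm{Poisson}(a)\wedge\theta$, which is strictly dominated by the product measure $\mathrm{Poisson}(a')\wedge\theta$; so by~\cite{LSS} (and its routine extension to finitely-valued $k$-dependent fields), for $n$ large there is a coupling of $(Y_x^{+})_x$ with an i.i.d.\ $\mathrm{Poisson}(a')$ field $(N_x^{a'})_x$ having $N_x^{a'}\ge Y_x^{+}$ for every $x$; symmetrically, for $a'<a$ one obtains $(Y_x^{-})_x\succeq$ an i.i.d.\ $\mathrm{Poisson}(a')\wedge\theta$ field. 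One then restores the full configuration $\omega_0$ by sampling its remaining coordinates from the conditional law given $(Y_x^{\pm})_x$, and completes $(N_x^{a'})_x$ to an independent unit Poisson process on each $\{x\}\times[0,\infty)$; by the reformulation above this coupling satisfies $\zetaf_0\ge\zeta_0^{(a')}$, respectively $\zetar_0\le\zeta_0^{(a')}$. The real work is to check that the fixed gap $|a'-a|>0$ dominates both the $O(1/n)$ clash correction and the $O(1/n)$ binomial-to-Poisson discrepancy uniformly, so that the strict stochastic domination between the limiting marginals persists for all large $n$ and supplies the hypothesis of~\cite{LSS}.
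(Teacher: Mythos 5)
Your reduction is correct and is essentially the right setup: writing $\zetaf_0(x)=\nz(Y_x^{+})$, $\zetar_0(x)=\nz(Y_x^{-})$, and passing, via monotonicity of $\nz$, to the site-wise inequalities $N_x^{a'}\ge Y_x^{+}$ (when $a'>a$) and $N_x^{a'}\wedge\theta\le Y_x^{-}$ (when $a'<a$) is exactly what is needed; your $O(1/n)$ bound on $\P(\mathrm{cl}_x=1)$ and the one-site binomial-to-Poisson domination are also fine. The gap is in the global coupling step. There is no ``routine extension'' of \cite{LSS} that dominates a general finitely-valued $k$-dependent field by a fixed product measure merely because its one-site marginal is strictly stochastically below the target marginal. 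The LSS hypothesis is that the field is close to a \emph{degenerate} product measure, and this is essential. For instance, on $\bZ$ let $W_x$ be i.i.d.\ Bernoulli$(1/2)$ and $Y_x=W_xW_{x+1}$; this is $1$-dependent with marginal $1/4$, yet $\P(Y_0=Y_1=1)=1/8>q'^2$ for every $q'<1/(2\sqrt{2})$, so $(Y_x)$ is \emph{not} dominated by i.i.d.\ Bernoulli$(q')$ for any $q'\in(1/4,\,1/(2\sqrt{2}))$, even though the marginals have a fixed positive gap. Treating $(Y_x^{+})_x$ as a black-box $2$-dependent field and invoking a domination theorem therefore does not supply the needed coupling, and ``supplies the hypothesis of \cite{LSS}'' is a misreading of what that hypothesis is.

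What makes the claim true is the structure you already isolated: $Y_x^{\pm}$ differs from the i.i.d.\ field $(N_x\wedge\theta)_x$ only through the rare clash indicator. The paper's proof exploits this by conditioning first on $(N_x\wedge\theta)_x$. Given that i.i.d.\ field, the indicators $\mathrm{cl}_x$ form a $\{0,1\}$-valued field which is independent at $\ell^1$-distance $\ge 3$ and has conditional marginals uniformly $O(1/n)$, and \emph{this} is the field to which \cite{LSS} applies, since its near-degenerate-marginal hypothesis is satisfied. One obtains an i.i.d.\ Bernoulli$(\eps)$ field $(\eta_x)_x$ with $\eta_x\ge\mathrm{cl}_x$, independent of $(N_x\wedge\theta)_x$, so that the pairs $(N_x\wedge\theta,\eta_x)$ are genuinely i.i.d.; one then performs a site-wise coupling of each pair with a Poisson$(a')$ variable, and the fixed gap $|a'-a|$ is used only at this last step to absorb both the constant $\eps$ and the vanishing binomial-to-Poisson discrepancy. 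Your write-up should replace the direct appeal to an ``extended LSS'' with this two-stage conditioning argument; the remaining parts of your proposal can stay as they are.
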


\begin{proof}
We will prove only the first statement; the second is 
proved similarly. Observe that the random variables $N_x$ are 
i.i.d.~Binomial($n$,$p$). Fix an $\epsilon>0$
such that $a+\epsilon<a'$. 

Assume that first the i.i.d.~random field of truncated 
random variables $N_x\wedge\theta$, $x\in \bZ^2$, 
is selected. Conditional on this selection, any site 
$x\in\bZ^2$ is a clash site with probability at most 
$C/n$, where $C = C(\theta)$ is a constant. Furthermore, 
if $||x-x'||_1\ge 3$, then $x$ and $x'$ are clash sites independently. Therefore, by \cite{LSS}, there exists  
an i.i.d random field $\eta_x$, $x\in \bZ^2$ 
of Bernoulli random variables, 
independent also of the field $N_x\wedge\theta$, $x\in \bZ^2$, so that 
$\eta_x=1$ whenever $x$ is a clash site and $\prob{\eta_x=1}=\epsilon$. 

If $n$ is large enough, we can, for a fixed $x$, 
find a coupling between 
$(N_x,\eta_x)$ and a Poisson($a$) random variable 
$M_x$ so that $(N_x\wedge \theta)\ind{\eta_x=0}\ge (M_x\wedge \theta)$. Thus we can construct
an independent field $M_x, x\in \bZ^2$ with this property, which concludes
the proof.
\end{proof}
  
Define now
\begin{equation}\label{phi-def}
\phi(a)=\P(\zeta^{(a)}_\infty(\mathbf{0})=0).
\end{equation}
Observe that $\phi:(0,\infty)\to[0,1]$ is a 
nondecreasing limit of nondecreasing 
continuous functions $\phi_t$ given by 
$
\phi_t(a)=\P(\zeta^{(a)}_t(\mathbf{0})=0).
$
Therefore, $\phi$ is left-continuous and nondecreasing. 

\begin{lemma}\label{Z2xK limit} Assume $\theta\ge 3$. 
Fix any $a\in (0,\infty)$ and $v\in \bZ^2\times K_n$. 
As $n\to\infty$, 
\begin{equation*}
\begin{aligned}
\P(\text{\rm Poisson}(a)\ge \theta)\le \phi(a)&\le \liminf_n\probsub{p}{\omega_\infty(v)=1}\\
&\le \limsup_n\probsub{p}{\omega_\infty(v)=1}\le \phi(a+)
\le 1-e^{-4a}
\end{aligned}
\end{equation*}
\end{lemma}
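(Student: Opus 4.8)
The plan is to sandwich $\probsub{p}{\omega_\infty(v)=1}$ between the quantities $\phi(a')$ and $\phi(a'')$ for $a'<a<a''$, using the coupling of Lemma~\ref{Z2xK coupling} together with the monotonicity of the comparison dynamics, and then let $a',a''\to a$. First I would record the two elementary bracketing inclusions. For the lower bound: by Lemma~\ref{Z2xK coupling}, for any $a'<a$ and $n$ large we may couple $\omega_0$ with the $a'$-initialization so that $\zetar_0\le \zeta_0^{(a')}$; since the rule $\zeta_t$ is monotone in the initial configuration (fewer/larger-labeled sites only slow the spread of $0$s), this gives $\{\zetar_\infty(\mathbf 0)=0\}\supset\{\zeta^{(a')}_\infty(\mathbf 0)=0\}$, hence by Lemma~\ref{Z2xK bounds}, $\probsub{p}{\omega_\infty(v)=1}\ge \P(\zetar_\infty(x)=0)\ge \phi(a')$ for $v\in\{x\}\times K_n$. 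Taking $\liminf_n$ and then $a'\uparrow a$, left-continuity of $\phi$ (established just before the lemma) yields $\phi(a)\le\liminf_n\probsub{p}{\omega_\infty(v)=1}$. Symmetrically, for $a''>a$ we couple so that $\zetaf_0\ge\zeta_0^{(a'')}$, obtaining $\probsub{p}{\omega_\infty(v)=1}\le \P(\zetaf_\infty(x)=0)+\probsub{p}{\omega_0(v)=1}\le \phi(a'')+p$; taking $\limsup_n$ (the $p\to0$ term vanishes) and then $a''\downarrow a$ gives $\limsup_n\probsub{p}{\omega_\infty(v)=1}\le\phi(a+)$.

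Next I would verify the two outer inequalities. The leftmost, $\P(\mathrm{Poisson}(a)\ge\theta)\le\phi(a)$, is immediate from the definitions: if $N_{\mathbf 0}^a\ge\theta$ then $\nz(N_{\mathbf 0}^a)=0$, so $\zeta^{(a)}_0(\mathbf 0)=0$ and hence $\zeta^{(a)}_\infty(\mathbf 0)=0$; and $N_{\mathbf 0}^a$ is Poisson($a$). For the rightmost, $\phi(a+)\le 1-e^{-4a}$, I would argue that if all four planes over the $2\times 2$ block $\{\mathbf 0,(0,1),(1,0),(1,1)\}$ are empty in $\omega_0$, then in particular each of the corresponding sites has $N_x=0$, so $\nz(N_x)=\theta$ at those four sites, and (checking the rule) a $2\times 2$ block of $\theta$'s can never be converted to $0$: a site in $\theta$-state needs $Z_t+W_t\ge\theta\ge3$ to flip, but $W_t$ is at most $1$ and a corner of the block has at most two neighbors outside the block, so it cannot accumulate enough. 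Thus $\zetaf_\infty(\mathbf 0)\ne 0$ on this event, which has probability $(1-p)^{4n}\to e^{-4a}$ (using $p=a/n$), giving $\limsup_n\P(\zetaf_\infty(\mathbf 0)=0)\le 1-e^{-4a}$, and since this bound is uniform in a neighborhood of $a$ one gets $\phi(a+)\le 1-e^{-4a}$ by passing through the coupling with $a''$-initializations; alternatively one checks directly from the definition of $\zeta^{(a'')}$ that $\P(\zeta^{(a'')}_\infty(\mathbf 0)=0)\le 1-e^{-4a''}$ and lets $a''\downarrow a$.

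I expect the main subtlety to be the monotonicity claim for the $\zeta_t$-dynamics — namely that the map $\zeta_0\mapsto\{x:\zeta_\infty(x)=0\}$ is monotone (coordinatewise-decreasing initial configurations give larger final zero-sets), which is what lets us pass the coupling inequalities $\zetar_0\le\zeta_0^{(a')}$ and $\zetaf_0\ge\zeta_0^{(a'')}$ through to the $t=\infty$ configurations. This should follow by a routine induction on $t$ from the form of the update rule (the threshold $Z_t(x)+W_t(x)\ge\zeta_t(x)$ is monotone in $Z_t$, in $W_t$, and in $-\zeta_t(x)$, and both $Z_t(x)$ and $W_t(x)$ are monotone functionals of the set of $0$'s and the set of sites in states $\{1,\dots,\theta-1\}$ respectively), but one must be slightly careful because $W_t$ depends on which sites are in the ``intermediate'' range rather than simply on the zero-set; I would phrase the induction hypothesis as simultaneous monotonicity of the zero-set and the set $\{x:\zeta_t(x)<\theta\}$ to make it go through cleanly. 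Everything else is bookkeeping with $\liminf$/$\limsup$, left-continuity of $\phi$, and the elementary limit $(1-a/n)^{cn}\to e^{-ca}$.
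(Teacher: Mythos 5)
Your proof is correct and follows essentially the same route as the paper's (very terse) proof: the two outer bounds come from the same observations (a site with $N_{\mathbf 0}^a\ge\theta$ starts at state $0$; a $2\times2$ block with all $N_x^a=0$ starts in state $\theta$ and is inert forever), and the middle sandwich is exactly the paper's appeal to Lemmas~\ref{Z2xK bounds} and~\ref{Z2xK coupling} combined with left-continuity of $\phi$.

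You correctly flag the one real subtlety that the paper leaves unstated, namely that the $\zeta_t$ dynamics is monotone in $\zeta_0$ even though $W_t$ is not itself a monotone functional of the configuration. Your proposed induction hypothesis ``simultaneous monotonicity of the zero-set and the set $\{x:\zeta_t(x)<\theta\}$'' needs a small tweak: that pair of set containments alone is not enough to iterate the dynamics, since the update at $x$ also depends on the value $\zeta_t(x)$. The clean way to run the induction is to observe that $\zeta_t(y)\in\{0,\zeta_0(y)\}$ always, so given the ordered pair of initial configurations $\zeta_0\le\zeta_0'$ it suffices to propagate only the containment of zero-sets; the key step is then exactly the case split you allude to: if $W_t(x)=0<W_t'(x)$, the witnessing neighbor $y$ with $\zeta_t'(y)\in(0,\theta)$ must satisfy $\zeta_t(y)=0$ (using $\zeta_0(y)\le\zeta_0'(y)<\theta$), so the deficit in $W_t(x)$ is compensated by a strict surplus in $Z_t(x)$, giving $Z_t(x)+W_t(x)\ge Z_t'(x)+W_t'(x)$. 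With that adjustment your argument is complete.

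Your $2\times2$ block argument is also slightly glossed (you write ``$W_t$ is at most $1$ and a corner has at most two neighbors outside the block, so it cannot accumulate enough,'' but for $\theta=3$ the naive bound $Z_t+W_t\le 2+1=3$ is not strictly less than $\theta$); however, as you surely noticed, the cases $Z_t=2$ and $W_t=1$ are mutually exclusive for a corner of the block whose in-block neighbors are still in state $\theta$, since the two out-of-block neighbors cannot simultaneously all be $0$ and one of them be in $(0,\theta)$. So $Z_t+W_t\le 2<\theta$ for the first site in the block that would flip, and the block is indeed inert.
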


\begin{proof}
We have, 
$$
\{N_{\mathbf{0}}^a\ge \theta\}=\{\zeta^{(a)}_0(\mathbf{0})=0\}\subset 
\{\zeta^{(a)}_\infty(\mathbf{0})=0\},$$
and, for any $2\times 2$ block $B\subset\bZ^2$ including 
$\mathbf{0}$,
$$
\cap_{x\in B}\{N_x^a=0\}=\cap_{x\in B}\{\zeta^{(a)}_0(x)=\theta\}\subset 
\{\zeta^{(a)}_\infty(\mathbf{0})=\theta\},$$
which gives the two extreme bounds. The remainder follows 
from Lemmas~\ref{Z2xK bounds}
and \ref{Z2xK coupling}. 
\end{proof}

\begin{lemma}\label{Z2xK continuity} For 
$\theta\ge 14$, $\phi$ is continuous on $(0,\infty)$.
\end{lemma}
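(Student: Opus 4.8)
The plan is to use that $\phi$ is already known to be nondecreasing and left-continuous, so that it suffices to prove right-continuity; in fact I would show $\phi$ is locally Lipschitz on $(0,\infty)$. First I would record that the $\zeta$-dynamics is monotone: if $\zeta_0\le\zeta_0'$ pointwise (with the convention that the state $\theta$ is the largest), then $\zeta_t\le\zeta_t'$ for all $t$, by a straightforward induction whose only point is that when a neighbour $y$ of $x$ passes from a state in $(0,\theta)$ to $0$ the quantity $Z_t(x)+W_t(x)$ cannot decrease. Since $\nz$ is nonincreasing, $\zeta_0^{(a)}$ is nonincreasing in $a$, so $\{\zeta_\infty^{(a)}(\mathbf 0)=0\}$ is an increasing event in the underlying Poisson point process on $\bZ^2\times[0,\infty)$ and $\phi$ is nondecreasing. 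A Russo-type differentiation identity for increasing events of a Poisson process — adding, at rate $da$, one extra point at each site and collecting the pivotal contributions — then gives
\begin{equation*}
\phi(b)-\phi(a)=\int_a^b\Bigl(\textstyle\sum_{x\in\bZ^2}\pi_s(x)\Bigr)\,ds,\qquad \pi_s(x):=\prob{\,\zeta_\infty^{(s)}(\mathbf 0)\ne0\ \text{but}\ \zeta_\infty^{(s),+x}(\mathbf 0)=0\,},
\end{equation*}
where $\zeta^{(s),+x}$ is obtained from $\zeta^{(s)}$ by adding one point at $x$ (replacing $\nz(N_x)$ by $\nz(N_x+1)$). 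Thus the lemma reduces to $\sum_{x}\pi_s(x)<\infty$, uniformly for $s$ in compact subsets of $(0,\infty)$, and the natural target is the stronger bound $\pi_s(x)\le C\rho^{\|x\|_1}$ with $C<\infty$, $\rho<1$ uniform over such compacts.

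The next step is to see that a far-away pivotal site forces a long ``flood path.'' If $x$ is pivotal, let $\cY$ be the set of sites that turn to $0$ in $\zeta^{(s),+x}_t$ but not in $\zeta^{(s)}_t$. Then $\cY$ is $\bZ^2$-connected, contains $x$, and must reach within a bounded distance of $\mathbf 0$ (otherwise the fate of $\mathbf 0$ is unchanged), so $\cY$ contains a connected set of cardinality at least $\|x\|_1/2$ through $x$. Every $y\in\cY$ that newly turns to $0$ obeys a local constraint on $\zeta_0^{(s)}$: because it flips with total help $Z_t(y)+W_t(y)\le 5$ from its four neighbours, and because the super-obstacles (states $5$ and $\theta$, i.e.\ $N_y\le\theta-5$) never flip, its state lies in $\{1,2,3,4\}$, i.e.\ $N_y\in\{\theta-4,\dots,\theta-1\}$, apart from an $O(1)$ number of ``source'' positions and the $O(1)$ sites near $\mathbf 0$ where a robust barrier of state-$4$-or-higher obstacles is actually breached — breaching such a site further requires the flood to surround it, forcing $\cY$ to be locally two-dimensional there. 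I would also note that the complementary case, where the level-$s$ configuration has a circuit of super-obstacles encircling $\mathbf 0$ at distance $\sim\|x\|_1$ with $x$ on it, is again governed by a long chain of non-super-obstacle sites bridging $\mathbf 0$ to that circuit, and so falls under the same estimate.

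The core is then a Peierls/contour bound: the number of $\bZ^2$-connected sets of cardinality $\ell$ containing a fixed site is at most $\lambda^\ell$ (with $\lambda$ the lattice-animal growth rate), while the probability that a fixed such set is realized as the flood set $\cY$ (or bridging chain) above is at most $q_\theta^{\,\ell-O(1)}$, where $q_\theta$ is a bound, uniform in $s>0$, for the probability a single site is in the required state; once one accounts for both the flood chains and the two-dimensional ``breaching'' pieces this is $q_\theta=\sup_{a>0}\prob{\mathrm{Poisson}(a)\in\{\theta-2,\theta-1\}}=\Theta(\theta^{-1/2})$. Taking $\theta\ge 14$ is exactly what makes $\lambda q_\theta<1$, which yields $\pi_s(x)\le C\rho^{\|x\|_1}$ uniformly for $s$ in compacts, hence $\sum_x\pi_s(x)<\infty$ uniformly, hence, by the displayed identity, $\phi$ is locally Lipschitz on $(0,\infty)$ and therefore continuous.

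The main obstacle is the third paragraph. One must pin down precisely which local configuration a site must exhibit to ``participate'' in a flood that passes through it or to breach the barrier around $\mathbf 0$, count the relevant contours with the correct per-site weight, and — the genuinely delicate point — verify the Peierls inequality uniformly over all $a\in(0,\infty)$ rather than only for $\theta$ large relative to $a$. In particular one must treat together the regime of small $a$ (where blocking is controlled by percolating circuits of super-obstacles and a pivotal $x$ means reaching or breaching such a circuit) and the regime of large $a$ (where $\{\zeta^{(s)}_\infty\ne0\}$ occupies only small, exponentially-tailed clusters, so a far pivotal $x$ is essentially impossible), and check that $\theta=14$ is really the first value for which the worst-case comparison $\lambda q_\theta<1$ holds; for smaller $\theta$ one expects genuine long-range influence, which is why continuity is claimed only for $\theta\ge14$.
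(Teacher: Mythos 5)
Your plan takes a genuinely different and considerably more elaborate route than the paper's, and it contains a real gap precisely where you flag one.

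The intended argument is short and essentially qualitative. You have already identified the crucial fact that sites in states $0$, $5$ and $\theta$ are frozen in the $\zeta$-dynamics: state $0$ never changes, state $\theta$ would need $Z+W\ge\theta>5$, and state $5$ would need $Z+W\ge5$, which forces $Z=4$, but then all four neighbours are in state $0$ and so $W=0$. Rather than feeding this into a Russo/Peierls machine, the paper simply lets $F_a$ be the event that there is an $\ell^\infty$-circuit around $\mathbf 0$ consisting of sites $x$ with $N_x^a\notin[\theta-5,\theta-1]$, hence of frozen sites. On $F_a$, the final state of $\mathbf 0$ under $\zeta_t^{(a')}$ is a function only of $\zeta_0^{(a')}$ on the finitely many sites on and inside that circuit, so it coincides for $a'=a$ and for all $a'>a$ small enough that no new Poisson point lands in that finite region. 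Therefore $E_a\cap F_a\subset\{\zeta^{(a)}_\infty(\mathbf 0)=0\}$ up to a null set, and right-continuity of $\phi$ reduces to $\P(F_a)=1$ for every $a>0$, i.e.\ to $\sup_{a>0}\P\bigl(\mathrm{Poisson}(a)\in[\theta-5,\theta-1]\bigr)\le p_c^{\text{site}}$. With the rigorous bound $p_c^{\text{site}}>0.556$ from van den Berg and Ermakov, a numerical check shows this holds for $\theta\ge14$. That is where $14$ comes from: a comparison of a Poisson window probability to $p_c^{\text{site}}$, not to a lattice-animal constant.

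The monotonicity and Russo steps in your proposal are fine. The problem is the third paragraph, which you yourself call the main obstacle, and which is in fact the whole of the work in your approach. The flood set $\cY$ passes through sites with $N_y\in\{\theta-4,\dots,\theta-1\}$, and $\sup_{a>0}\P\bigl(\mathrm{Poisson}(a)\in\{\theta-4,\dots,\theta-1\}\bigr)$ is roughly $0.44$ at $\theta=14$; since even the self-avoiding-walk connective constant on $\bZ^2$ is about $2.64$, the naive per-site Peierls bound gives $\lambda q_\theta>1$ and the contour sum diverges. Your unexplained tightening of the window to $\{\theta-2,\theta-1\}$ is exactly the missing lemma: one would have to prove that a thin flood chain cannot pass through a state-$3$ or state-$4$ site without a two-dimensional thickening, quantify the entropic and probabilistic cost of such thickenings, and then redo the contour count --- none of which appears, and the claim that $\theta=14$ is ``exactly'' the threshold at which $\lambda q_\theta<1$ is asserted rather than derived. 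In short, you correctly identify the frozen-state trichotomy and the right-continuity problem, but you choose the hard quantitative road and leave its central estimate unproved, whereas the paper closes the argument with a single circuit of frozen sites and the $p_c^{\text{site}}$ bound.
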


\begin{proof}
Recall that by the construction, $\zeta_t^{(a)}$ are 
coupled for all $a$. Let 
$$
E_a=\bigcap_{a'>a} \{\zeta^{(a')}_\infty(\mathbf{0})=0\},
$$
so that $\phi(a+)=\P(E_a)$. Let also $F_a$ be the 
event that there is an $\ell^\infty$-circuit $\cC$ around the origin, 
consisting of sites $x$ with $N_x^a\notin [\theta-5,\theta-1]$. 
As  no site 
in $\cC$ ever changes its state in the $\zeta_t^{(a)}$ dynamics, 
$$
E_a\cap F_a\subset \{\zeta^{(a)}_\infty(\mathbf{0})=0\}.
$$
It remains to show that, for $\theta\ge 14$, 
 $\P(F_a)=1$ for all $a\in (0,\infty)$, that is, 
$$
\P(\text{\rm Poisson}(a)\in [\theta-5,\theta-1])\le 
p_c^{\text{\rm site}}.
$$
Using the rigorous lower bound $p_c^{\text{\rm site}}>0.556$ \cite{vdBE}, a numerical computation shows that the above bound
indeed holds for $\theta\ge 14$.
\end{proof}

\section{Open problems}

We conclude with a selection of a few natural questions. 

\begin{question} 
Is the function $\phi$ defined in (\ref{phi-def}) 
continuous on $(0,\infty)$ for all 
$\theta$? Is it analytic for all, or at least large enough, 
$\theta$? 
\end{question}

\begin{question} 
Is the function $a\mapsto\lim_{\epsilon\to 0}\P(\xi_\infty^{(a,\epsilon)}(\mathbf{0})=0)$, where 
$\xi_\infty^{(a,\epsilon)}$ is defined in Section 5.1, 
continuous for all $a$? A related question is whether $\lim_{n\to\infty} \P_p(v_0 \in \omega_\infty)$ exists for odd $\theta$ and all $a$ when $p$ is given by~\eqref{form-p-odd}?
\end{question}

In both question above, 
arguments similar to that for Lemma~\ref{Z2xK continuity} imply continuity for large enough $a$ and for small enough $a$.

\begin{question} 
When $a<a_c$ in Theorem~\ref{odd-theta-thm}, what is the rate 
of convergence in (\ref{odd-theta-sub})?
\end{question}

Our last three questions are more open-ended, and their 
answers likely
require development of new techniques.
We first propose a closer look into the critical 
scaling in Theorem~\ref{sharp transition even}.

\begin{question} Assume $\theta$ is even, as in Theorem~\ref{sharp transition even}. Assume that 
$$p=(2(\ell-1)!)^{1/\ell}\frac{(\log n)^{1/\ell}}{n^{1+1/\ell}}+bf(n)
$$
Can the function $f(n)$ be chosen so that the limit
of the final density as $n\to\infty$ exists and 
is neither a constant nor a step function of $b\in \bR$?
\end{question}

We conclude with two questions on larger dimensions 
of the lattice factor or the Hamming torus factor
(see also \cite{GHPS, GS}). 

\begin{question} What are the analogues of our main theorems
for bootstrap percolation on
$\bZ^d\times K_n^2$, for $d\ge 3$?
\end{question}

To approach this question using the methods of our present paper
would require a much deeper understanding of heterogeneous 
bootstrap percolation on $\bZ^d$ (see \cite{GHS}). 

\begin{question} What are the analogues of our main theorems
for bootstrap percolation on
$\bZ^2\times K_n^d$, $d\ge 3$?
\end{question}

This question poses a significant challenge 
at present, as the bootstrap percolation on $K_n^d$, $d\ge 3$, 
alone 
is poorly understood \cite{GHPS}, except for $\theta=2$
\cite{Sli}.

\section*{Acknowledgements}

JG  was  partially  supported  by  the  NSF  grant  DMS-1513340
and the Slovenian Research Agency research program P1-0285.
DS  was  partially  supported  by  the  NSF  TRIPODS  grant  
CCF-1740761.

\newcommand{\etalchar}[1]{$^{#1}$}


\begin{thebibliography}{99}

%
%
%
%
%
%
%
%
%
%
%
%
%
%
%


 

\bibitem[AG]{AG}
M.~Aizenman and G.~Grimmett.
\newblock Strict monotonicity for critical points in percolation and
  ferromagnetic models.
\newblock {\em J. Statist. Phys.}, 63(5--6):817--835, 1991.

\bibitem[AL]{AL}
M.~Aizenman and J.~L. Lebowitz.
\newblock Metastability effects in bootstrap percolation.
\newblock {\em J. Phys. A}, 21(19):3801--3813, 1988.

\bibitem[BBDM]{BBDM}
J.~Balogh, B.~Bollob\'as, H.~Duminil{-}Copin, and R.~Morris.
\newblock The sharp threshold for bootstrap percolation in all dimensions.
\newblock {\em Trans. Amer. Math. Soc.}, 364(5):2667--2701, 2012.

\bibitem[BBLN]{BBLN}
P.~N.~Balister, B.~Bollob\'as, J.~D.~Lee, and B.~P.~Narayanan.
\newblock
Line  percolation. 
\newblock {\em Random Strictures \& Algorithms}
52(4): 543--715, 2018.

\bibitem[BBR]{BBR}
B.~Bollob\'as, P.~Balister, and O.~Riordan.
\newblock Essential enhancements revisited.
\newblock arXiv:1402.0834.

\bibitem[BDGM1]{BDGM2}
G.~J. Baxter, S.~N Dorogovtsev, A.~V Goltsev, and J.~Mendes.
\newblock Heterogeneous k-core versus bootstrap percolation on complex
  networks.
\newblock {\em Physical Review E}, 83(5):051134, 2011.

\bibitem[BDGM2]{BDGM1}
G.~J. Baxter, S.~N. Dorogovtsev, A.~V. Goltsev, and J.~F.~F. Mendes.
\newblock Bootstrap percolation on complex networks.
\newblock {\em Phys. Rev. E}, 82:011103, Jul 2010.

\bibitem[BDMS]{BDMS}
B.~Bollob\'as, H.~Duminil{-}Copin, R.~Morris, and P.~Smith.
\newblock Universality of two-dimensional critical cellular automata.
\newblock {\em Proceedings of London Mathematical Society}.
\newblock To appear.

\bibitem[{BL}]{BL} L.~Belhadji and 
N.~Lanchier. 
\newblock Individual versus cluster recoveries within a spatially structured population. 
\newblock {\em Annals of 
Applied Probability} 16(1): 403–-422, 2006.


\bibitem[CLR]{CLR}
J.~Chalupa, P.~L. Leath, and G.~R. Reich.
\newblock Bootstrap percolation on a {B}ethe lattice.
\newblock {\em Journal of Physics C: Solid State Physics}, 12(1):L31, 1979.

\bibitem[DDG{\etalchar{+}}]{DDGHS}
N.~Dirr, P.~W. Dondl, G.~R. Grimmett, A.~E. Holroyd, and M.~Scheutzow.
\newblock Lipschitz percolation.
\newblock {\em Electron. Commun. Probab.}, 15:14--21, 2010.


\bibitem[DEK{\etalchar{+}}]{DEKNS}
M.~Damron, S.~M. Eckner, H.~Kogan, C.~M. Newman, and V.~Sidoravicius.
\newblock Coarsening dynamics on {$\mathbb{Z}^d$} with frozen vertices.
\newblock {\em J. Stat. Phys.}, 160(1):60--72, 2015.

%

\bibitem[GH1]{ent}
G.~R. Grimmett and A.~E. Holroyd.
\newblock Plaquettes, spheres, and entanglement.
\newblock {\em Electron. J. Probab.}, 15:1415--1428, 2010.

\bibitem[GH2]{geom}
G.~R. Grimmett and A.~E. Holroyd.
\newblock Geometry of {L}ipschitz percolation.
\newblock {\em Ann. Inst. Henri Poincar\'e Probab. Stat.}, 48(2):309--326,
  2012.


\bibitem[GHM]{GHM}
J.~Gravner, A.~E. Holroyd, and R.~Morris.
\newblock A sharper threshold for bootstrap percolation in two dimensions.
\newblock {\em Probab. Theory Related Fields}, 153(1--2):1--23, 2012.

\bibitem[GHPS]{GHPS}
J. Gravner, C. Hoffman, J. Pfeiffer, and D. Sivakoff.  
\newblock Bootstrap percolation on the Hamming torus.  
\newblock {\em Annals of Applied Probability}, 2015; 25(1), 287-323. 



\bibitem[GHS]{GHS}
J.~Gravner, A.~E. Holroyd, and D.~Sivakoff.
\newblock Polluted bootstrap percolation in three dimensions
\newblock (2017), arXiv:1706.07338.

\bibitem[GM]{GM}
J.~Gravner and E.~McDonald.
\newblock Bootstrap percolation in a polluted environment.
\newblock {\em J. Statist. Phys.}, 87(3-4):915--927, 1997.

\bibitem[GPS]{GPS}
\newblock J.~Gravner, J.~E.~Paguyo, and E.~Slivken. 
Maximal spanning time for neighborhood growth on the Hamming plane
\newblock (2017), arXiv:1708.01855

\bibitem[{GraH}]{GH} J.~Gravner, A.~E.~Holroyd.
\newblock Polluted bootstrap percolation with threshold 2 in all
dimensions
\newblock (2017),  arXiv:1705.01652.


\bibitem[GS]{GS}
J.~Gravner and D.~Sivakoff.
\newblock Bootstrap percolation on products of cycles and complete graphs. 
\newblock {\em Electronic Journal of Probability} 22(29):20pp, 2017.

\bibitem[GSS]{GSS}
\newblock J.~Gravner, D.~Sivakoff, and E.~Slivken. 
Neighborhood growth dynamics on the Hamming plane. 
\newblock {\em Electronic Journal of Combinatorics} 24(4):P4.29, 1--55, 2017.

\bibitem[GZH]{GZH}
J.~Gao, T.~Zhou, and Y.~Hu.
\newblock Bootstrap percolation on spatial networks.
\newblock {\em Scientific reports}, 5, 2015.


\bibitem[Hol]{Hol1}
A.~E. Holroyd.
\newblock Sharp metastability threshold for two-dimensional bootstrap
  percolation.
\newblock {\em Probab. Theory Related Fields}, 125(2):195--224, 2003.

\bibitem[J{\L}TV]{JLTV}
S.~Janson, T.~{\L}uczak, T.~Turova, and T.~Vallier.
\newblock Bootstrap percolation on the random graph {$G_{n,p}$}.
\newblock {\em Ann. Appl. Probab.}, 22(5):1989--2047, 2012.

\bibitem[KL]{KL}
C.~Koch and J.~Lengler.
\newblock Bootstrap percolation on geometric inhomogeneous random graphs.
In \emph{43rd International Colloquium on Automata, Languages, and Programming (ICALP 2016)},
I.~Chatzigiannakis, M.~Mitzenmacher, Y.~Rabani and D.~Sangiorgi
editors (2016), 147:1--147:15.

\bibitem[{Lal}]{Lal}
S.~Lalley.
\newblock Spatial epidemics: Critical behavior in one dimension.
\newblock {\em Probability Theory and Related Fields} 
144(3--4):429--469, 2009.

\bibitem[LSS]{LSS}
T.~M. Liggett, R.~H. Schonmann, and A.~M. Stacey.
\newblock Domination by product measures.
\newblock {\em Ann. Probab.}, 25(1):71--95, 1997.

\bibitem[{LZ}]{LZ}
S.~Lalley and X.~Zheng. 
\newblock Spatial epidemics and local times for critical branching random walks in dimensions 2 and 3.
\newblock {\em Probability Theory and Related Fields} 
148(3--4):527--566, 2010.

\bibitem[Mor]{Mor}
R.~Morris.
\newblock Bootstrap percolation, and other automata.
\newblock {\em European Journal of Combinatorics}
66:250--263, 2017.


\bibitem[{Schi}]{Schi} R.~Schinazi. 
\newblock On the role of social clusters in the transmission of
infectious diseases. 
\newblock {\em Theoretical Population Biology}
61(2):163--169, 2002.

\bibitem[Scho]{Scho}
R.~H. Schonmann.
\newblock On the behavior of some cellular automata related to bootstrap
  percolation.
\newblock {\em Ann. Probab.}, 20(1):174--193, 1992.

\bibitem[{Sil\etalchar{+}}]{Sil} M.~J.~Silk, D.~P.~Croft, R.~J.~Delahay, D.~J.~Hodgson, M.~Boots, N.~Weber, and R.~A.~McDonald.
\newblock Using Social Network Measures in Wildlife Disease Ecology, Epidemiology, and Management.
\newblock {\em BioScience}, 67(3):245-–257, 2017.

\bibitem[{Siv}]{Siv}
D.~Sivakoff. 
\newblock Contact process on a graph with communities.
 \newblock {\em ALEA: Latin American Journal of Probability and Statistics} 14:9--31, 2017. 

\bibitem[{Sli}]{Sli} E.~Slivken.
\newblock Bootstrap percolation on the Hamming torus with threshold 2.
\newblock arXiv:1407.2317.

\bibitem[vdBE]{vdBE}
J.~van den Berg and
A.~Ermakov.
\newblock  A new lower bound for the critical probability of site percolation on the square lattice.
\newblock {\em Random Structures \& Algorithms}, 8(3):199--212, 1996.

\bibitem[vE]{vE}
A.~C.~D. van Enter.
\newblock Proof of {S}traley's argument for bootstrap percolation.
\newblock {\em J. Statist. Phys.}, 48(3-4):943--945, 1987.





\end{thebibliography}
\end{document}